\definecolor{red}{rgb}{0.7,0.15,0.15}
\definecolor{green}{rgb}{0,0.5,0}
\definecolor{blue}{rgb}{0,0,0.7}
\makeatletter \@addtoreset{equation}{section}
\DeclarePairedDelimiter{\ceil}{\lceil}{\rceil}
\newtheorem{theorem}{Theorem}[section]
\newtheorem{assumption}{Assumption}
\newtheorem*{assumption*}{Assumption}
\newtheorem{corollary}[theorem]{Corollary}
\newtheorem*{corollary*}{Corollary}
\newtheorem{lemma}[theorem]{Lemma}
\newtheorem*{lemma*}{Lemma}
\newtheorem{proposition}[theorem]{Proposition}
\newtheorem{definition}[theorem]{Definition}
\newtheorem{remark}[theorem]{Remark}
\newtheorem*{theorem*}{Theorem}
\pgfplotsset{compat=1.14}
\newtheorem{lemmaA}{Lemma}[subsection]
\newtheorem{theoremA}{Theorem}[subsection]
\def \A{\mathbb{A}}
\def \D{\mathbb{D}}
\def \E{\mathbb{E}}
\def \F{\mathbb{F}}
\def \G{\mathbb{G}}
\def \H{\mathbb{H}}
\def \I{\mathbb{I}}
\def \L{\mathbb{L}}
\def \M{\mathbb{M}}
\def \N{\mathbb{N}}
\def \P{\mathbb{P}}
\def \Q{\mathbb{Q}}
\def \R{\mathbb{R}}
\def \S{\mathbb{S}}
\def\Ac{{\cal A}}
\def\Bc{{\cal B}}
\def\Cc{{\cal C}}
\def\Dc{{\cal D}}
\def\Ec{{\cal E}}
\def\Fc{{\cal F}}
\def\Gc{{\cal G}}
\def\Hc{{\cal H}}
\def\Jc{{\cal J}}
\def\Kc{{\cal K}}
\def\Lc{{\cal L}}
\def\Mc{{\cal M}}
\def\Nc{{\cal N}}
\def\Oc{{\cal O}}
\def\Pc{{\cal P}}
\def\Sc{{\cal S}}
\def\Tc{{\cal T}}
\def\Uc{{\cal U}}
\def\Vc{{\cal V}}
\def\Xc{{\cal X}}
\def\Yc{{\cal Y}}
\def\Zc{{\cal Z}}
\DeclareMathAlphabet{\pazocal}{OMS}{zplm}{m}{n}
\def\Af{{\mathfrak A}}
\def\Mf{{\mathfrak M}}
\def\Tf{{\mathfrak T}}
\def\f{{\mathsf{f}}}
\def\g{{\mathsf{g}}}
\def\xb{ \mathbf{x}}
\def\Hs{\mathscr{H}}
\def\eps{\varepsilon}
\def\d{{\mathrm{d}}}
\def\1{\mathbf{1}}
\def\e{{\mathrm{e}}}
\def\o{{\mathrm{o}}}
\def\sigmah{\widehat{\sigma}}
\def\mh{\widehat{m}}
\def\nh{\widehat{n}}
\def\f{{\mathsf{f}}}
\DeclareMathOperator*{\sgn}{sgn} 
\DeclareMathOperator*{\argmax}{arg\,max} 
\DeclareMathOperator*{\es}{ess\,sup^\P}
\DeclareMathOperator*{\Prob}{Prob} 
\DeclareMathOperator*{\as}{\text{\rm a.s.}} 
\let\ae\relax
\DeclareMathOperator*{\ae}{\text{\rm a.e.}} 
\DeclareMathOperator*{\qs}{\text{\rm q.s.}} 
\DeclareMathOperator*{\qe}{\text{\rm q.e.}} 
\DeclareMathOperator*{\Tr}{Tr}
\renewcommand{\|}{\Vert}
\renewcommand{\t}{\top}
\def\moverlay{\mathpalette\mov@rlay}
\def\mov@rlay#1#2{\leavevmode\vtop{%
   \baselineskip\z@skip \lineskiplimit-\maxdimen
   \ialign{\hfil$\m@th#1##$\hfil\cr#2\crcr}}}
\newcommand{\charfusion}[3][\mathord]{
    #1{\ifx#1\mathop\vphantom{#2}\fi
        \mathpalette\mov@rlay{#2\cr#3}
      }
    \ifx#1\mathop\expandafter\displaylimits\fi}
\title{Me, myself and I: a general theory of non-Markovian time-inconsistent stochastic control for sophisticated agents\footnote{The authors gratefully acknowledge the support of the ANR project PACMAN ANR-16-CE05-0027.} }
\author{Camilo {\sc Hern\'andez} \footnote{Columbia University, IEOR department, USA, camilo.hernandez@columbia.edu.} \and Dylan {\sc Possama\"{i}} \footnote{ETH Z\"urich, Mathematics department, Switzerland, dylan.possamai@math.ethz.ch}}
\date{\today}
\begin{document}

\maketitle

\begin{abstract}
We develop a theory for continuous-time non-Markovian stochastic control problems which are inherently time-inconsistent. Their distinguishing feature is that the classical Bellman optimality principle no longer holds. Our formulation is cast within the framework of a controlled non-Markovian forward stochastic differential equation, and a general objective functional setting. We adopt a game-theoretic approach to study such problems, meaning that we seek for \emph{sub-game perfect Nash equilibrium} points. As a first novelty of this work, we introduce and motivate a refinement of the definition of equilibrium that allows us to establish a direct and rigorous proof of an \emph{extended dynamic programming principle}, in the same spirit as in the classical theory. This in turn allows us to introduce a system consisting of an infinite family of backward stochastic differential equations analogous to the classical HJB equation. We prove that this system is fundamental, in the sense that its well-posedness is both necessary and sufficient to characterise the value function and equilibria. As a final step we provide an existence and uniqueness result. Some examples and extensions of our results are also presented.

\vspace{5mm}
\noindent{\bf Key words:} Time inconsistency, consistent planning, non-exponential discounting, mean--variance, backward stochastic differential equations. \vspace{5mm}

\end{abstract}

\section{Introduction}

This paper is concerned with developing a general theory to address time-inconsistent stochastic control problems for sophisticated agents. In particular, we address this task in a continuous-time and non-Markovian framework. This is, to the best of our knowledge, the first work that studies these problems at such level of generality from a probabilistic point of view. 

\medskip

The distinctive feature in these situations is that human beings do not necessarily behave as what neoclassical economists refer to \emph{perfectly rational decision-makers}. Such idealised individuals are aware of their alternatives, form expectations about any unknowns, have clear preferences, and choose their actions deliberately after some process of optimisation, see \citeauthor*{osborne1994course} \cite[Chapter 1]{osborne1994course}. In reality, their criteria for evaluating their well-being are in many cases a lot more involved than the ones considered in the classic literature. 
In mathematical terms, this translates into stochastic control problems in which the classic dynamic programming principle, or in other words the Bellman optimality principle, is not satisfied.
\medskip

Let us consider the form of payoff functionals at the core of the continuous-time optimal stochastic control literature in a non-Markovian framework. Given a time reference $t\in [0,T]$, where $T>0$ is a fix time horizon, a past trajectory $x$ for the state process $X$, whose path up to $t$ we denote by $X_{\cdot \wedge t}$, and an action plan $\M=(\P,\nu)$, that is to say, a probability distribution for $X$ and an action process, the reward derived by an agent is 
\begin{align}\label{pay--off0}
J(t,x,\M)=\E^{\P}\bigg[\int_t^T f_r(X_{\cdot \wedge r},\nu_r)\d r + \xi(X_{\cdot \wedge T})\bigg].
\end{align}

However, as pointed out by \citeauthor*{samuelson1937note} \cite{samuelson1937note}, 
\textquote{the solution to the problem of maximising \eqref{pay--off0} holds only for an agent deciding her actions throughout the period at the beginning of it, and, as she moves along in time, there is a \textit{perspective phenomenon} in that her view of the future in relation to her instantaneous time position remains invariant, rather than her evaluation of any particular year.[...] Moreover, these results will remain unchanged even if she were to discount from the existing point of time rather than from the beginning of the period. Therefore, the fact that this is so is in itself a presumption that individuals do behave in terms of functionals like \eqref{pay--off0}}. Consequently, understanding the rationale behind the actions of a broader class of economic individuals calls for the incorporation of functionals able to include the previous one as a particular case. This is the motivation behind any theory of time-inconsistency.\medskip

Time-inconsistency is generally the fact that marginal rates of substitution between goods consumed at different dates change over time, see \citeauthor*{strotz1955myopia} \cite{strotz1955myopia}, \citeauthor*{laibson1997golden} \cite{laibson1997golden}, \citeauthor*{gul2001temptation} \cite{gul2001temptation}, \citeauthor*{fudenberg2006dual} \cite{fudenberg2006dual}, \citeauthor*{odonoghue1999doing} \cite{odonoghue1999doing, odonoghue1999incentives}. 
In many applications, these time-inconsistent preferences introduce a conflict between \textquote{an impatient \emph{present self} and a patient \emph{future self}}, see \citeauthor*{brutscher2011payment} \cite{brutscher2011payment}. In \cite{strotz1955myopia}, where this phenomenon was first treated, three different types of agents are described: the pre-committed agent does not revise her initially decided strategy even if that makes her strategy time-inconsistent; the naive agent revises her strategy without taking future revisions into account even if that makes her strategy time-inconsistent; the sophisticated agent revises her strategy taking possible future revisions into account, and by avoiding such makes her strategy time-consistent. In this paper we are interested in the latter type. \medskip

The study of time-inconsistency has a long history. The game-theoretic approach started with \cite{strotz1955myopia} where the phenomenon was introduced in a continuous setting, and it was proved that preferences are time-consistent if, and only if, the discount factor is exponential with a constant discount rate. \citeauthor*{pollak1968consistent} \cite{pollak1968consistent} gave the solution to the problem for both naive and sophisticated agents under a logarithmic utility function. For a long period of time, most of the attention was given to the discrete-time setting introduced by \citeauthor*{phelps1968second} \cite{phelps1968second}. This was, presumably, due to the unavailability of system of equations providing a general method for solving the problem, at least for sophisticated agents. Nonetheless, the theory evolved and results were extended to new frameworks, although this was mostly on a case-by-case basis. For example, \citeauthor*{barro1999ramsey} \cite{barro1999ramsey} studied the neoclassical growth model that includes a variable rate of time preference, and \cite{laibson1997golden} considered the case of quasi-hyperbolic time preferences. Notably, \citeauthor*{basak2010dynamic} \cite{basak2010dynamic} treated the mean--variance portfolio problem and derived its time-consistent solution by cleverly decomposing the nonlinear term and then applying dynamic programming. In addition, \citeauthor*{goldman1980consistent} \cite{goldman1980consistent} presented one of the first proof of existence of an equilibrium under quite general conditions. More recently, \citeauthor*{vieille2009multiple} \cite{vieille2009multiple} showed how for infinite horizon dynamic optimisation problems with non-exponential discounting, the multiplicity of solutions (with different payoffs) was the rule rather than the exception. 

\medskip
To treat these problems in a systematic way, the series of works carried out by \citeauthor*{ekeland2006being} \cite{ekeland2006being,ekeland2010golden}, and \citeauthor*{ekeland2008investment} \cite{ekeland2008investment} introduced the first notion of sub-game perfect equilibria in continuous-time, where the source of inconsistency is non-exponential discounting. \cite{ekeland2006being} consider a deterministic setting, whereas \cite{ekeland2010golden} extends these ideas to Markovian diffusion dynamics. In \cite{ekeland2008investment}, the authors provide the first existence result in a Markovian context encompassing the one in their previous works. This was the basis for a general Markovian theory developed by \citeauthor*{bjork2014theory} \cite{bjork2014theory} in discrete-time and \citeauthor*{bjork2017time} \cite{bjork2016time2, bjork2017time} in continuous-time. Inspired by the notion of equilibrium in \cite{ekeland2010golden} and their study of the discrete case in \cite{bjork2014theory}, in \cite{bjork2017time} the authors consider a general Markovian framework with diffusion dynamics for the controlled state process $X$, and provide a system of PDEs whose solution allows to construct an equilibrium for the problem. Recently, \citeauthor*{he2019equilibrium} \cite{he2019equilibrium} fills in a missing step in \cite{bjork2017time} by deriving rigorously the PDE system and refining the definition of equilibrium, {\color{black} while \citeauthor*{lindensjo2019regular} \cite{lindensjo2019regular}\footnote{We are grateful to an anonnymous referee for calling our attention to this work.} shows that solving the PDE system is a necessary condition for a refinement of the notion of equilibrium by assuming the regularity of the value function. Nonetheless, so far none of these approaches were able to handle the non-Markovian analogue of these problems nor did they provide a full characterisation of equilibria and their associated value functions. These results become essential in applications that go beyond solving a time-inconsistent control problem, for example, in contracting problems involving a principal and time-inconsistent agents (see \citeauthor*{cvitanic2015dynamic} \cite{cvitanic2015dynamic}).}

\medskip

Simultaneously, extensions have been considered, and unsatisfactory seemingly simple scenarii have been identified. \citeauthor*{bjork2014mean} \cite{bjork2014mean} study the time-inconsistent version of the portfolio selection problem for diffusion dynamics and a mean--variance criterion. \citeauthor*{czichowsky2013time} \cite{czichowsky2013time} considers an extension of this problem for general semi-martingale dynamics. \citeauthor*{hu2012time} \cite{hu2012time,hu2017time} provide a rigorous characterisation of the linear--quadratic model, and \citeauthor*{huang2021strong} \cite{huang2021strong} perform a careful study in a Markov chain environment. Regarding the expected utility paradigm, \citeauthor*{karnam2016dynamic} \cite{karnam2016dynamic} introduce the idea of the \emph{dynamic utility} under which an original time-inconsistent problem (under the originally fixed utility) becomes a time-consistent one. \citeauthor*{he2019forward} \cite{he2019forward} propose the concept of forward rank-dependent performance processes, by means of the notion of conditional nonlinear expectation introduced by \citeauthor*{ma2018time} \cite{ma2018time}, to incorporate probability distortions without assuming that the model is fully known at the initial time. \citeauthor*{landriault2018equilibrium} \cite{landriault2018equilibrium} present an example, stemming from a mean--variance investment problem, in which uniqueness of the equilibrium via the PDE characterisation of \cite{bjork2014theory} fails. \medskip

A different approach is presented in \citeauthor*{yong2012time} \cite{yong2012time} and \citeauthor*{wei2017time} \cite{wei2017time}, where, in the framework of recursive utilities, an equilibrium is defined as a limit of discrete-time games leading to a system of forward--backward SDEs. Building upon the analysis in \cite{wei2017time}, \citeauthor*{wang2019time} \cite{wang2019time} consider the case where the cost functional is determined by a backward stochastic Volterra integral equation (BSVIE, for short) which covers the general discounting situation with a recursive feature. A Hamilton--Jacobi--Bellman equation (HJB equation, for short) is associated in order to obtain a verification result. Moreover, \citeauthor*{wang2019time} \cite{wang2019time} establish the well-posedness of the HJB equation and derive a probabilistic representation in terms of a novel type of BSVIEs. As explained in \Cref{Section:BSDEassociated}, for the class of problems considered in this paper, our approach helps making explicit the connection between time-inconsistent problems and BSVIEs. \citeauthor*{han2019time} \cite{han2019time} study the case where the state variable is a Volterra process and, by associating an extended path-dependent Hamilton--Jacobi--Bellman equation system, obtains a verification theorem for non-Markovian and non-semimartingale models. Finally, \citeauthor*{mei2020closed} \cite{mei2020closed} deals with time-inconsistent control problems for McKean--Vlasov dynamics which are, for example, a natural framework to study mean--variance problems. {\color{black}We highlight that the previous works focused of establishing verification results. In the present work, we go beyond this as we introduce a system which is fundamental for time-inconsistent control problems in the sense that its well-posedness is both necessary and sufficient. In wrods, all equilibria arise as solutions to such system.}\medskip

When it comes to time-inconsistent stopping problems, recent works have progressed in understanding this setting, yet many peculiarities and questions remain open. A novel treatment of optimal stopping for a Markovian diffusion process with a payoff functional involving probability distortions, for both naïve and sophisticated agents, is carried out by \citeauthor*{huang2020general} \cite{huang2020general}. \citeauthor*{huang2020optimal} \cite{huang2020optimal} consider a stopping problem under non-exponential discounting, and looks for an \emph{optimal equilibrium}, one which generates larger values than any other equilibrium does on the entire state space. \citeauthor*{he2019optimal} \cite{he2019optimal} study the problem of a pre-committed gambler and compare his behaviour to that of a naïve one.  Another series of works is that of \citeauthor*{christensen2019time} \cite{christensen2019time, christensen2019timeb, christensen2019moment} and \citeauthor*{bayraktar2019notions} \cite{bayraktar2019notions}.  \cite{christensen2019time} study a discrete-time Markov chain stopping problem and propose a definition of sub-game perfect Nash equilibrium for which necessary and sufficient equilibrium conditions are derived, and an equilibrium existence result is obtained. The extension to the continuous setting is performed in \cite{christensen2019timeb}, and \cite{christensen2019moment} studies the the pre-committed and sophisticated solutions to a moment constrained version of the optimal dividend problem. Independently, \cite{bayraktar2019notions} studied a continuous Markov chain process and proposed another notion of equilibrium. The authors thoroughly obtain the relation between the notions of optimal-mild, weak and strong equilibrium introduced in \cite{huang2020general}, \cite{christensen2019timeb} and \cite{bayraktar2019notions}, respectively, and provide a novel iteration method which directly constructs an optimal-mild equilibrium bypassing the need to find first all mild equilibria. Notably, \citeauthor*{tan2018failure} \cite{tan2018failure} gives an example of nonexistence of an equilibrium stopping plan. On the other hand, \citeauthor*{nutz2020conditional} \cite{nutz2020conditional} provide a first approach to the recently introduced conditional optimal stopping problems which are inherently time-inconsistent. 

\medskip
{\color{black}The contributions of this paper can be summarised as follows, see \Cref{Section:OurResults} for detailed statements. First, regarding the set-up of the problem, we present the first probabilistic approach to non-Markovian time-inconsistent stochastic control problems. In particular, the dynamics of the controlled state process are prescribed in weak formulation, and control on both the drift and the volatility are allowed. We address time-inconsistent control problems from a sophisticated agent point of view, and seek for equilibrium actions. Keeping in mind the rationale behind such agents, and reviewing the existing literature, \cite{strotz1955myopia} stated that: \textquote{[The] problem [of a sophisticated agent] is then to find the best plan among those that [she] will actually follow}. This indicates that the agent chooses a plan that coordinates her future preferences, and therefore, the equilibrium action is time-consistent. Consequently, \emph{the value associated with an equilibrium is expected to satisfy a dynamic programming principle} (DPP for short).  However, the only statement of such DPP in the existing literature is, to best of our knowledge, \cite[Proposition 8.1]{bjork2016time2} and, uncommonly, it exploits the PDE system introduced by the same authors in \cite{bjork2017time}. We further discuss this in \Cref{Section:NotionsofEquilibrium}. Motivated by the structure of the classical theory of control, we propose a refinement on the notion of equilibrium, see \Cref{Def:equilibrium}, that allow us to bypass such argument and obtain an extended DPP from this new notion of equilibria, see \Cref{Theorem:DPP:Limit}. Not surprisingly, its statement agrees with the connection between the game-theoretic approach to time-inconsistent control problems and classic time-consistent control problems in \cite{bjork2016time2}. We nonetheless emphasise that our result is a direct consequence of our notion of equilibrium. Once a DPP is available, we can naturally associate a system of backward stochastic differential equations (BSDEs for short) to it, see System \eqref{HJB}. This system features the same fully coupled structure and agrees with the corresponding PDE system known in the Markovian framework, see \Cref{Thm:PDEsystem:markovian}. Naturally, we address the question of stating what a \emph{solution} to the system is, see \Cref{Def:sol:systemH}, and we are able to obtain a verification result, see \Cref{Verification}. Notably, we also show that \eqref{HJB} is, at the same time, necessary to the study of this problem, see \Cref{Theorem:Necessity}. By this we mean that given an equilibrium, its corresponding value function is naturally associated with a solution to \eqref{HJB}. In particular, we prove that any equilibrium must necessarily maximise the Hamiltonian functional of the agent. Consequently, \eqref{HJB} is fundamental to the study of time-inconsistent control problems for sophisticated agents as all equilibria arise as solutions to such system. Our definition of equilibrium and the proof of the extended DPP, which bypasses the argument in \cite[Proposition 8.1]{bjork2016time2}, are key for this last result and as such we believe are valuable contributions to the theory of time-inconsistent control problems. Finally, we provide a well-posedness result in the case where the volatility of the state process is not controlled, see \Cref{Thm:wellposedness:driftcontrol}. This result, in combination with \Cref{Theorem:Necessity} and \Cref{Verification}, establishes the uniqueness of equilibria in such setting.}

\medskip

This paper is organised as follows. \Cref{Section:ProblemFormulation} is devoted to the formulation of the problem. It presents our probabilistic framework, motivates a game formulation to time-inconsistent non-Markovian stochastic control problems, and introduces our refinement of the definition of equilibrium. \Cref{Section:OurResults} is dedicated to state and describe our results, and to compare our definition of equilibrium with the ones available in the literature. \Cref{Section:Examples} illustrates our results with an example. \Cref{Section:DPP} takes care of rigorously proving the extend dynamic programming principle. \Cref{Section:Analysis} contains the analysis and main results of the proposed methodology, that is to say, the necessity and the sufficiency of \eqref{HJB} in a general setting. \Cref{Section:Extensions} presents direct extensions of our model to more general reward functionals. Additionally, the appendix includes the proof of the well-posedness of System \eqref{HJB} in the case when only drift control is allowed, as well as some auxiliary and technical results.

\medskip

{\bf Notations:}
Throughout this document we take the convention $\infty-\infty:=-\infty$, and we fix a time horizon $T>0$. $\R_+$ and $\R_+^\star$ denote the sets of non-negative and positive real numbers, respectively. Given $(E,\|\cdot \|)$ a Banach space, a positive integer $p$, and a non-negative integer $q$, $\Cc^{p}_{q}(E)$ (resp. $\Cc^{p}_{q,b}(E)$) will denote the space of functions from $E$ to $\R^{p}$ which are at least $q$ times continuously differentiable (resp. and bounded with bounded derivatives). We set $\Cc_{q,b}(E):=\Cc^{1}_{q,b}(E)$, \emph{i.e.} the space of $q$ times continuously differentiable bounded functions with bounded derivatives from $E$ to $\R$ . Whenever $E=[0,T]$ (resp. $q=0$ or $b$ is not specified), we suppress the dependence on $E$ (resp. on $q$ or $b$), \emph{e.g.} $\Cc^p$ denotes the space of continuous functions from $[0,T]$ to $\R^{p}$. Given $x\in \Cc^p$, we denote by $x_{\cdot \wedge t}$ the path of $x$ stopped at time $t$, \emph{i.e.} $x_{\cdot \wedge t}:=(x(r\wedge t), r\geq 0)$. Given $(x,\tilde x)\in \Cc^p\times\Cc^p$ and $t\in[0,T]$, we define their concatenation $x\otimes_t  \tilde x\in\Cc^p$ by $(x\otimes_t  \tilde x)(r):= x(r)1_{\{r\leq t\}}+(x(t)+\tilde x(r)-\tilde x(t))1_{\{r\geq t\}}$, $r\in[0,T]$. 
\medskip

For $\varphi \in \Cc^{p}_q(E)$ with $q\geq 2$, $\partial_{xx}^2 \varphi$ will denote its Hessian. For a function $\phi:[0,T]\times E$ with $s \longmapsto \phi(s,\alpha)$ uniformly continuous uniformly in $\alpha$, we denote by $\rho_{\phi }:[0,T]\longrightarrow \R$ its modulus of continuity, which we recall satisfies $\rho_\phi(\ell)\longrightarrow 0$ as $\ell \downarrow 0$. For $(u,v) \in \R^p\times\R^p$, $u\cdot b$ will denote their usual inner product, and $|u|$ the corresponding norm. For positive integers $m$ and $n$, we denote by $\Mc_{m,n}(\R)$ the space of $m\times n$ matrices with real entries. By $0_{m,n}$ and $\text{I}_n$ we denote the $m\times n$ matrix of zeros and the identity matrix of $\Mc_n(\R):=\Mc_{n,n}(\R)$, respectively. $\S_n^+(\R)$ denotes the set of $n\times n$ symmetric positive semi-definite matrices. $\Tr [M]$ denotes the trace of a matrix $M\in  \Mc_{n}(\R)$.

\medskip
For $(\Omega, \Fc)$ a measurable space, $\Prob(\Omega)$ denotes the collection of all probability measures on $(\Omega, \Fc)$. For a filtration $\F:=(\Fc_t)_{t\in [0,T]}$ on $(\Omega, \Fc)$, $\Pc_{\rm prog}(E,\F)$ (resp. $\Pc_{\rm pred}(E,\F)$, $\Pc_{\rm opt}(E,\F)$) will denote the set of $E$-valued, $\F$--progressively measurable processes (resp. $\F$-predictable processes, $\F$-optional processes). For $\P\in \Prob(\Omega)$ and a filtration $\F$, $\F^\P:=(\Fc_t^\P)_{t\in[0,T]},$ denotes the $\P$-augmentation of $\F$. We recall that for any $t\in [0,T]$, $\Fc^\P_t:=\Fc_t\vee \sigma(\Nc^\P)$, where $\Nc^\P:=\{N\subseteq \Omega: \exists B \in \Fc, N \subseteq B \text{ and } \P[B]=0\}$. With this, the probability measure $\P$ can be extended so that $(\Omega,\Fc, \F^\P,\P)$ becomes a complete probability space, see  \citeauthor*{karatzas1991brownian} \cite[Chapter II.7]{karatzas1991brownian}. $\F^\P_+$ denotes the right limit of $\F^\P$, \emph{i.e.} $\Fc_{t+}^\P:=\bigcap_{\eps>0} \Fc_{t+\eps}^\P$, $t\in[0,T)$, and $\Fc_{T+}^\P:=\Fc_T^\P$, so that $\F^{\P}_+$ is the minimal filtration that contains $\F$ and satisfies the usual conditions. Moreover, given $\Pc\subseteq \Prob(\Omega)$ we introduce the set of $\Pc$-polar sets $\Nc^\Pc:=\{ N\subseteq \Omega: N\subseteq B, \text{ for some } B\in \Fc \text{ with } \sup_{\P\in \Pc} \P[B]=0\}$, as well as the $\Pc$-completion of $\F$, $\F^\Pc:=(\Fc_t^\Pc)_{t\in [0,T]}$, with $\Fc_t^\Pc:= \Fc_t \vee \sigma(\Nc^\Pc)$, $t\in [0,T]$ together with the corresponding right-continuous limit $\F^\Pc_+:=(\Fc_{t+}^\Pc)_{t\in [0,T]}$, with $\Fc^\Pc_{t+}:=\bigcap_{\eps>0} \Fc_{t+\eps}^\Pc$, $t\in[0,T)$, and $\Fc_{T+}^\Pc:=\Fc_T^\Pc$. For $\{s,t\}\subseteq [0,T]$, with $s\leq t$, $\Tc_{s,t}(\F)$ denotes the collection of $[t,T]$-valued $\F$--stopping times. 
\medskip

Additionally, given $A \subseteq \R^k$, $\A$ denotes the collection of finite and positive Borel measures on $[0,T]\times A$ whose projection on $[0,T]$ is the Lebesgue measure. This is, any $q\in \A$ can be disintegrated as $q(\d t, \d a)=q_t(\d a) \d t$, for an appropriate Borel-measurable kernel $q_t$, unique up to (Lebesgue) almost everywhere equality. We are interested in the set $\A_0$, of $q\in \A$ of the form $q=\delta_{\phi_t}(\d a) \d t$, for $\delta_{\phi}$ the Dirac mass at a Borel-measurable function $\phi:[0,T]\longrightarrow A$.

\section{Problem formulation}\label{Section:ProblemFormulation}

\subsection{Probabilistic framework}\label{Section:ProbabilisticFramework}
 
Let  $d$ and $n$ be two positive integers, and $\Xc:= \Cc^d$. We will work on the canonical space $ \Omega:=\Xc \times \Cc^n \times \A$, whose elements we will denote generically by $\omega:=(x,\text{w},q)$, and with canonical process $(X,W,\Lambda)$, where 
\[X_t(\omega):=x(t), \; W_t(\omega):=\text{w}(t), \; \Lambda(\omega):=q, \; (t,\omega)\in [0,T]\times \Omega.\]

$\Xc$ and $\Cc^d$ are endowed with the topology $\Tf_\infty$, induced by the norm $\| x \|_\infty:=\sup_{0\leq t \leq T} |x(t)|$, $x\in\Xc$, while $\A$ is endowed with the topology $\Tf_{\text{w}}$ induced by weak convergence, which we recall is metrisable, for instance, by the Prohorov metric, see \citeauthor*{stroock2007multidimensional} \cite[Theorem 1.1.2]{stroock2007multidimensional}. With these norms, both spaces are Polish.\medskip

For $(t,\varphi) \in [0,T]\times \Cc_b([0,T]\times A)$, we define 
\begin{align*}
\Delta_t[\varphi]:=\iint_{[0,t]\times A} \varphi(r,a) \Lambda(\d r, \d a), \; \text{so that}\; \Delta_t[\varphi] (\omega)=\iint_{[0,t]\times A}  \varphi(r,a)q_r( \d a)\d r,\; \text{for any}\; \omega\in\Omega.
\end{align*}

We denote by $\Fc$ the Borel $\sigma$-field on $ \Omega$. In this paper, we will work with the filtrations $\F:=(\Fc_t)_{t\in[0,T]}$ and $\F^X:=(\Fc^X_t)_{t\in[0,T]}$ defined for $t \in [0,T]$ by
\begin{align*}
\Fc_t :=\sigma \Big( (X_r,W_r,\Delta_r [\varphi]): (r,\varphi)\in [0,t]\times \Cc_b([0,T]\times A)\Big),\; \Fc^X_t:=\sigma \Big( (X_r,\Delta_r [\varphi]): (r,\varphi)\in [0,t]\times \Cc_b([0,T]\times A)\Big).
\end{align*}

Additionally, we will work with processes $\psi:[0,T]\times \Xc \longrightarrow E, (t,x)\longmapsto \psi(t,x)$, for $E$ some Polish space, which are $\G$-optional, with $\G$ an arbitrary filtration, \emph{i.e.} $\Pc_{\rm opt}(E,\G)$-measurable. In particular, these processes are automatically non-anticipative, that is to say, $\psi_r(X):=\psi_r(X_{\cdot \wedge r})$ for any $r\in[0,T]$. We denote by $\pi^{\Xc}$ the canonical projection from $\Omega$ to $\Xc$ and let $\pi^{\Xc}_\# \P:=\P\circ (\pi^{\Xc})^{-1}$ denote the push-forward of $\P$. As the previous processes are defined on $[0,T]\times \Xc \subsetneq [0,T]\times \Omega$, we emphasise that throughout this paper, the assertion 
\begin{align}\label{Convetion:P}
`\P\text{--}\ae x \in \Xc\text{'}, \; \text{will always mean that}\; \big(\pi^{\Xc}_\# \P\big)[\Xc]=1.
\end{align}

$\P \in \Prob( \Omega)$ will be called a semi-martingale measure if $X$ is an $(\F,\P)$--semi-martingale. By \citeauthor*{karandikar1995pathwise} \cite{karandikar1995pathwise}, there then exists an $\F$-predictable process, denoted by $\langle X\rangle=(\langle X\rangle_t)_{t\in [0,T]}$, which coincides with the quadratic variation of $X$, $\P\text{--}\as$, for every semi-martingale measure $\P$. Thus, we can introduce the $d\times d$ symmetric positive semi-definite matrix $\widehat \sigma$ as the square root of $\widehat \sigma^2$ given by
\begin{align}\label{Def:quadraticvwrtL}
\widehat \sigma_t^2:=\limsup_{\eps \searrow 0} \frac{\langle X \rangle_t-\langle X\rangle_{t-\eps}}{\eps}, \, t\in [0,T].
\end{align}

\subsection{Conditioning and concatenation of probability measures}\label{Section:Rcpd}

In this section, we recall the celebrated result on the existence of a well-behaved $\omega$-by-$\omega$ version of the conditional expectation. We also introduce the concatenation of a measure and a stochastic kernel. These objects are key for the statement of our results in the level of generality we are working with.

\medskip
Recall $\Omega$ is a Polish space and $\Fc$ is a countably generated $\sigma$-algebra. For $\P\in \Prob(\Omega)$ and $\tau \in \Tc_{0,T}(\F)$, $\Fc_\tau$ is also countably generated, so there exists an associated regular conditional probability distribution (r.c.p.d. for short) $(\P_\omega^\tau)_{\omega \in \Omega}$, see \cite[Theorem 1.3.4]{stroock2007multidimensional}, satisfying
\begin{enumerate}[label=$(\roman*)$, ref=.$(\roman*)$,wide, labelindent=0pt]
\item for every $\omega \in \Omega$, $\P^\tau_\omega$ is a probability measure on $(\Omega,\Fc)$;
\item for every $E\in \Fc$, the mapping $\omega\longmapsto \P^\tau_\omega[E]$ is $\Fc_\tau$-measurable;
\item the family $(\P_\omega^\tau)_{\omega\in \Omega}$ is a version of the conditional probability measure of $\P$ given $\Fc_\tau$, that is to say for every $\P$-integrable, $\Fc$-measurable random variable $\xi$, we have $\E^\P[\xi|\Fc_\tau](\omega)=\E^{\P^\tau_\omega}[\xi]$, for $\P\text{--}\ae\ \omega \in \Omega$;
\item for every $\omega \in \Omega$, $\P^\tau_\omega [\Omega^\omega_\tau]=1$, where $\Omega^\omega_\tau:=\{ \omega' \in \Omega : \omega'(r)=\omega(r), 0\leq r\leq \tau(\omega)\}$.
\end{enumerate}

Moreover, for $\P\in \Prob(\Omega)$ and an $\Fc_\tau$-measurable stochastic kernel $(\Q_\omega^\tau)_{\omega\in\Omega}$ such that $\Q^\tau_\omega [\Omega^\omega_\tau]=1$ for every $\omega \in \Omega$, the concatenated probability measure is defined by
\begin{align}\label{Def:ConcMeasure}
\P \otimes_\tau \Q_\cdot [A]:=\int_\Omega \P(\d \omega) \int_\Omega \1_A(\omega \otimes_{\tau(\omega)}\tilde \omega) \Q_\omega(\d \tilde \omega),\; \forall A \in \Fc.
\end{align}
The following result, see \cite[Theorem 6.1.2]{stroock2007multidimensional}, gives a rigorous characterisation of the concatenation procedure.
\begin{theorem}[Concatenated measure]\label{Thm:Concatenated:M}
Consider a stochastic kernel $(\Q_\omega)_{\omega\in\Omega}$,  and let $\tau\in\Tc_{0,T}(\F)$. Suppose the map $\omega \longmapsto \Q_\omega$ is $\Fc_\tau$-measurable and $\Q_\omega[\Omega_\tau^\omega]=1$ for all $\omega\in \Omega$. Given $\P \in \Prob(\Omega)$, there is a unique probability measure $\P\otimes_{\tau(\cdot)} \Q_\cdot$ on $(\Omega,\Fc)$ such that $\P\otimes_{\tau(\cdot)} \Q_\cdot$ equals $\P$ on $(\Omega,\Fc_\tau)$ and $(\delta_{\omega} \otimes_{\tau(\omega)} \Q_\omega)_{\omega \in \Omega}$ is an {\rm r.c.p.d}. of $\P\otimes_{\tau(\cdot)}\Q_\cdot | \Fc_\tau$. For some $t\in[0,T]$, suppose that $\tau \geq t$, that $M:[t,T]\times \Omega \longrightarrow \R$ is a right-continuous, $\F$--progressively measurable function after $t$, such that $M_t$ is $\P\otimes_{\tau(\cdot)} \Q_\cdot$-integrable, that for all $r\in[t,T]$, $(M_{r\wedge \tau})_{r\in[t,T]}$ is an $(\F,\P)$-martingale, and that $(M_r- M_{r\wedge\tau(\omega)})_{r\in[ t,T]}$ is an $(\F,\Q_\omega)$-martingale, for all $\omega \in \Omega$. Then $(M_r)_{r\in [t,T]}$ is an $(\F,\P\otimes_{\tau(\cdot)}\Q_\cdot)$-martingale.
\end{theorem}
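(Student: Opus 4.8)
The plan is to build the measure $\P\otimes_{\tau(\cdot)}\Q_\cdot$ explicitly via the formula \eqref{Def:ConcMeasure} and then verify the three asserted properties: agreement with $\P$ on $\Fc_\tau$, the r.c.p.d. identification, and the martingale transfer. First I would check that \eqref{Def:ConcMeasure} indeed defines a probability measure: this follows from the fact that $\omega\longmapsto\Q_\omega$ is $\Fc_\tau$--measurable (so the inner integral is a measurable function of $\omega$, hence $\P$--integrable as it is bounded by $1$), together with monotone convergence to get countable additivity and $\P\otimes_{\tau(\cdot)}\Q_\cdot[\Omega]=1$ from $\Q_\omega[\Omega]=1$. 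Uniqueness on $\Fc$ will come from the fact that $\Fc$ is countably generated and any measure with the stated r.c.p.d. property is forced to coincide with \eqref{Def:ConcMeasure} on a generating $\pi$--system (the cylinder sets), hence everywhere by Dynkin. For the restriction to $\Fc_\tau$: if $A\in\Fc_\tau$, then $\1_A(\omega\otimes_{\tau(\omega)}\tilde\omega)=\1_A(\omega)$ for every $\tilde\omega$ because $\omega\otimes_{\tau(\omega)}\tilde\omega$ agrees with $\omega$ on $[0,\tau(\omega)]$ and $\Omega^\omega_\tau$ is $\Q_\omega$--full, and $A$ is $\Fc_\tau$--measurable; using $\Q_\omega[\Omega^\omega_\tau]=1$ the inner integral collapses to $\1_A(\omega)$, giving $\P\otimes_{\tau(\cdot)}\Q_\cdot[A]=\P[A]$. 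That $(\delta_\omega\otimes_{\tau(\omega)}\Q_\omega)_{\omega\in\Omega}$ is an r.c.p.d. of $\P\otimes_{\tau(\cdot)}\Q_\cdot$ given $\Fc_\tau$ is then a matter of checking properties $(i)$--$(iv)$ in the list above directly from the definitions, the only nontrivial point being $(iii)$, which reduces to Fubini applied to \eqref{Def:ConcMeasure} for $A=B\cap C$ with $B\in\Fc_\tau$, $C\in\Fc$.

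For the martingale statement, fix $t\le s\le u\le T$ and $A\in\Fc_s$; I want to show $\E^{\Pbar}[(M_u-M_s)\1_A]=0$ where $\Pbar:=\P\otimes_{\tau(\cdot)}\Q_\cdot$. The natural device is to condition on $\Fc_{s\vee\tau}$ (or to split $\Omega$ according to whether $\tau\le s$). On the event $\{\tau\le s\}$, which is $\Fc_s$--measurable, one uses that $(M_r)_{r\ge\tau}=(M_r-M_{r\wedge\tau})_{r\ge\tau}$ is an $(\F,\Q_\omega)$--martingale together with the r.c.p.d. identification just established, so that the conditional increment $\E^{\Pbar}[M_u-M_s\,|\,\Fc_s]$ equals $\E^{\Q_\omega}[M_u-M_s]=0$ for $\Pbar$--a.e.\ such $\omega$. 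On the event $\{\tau>s\}$, the stopped process $(M_{r\wedge\tau})$ being an $(\F,\P)$--martingale and $\Pbar=\P$ on $\Fc_\tau\supseteq\Fc_s$ there handles the increment up to $\tau$, and then one re--applies the $\Q_\omega$--martingale property past $\tau$. Assembling these two pieces and using the integrability of $M_t$ to control the conditional expectations gives the $(\F,\Pbar)$--martingale property; right--continuity and $\F$--progressive measurability of $M$ after $t$ are used to make all the conditioning and optional--stopping steps legitimate.

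The main obstacle I expect is the bookkeeping around the random time $\tau$ in the martingale part: one must be careful that $\{\tau\le s\}\in\Fc_s$, that the shifted path $\omega\otimes_{\tau(\omega)}\tilde\omega$ interacts correctly with the values of $M$ before and after $\tau$ (here $M$ is defined on $[t,T]\times\Omega$, so the decomposition $M_r = M_{r\wedge\tau} + (M_r - M_{r\wedge\tau})$ is the right way to separate the "$\P$--part" from the "$\Q_\omega$--part"), and that integrability propagates from $M_t$ to all $M_s$ via the two martingale properties. Everything else — defining the measure, checking it is a probability measure, restriction to $\Fc_\tau$, and the r.c.p.d. identification — is standard and follows the template of \cite[Chapter 6]{stroock2007multidimensional}; I would simply cite \cite[Theorem 6.1.2]{stroock2007multidimensional} for the abstract construction and spell out only the martingale transfer, which is the part actually used in the sequel.
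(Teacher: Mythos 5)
Your proposal is correct, and it essentially reconstructs the argument of \cite[Theorems 1.2.10 and 6.1.2]{stroock2007multidimensional}, which is exactly what the paper does: the paper offers no proof of its own and simply cites that result, as you yourself propose to do for the construction and r.c.p.d.\ identification. The only part you spell out beyond the citation, the martingale transfer via the decomposition $M_r=M_{r\wedge\tau}+(M_r-M_{r\wedge\tau})$ and the split on $\{\tau\leq s\}$ versus $\{\tau>s\}$, is the standard Stroock--Varadhan argument and is sound as sketched.
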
 

In particular, for an $\Fc$-measurable function $\xi$, $\E^{\P\otimes_\tau \P^\tau_\cdot}[\xi]=\E^\P[\E^\P[\xi|\Fc_\tau]]=\E^\P[\xi]$. This is the classical tower property. Additionally, the reverse implication in the last statement in \Cref{Thm:Concatenated:M} holds by \cite[Theorem 1.2.10]{stroock2007multidimensional}.

\subsection{Controlled state dynamics}\label{Section:ControlledDynamics}
Let $k$ be a positive integer, and let $A\subseteq \R^k$. An action process $\nu$ is an $A$-valued $\F^X$-predictable process. Given an action process $\nu$, the \emph{controlled state equation} is given by the stochastic differential equation (SDE for short)
\begin{align}\label{Eq:dynamics}
X_t= x_0+ \int_0^t \sigma_r(X,\nu_r)  \big(b_r(X, \nu_r)\d r+\d W_r\big),\; t\in [0,T].
\end{align}
where $W$ is an $n$-dimensional Brownian motion, $x_0\in\R^d$, and
\begin{align}\label{DynamicsCoefficients}
\begin{split}
 \sigma&:[0,T]\times \Xc \times A \longrightarrow \Mc_{d, n}(\R),\; \text{is bounded, and} \;  (t,x)\longmapsto \sigma(t,x, a)\text{ is } \F^X\text{-optional for any } a\in A,\\
 b&:[0,T]\times  \Xc \times A\longrightarrow \R^n,\; \text{is bounded,} \;  (t,x)\longmapsto b(t,x, a)\text{ is } \F^X\text{-optional for any } a\in A.\end{split}
\end{align}
In this work we characterise the controlled state equation in terms of \textit{weak solutions} to \eqref{Eq:dynamics}. These come from so-called martingale problems, see \cite[Chapter 6]{stroock2007multidimensional}. Let $\overline X:=(X,W)$ and $\overline \sigma: [0,T]\times  \Xc \longrightarrow \Mc_{n+d}(\R)$ given by
\begin{align*}
\overline  \sigma:= \begin{pmatrix}
 \sigma &0_{d,n}\\
 \text{I}_{n}& 0_{n,d} \end{pmatrix}.
\end{align*}

For any $(t,x)\in [0,T]\times \Xc$, we define $\Pc(t,x)$ as the collection of $\P\in \Prob(\Omega)$ such that
\begin{enumerate}[label=$(\roman*)$, ref=.$(\roman*)$,wide, labelindent=0pt]
\item there exists $\text{w}\in \Cc^n$ such that $\P\circ ( X_{\cdot \wedge t},W_{\cdot \wedge t})^{-1}=\delta_{ (x_{\cdot\wedge t},\text{w}_{\cdot \wedge t})}$;
\item for all $\varphi \in \Cc_{2,b}(\R^{d+n})$, the process $M^\varphi :[t,T]\times  \Omega \longrightarrow \R$ defined by
\begin{align}\label{Eq:MartingaleProblem}
M_r^\varphi:=\varphi(\overline X_r)-\iint_{[t,r]\times A} \frac{1}{2}\Tr\big[(\overline \sigma \overline \sigma^\t)_u( X, a)  (\partial^2_{\overline x\overline x} \varphi)(\overline X_u)\big] \Lambda(\d u, \d a), \; r\in [t,T],
\end{align}
is an $(\F,\P)$--local martingale;
\item $\P[\Lambda \in \A_0]=1.$
\end{enumerate}

There are classical conditions ensuring that the set $\Pc(t,x)$ is non-empty. For instance, it is enough that the mapping $x\longmapsto \overline \sigma_t(x,a)$ is continuous for some constant control $a$, see \cite[Theorem 6.1.6]{stroock2007multidimensional}. We also recall that uniqueness of a solution, \emph{i.e.} there is a unique element in $\Pc(t,x)$	, holds when in addition $ \overline{ \sigma \sigma}^\t_t(x, a)$ is uniformly positive away from zero, \emph{i.e.} there is $ \lambda>0$ s.t. $\theta^\t \overline{ \sigma \sigma}^\t_t(x,a) \theta \geq  \lambda |\theta|^2,\; (t,x,\theta)\in [0,T]\times \Xc\times \R^d$,
see \cite[Theorem 7.1.6]{stroock2007multidimensional}.\medskip

For $\P\in \Pc(x):= \Pc(0,x)$, $W$ is an $n$-dimensional $\P$--Brownian motion and there is an $A$-valued process $\nu$ such that
\begin{align}\label{Eq:driftlessSDE}
X_t=x_0+\int_0^t \sigma_r(X,\nu_r) \mathrm{d}W_r, \; t \in [0,T], \; \P\text{\rm--a.s.}
\end{align}

\begin{remark}\label{Remark:martingaleproblem}
We remark some properties of the previous martingale problem which, in particular, justify \eqref{Eq:driftlessSDE}
\begin{enumerate}[label=$(\roman*)$, ref=.$(\roman*)$,wide, labelindent=0pt]
\item for any $\P\in \Pc(t,x)$ and $\nu$ verifying $ \P[\Lambda\in \A_0]=1$, \eqref{Eq:MartingaleProblem} implies
\begin{align}\label{Eq:SupportSigma}
\widehat \sigma^2_r=(\sigma \sigma^\t)_r(X,\nu_r),\;  \d r \otimes \d \P\text{--}\ae,\; \text{\rm on}\;[t,T]\times\Omega;
\end{align}

\item we highlight the fact that our approach is to enlarge the canonical space right from the beginning of the formulation. This is in contrast to, for instance, {\rm \citeauthor*{karoui2015capacities2} \cite[Remark 1.6]{karoui2015capacities2}}, where the canonical space is taken as $\Xc\times \A$ and enlargements are considered as properly needed. As $\Pc(t,x)$ ought to describe the law of $X$ as in \eqref{Eq:driftlessSDE}, this cannot be done unless the canonical space is extended. We feel our approach simplifies the readability and understanding of the analysis at no extra cost. Indeed, the extra canonical process $W$ allows us to get explicitly the existence of a $\P$--Brownian motion by virtue of L\'evy's characterisation. By \emph{\cite[Theorem 4.5.2]{stroock2007multidimensional}} and \eqref{Eq:SupportSigma}, since
\begin{align}\label{Eq:Remark:Wprocess}
 \begin{pmatrix}
 \sigma & 0 \\
 I_{n} & 0 \end{pmatrix}  \begin{pmatrix}
 \sigma^\t & I_{n}\\
0& 0 \end{pmatrix} =\begin{pmatrix}
\widehat \sigma^2 &  \sigma \\
 \sigma^\t & I_n
\end{pmatrix},\; 
\text{\rm it follows}\; 
\begin{pmatrix}
X_t\\
W_t
\end{pmatrix}=\begin{pmatrix}
x_0\\
\emph{w}_0
\end{pmatrix}+\int_0^t
 \begin{pmatrix}
 \sigma &0\\
 I_{n}& 0 \end{pmatrix} 
 \d W_s,\  t\in[0,T];
\end{align}

\item the reader might notice that the notation $\Pc(t,x)$ does not specify an initial condition for neither the process $W$, nor for the measure valued process $\Lambda$. Arguably, given our choice of $\Omega$, one is naturally led to introduce $\Pc(t,\omega)$, with initial condition $\omega=(x,\emph{w},q) \in \Omega$. Nevertheless, by \eqref{Eq:MartingaleProblem} and \eqref{Eq:Remark:Wprocess}, we see that the dynamics of $X$ depends on the increments of the application $[0,T] \ni t \longmapsto \Delta_t [\sigma \sigma^\t](\omega) \in \Mc_{d}(\R)$ for $\omega \in \Omega$. It is clear from this that the initial condition on $W$ and $\Lambda$ are irrelevant. This yields $\Pc(t,\omega)=\Pc(t,\tilde \omega)$ for all $\tilde \omega=(x, \tilde{\emph{w}},\tilde q)\in \Omega$.

\end{enumerate}
\end{remark}

We now introduce the class of admissible actions. We let $\Af$ denote the set of $A$-valued and $\F^X$-predictable processes. At the formal level, we will say $\nu\in \Af$ is admissible whenever \eqref{Eq:driftlessSDE} has a unique weak solution. A proper definition requires first to introduce some additional notations. We will denote by $(\P^{\nu}_{t,x})_{(t,x)\in [0,T]\times \Xc}$ the corresponding family of solutions associated to $\nu$. Moreover, we recall that uniqueness guarantees the measurability of the application $(t,x)\longmapsto \P^{\nu}_{t,x}$, see \cite[Exercise 6.7.4]{stroock2007multidimensional}. For $(t,x,\P)\in [0,T]\times \Xc\times  \Pc(t,x)$, we define 
\begin{align*}
\begin{split}
 \Ac^0 (t,x,\P)&:=\big\{ \nu\in \Af:  \Lambda(\d r, \d a)=\delta_{\nu_r}(\d a)\d r,\; \d r \otimes \d \P\text{--}\ae\; \text{on}\; [t,T]\times \Omega\big\},\\
\Mf^0(t,x)&:=\big\{ (\P,\nu)\in \Pc(t,x)\times  \Ac^0(t,x,\P)\big\}, \text{ and }
\Pc^0(t,x,\nu):=\big\{\P \in \Prob(\Omega):(\P,\nu)\in \Mf^0(t,x)\big\}.
\end{split}
\end{align*}
Letting $\Ac^0(t,x):=\bigcup_{\P\in \Pc(t,x)} \Ac^0(t,x,\P)$, we define rigorously the class of admissible actions
\begin{align}\label{Def:admissible}
\Ac(t,x):=\big\{\nu \in \Ac^0(t,x): \Pc^0(t,x,\nu)=\{\P^\nu_{t,x}\}\big\}.
\end{align}
We set $\Ac(x):=\Ac(0,x)$ and define similarly, $\Ac(t,x,\P)$, $\Pc(t,x,\nu)$, $\Mf(t,x)$, $\Ac(x,\P)$, $\Mf(x)$ and $\Pc(x,\nu)$.

\begin{remark}\label{Remark:admissibility}
\begin{enumerate}[label=$(\roman*)$, ref=.$(\roman*)$,wide, labelindent=0pt]
\item We remark that the sets $\Mf(t,x)$ and $\Ac(t,x)$ are equivalent parametrisations of the admissible solutions to \eqref{Eq:driftlessSDE}. This follows since uniqueness of weak solutions for fixed actions,  as required by \eqref{Def:admissible}, implies that the sets $\Ac(t,x,\P)$ are disjoint.

\item Introducing the sets $\Ac(t,x,\P)$ allows us to better handle action processes for which the quadratic variation of $X$ is the same. For different $\P\in \Pc(t,x)$ the discrepancy among such probability measures can be read from \eqref{Eq:SupportSigma}, \emph{i.e.} in the $($support of the$)$ quadratic variation of $X$. This reflects the fact that different diffusion coefficients of \eqref{Eq:driftlessSDE} might induce mutually singular probability measures in $\Pc(t,x)$. We also recall that in general $\Pc(t,x)$ is not finite since it is a convex set, see {\rm \citeauthor*{jacod2003limit} \cite[Proposition III.2.8]{jacod2003limit}}.
\end{enumerate}
\end{remark}

\begin{remark}\label{Remark:com}
\begin{enumerate}[label=$(\roman*)$, ref=.$(\roman*)$,wide, labelindent=0pt]
\item Since $b$ is bounded, it follows that given $(\P,\nu)\in \Mf(x)$, if we define $\M:=(\overline \P^{\nu},\nu)$ with
\begin{gather*}
\frac{\mathrm{d}\overline \P^{\nu}}{\mathrm{d}\P}:= \exp \bigg(\int_0^T b_r(X,\nu_r)\cdot \d  W_r- \int_0^T | b_r(X,\nu_r) |^2 \d r \bigg), \;\text{\rm and}\; W^{\M}_t:=W_t-\int_0^t b_r(X,\nu_r) \d r,\; t\in[0,T],
\end{gather*}
we have that $W^\M$ is a $\overline \P^\nu$--Brownian motion, and
\begin{align*}
 X_t= x_0+\int_0^t \sigma_r(X,\nu_r)  \big(b_r(X, \nu_r)\d r+\d W_r^\M \big), \; \overline \P^\nu\text{--}\as
 \end{align*}
\item  We will exploit the previous fact and often work under the drift-less dynamics \eqref{Eq:driftlessSDE}. We stress that in contrast to the strong formulation setting, in the weak formulation the state process $X$ is fixed and the action process $\nu$ allows to control the distribution of $X$ via $\overline \P^\nu$.
\end{enumerate}
\end{remark}

In light of the previous discussion, we define the collection of \textit{admissible models} with initial conditions $(t,x)\in [0,T]\times \Xc$ 
\[
\Mc(t,x):=\big\{(\overline \P^\nu,\nu): (\P,\nu)\in \Mf(t,x) \big\},
\]
and we set $\Mc(x):=\Mc(0,x)$. To ease notations we set $\Tc_{t,T}:=\Tc_{t,T}(\F)$, for any $\tau\in \Tc_{0,T}$, $\P^{\tau}_\cdot:=(\P^{\tau}_\omega)_{\omega\in \Omega}$, and for any $(\tau,\omega)\in \Tc_{0,T}\times \Omega$, $ \Pc(\tau,x):=\Pc(\tau(\omega),x)$, $ \Mf(\tau,x):=\Mf(\tau(\omega),x)$, and $\Mc(\tau,x):=\Mc(\tau(\omega),x)$.\medskip

We will take advantage in the rest of this paper of the fact that we can move freely from objects in $\Mc$ to their counterparts in $\Mf$, see \Cref{Lemma:CompCond}. Also, we mention that we will make a slight abuse of notation, and denote by $\M$ elements in both $\Mf(t,x)$ and $\Mc(t,x)$. It will be clear from the context whether $\M$ refers to a model for \eqref{Eq:dynamics}, or the drift-less dynamics \eqref{Eq:driftlessSDE}.

\subsection{Objective functional}

Let us introduce the running and terminal cost functionals
\begin{align*}
&f:[0,T]\times [0,T]\times\Xc \times A \longrightarrow \R, \text{ Borel-measurable, with } f_\cdot(s,\cdot,a),\; \F^X\text{-optional, } \text{for any } (s,a) \in [0,T]\times A;\\
&\xi:[0,T]\times\Xc \longrightarrow \R, \text{ Borel-measurable} .
\end{align*}

 We are interested in a generic payoff functional of the form
\begin{align*}
J(t,x,\nu):=\E^{\overline \P^\nu}\bigg[\int_t^T f_r(t,X,\nu_r)\mathrm{d}r + \xi(t,X_{\cdot \wedge T})\bigg], \; (t,x, \nu)\in [0,T]\times \Xc\times  \Ac(t,x).
\end{align*}

\begin{remark}\label{Remark:payoff:omegax}
\begin{enumerate}[label=$(\roman*)$, ref=.$(\roman*)$,wide, labelindent=0pt]
\item As we work on an enlarged probability space, one might wonder whether the reward's values under both formulations coincide. We recall that we chose to enlarge the canonical space, see {\rm\Cref{Section:ControlledDynamics}}, to explicitly account for the randomness driving \eqref{Eq:dynamics}, \emph{i.e.} the process $W$. Nevertheless, as for any $(\omega,\M)\in \Omega\times \Mf(t,x)$, the latter depends only on $x$, see {\rm \Cref{Remark:martingaleproblem}}, we see that given $\M$, $J$ is completely specified by $(t,x)$.

\item Given the form of the payoff functional $J$, the problem of maximising $\Ac(x)\ni \nu\longmapsto J(0,x,\nu)$ has a time-inconsistent nature. More precisely, the dependence of $f$ and $\xi$ on the current time $t$ is the source of inconsistency.
\end{enumerate}
\end{remark}

We study this problem from a game-theoretic perspective and look for \textit{equilibrium laws}. The next section is dedicated to explaining these concepts more thoroughly.

\subsection{Game formulation}\label{Section:gameformulation}

We recall that a strategy profile is \textit{sub-game perfect} if it prescribes a Nash equilibrium in any sub-game.  In our framework, every player together with a past trajectory define a new sub-game. This motivates the idea behind the definition of an equilibrium model, see among others \cite{strotz1955myopia}, \cite{ekeland2006being} and \cite{bjork2010general}.\medskip

Let $\xb \in \Xc$, $\nu^\star \in \Ac(\xb)$ be an action, which is a candidate for an equilibrium, $(t,\omega) \in [0,T]\times \Omega$ an arbitrary initial condition, $\ell\in (0, T-t]$, $\tau \in \Tc_{t,t+\ell}$ and $\nu \in \Ac(\tau,x)$. We define $\nu\otimes_{\tau}\nu^\star:=\nu\1_{[ t, \tau)}+
\nu^\star\1_{[ \tau ,T]}$.

\begin{definition}[\textit{Equilibrium}]\label{Def:equilibrium}
Let $\xb\in \Xc$, $\nu^\star \in \Ac(\xb)$. For $\eps>0$ let
\begin{align*}
\ell_\eps:=\inf \Big\{\ell >0: \exists \P\in \Pc(\xb),\;  \P\big[\big\{x\in \Xc: \exists (t,\nu) \in [0,T] \times  \Ac(t,x),\; J\big(t,t,x,\nu^\star \big)<  J\big(t,t,x,\nu\otimes_{t+\ell}\nu^\star \big) -\eps \ell\big\} \big]>0 \Big\}.
\end{align*}
If for any $\eps>0$, $\ell_\eps>0$ then $\nu^\star$ is an \textbf{equilibrium model}, and we write $\nu^\star \in \Ec(\xb)$.
\end{definition}

\begin{remark}\label{Remark:Def:Eq:Ext} We now make a few remarks regarding our definition.
\begin{enumerate}[label=$(\roman*)$, ref=.$(\roman*)$,wide, labelindent=0pt]
\item The first advantage of {\rm\Cref{Def:equilibrium}} is that $\ell_\eps$ is monotone in $\eps$. Indeed, let $0<\eps^\prime \leq \eps$, then by definition, $\ell_\eps$ satisfies the condition in the definition of $\ell_{\eps^\prime}$, thus $\ell_{\eps^\prime}\leq \ell_\eps$. We will exploit this in the proof of {\rm \Cref{Theorem:DPP:Limit}}.
\item In \emph{\cite{ekeland2008investment}}, the authors chose $\Ac(t,x,t+\ell):=\big\{ \nu\in \Ac(t,x):  \nu\otimes_{t+\ell} \nu^\star \in \Ac(t,x) \big\}$ as the class of actions to be compared to in the equilibrium condition.  Clearly, $\Ac(t,x,t+\ell) \subseteq \Ac(t,x)$ for $\ell>0$. Under the assumption of weak uniqueness, it holds that $\nu\otimes_{t+\ell} \nu^\star \in \Ac(t,x)$ for any $\nu \in \Ac(t,x)$, see {\rm\Cref{Lemma:forwardcondition}}.
\item From the previous definition, given $( \eps, \ell)\in \R_+^\star \times (0,\ell_\eps)$, for $\Pc(\xb)\text{--}\qe\;  x \in \Xc$ and any $(t,\nu)\in [0,T]\times \Ac(t,x)$
\begin{align}\label{Equilibrium:Ell:noifn}
J(t,t,x,\nu^\star)- J(t,t,x,\nu\otimes_{t+\ell}\nu^\star)\geq  -\eps \ell.
\end{align}
{\color{black} There are two distinguishing features in this definition.  The fist one is that {\rm \Cref{Def:equilibrium}} imposes \eqref{Equilibrium:Ell:noifn} for all $\ell<\ell_\eps$ uniformly in $(t,\nu)$. This local feature was not captured by the classical definition of equilibria, has motivated other refinements on the notion of equilibria, and will be key to prove an extended {\rm DPP}, see {\rm \Cref{Section:NotionsofEquilibrium}} for more details. The second one is that \eqref{Equilibrium:Ell:noifn} holds for $\Pc(\xb)\text{--} \qe\; x\in\Xc$, \emph{i.e.} the equilibrium condition holds only for trajectories that are reachable.}

\end{enumerate}
\end{remark}

In the rest of the document we fix $\xb \in \Xc$ and study the problem 
\begin{align*}\label{P}\tag{P}
v(t,x):=J(t,t,x,\nu^\star),\; (t,x)\in[0,T]\times \Xc,\; \nu^\star \in \Ec(\xb).
\end{align*}

Thanks to the weak uniqueness assumption, $v$ is well-defined for all $(t,x)\in [0,T]\times  \Xc$ and measurable. 

\begin{remark}\label{Remark:valuefunction}
\eqref{P} is fundamentally different from the problem of maximising $\Ac(t,x)\ni \nu \longmapsto J(t,t,x,\nu)$. First, in \eqref{P} one finds $\nu^\star\in \Ac(\xb)$ first and then defines the value function. This contrasts with the classical formulation of optimal control problems. Second, the previous maximisation will find player $t$'s so-called pre-committed strategy.
\end{remark}

\section{Related work and our results}\label{Section:OurResults}

As a preliminary to the presentation of our contributions, this section starts by comparing our setting with the ones considered in the existing literature.

\subsection{On the different notions of equilibrium}\label{Section:NotionsofEquilibrium}
We now make a few comments on our definition of equilibria, its relevance and compare it with the ones previously proposed. Let us begin by recalling the set-up adopted by most of the existing literature in continuous time-inconsistent stochastic control.

\medskip 
Given $T>0$, on the time interval $[0,T]$ a fixed filtered probability space $\big(\Omega,\F^W,\Fc_T^W,\P\big)$ supporting a Brownian motion $W$ is given. Here, $\F^W$ denotes the $\P$-augmented Brownian filtration. Let $\Ac$ denote the set of admissible actions and $\G$ a (possibly) smaller filtration than $\F^W$. For a $\G$-adapted process $\nu\in \Ac$, representing an action process, the state process $X$ is given by the unique strong solution to the SDE
\begin{align}\label{Dynamic:StrondFormulation}
 X_t^{0,x_0,\nu}=x_0+\int_0^tb_s(X_s^{0,x_0,\nu},\nu_s)\d s+\int_0^t\sigma_s (X_s^{0,x_0,\nu},\nu_s) \d W_s, \text{ for } t\in [0,T].
\end{align}

As introduced in \cite{ekeland2008investment}, $\nu^\star$ is then said to be an equilibrium if for all $(t,x,\nu)\in [0,T]\times \Xc\times \Ac$
\[\liminf_{\ell \searrow 0} \frac{J(t,t,x,\nu^\star)-J(t,t,x,\nu\otimes_{t+\ell}\nu^\star)}{\ell}\geq 0.\]
If we examine closely the above condition, we obtain that for any $\kappa:=(t,x,\nu )\in [0,T]\times \Xc\times \Ac$, $\eps>0$ and sequence $ (\ell_n)_{n\in\N}\subseteq [0,T]$, $\ell_n\longrightarrow0$,  there is a positive integer $N^\kappa_\eps$ such that
\begin{align}\label{EquilibriumOld}
 \forall n \geq N_{\eps}^{\kappa}, \, J(t,t,x,\nu^\star)-J(t,t,x,\nu\otimes_{t+\ell_n}\nu^\star)\geq -\eps \ell_n.
\end{align}

From \eqref{EquilibriumOld} it is clear that the classical definition of equilibrium is an $\eps$-like notion of equilibrium. Now, ever since its introduction, the distinctive case in which the $\liminf$ is 0 has been noticed. In fact, a situation in which the agent is worse off in a \emph{sequence} of coalitions with future versions of herself but in the limit is indifferent, conforms with this definition. This case is excluded when $\eps=0$, \emph{i.e.} the case of regular equilibria, see \cite[Definition 3]{he2019equilibrium}, another refinement of equilibria which is related to case $\eps=0$ in \eqref{Equilibrium:Ell:noifn}. 
{\color{black} We also remark that {\rm \cite[Section 4]{he2019equilibrium}} presents several examples from the existing literature on time-inconsistent control in which a classical equilibrium fails to be regular. From these examples one can further show that classical equilibria may fail to be equilibria as in \Cref{Def:equilibrium}}. \medskip

{\color{black} At this point we want to emphasise the main idea in our approach to time-inconsistent control for sophisticated agents: \emph{any useful definition of equilibrium for a sophisticated agent ought to lead, from first principles, to a dynamic programming principle}. Indeed, in line with the literature on stochastic control we chose this to be a direct consequence of the notion of equilibrium, and as such we introduced a refinement on the definition. As mentioned in the introduction, \cite[Proposition 8.1]{bjork2016time2} unveils the form of this DPP in a Markovian framework. Yet, it does it without laying proper assumptions or providing a rigorous proof. To be precise, \cite[Proposition 8.1]{bjork2016time2} states that (in the framework of strong formulation with feedback Markovian actions) given an equilibrium, as defined in \cite{bjork2016time2}, for any time-inconsistent stochastic control problem it is possible to find a time-consistent optimal stochastic control problem which attains the same value. Not surprisingly, the DPP satisfied by the associated time-consistent control problem agrees with ours in the Markovian framework. However, the argument laid down in \cite[Proposition 8.1]{bjork2016time2} assumes \emph{a priori} a smooth solution to the PDE system, \emph{i.e.} it presupposes that the value function associated to the equilibrium action and the decoupled pay-off functionals belong to $\Cc_{1,2}([0, T]\times  \R^d)$, and is in the spirit of the Feynman--Kac representation formula. This means that the class of equilibria for which the DPP used in \cite{bjork2016time2} holds is actually a sub-class of the ones given by the classical definition via the liminf. Indeed, these would correspond to regular equilibria as defined in \cite{lindensjo2019regular}, see also \cite[Remark 3.9]{lindensjo2019regular} and the discussion leading to \cite[Assumption 2]{he2021who}. Even more significant in our view, in the context of classic time-consistent control, the argument in \cite[Proposition 8.1]{bjork2016time2} would be equivalent to assuming that the HJB equation has a smooth classical solution to prove the DPP. Overall, we found this line of argument to be quite atypical in the sense that: $(i)$ this is usually done the other way around: the DPP allows one to show that the value function is related to the HJB PDE, usually in the viscosity sense; $(ii)$ quid of the cases where the value function fails to be smooth, which are ubiquitous in the literature? In classical control problems, the DPP holds under mere Borel-measurability of the value function.\medskip

As we will see in the next section, once a DPP is available, all the pieces necessary for a complete theory of time-inconsistent non-Markovian stochastic control will become apparent. }\medskip


We use the rest of this section to address different works in the area.

\begin{enumerate}[label=$(\roman*)$, ref=.$(\roman*)$,wide, labelindent=0pt]
\item The inaugural papers on the game theoretic approach to inconsistent control problems are the sequence of papers by \citeauthor*{ekeland2006being} \cite{ekeland2006being,ekeland2010golden}, and \citeauthor*{ekeland2008investment} \cite{ekeland2008investment}. In their initial work \cite{ekeland2006being}, the authors consider a strong formulation framework, \emph{i.e.} \eqref{Dynamic:StrondFormulation} and $\G=\G^X$ denotes the augmented natural filtration of $X^{0,x_0,\nu}$, and seek for closed-loop, \emph{i.e.} $\G^X$-measurable, action processes defined via spike perturbations of the form
\begin{align}\label{Spike:Admissible}
(a \otimes_{t+\ell} \nu)_r:=a 1_{[t,t+\ell)}(r)+\nu(X_r^{0,x_0,\nu})1_{[t+\ell,T]}(r),
\end{align} 
that maximise the corresponding Hamiltonian. However as already pointed out by \citeauthor*{wei2017time} in \cite{wei2017time}, the local comparison is made between a Markovian feedback control, \emph{i.e.} $\G=\G^X$ and $\nu_r=\nu(X_r^{0,x_0,\nu})$, and an open-loop control value $a$, \emph{i.e.} $\G=\F^W$, and there is no argument as to whether admissibility is preserved. The later two works \cite{ekeland2008investment} and \cite{ekeland2010golden} introduce the definition of equilibrium via \eqref{EquilibriumOld}, in which admissibility is defined by progressively measurable open loop processes with moments of all orders. This last condition guarantees the uniqueness of the strong solution to the controlled state dynamics. In our framework we do not need such condition as the state dynamics hold in the sense of weak solutions.

\item The study in the linear quadratic set-up is carried out by \citeauthor*{hu2012time} \cite{hu2012time,hu2017time}. There, the dynamics are stated in strong formulation. To bypass the admissibility issues in \cite{ekeland2006being}, \cite{ekeland2008investment} and \cite{ekeland2010golden}, the class of admissible actions is open loop, \emph{i.e.} $\G=\F^W$. Equilibria are defined via \eqref{EquilibriumOld} but contrasting against spike perturbation as in \eqref{Spike:Admissible}. In \cite{hu2012time}, the authors obtain a condition which ensures that an action process is an equilibrium, and in \cite{hu2017time} the authors are able to complete their analysis and provide a sufficient and necessary condition, via a flow of forward--backward stochastic differential equations. The approach taken to characterise equilibria in both \cite{hu2012time} and \cite{hu2017time} leverages on the particular structure of the linear quadratic setting and the admissibility class. Moreover, the authors are able to prove uniqueness of the equilibrium in the setting of a mean--variance portfolio selection model in a complete financial market, where the state process is one-dimensional and the coefficients in the formulation are deterministic. Unlike theirs, our results require standard Lipschitz assumptions.

\item Another sequence of works that has received great attention is by \citeauthor*{bjork2014theory} \cite{bjork2014theory} and \citeauthor*{bjork2017time} \cite{bjork2016time2,bjork2017time}. There, the problem is presented in strong formulation, \emph{i.e.} \eqref{Dynamic:StrondFormulation}, and admissibility is defined by Markovian feedback actions, \emph{i.e.} $\G=\G^X$ and $\nu_r=\nu(X_r^{0,x_0,\nu})$. The first of these works deals with the problem in discrete-time, a scenario in which the classic backward induction algorithm is implemented to obtain an equilibrium by seeking for a sign on the difference of the payoffs corresponding to $\nu^\star$ and $\nu\otimes_{t+\ell}\nu^\star$. In their subsequent paper, the results are extended to a continuous-time setting, implementing the definition of equilibrium \eqref{EquilibriumOld}. Their main contribution is to provide a system of PDEs associated to the problem and provide a verification theorem. Nevertheless, the derivation of such system is completely formal and, in addition, there is no rigorous argument about the well-posedness of the system.

\item A different setting is presented in \citeauthor*{wei2017time} \cite{wei2017time} and \citeauthor*{wang2019time} \cite{wang2019time}, see references therein too. Both works study a time-inconsistent recursive optimal control problem in strong formulation setting, \emph{i.e.} \eqref{Dynamic:StrondFormulation}, in which the class of admissible actions are Markovian feedback, \emph{i.e.} $\G=\G^X$ and $\nu_r=\nu_r(X_r^{0,x_0,\nu})$. 
An equilibrium is defined as the unique continuous solution to a forward--backward system that describes the controlled state process and the reward functionals of the players, and satisfy a \emph{local approximate optimality property}. More precisely, $u$ is the limit of a sequence of locally optimal controls at discrete times. 
We highlight that in addition to a verification theorem, the authors are able to prove well-posedness of their system in the uncontrolled volatility scenario. However, we do not think their definition of an equilibrium is much tractable. We believe this is due to the fact that their approach relies heavily on the approximation of the solution to the continuous game by discretised problems, and requires the equilibrium to be continuous in time, as opposed to mere measurability which is the case for us, see {\rm\Cref{Def:admissible}}. 
We also would like to mention \cite{wang2019time} which, building upon the ideas in \cite{wei2017time}, modelled the reward functional by a BSVIE and look for a time-consistent locally near optimal equilibrium strategy. The authors are able to extend the results of the previous paper, but more importantly, they argue that a BSVIE is a more suitable way to represent a recursive reward functional with non-exponential discounting.  We comment of this in \Cref{Section:BSDEassociated}.

\item Other recent works on this subject are \citeauthor*{huang2021strong} \cite{huang2021strong} and \citeauthor*{he2019equilibrium} \cite{he2019equilibrium}. The first article considers an infinite horizon stochastic control problem in which the agent can control the generator of a time-homogeneous, continuous-time, finite-state Markov chain. The authors begin by introducing two variations of the notion of equilibria, referred there as \emph{strong} and \emph{weak} equilibria. Exploiting the structure on their dynamics, 
they derive necessary and sufficient conditions for both notions of equilibria. Moreover, under compactness of the set of admissible actions, existence of an equilibria is proved. In the second work, working in the framework of \cite{bjork2017time}, \emph{i.e.} strong formulation \eqref{Dynamic:StrondFormulation}, 
the authors `perform the analysis of the derivation of the system, \emph{i.e.} lay down sufficient conditions under which the value function satisfies the system.' In addition to their analysis, the authors introduce two new notions of equilibria, \emph{regular} and \emph{strong} equilibria. Regular equilibria compare rewards with feasible actions different from $\nu^\star$, and strong equilibria allow comparisons with any feasible actions. By requiring extra regularity on the actions, the authors provide necessary and sufficient conditions for a strategy to be a regular or a strong equilibria. 
Even though \cite{he2019equilibrium} succeeds in characterising both notion of equilibria, the conditions under which such results hold are, as expected, quite stringent, requiring for instance that the optimal action is differentiable in time and with derivatives of polynomial growth.  {\color{black} Lastly, we mention that the notions of strong and regular equilibria, as introduced in \cite{huang2021strong} and \cite{he2019equilibrium} respectively, are related to the case $\eps=0$ in \eqref{Equilibrium:Ell:noifn}. Indeed, both definitions introduce a parameter analogous to $\ell_\eps$, \emph{i.e.} uniform in $(t,x,\nu)$, and consequently would be consistent with the proof of the extended dynamic programming principle we present in \Cref{Section:DPP}.}

\end{enumerate}

We emphasise that in our setting the dynamics of the state process are non-Markovian, moreover, we have chosen the class of admissible actions to be `closed-loop'\footnote{This is not a closed-loop formulation {\it per se}. Our controls are also adapted to the information generated by the canonical process $\Lambda$.} and non-Markovian, \emph{i.e.} $\F^X$-adapted see \Cref{Section:ProbabilisticFramework}. Additionally, our treatment of time-inconsistent control problems is via weak formulation which is in itself an extension on the previous works in the subject. 
\medskip

The remaining of this section is devoted to present the main results of this paper. 

\subsection{Dynamic programming principle}

Our first main result for the study of time-inconsistent stochastic control problems for sophisticated agents, Theorem \ref{Theorem:DPP:Limit}, concerns the local behaviour of the value function $v$ as defined in \eqref{P}. This result is the first of its kind in a continuous-time setting and it is the major milestone for a complete theory. This result confirms the intuition drawn from the behaviour of a sophisticated agent, in the sense that $v$ does indeed satisfy a dynamic programming principle. In fact, it can be regarded as an extended one, as it does reduce to the classic result known in optimal stochastic control in the case of exponential discounting, see \Cref{Remark:DPP:sanotycheck}. This result requires the following main assumptions.

\begin{assumption}\label{AssumptionA}
\begin{enumerate}[label=$(\roman*)$, ref=.$(\roman*)$,wide, labelindent=0pt]
\item The map $ s \longmapsto \xi(s,x)$ \emph{(}resp. $s\longmapsto f_t(s,x,a))$ is continuously differentiable uniformly in $x$ $($resp. in $(t,x,a))$, and we denote its derivative by $\partial_s \xi (s,x)$ $($resp. $\partial_s f_t(s,x,a))$. \label{AssumptionA:DPP1}

\item  \label{AssumptionA:DPP2} $a \longmapsto  f_t(s,x,a)$ is uniformly Lipschitz continuous, \emph{i.e.}
\begin{align*}
\exists C > 0,\; \forall (s,t,x,a,a^\prime)\in[0,T]^2\times\Xc\times A^2,\; \big|  f_t(s,x,a)-  f_t(s,x,a^\prime)\big| \leq C|a-a^\prime|.
\end{align*}
\item \label{AssumptionA:DPP} \begin{enumerate}[label=$(\alph*)$]
\item $x \longmapsto \xi (t, x)$ is lower-semicontinuous uniformly in $t$, \emph{i.e.}
\begin{align*}
\hspace*{-0.5cm} \forall (\tilde x,\eps)\in \Xc \times \R_+^\star,\; \exists U_{\tilde x}\in \Tf_\infty,\; \tilde x \in U_{\tilde x},\; \forall (t,x) \in [0,T]\times U_{\tilde x},\; \xi(t,x)\geq \xi(t,x_0)-\eps, \text{ when } \xi(t,x_0)>-\infty.
\end{align*}
\item $x\longmapsto b(t,x,a)$ and $x\longmapsto \sigma(t,x,a)$ are uniformly Lipschitz continuous, \emph{i.e.}
\begin{align*}
\exists C > 0,\; \forall (t,x,x^\prime,a)\in[0,T]\times\Xc^2\times A,\; |b_t(x,a)-b_t(x^\prime,a)|+|\sigma_t(x,a)-\sigma_t(x^\prime,a)| \leq C\|x_{\cdot\wedge t}-x_{\cdot\wedge t}^\prime\|_\infty.
\end{align*}
\end{enumerate}
\end{enumerate}
\end{assumption}

\begin{remark}
Let us comment on the above assumptions. As it will be clear from our analysis in {\rm\Cref{Section:DPP}} and {\rm\Cref{Section:necessity}}, to study time-inconsistent stochastic control problems for sophisticated agents under our notion of equilibrium, one needs to make sense of a system. The fact that we get such a system should be compared to the classical stochastic control framework, where only one {\rm BSDE} suffices to characterise the value function and the optimal control, see \emph{\cite[Section 4.5]{zhang2017backward}}.\medskip

Consequently, {\rm\Cref{AssumptionA}\ref{AssumptionA:DPP1}} and {\rm\Cref{AssumptionA}\ref{AssumptionA:DPP2}} are fairly mild requirements in order to understand the behaviour of the reward functionals on the agent's type, which is the source of inconsistency. We also remark that {\rm\Cref{AssumptionA}\ref{AssumptionA:DPP1}} guarantees that the map $ t\longmapsto f_t(t,x,a)$ is continuous, uniformly in $(x,a)$, which ensures extra regularity in type and time for the player's running rewards. Lastly, given our approach and our choice to not impose regularity on the action process, in order to get a rigorous dynamic programming principle, we cannot escape imposing extra assumptions. Namely, if we do not want to assume that $t \longmapsto \nu_t(X)$ is continuous, we have to impose regularity in $x$, which is exactly what {\rm\Cref{AssumptionA}\ref{AssumptionA:DPP}} does. This allows us to use the result in \emph{\cite{karoui2015capacities2}} regarding piece-wise constant approximations of stochastic control problems. We believe our choice is the least stringent and it is clearly much weaker than any regularity assumptions made in the existing literature, see \emph{\cite{bjork2014theory}, \cite{wei2017time}, \cite{he2019equilibrium}}. For details see the discussion after \eqref{DPP:Limit:Aux0} and \emph{\Cref{Remark:assumpDPP}}.
\end{remark}
Our dynamic programming principle takes the following form.
\begin{theorem}\label{Theorem:DPP:Limit}
Let {\rm\Cref{AssumptionA}} hold, and $\nu^\star \in \Ec(\xb)$. For $\{\sigma, \tau\}\subseteq \Tc_{t,T}$, $\sigma\leq  \tau$ and $\Pc(\xb)\text{--}\qe\; x \in \Xc$, we have
\begin{align}\label{Eq:DPP:limit}
v(\sigma,x) =\sup_{\nu \in \Ac(\sigma,x)}  \E^{\overline \P^{\nu}}\bigg[ v(\tau,X)+\int_{\sigma}^\tau \bigg( f_r(r,X,\nu_r)-\E^{{\overline \P}^{\nu^\star}_{r,\cdot}}\bigg[\partial_s \xi(r,X_{\cdot\wedge T}) +\int_r^T  \partial_s f_u(r,X,\nu^\star_u)\d u\bigg] \bigg)\d r\bigg].
\end{align}
Moreover, $\nu^\star$ attains the $\sup$ in \eqref{Eq:DPP:limit}.
\end{theorem}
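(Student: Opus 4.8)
\emph{Proof strategy.} The plan is to obtain \eqref{Eq:DPP:limit} by establishing the two inequalities ``$v(\sigma,x)\ge$ RHS for every $\nu\in\Ac(\sigma,x)$'' and ``$v(\sigma,x)=$ RHS for $\nu=\nu^\star$'', by a telescoping argument along refining partitions; the second one will also yield the attainment claim. Two structural ingredients come first. The \emph{flow property} of the rewards: for $\nu\in\Ac(t,x)$ and a stopping time $\tau\ge t$, conditioning at $\tau$ via the r.c.p.d.\ and \Cref{Thm:Concatenated:M} gives $J(s,t,x,\nu)=\E^{\overline\P^\nu}\big[\int_t^\tau f_r(s,X,\nu_r)\d r+J(s,\tau,X,\nu)\big]$; moreover, by sub--game perfection (\Cref{Remark:valuefunction}), $\nu^\star$ restricted to any sub--game $[\tau,T]$ with past $X_{\cdot\wedge\tau}$ is again an equilibrium, and the family $(\overline\P^{\nu^\star}_{t,x})$ is consistent under conditioning thanks to weak uniqueness (\Cref{Lemma:CompCond}, \Cref{Lemma:forwardcondition}). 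The \emph{identification of the correction term}: using the differentiability in the type variable in \Cref{AssumptionA}\ref{AssumptionA:DPP1} and Fubini,
\[
J(\tau,\tau,X,\nu)-J(\sigma,\tau,X,\nu)=\int_\sigma^\tau\E^{\overline\P^\nu_{\tau,X}}\Big[\partial_s\xi(u,X_{\cdot\wedge T})+\int_\tau^T\partial_s f_r(u,X,\nu_r)\d r\Big]\d u,
\]
and, as $\tau\downarrow u$, the integrand converges to $g(u,X):=\E^{\overline\P^{\nu^\star}_{u,X}}\big[\partial_s\xi(u,X_{\cdot\wedge T})+\int_u^T\partial_s f_r(u,X,\nu^\star_r)\d r\big]$, which is precisely the bracketed quantity under $\int_\sigma^\tau$ in \eqref{Eq:DPP:limit}; the continuity of $r\longmapsto\overline\P^{\nu^\star}_{r,X}$ needed here rests on the Lipschitz bounds of \Cref{AssumptionA}\ref{AssumptionA:DPP}.

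For the inequality, I would fix $\nu\in\Ac(\sigma,x)$, $\eps>0$, and a partition $\sigma=r_0<\dots<r_m=\tau$ with mesh below $\ell_\eps$ from \Cref{Def:equilibrium}. At each node $r_i$, applying \eqref{Equilibrium:Ell:noifn} to the deviation $\nu\otimes_{r_{i+1}}\nu^\star$ (admissible by \Cref{Lemma:forwardcondition}), and using the flow property together with the previous step on $[r_i,r_{i+1}]$ and the fact that $\overline\P^{\nu\otimes_{r_{i+1}}\nu^\star}$ and $\overline\P^\nu$ agree on $\Fc_{r_{i+1}}$, yields
\[
v(r_i,X)\ge\E^{\overline\P^\nu_{r_i,X}}\Big[v(r_{i+1},X)+\int_{r_i}^{r_{i+1}}\Big(f_u(r_i,X,\nu_u)-\E^{\overline\P^{\nu^\star}_{r_{i+1},X}}\big[\partial_s\xi(u,X_{\cdot\wedge T})+\textstyle\int_{r_{i+1}}^T\partial_s f_r(u,X,\nu^\star_r)\d r\big]\Big)\d u\Big]-\eps(r_{i+1}-r_i).
\]
Taking $\overline\P^\nu_{\sigma,x}$--expectations, summing over $i$ (the $v$--terms telescope, since $v(\sigma,X)=v(\sigma,x)$, $\overline\P^\nu_{\sigma,x}$--a.s.), letting the mesh go to $0$ and then $\eps\downarrow0$, gives $v(\sigma,x)\ge\E^{\overline\P^\nu}\big[v(\tau,X)+\int_\sigma^\tau(f_r(r,X,\nu_r)-g(r,X))\d r\big]$; the passage to the limit uses dominated convergence, justified by the boundedness of $A$ and the integrability built into the standing data assumptions (\Cref{AssumptionA}\ref{AssumptionA:DPP2}).

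Running the same telescoping with $\nu=\nu^\star$ — where now the flow identity is \emph{exact}, with no deviation and hence no $\eps$ — produces, in the mesh limit, $v(\sigma,x)=\E^{\overline\P^{\nu^\star}}\big[v(\tau,X)+\int_\sigma^\tau(f_r(r,X,\nu^\star_r)-g(r,X))\d r\big]$. This shows $\nu^\star$ attains the supremum and, combined with the inequality above, establishes \eqref{Eq:DPP:limit}.

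The main obstacle is twofold. First, the ``$o(\ell)$'' remainders generated at each node — in particular when replacing $f_u(r_i,\cdot)$ by $f_u(u,\cdot)$, the inner expectation $\E^{\overline\P^{\nu^\star}_{r_{i+1},X}}[\cdots]$ by $g(u,X)$, and when controlling the equilibrium perturbation — must be made \emph{uniform over the path space}: since the admissible actions are merely $\F^X$--measurable in time, one cannot Taylor--expand pathwise, so one must invoke the piece--wise constant approximation of controlled martingale problems of \cite{karoui2015capacities2} together with the spatial regularity of \Cref{AssumptionA}\ref{AssumptionA:DPP} to reduce to piece--wise constant $\nu$ and bound the errors uniformly; this is exactly the role of \Cref{AssumptionA}\ref{AssumptionA:DPP} and is where the analysis is heaviest. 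Second, there is a quantifier--matching issue: \eqref{Equilibrium:Ell:noifn} holds only for $\Pc(\xb)$--quasi--every $x$, so one must verify that along the conditioning $\overline\P^\nu_{r_i,\cdot}$ the $\Pc$--polar exceptional set is still charged null, which relies on the stability--under--conditioning properties of the admissible models. The remaining steps (Fubini, the tower property via \Cref{Thm:Concatenated:M}, dominated convergence) are routine.
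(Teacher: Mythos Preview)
Your proposal is correct and follows essentially the same route as the paper: telescope the equilibrium inequality \eqref{Equilibrium:Ell:noifn} along a partition of $[\sigma,\tau]$ with mesh below $\ell_\eps$, pass to the limit as the mesh vanishes using the piece--wise constant approximation of \cite{karoui2015capacities2}, and obtain the reverse inequality directly from $\nu^\star$ via Fubini and the tower property. The paper organises the argument through an explicit iterated one--step DPP (its \Cref{DPP:Iterated}, which invokes the measurable--selection machinery of \cite{karoui2013capacities} to push the $\sup$ through the conditional expectation at each node), and then carries out the limit by a somewhat elaborate decomposition of the discrete sums into five pieces $I_1$--$I_5$, each handled separately using the moduli of continuity from \Cref{AssumptionA}\ref{AssumptionA:DPP1} and the Lipschitz bound from \Cref{AssumptionA}\ref{AssumptionA:DPP2}; your sketch compresses this bookkeeping but identifies all the same ingredients and difficulties.
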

%
\begin{remark}\label{Remark:ThmDPPlimit}
\begin{enumerate}[label=$(\roman*)$, ref=.$(\roman*)$,wide, labelindent=0pt]
{\color{black}  
\item In light of {\rm \Cref{Theorem:DPP:Limit}} we are led to consider a system consisting of a second order {\rm BSDE} $(${\rm 2BSDE} for short$)$ and an infinite collection of processes in order to solve {\rm  Problem $($\ref{P}$)$}. This will be the object of the next section.  As a by-product, we recover below the connection, already mentioned in {\rm \cite[Proposition 8.1]{bjork2016time2}}, between time-inconsistent control problems and optimal stochastic control problems in a Markovian setting. 

\item We emphasise that {\rm \Cref{Theorem:DPP:Limit}} is a direct consequence of {\rm \Cref{Def:equilibrium}}. Moreover, it differs from {\rm \cite[Proposition 8.1]{bjork2016time2}} in that the latter argues via the {\rm PDE} \eqref{Eq:HJB-BKM}, see below. In fact, the result in {\rm \cite{bjork2016time2}} is obtained assuming that a smooth solution exists and it is in the spirit of the Feynman--Kac representation formula.  This would be analogous to us assuming that we had access to a solution to the {\rm 2BSDE} in the proof of {\rm \Cref{Theorem:DPP:Limit}} which, among other things, would automatically rule out the possibility to prove the necessity of the {\rm 2BSDE}, \emph{i.e.} that any equilibria is associated to a solution to such system. In our probabilistic framework, the proof of {\rm \Cref{Theorem:DPP:Limit}} will ultimately allow us to bypass this and establish the necessity result.

\item Lastly, we point out that even for optimal control in discrete-time, proving a {\rm DPP} in fairly general settings is a very difficult task because of crucial measurability issues, see for instance {\rm \citeauthor*{bertsekas1978stochastic} \cite[Section 1.2]{bertsekas1978stochastic}.}
} 
\end{enumerate}
\end{remark}

\begin{corollary}
Let {\rm\Cref{AssumptionA}} hold, and $\nu^\star\in  \Ec(\xb)$. There exists a time-consistent stochastic control problem with the same value $v,$ for which $\nu^\star$ prescribes an optimal control. Namely let
\[[0,T]\times \Xc\times A \longrightarrow \R,\; (t,x,a)\longmapsto k_t(x,a):=f_t(t,x,a)-\E^{{\overline \P}^{\nu^\star}_{t,x}} \bigg[\partial_s \xi(t,X_{\cdot \wedge T})+\int_t^T \partial_s f_u(t,X,\nu^\star_u)\d u\bigg],\]
Then, for $\Pc(\xb)\text{--}\qe\; x \in \Xc$
\begin{align*}
v(t,x)=\sup_{\nu \in \Ac(t,x)} \E^{\overline \P^\nu} \bigg[\int_t^T k_r(X,\nu_r)\d r+\xi(T,X_{\cdot \wedge T})\bigg].
\end{align*}
\end{corollary}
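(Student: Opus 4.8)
The corollary is an almost immediate restatement of Theorem~\ref{Theorem:DPP:Limit}, so the proof is short. The strategy is: (1) take the dynamic programming identity \eqref{Eq:DPP:limit} at the specific stopping times $\sigma = t$ (a deterministic time, viewed as a constant element of $\Tc_{t,T}$) and $\tau = T$; (2) observe that $v(T,X) = \xi(T,X_{\cdot\wedge T})$ by the terminal condition built into \eqref{P} (since $J(T,T,x,\nu^\star) = \xi(T,x_{\cdot\wedge T})$ directly from the definition of $J$ in \eqref{Pay-off}, the integral over $[T,T]$ vanishing); (3) recognise that the integrand appearing under $\E^{\overline\P^\nu}$ in \eqref{Eq:DPP:limit},
\[
f_r(r,X,\nu_r) - \E^{\overline\P^{\nu^\star}_{r,\cdot}}\Big[\partial_s\xi(r,X_{\cdot\wedge T}) + \int_r^T \partial_s f_u(r,X,\nu^\star_u)\,\d u\Big],
\]
is exactly $k_r(X,\nu_r)$ as defined in the statement, once one writes $f_r(r,X,\nu_r) = f_r(r,X,\nu_r)$ verbatim and matches the conditional expectation term; (4) substitute to obtain
\[
v(t,x) = \sup_{\nu\in\Ac(t,x)} \E^{\overline\P^\nu}\Big[ \int_t^T k_r(X,\nu_r)\,\d r + \xi(T,X_{\cdot\wedge T})\Big],
\]
valid for $\Pc(\xb)\text{--}\qe\ x\in\Xc$, which is the claimed identity. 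The ``time--consistent stochastic control problem'' referred to in the statement is precisely the one whose value is the right--hand side: it has running cost $k$ and terminal cost $\xi(T,\cdot)$, neither of which depends on an initial--time parameter, hence it is a classical (time--consistent) control problem in the sense of \cite[Section~4.5]{zhang2017backward}.

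**The one subtlety to check.** One should verify the measurability/well--posedness of $k$ as a legitimate cost functional for a standard control problem: $k$ must be Borel measurable and $k_\cdot(\cdot,a)$ must be $\F^X$--optional for each $a$. This follows because $f$ has these properties by hypothesis, $\partial_s\xi$ and $\partial_s f_u$ exist and inherit measurability from Assumption~\ref{AssumptionA}\ref{AssumptionA:DPP1}, and the map $(t,x)\longmapsto \overline\P^{\nu^\star}_{t,x}$ is measurable by the weak uniqueness assumption (as recalled before \eqref{Def:admissible}, via \cite[Exercise~6.7.4]{stroock2007multidimensional}); the inner conditional expectation $\E^{\overline\P^{\nu^\star}_{t,x}}[\cdots]$ is then a measurable function of $(t,x)$, and being constructed from quantities evaluated along the past trajectory $X_{\cdot\wedge r}$ it is non--anticipative, hence $\F^X$--optional. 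One also uses that $A$ is bounded and $\partial_s f$ is Lipschitz in $a$ (Assumption~\ref{AssumptionA}\ref{AssumptionA:DPP2}) to ensure the $\d u$--integral inside the expectation is finite, so $k$ takes finite real values.

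**Where the work is (and isn't).** There is essentially no ``hard part'' here: all the genuine difficulty — establishing the extended DPP \eqref{Eq:DPP:limit} itself — has already been done in Theorem~\ref{Theorem:DPP:Limit}. The corollary is purely a cosmetic rearrangement: pick the endpoints $t$ and $T$, collapse $v(T,\cdot)$ to $\xi(T,\cdot)$, and rename the integrand $k$. The only thing that requires a sentence of care is noting that the supremum is the same object — i.e. that $\Ac(t,x)$ is the admissible class for both problems, which is immediate since the dynamics \eqref{Eq:driftlessSDE} (equivalently \eqref{Eq:dynamics} after the Girsanov change in Remark~\ref{Remark:com}) are unchanged; only the reward is repackaged. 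Thus the proof consists of writing ``Apply Theorem~\ref{Theorem:DPP:Limit} with $\sigma=t$, $\tau=T$; since $v(T,X)=\xi(T,X_{\cdot\wedge T})$ and the integrand equals $k_r(X,\nu_r)$, the claimed identity follows. The problem on the right--hand side is a classical stochastic control problem since $k$ and $\xi(T,\cdot)$ carry no initial--time dependence, and $\nu^\star$ attains its supremum by the last assertion of Theorem~\ref{Theorem:DPP:Limit}.''
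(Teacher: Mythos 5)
Your proposal is correct and is exactly the argument the paper intends: the corollary is stated as a direct consequence of Theorem \ref{Theorem:DPP:Limit}, obtained by taking $\sigma=t$, $\tau=T$, using $v(T,X)=J(T,T,X,\nu^\star)=\xi(T,X_{\cdot\wedge T})$, and identifying the integrand with $k$; the optimality of $\nu^\star$ is the last assertion of that theorem. Your additional remarks on the measurability of $k$ and the unchanged admissible class are sensible but not needed beyond a sentence.
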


\begin{remark}\label{Remark:Extention:StrictEq} 
\begin{enumerate}[label=$(\roman*)$, ref=.$(\roman*)$,wide, labelindent=0pt]
\item As mentioned in {\rm \cite[Proposition 8.1]{bjork2016time2}}, this last result is of little practical use as it requires knowing the equilibrium strategy {\it a priori}, since the functional $k$ does depend on $\nu^\star$. 

\item We remark that the results in this section hold if one lets $\ell_{\eps}$ in {\rm \Cref{Def:equilibrium}} depend on $(t,\nu)$, see {\rm\Cref{Remark:DPP:Limit}}. 
\end{enumerate}
\end{remark}

\subsection{BSDE system associated to (\textcolor{red}P)}\label{Section:BSDEassociated}
Let us introduce the functionals needed to state the rest of our results. As in the classical theory of optimal control, we introduce the Hamiltonian operator associated to this problem. For $(s,t,x, z,\gamma,\Sigma,u,v, a)\in [0,T)^2 \times \Xc \times \R^d\times \S_d(\R)\times \S_d(\R) \times \R \times \R^d\times A$, let
\begin{align*}
\mathsf{h} _t(s,x,z,\gamma,a):=&f_t(s,x,a)+ b_t(x,a)\cdot\sigma_t(x,a)^\t z+\frac{1}{2}\Tr\big[(\sigma \sigma^\t)_t(x,a)  \gamma \big] ; \\  
H_t(x,z,\gamma,u)  :=& \sup_{a\in A} \Big\{ \mathsf{h}_t(t,x,z,\gamma,a) \Big\}-u.
\end{align*}

Following the approach of \citeauthor*{soner2012wellposedness} \cite{soner2012wellposedness}, we introduce the range of our squared diffusions coefficient and the inverse map which assigns to every squared diffusion the corresponding set of generating actions
\begin{align*}
\begin{split}
{\bf\Sigma} _t(x)&:=\big\{ \Sigma_t(x,a)\in \S_d(\R): a\in A\big \},\;\text{where } \Sigma_t(x,a):=(\sigma \sigma^\t)_t(x,a),\\
A_t(x,\Sigma)&:=\big\{ a \in A: (\sigma\sigma^\t)_t(x,a)=\Sigma\big\},\, \Sigma \in {\bf\Sigma} _t(x).\end{split}
\end{align*}

The previous definitions allow us to isolate the partial maximisation with respect to the squared diffusion. Let
\begin{align*}
\begin{split}
h_t(s,x,z,a)&:= f_t(s,x,a)+ b_t(x,a)\cdot\sigma_t(x,a)^\t z,\;
 \nabla h_t(s,x,v,a):= \partial_s f_t	(s,x,a)+  b_t(x,a)\cdot\sigma_t(x,a)^\t v.\\
F_t(x,z,\Sigma,u)& :=\sup_{a \in A_t(x,\Sigma)} \big\{ h_t(t,x,z,a)\big \}-u,
\end{split}
\end{align*}
With this, $2H=(-2F)^*$ is the covex conjugate of $-2F$, \emph{i.e.}
\begin{align}\label{Hamiltonia:General}
H_t(x,z,\gamma,u)=\sup_{\Sigma \in {\bf\Sigma}_t(x)}\bigg\{ F_t(x,z,\Sigma,u)+\frac{1}{2}\Tr[\Sigma  \gamma] \bigg\}. 
\end{align}
Moreover, we assume there exists a unique $A$-valued, Borel-measurable map $\Vc^\star(t,x,z)$ satisfying\footnote{{\color{black} The existence of such mapping is guaranteed by \citeauthor*{schal1974selection} \cite[Theorem 3]{schal1974selection} in the case of $A$ bounded and $a\longmapsto h_t(t,x,z,\gamma,a)$ Lipschitz for every $(t,x,z,\gamma)\in [0,T]\times \Xc\times \R^d\times \S_d(\R)$}.}
\begin{align}\label{eq:maxhamiltonian}
[0,T]\times\Xc\times\R^d\ni (t,x,z)\longmapsto \Vc^\star (t,x,z)\in \argmax _{a\in A_t(x,\hat{\sigma}^2_t(x))} h_t(t,x,z,a).
\end{align}

In the most general setting for \eqref{P} considered in this paper, where control on both the drift and the volatility are allowed, to $\xi$, $\partial_s \xi$, $\partial_s f$, and $F$ as above, we associate the system
\begin{align}\label{HJB}\tag{H}
\begin{split}
Y_t=&\ \xi(T,X_{\cdot\wedge T})+\int_t^T   F_r(X,Z_r,\widehat \sigma_r^2,\partial {Y_r^r})\d r- \int_t^T  Z_r \cdot  \d X_r+ K_T^\P-K_t^\P,\;  0\leq t \leq T,\; \Pc(\xb)\text{--}\qs,\\
{\partial Y_t^s}(\omega ):=&\ \E^{\overline \P^{\nu^\star}_{t,x}}\bigg[\partial_s \xi(s,X_{\cdot\wedge T})+\int_t^T\partial_s f_r(s,X, \Vc^\star(r,X,Z_r) )  \d r\bigg],\; (s,t)\in[0,T]^2,\; \omega\in\Omega. 
\end{split}
\end{align}

where $\P^{\nu^\star} \in \Pc(\mathbf{x})$ with $\nu^\star_t:=\Vc^\star(t,X,Z_t) \in \Ac(\xb)$.\medskip

If only drift control is allowed, \emph{i.e.} $\sigma_t(x):=\sigma_t(x,a)$ for all $a\in A$, the weak uniqueness assumption for \eqref{Eq:driftlessSDE} implies $\Pc(\xb)=\{\P\}$. Letting
\begin{align*}
h_t^o(s,x,z,a)&:=f_t(s,x,a)+ b_t(x,a)\cdot\sigma_t(x)^\t z,\; \nabla h_t^o(s,x,z,a):=\partial_s f_t (s,x,a)+ b_t(x,a)\cdot\sigma_t(x)^\t z, \\[0.5em]
H_t^o(x,z,u)  &:=\sup_{a\in A} \{h_t^o(t,x,z,a)\}-u,
\end{align*} 
we show, see Proposition \ref{HJB:to:HJB0}, that \eqref{HJB} reduces to the system
\begin{align}\label{HJB0}\tag{H\textsubscript{o}}
\begin{split}
Y_t&=\xi(T,X_{\cdot\wedge T})+\int_t^T H_r^o(X,Z_r,\partial Y_r^r)\d r-\int_t^T  Z_r \cdot \d X_r,\; 0\leq t \leq T,\; \P\text{--}\as, \\
\partial Y_t^s&=\partial_s \xi(s,X_{\cdot\wedge T})+\int_t^T \nabla h_r^o(s,X,\partial Z_r^s, \Vc^\star(r,X, Z_r)) \d r-\int_t^T\partial Z_r^s \cdot \d X_r,\; 0\leq  t \leq T,\; \P\text{--}\as,\; 0\leq s \leq T.
\end{split}
\end{align}

Though BSDEs have been previously incorporated in the formulation and analysis of time-inconsistent control problems, the kind of systems of BSDEs prescribed by \eqref{HJB} and \eqref{HJB0} are new in the literature. Indeed, in \cite{ekeland2008investment} an auxiliary family of BSDEs was introduced in order to argue, in combination with the stochastic maximum principle, that a given action process complies with the definition of equilibrium \eqref{EquilibriumOld}. This was a reasonable line of arguments as we recall at the time no verification theorem was available. More recently, \cite{wang2019time} studied the case in which the reward functional is represented by a Type-I BSVIEs, a generalisation of the concept of BSDEs. In fact, \cite{wang2019time} argues that when the running cost rate and the terminal cost are time dependent, then the reward functional does satisfy a BSVIE. As it happens, in order to conduct our analysis of \eqref{HJB0}, we also identify a link to such type of equations, namely we obtained that the process $(\partial Y_t^t)_{t\in[0,T]}$ satisfies a Type-I BSVIE, see Lemmata \ref{Lemma:Derivative} and \ref{Lemma:Dynamicsdiagonal}.\medskip

In the general case, the first equation in \eqref{HJB} defines a 2BSDE, whereas the second defines an infinite family of processes, $(\partial Y^s)_{0\leq s\leq T}$, each of which admits a BSDE representation. We chose to introduce such processes via the family $(\P^{\nu^\star}_{t,x})_{(t,x)\in [0,T]\times\Xc}$, since in the first equation one needs an object defined on the support of every $\P\in \Pc(\xb)$. Recall that when the volatility is controlled, the supports of the measures in $\Pc(\xb)$ may be disjoint, whereas in the drift control case, the support is always the same. Had we chosen to introduce the BSDE representation, we would have obtained an object defined only on the support of $\overline\P^{\nu^\star}.$ Moreover, as we work with non-dominated probabilities measures, this last choice would not have been consistent with our extended DPP, nor would have sufficed to obtain the rest of our results.

\medskip
The novel features of system \eqref{HJB} mentioned above raise several theoretical challenges. At the core of such system is the coupling arising from the appearance of $\partial Y_t^t$, the \emph{diagonal} process prescribed by the infinite family $(\partial Y^s)_{0\leq s\leq T}$, in the first equation and of $Z$ in the definition of the family. This makes any standard results available in the BSDEs literature immediately inoperative. Therefore, identifying stating sufficient conditions under which well-posedness holds, \emph{i.e.} existence and uniqueness of a solution, needed to be investigated. Part of these duties consists of determining in what sense such a solution exists, see \Cref{Def:sol:systemH} and \Cref{Def:solution:SystemH0} for the general case and the drift control case, respectively.  The well-posedness of \eqref{HJB0}, \emph{i.e.} in the drift control case,, is part of our, rather technical, \Cref{Appendix:well-posedness}. See \Cref{Section:wellposedness} for precise statements of our results and the necessary assumptions.\medskip

We now introduce the concept of solution of a 2BSDE which we will use to state the definition of a solution to System \eqref{HJB}. To do so, we introduce for $(t,r, x)\in [0,T]^2\times \Xc$, $\P\in \Pc(t,x)$ and a filtration $\G$ 
\begin{align}\label{Eq:measures:uptoF}
 \Pc_{t,x}(r,\P,\G):=\Big\{ \P^\prime\in \Pc(t,x): \P^\prime=\P \text{ on } \Gc_r \Big \}.
\end{align}
We will write $\Pc_x(r,\P,\G)$ for $\Pc_{0,x}(r,\P,\G)$. We postpone to \Cref{Section:Analysis} the definition of the spaces involved in the following definitions. \medskip

\begin{definition}\label{Def:solution2bsde0}
For a given process $(\partial Y_t^t)_{t\in[0.T]}$, consider the equation
\begin{align}\label{Eq:dynamic2bsde0}
Y_t=\xi(T,X_{\cdot\wedge T})+\int_t^T  F_r(X,Z_r,\widehat \sigma_r^2,\partial {Y_r^r})\d r- \int_t^T  Z_r \cdot  \d X_r + K_T^{\P}-K_t^{\P},\;  0 \leq t \leq T.
\end{align}
We say $(Y,Z,(K^{\P})_{\P\in \Pc(\xb)})$ is a solution to {\rm2BSDE} \eqref{Eq:dynamic2bsde0} under $\Pc(\xb)$ if for some $p>1$
\begin{enumerate}[label=$(\roman*)$, ref=.$(\roman*)$,wide, labelindent=0pt]
\item {\rm\Cref{Eq:dynamic2bsde0}} holds $\Pc(\xb)\text{--}\qs;$
\item $(Y,Z,(K^{\P})_{\P\in \Pc(\xb)})\in \S_\xb^p\big(\F^{X,\Pc(\xb)}_+\big)\times \H_\xb^p\big(\F^{X,\Pc(\xb)}_+ \big)\times \I_\xb^p\big( (\F^{X,\P}_+)_{\P\in \Pc(\xb)}\big);$
\item the family $(K^{\P})_{\P\in \Pc(\xb)}$ satisfies the minimality condition
\begin{equation}\label{Min:Condition:NoA}
0=\operatorname*{ess\, inf^\P}_{\P^\prime\in \Pc_\xb(t,\P,\F_+^X)} \E^{\P^\prime}\big[K_T^{\P^\prime}-K^{\P^\prime}_t \big|\Fc_{t+}^{X,\P^\prime}\big],\; 0\leq t\leq T,\; \P\text{--}\as,\; \forall \P\in \Pc(\xb).
\end{equation}
\end{enumerate}
\end{definition}

We now state our definition of a solution to \eqref{HJB}.
\begin{definition}\label{Def:sol:systemH}

We say $(Y,Z,(K^\P)_{\P\in\Pc(\xb)},\partial Y)$ is a solution to \eqref{HJB}, if for some $p>1$
\begin{enumerate}[label=$(\roman*)$, ref=.$(\roman*)$,wide, labelindent=0pt]
\item \label{Def:sol:systemH:i} $\big(Y,Z,(K^\P)_{\P\in\Pc(\xb)}\big)$ is a solution to the {\rm 2BSDE} in \eqref{HJB} under $\Pc(\xb);$
\item\label{Def:sol:systemH:ii} $\partial Y\in \S^{p,2}_{\xb}\big(\F^{X,\Pc(\xb)}_+\big);$
\item \label{Def:sol:systemH:iii} there exists $\nu^\star \in \Ac(\xb)$ such that
\[ 0=\E^{\P^{\nu^\star}}\Big[K_T^{\P^{\nu^\star}}-K_t^{\P^{\nu^\star}}\Big|\Fc_{t+}^{X,\P^{\nu^\star}} \Big],\;  0\leq t\leq T,\; \P^{\nu^\star}\text{\rm--}\as \]
\end{enumerate}
\end{definition}

An immediate result about system \eqref{HJB} is that it is indeed a generalisation of the system of PDEs given in \cite{bjork2014theory} in the Markovian framework. This builds upon the fact second-order, parabolic, fully nonlinear PDEs of HJB type admit a non-linear Feynman--Kac representation formula.

\begin{theorem}\label{Thm:PDEsystem:markovian}
Consider the Markovian setting, \emph{i.e.} $\psi_t(X,\cdot )= \psi_t(X_t,\cdot)$ for $\psi= b, \sigma, f, \partial_s f $, and, $\psi(s,X)=\psi(s,X_T)$ for $\psi= \partial_s \xi, \xi$. Assume that
\begin{enumerate}[label=$(\roman*)$, ref=.$(\roman*)$,wide, labelindent=0pt]
\item there exists a unique $A$-valued Borel-measurable map $\overline \Vc^\star(t,x,z,\gamma)$ satisfying
 \[ [0,T]\times\Xc\times\R^d\times \S_d(\R) \ni (t,x,z,\gamma)\longmapsto \overline \Vc^\star(t,x,z, \gamma)\in \argmax_{a\in A} \big \{ \mathsf{h}_t(t,x,z,\gamma,a)\big\};\]
 
\item for $(s,t,x)\in [0,T)\times[0,T]\times \R^d:=\Oc$, there exists $(v(t,x),J(s,t,x))\in \Cc_{1,2}([0,T]\times \R^d)\times \Cc_{1,1,2}([0,T]^2\times \R^d)$ classical solution to the system\footnote{ Following \cite{bjork2017time} , for a function $(s,\cdot)\longmapsto \varphi(s,\cdot)$, $\varphi^s(\cdot)$, stresses that the $s$ coordinate is fixed.}
\begin{align}\label{Eq:HJB-BKM}
\begin{cases}
  \partial_t V(t,x)+  H(t,x,\partial_{x}V(t,x),\partial_{xx}V(t,x),\partial_s\Jc(t,t,x))=0,\; (s,t,x)\in\Oc,   \\[0.8em]
  \partial_t \Jc^s(t,x)+  \mathsf{h}^s(t,x,  \partial_x \Jc^s(t,x), \partial_{xx} \Jc^s(t,x),\nu^\star(t,x))=0,\; (s,t,x)\in\Oc,\\[0.8em]
V(T,x)=\xi(T,x),\;  \Jc^s(T,x)= \xi(s,x),\; (s,x)\in [0,T]\times \R^d.
\end{cases}
\end{align}
where $\nu^\star(t,x):=\overline \Vc^\star(t,x,\partial_x V(t,x), \partial_{xx} V(t,x))$.

\item $v$, $\partial_x v$, $J$, $\partial_x J$ and $\overline f(s,t,x):= f(s,t,x,\nu^\star(t,x))$ have uniform exponential growth in $x$\footnote{For $x\in \R^d$, $|x|_1=\sum_{i=1}^d |x_i|$.}, \emph{i.e.}
\[\exists C>0, \; \forall (s,t,x)\in[0,T]^2\times\R^d,\; |v(t,x)|+|\partial_x v(t,x)|+|J(s,t,x)|+|\partial_x J(s,t,x)|+ |\overline f(s,t,x)|\leq C \exp(C |x|_1),\]
\end{enumerate}

Then, a solution to the {\rm 2BSDE} in \eqref{HJB} is given by
\begin{align*}
Y_t:=v(t,X_t), \; Z_t:= \partial_x v(t,X_t), \; K_t:=\int_0^t k_r\d r,\; \partial Y_t^s:= \partial_s J^s(t,X_t),
\end{align*}
where $k_t:= H(t,X_t, Z_t,\Gamma_t , \partial Y_t^t) - F_t(X_t,Z_t, \sigmah_t^2, \partial Y_t^t )  -\frac{1}{2} \Tr[ \sigmah_t^2 \Gamma_t]$ and $\Gamma_t:=\partial_{xx} v(t,X_t)$. Moreover, if for $\nu^\star_t:=\overline \Vc^\star(t,X_t,Z_t, \Gamma_t)$ there exists $\P^\star\in \Pc(\xb,\nu^\star)$, then $(Y,Z,K,\partial Y)$ and $\nu^\star$ are a solution to \eqref{HJB}.
\end{theorem}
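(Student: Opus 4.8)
The plan is to verify that the candidate processes, obtained from classical solutions of the PDE system \eqref{Eq:HJB-BKM} via It\^o's formula, satisfy all the requirements in \Cref{Def:solution2bsde0} and then in \Cref{Def:sol:systemH}. First I would apply It\^o's formula to $v(t,X_t)$ under an arbitrary $\P\in\Pc(\xb)$, using that $v\in\Cc_{1,2}$ and that $\d\langle X\rangle_t=\widehat\sigma_t^2\,\d t$ under any such $\P$ by \eqref{Def:quadraticvwrtL} and \eqref{Eq:SupportSigma}. The drift term produced is $\partial_t v(t,X_t)+\frac12\Tr[\widehat\sigma_t^2\partial_{xx}v(t,X_t)]$, and invoking the first PDE in \eqref{Eq:HJB-BKM} to replace $\partial_t v$, together with the definitions of $H$ in \eqref{Hamiltonia:General} and of $F$, this drift rearranges into $-F_t(X_t,Z_t,\widehat\sigma_t^2,\partial Y_t^t)-k_t$ with $k_t$ as defined in the statement; this is exactly the form of the dynamics \eqref{Eq:dynamic2bsde0} with $K_t=\int_0^t k_r\,\d r$. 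The nonnegativity and nondecrease of $K^\P$ follows from \eqref{Hamiltonia:General}: since $H$ is the supremum over $\Sigma\in\boldsymbol\Sigma_t(x)$ of $F_t(x,z,\Sigma,u)+\frac12\Tr[\Sigma\gamma]$ and $\widehat\sigma_t^2\in\boldsymbol\Sigma_t(X_t)$ is one admissible choice, we get $k_t\geq0$ pointwise. The terminal condition $Y_T=v(T,X_T)=\xi(T,X_T)$ is immediate from the boundary data.

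Next I would handle the family $(\partial Y^s)$. For fixed $s$, apply It\^o to $\partial_s J^s(t,X_t)$ and use the second PDE in \eqref{Eq:HJB-BKM}, which is a \emph{linear} parabolic equation in $(t,x)$ with the feedback $\nu^\star(t,x)=\overline\Vc^\star(t,x,\partial_x V(t,x),\partial_{xx}V(t,x))$ frozen; this shows $\partial Y^s$ solves a linear BSDE whose generator involves $\partial_s f_r(s,X_r,\nu^\star(r,X_r))$, and hence, taking conditional expectations under $\P^{\nu^\star}$ via the Feynman--Kac representation, it admits exactly the probabilistic expression in the second line of \eqref{HJB}; here one needs to check that $\Vc^\star(r,X_r,Z_r)$ coincides (for this Markovian choice) with $\nu^\star(r,X_r)$ restricted to the set $A_r(X_r,\widehat\sigma_r^2)$, which follows from uniqueness of the maximiser in the definition of $\Vc^\star$ and the factorisation \eqref{Hamiltonia:General} of $H$ into the $\Sigma$-maximisation and the inner $h$-maximisation. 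The exponential-growth hypothesis (iii) is what makes all these conditional expectations and stochastic integrals well defined and places $Y,Z,\partial Y$ in the relevant $\S^p$, $\H^p$, $\S^{p,2}$ spaces for some $p>1$ (exponential-integrability of $\sup_{t}|X_t|$ under each $\P\in\Pc(\xb)$, which holds since $\sigma$ is bounded).

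The remaining points are the two minimality/optimality conditions. For the minimality condition \eqref{Min:Condition:NoA}, I would argue that for any competitor $\P'\in\Pc_\xb(t,\P,\F_+^X)$, associated to some action $\nu'$, the process $K^{\P'}$ has drift $k_r'=H_r(X_r,Z_r,\Gamma_r,\partial Y_r^r)-\mathsf{h}_r(r,X_r,Z_r,\Gamma_r,\nu_r')+\big(\text{running reward discrepancy}\big)$, which is nonnegative and vanishes precisely when $\nu_r'$ attains the Hamiltonian supremum $H_r$; taking $\P'$ to be (a localisation of) $\P^\star\in\Pc(\xb,\nu^\star)$, where $\nu^\star_r=\overline\Vc^\star(r,X_r,Z_r,\Gamma_r)$ attains the sup by construction, drives the essential infimum to zero, giving both \eqref{Min:Condition:NoA} and condition \ref{Def:sol:systemH:iii} simultaneously. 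The main obstacle I anticipate is precisely the bookkeeping that the drift of $K^{\P'}$ is genuinely $\geq0$ and equals zero only at the Hamiltonian maximiser: one must carefully track how the term $F_r(X_r,Z_r,\widehat\sigma_r^2,\partial Y_r^r)$ in \eqref{Eq:dynamic2bsde0} interacts, under the change of measure to $\P'$, with the It\^o drift of $v(r,X_r)$ computed with $\langle X\rangle_r=\int_0^r(\sigma\sigma^\t)_u(X_u,\nu_u')\,\d u$, and show that the difference is exactly $H_r-\mathsf h_r(r,\cdot,\nu_r')+(\text{reward gap})\geq0$ using \eqref{Hamiltonia:General} and the nonnegativity of the Hamiltonian gap — together with verifying that $\P^\star$ indeed belongs to $\Pc_\xb(t,\P,\F_+^X)$ after the appropriate concatenation (\Cref{Thm:Concatenated:M}), which is where the hypothesis $\Pc(\xb,\nu^\star)\neq\emptyset$ is used. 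The existence of the $\S^p$/$\H^p$ norms themselves is routine given (iii), and the Feynman--Kac steps are standard, so the real work is this identification of the generator gap and the measurability/admissibility of $\nu^\star$.
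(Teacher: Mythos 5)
Your overall architecture coincides with the paper's proof: It\^o's formula applied to $v(t,X_t)$ under an arbitrary $\P\in\Pc(\xb)$, identification of the drift through the first PDE and the decomposition \eqref{Hamiltonia:General} (which indeed gives $k_t\geq 0$ since $\widehat\sigma^2_t\in{\bf\Sigma}_t(X_t)$, $\d t\otimes\d\P$--a.e.), exponential moments of $X$ from the boundedness of $\sigma$ combined with hypothesis $(iii)$ for the $\S^p$/$\H^p$/$\I^p$ memberships, and a Feynman--Kac representation of $\partial_s J^s(t,X_t)$ under $\overline\P^{\nu^\star}_{t,x}$ after differentiating the second PDE in $s$, using the identification $\overline\Vc^\star(t,X_t,Z_t,\Gamma_t)=\Vc^\star(t,X_t,Z_t)$, $\P^{\nu^\star}$--a.s. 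All of this is what the paper does.

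The one step where your plan has a genuine gap is the minimality condition \eqref{Min:Condition:NoA}. You propose, for arbitrary $\P\in\Pc(\xb)$ and $t$, to drive the essential infimum to zero by concatenating $\P\vert_{\Fc_t}$ with (a localisation of) $\P^\star$. But the hypothesis is only that $\Pc(\xb,\nu^\star)\neq\varnothing$, i.e.\ the $\nu^\star$--martingale problem is solvable from the single initial condition $(0,\xb)$; it does not give solvability of the shifted martingale problem with action $\nu^\star$ started at time $t$ from $\P$--typical paths, which is what membership of the concatenated measure in $\Pc_\xb(t,\P,\F_+^X)$ requires (and, since the volatility is controlled, the support of $\P$ may well be disjoint from that of $\P^{\nu^\star}$). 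The paper circumvents this by following \cite[Theorem 5.2]{soner2012wellposedness}: exploiting the boundedness of $\sigma$ and the time--regularity of $X$, $Z$ and $\Gamma$, one builds for each $\eps>0$ an $\eps$--optimal control whose martingale problem is solvable after any history, yielding $\P^{\nu^\eps}\in\Pc_\xb(t,\P,\F_+^X)$ with drift of $K$ at most $\eps$ after the stopping time $\tau^{\eps,t}$; the exact maximiser $\P^\star$ is invoked only to verify \Cref{Def:sol:systemH}\ref{Def:sol:systemH:iii}. A second, more cosmetic imprecision: under a competitor with action $\nu'$ the drift of $K$ is not $H-\mathsf h(\cdot,\nu'_r)$ plus a reward gap, but $H - F_r(\cdot,(\sigma\sigma^\t)_r(\cdot,\nu'_r),\cdot)-\frac12\Tr[(\sigma\sigma^\t)_r(\cdot,\nu'_r)\Gamma_r]$, i.e.\ it measures only the suboptimality of the induced diffusion, the inner maximisation over $A_t(x,\Sigma)$ being already absorbed into $F$; this does not affect nonnegativity or the vanishing at $\nu^\star$, but it is the quantity you actually need to control.
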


\begin{remark}
We highlight the assumption on $\Pc(\xb,\nu^\star)$ is satisfied, for instance, if the map $x\longmapsto \sigma \sigma^\t_t(x,\nu^\star(t,x))$ is continuous for every $t$, see \emph{\cite[Theorem 6.1.6]{stroock2007multidimensional}}. Note that the latter is a property of \eqref{Eq:HJB-BKM} itself. In addition, we also remark that under the assumptions in the verification theorem in {\rm\cite[{Theorem 5.2}]{bjork2010general}}, it is immediate that \eqref{HJB} admits a solution. 
\end{remark}

The next sub-sections present the remaining of our results. The first of them is about the necessity of our system. We show that given an equilibrium and the associated game value function, one can construct a solution to \eqref{HJB}. The second result is about the sufficiency of our system, \emph{i.e.} a verification result. In words, it says that from a solution to \eqref{HJB} one can recover an equilibrium. Our last results are about the well-posedness of the system \eqref{HJB} when volatility control is forbidden which ultimately yields the existence and uniqueness of equilibria for \eqref{P}.

\subsection{Necessity of (\textcolor{red}{H})}

{\color{black} The next result, familiar to those acquainted with the literature on optimal stochastic control, is new in the context of time-inconsistent control problems for sophisticated agents. Up until now, the study of such problems, regardless of the notion of equilibrium considered, remained limited to a verification argument, and the study of multiplicity of equilibria. Ever since the work of \cite[Section $5$]{bjork2017time}, it had been conjectured that, in a Markovian setting, given an equilibrium $\nu^\star$ and its value function $v$, the latter would satisfy the associated system of PDEs \eqref{Eq:HJB-BKM}, and $\nu^\star$ would attain the supremum in the associated Hamiltonian.  Nevertheless, according to \cite{bjork2017time} this remained an open and difficult problem.  Fortunately, capitalising on the DPP satisfied by any equilibrium and our probabilistic approach, we are able to present a proof of this claim in a general non-Markovian setting. }\medskip

For our results to hold, we need the following assumption, standard in the context of 2BSDEs, see \citeauthor*{possamai2015stochastic} \cite{possamai2015stochastic}.

\begin{assumption}\label{AssumptionB}
\begin{enumerate}[label=$(\roman*)$, ref=.$(\roman*)$,wide, labelindent=0pt]
\item  \label{AssumptionB:iii} There exists $p>1$ such that for every $(s,t,x)\in [0,T]^2\times \Xc$
\begin{align*}
\sup_{\P\in \Pc(t,x)} \E^\P\bigg[|\xi(T,X)|^p+ |\partial_s \xi(s,X)|^p+\int_t^T |F_r(X,0,\widehat \sigma^2_r ,0)|^p +|\partial_s f_r(s,X,\nu^\star_r)|^p\d r\bigg] < \infty.
\end{align*}
\item \label{AssumptionB:i} $ \R^d\ni z \longmapsto F_t(x,z,\Sigma,u)$ is Lipschitz continuous, uniformly in $(t,x,\Sigma,u)$, \emph{i.e.} there exists $C>0$ s.t.
\begin{align*}
\forall (z,z^\prime)\in   \R^d\times \R^d,\; |F_t(x,z,\Sigma,u)-F_t(x,z',\Sigma,u)|\leq C|\Sigma^{1/2}(z-z')|,\; \forall (t,x,\Sigma,u)\in [0,T]\times \Xc\times \Sigma_t(x)\times \R.
\end{align*}
\item \label{AssumptionB:ii} $ \R^d\ni z \longmapsto \Vc^\star (t,x,z)\in A$ is Lipschitz continuous, uniformly in $(t,x)\in [0,T]\times \Xc$, \emph{i.e.} there exists $C>0$ s.t.
\begin{align*}
\forall (z,z^\prime)\in  \R^d\times \R^d,\; \big|\Vc^\star (t,x,z)- \Vc^\star (t,x,z^\prime)\big|\leq C |z-z^\prime|, \forall (t,x)\in [0,T]\times \Xc.
\end{align*}
\end{enumerate}
\end{assumption}

\begin{theorem}[Necessity]\label{Theorem:Necessity}
Let {\rm\Cref{AssumptionA}} and {\rm\Cref{AssumptionB}} hold. Given $\nu^\star \in \Ec(\xb)$, one can construct $(Y,Z,(K^\P)_{\P\in \Pc(\xb)}, \partial Y)$ solution to \eqref{HJB}, such that for any $t\in [0,T]$ and $\Pc(\xb)\text{\rm--}\qe\; x\in \Xc$
 \[
v(t,x)=\sup_{\P\in\Pc(t,x)} \E^{ \P } \big[Y_t \big].
\] Moreover, $\nu^\star$ satisfies {\rm\Cref{Def:sol:systemH}\ref{Def:sol:systemH:iii}}, \emph{i.e.} $\nu^\star$ is a maximiser of the Hamiltonian.
\end{theorem}

\begin{remark}\begin{enumerate}[label=$(\roman*)$, ref=.$(\roman*)$,wide, labelindent=0pt]
\item {\color{black} We stress that in light of {\rm \Cref{Theorem:Necessity}}, every equilibrium must necessarily maximise the Hamiltonian associated to \eqref{P}. This is, to the best of our knowledge, the first time such a statement is rigorously justified in the framework of time-inconsistent control problems at the level of generality of this paper.

\item 
Even for Markovian time-consistent control problems, we recall that necessity results are quite technical and typically require the theory of viscosity solutions, see {\rm \citeauthor*{fleming2006controlled} \cite{fleming2006controlled}}.  To appreciate the scope of {\rm \Cref{Theorem:Necessity}}, we recall that Markovian {\rm BSDEs} $($resp.  {\rm 2BSDEs)} coincide with viscosity $($resp. Sobolev type$)$ solutions of {\rm PDEs} $($resp. path-dependent {\rm PDEs)}, see {\rm \cite[Theorem 5.5.8]{zhang2017backward}} $($resp. {\rm\cite[Proposition 11.3.8]{zhang2017backward})}.}

\item  {\color{black} We also comment on {\rm \cite[Theorem 3.11]{lindensjo2019regular}} which states that given a regular equilibria, see {\rm \cite[Definition 3.7]{lindensjo2019regular}}, one can define a classic solution the {\rm PDE} system in {\rm \cite{bjork2017time}},  \emph{i.e.} {\rm \eqref{Eq:HJB-BKM}}.  In the Markovian setting of {\rm \cite{lindensjo2019regular}}, regular equilibria render, by definition, smooth classic solutions to the value function and the decoupled payoff functionals. Not surprisingly, one can construct a classic solution to {\rm \eqref{Eq:HJB-BKM}} by means of It\^o's formula.  However, even for time-consistent problems this assumption rarely holds. Moreover, regular equilibria, which are feedback Markovian, are \emph{a priori} required to be continuous. This contrast with our non-Markovian framework and the fact that we take admissible actions and equilibria to only be, in general, measurable.}
\end{enumerate}
\end{remark}

\subsection{Verification}

As is commonplace for control problems, we are able to prove the sufficiency of our system. Indeed, our notion of equilibrium is captured by solutions to System \eqref{HJB}. Our result is not the first of its kind, though our framework allows us to state a fairly simple proof with clear arguments. For instance, 
the proof of \cite[Theorem 6.2]{wei2017time} requires laborious arguments, as a consequence of the notion of equilibrium considered, and relies heavily on PDE arguments. Our theorem requires the following set of assumptions.
\begin{assumption}\label{AssumptionC}
\begin{enumerate}[label=$(\roman*)$, ref=.$(\roman*)$,wide, labelindent=0pt]
\item $x\longmapsto \Phi(r,x):=\partial_s \xi(r,x)+\int_r^T  \partial_s f_u (r,x,a)\d u$ is continuous in $x$. \label{AssumptionC:iib}
\item $ s \longmapsto \xi(s,x)$ \emph{(}resp. $s\longmapsto f_t(s,x,a))$ is continuously differentiable uniformly in $x$ $($resp. in $(t,x,a));$\label{AssumptionC:ii} 
\end{enumerate}
\end{assumption}

\begin{theorem}[Verification]\label{Verification}
Let {\rm\Cref{AssumptionB}} and {\rm\Cref{AssumptionC}} hold. Let $(Y,Z,(K^\P)_{\P\in\Pc(\xb)},\partial Y)$ be a solution to \eqref{HJB} as in {\rm \Cref{Def:sol:systemH}} with $\nu^\star_t:=\Vc^\star(t,X_t,Z_t)$. Then, $\nu^\star \in \Ec(\xb)$ and for $\Pc(\xb)\text{--}\qe\; x\in \Xc$
\[ 
v(t,x)=\sup_{\P\in \Pc(t,x)} \E^{ \P } \big[Y_t \big].
\]
\end{theorem}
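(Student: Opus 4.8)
The plan is to show that, starting from a solution $(Y,Z,(K^\P)_{\P\in\Pc(\xb)},\partial Y)$ to \eqref{HJB}, the candidate action $\nu^\star_t:=\Vc^\star(t,X_t,Z_t)$ satisfies the equilibrium inequality \eqref{Equilibrium:Ell:noifn} uniformly, so that $\ell_\eps>0$ for every $\eps>0$. The first step is to identify the value: using that $\nu^\star$ is admissible and realises the minimality condition \ref{Def:sol:systemH:iii}, the $2$BSDE collapses along $\P^{\nu^\star}$, and since $\partial Y_t^t$ is exactly the conditional expectation appearing in \eqref{HJB}, a standard comparison/representation argument for the $2$BSDE gives $v(t,x)=\sup_{\P\in\Pc(t,x)}\E^\P[Y_t]$ for $\Pc(\xb)$--q.e.\ $x$. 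This is essentially reading off the classical identification of the value of a (now time--consistent, via the corollary after \Cref{Theorem:DPP:Limit}) stochastic control problem from its associated $2$BSDE; \Cref{AssumptionC}\ref{AssumptionC:iii} supplies the integrability needed for the $2$BSDE theory of \cite{possamai2015stochastic} to apply.

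The heart of the argument is the local comparison. Fix $(t,x)$, $\ell\in(0,T-t]$, $\tau\in\Tc_{t,t+\ell}$ and $\nu\in\Ac(\tau,x)$, and set $\nu\otimes_\tau\nu^\star$. I would write $J(t,t,x,\nu\otimes_\tau\nu^\star)$ by conditioning at $\tau$: on $[\tau,T]$ the continuation value is $v(\tau,X_{\cdot\wedge\tau})=J(\tau,\tau,X,\nu^\star)$, and on $[t,\tau)$ the running reward is $f_r(t,X,\nu_r)$, whereas one must correct the ``frozen time'' $t$ to the ``running time'' $r$. Using \Cref{AssumptionC}\ref{AssumptionC:ii}, $f_r(t,X,\nu_r)-f_r(r,X,\nu_r)=-\int_t^r\partial_s f_r(s,X,\nu_r)\,\d s$, and similarly the difference between $\xi(t,\cdot)$ implicit in $v(\tau,\cdot)$ and the actual accrued terminal/running rewards is captured by $\Phi(r,\cdot)$ from \Cref{AssumptionC}\ref{AssumptionC:iib}. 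Combining, one gets
\begin{align*}
J(t,t,x,\nu\otimes_\tau\nu^\star)=\E^{\overline\P^\nu}\bigg[v(\tau,X)+\int_t^\tau\Big(f_r(r,X,\nu_r)-\E^{\overline\P^{\nu^\star}_{r,\cdot}}\big[\Phi(r,X)+\cdots\big]\Big)\d r\bigg]+o(\ell),
\end{align*}
where the $o(\ell)$ is uniform by boundedness of $A$ and the uniform continuity in time of $f$, $\partial_s f$, $\partial_s\xi$. Now the bracket is precisely the quantity whose supremum over $\nu\in\Ac(\sigma,x)$ equals $v(\sigma,x)$ by \Cref{Theorem:DPP:Limit} (applied with $\sigma=\tau$ frozen back to $t$; here one uses that the integrand's $r$--dependence matches the DPP integrand), so $v(t,x)\ge J(t,t,x,\nu\otimes_\tau\nu^\star)-C\ell^2/\ell\cdot\ell$, i.e.\ $\ge J(t,t,x,\nu\otimes_\tau\nu^\star)-\eps\ell$ once $\ell$ is below a threshold depending only on $\eps$ and the moduli of continuity — not on $(t,\nu)$. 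Since $v(t,x)=J(t,t,x,\nu^\star)$, this is \eqref{Equilibrium:Ell:noifn}, and the uniformity of the threshold gives $\ell_\eps>0$, hence $\nu^\star\in\Ec(\xb)$.

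The main obstacle I anticipate is making the ``conditioning at $\tau$'' step rigorous in the weak, non--dominated formulation: one needs the concatenation machinery of \Cref{Thm:Concatenated:M} to write $\overline\P^{\nu\otimes_\tau\nu^\star}=\overline\P^\nu\otimes_\tau(\overline\P^{\nu^\star})^\tau_\cdot$ and then to transport the identity $v(\tau,\cdot)=\sup_\P\E^\P[Y_\tau]$ through the r.c.p.d., all while keeping track of the $\Pc(\xb)$--q.e.\ exceptional sets and ensuring the error terms are controlled $\overline\P^\nu$--a.s.\ uniformly over the (non--dominated) family. A secondary technical point is verifying that $\nu\otimes_\tau\nu^\star\in\Ac(t,x)$ so that the comparison is legitimate — this is handled by \Cref{Lemma:forwardcondition} under weak uniqueness. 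Modulo these measure--theoretic bookkeeping issues, the proof is a clean consequence of the DPP \eqref{Eq:DPP:limit} together with the $2$BSDE representation of $v$.
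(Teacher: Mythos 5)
Your argument has a genuine logical gap: it is circular. The key step of your proposal invokes \Cref{Theorem:DPP:Limit} to conclude that the bracketed quantity's supremum equals $v(\sigma,x)$, and from there deduces the equilibrium inequality \eqref{Equilibrium:Ell:noifn}. But \Cref{Theorem:DPP:Limit} is stated and proved \emph{under the hypothesis} that $\nu^\star\in\Ec(\xb)$ — indeed the function $v$ in \eqref{P} is only defined once an equilibrium is fixed. The whole point of the verification theorem is the converse direction: starting from a solution of \eqref{HJB}, one must establish that $\nu^\star$ is an equilibrium \emph{without} assuming any form of the DPP. The paper therefore proceeds by a direct estimate: it writes $J(t,t,x,\nu^\star)-J(t,t,x,\nu\otimes_{t+\ell}\nu^\star)$ using the $2$BSDE representation under $\overline\P^{\nu\otimes_{t+\ell}\nu^\star}$, drops the nondecreasing $K$--term to get a lower bound, and splits the remainder into three pieces $I_1,I_2,I_3$: $I_1$ vanishes by conditioning at $t+\ell$ (via \Cref{Lemma:forwardcondition} and \Cref{Lemma:Dynamicsdiagonal}), while $I_2$ and $I_3$ are each shown to be bounded below by $-\eps\ell$ for $\ell<\ell_\eps$, with $\ell_\eps$ uniform in $(t,\nu)$. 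You would need to replace your appeal to the DPP by such a self-contained estimate.

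Two further points. First, the uniformity in $\nu$ of the threshold $\ell_\eps$ is not a routine consequence of "boundedness of $A$ and uniform continuity in time": for the term $I_2$ the paper needs the weak continuity of $(t,x)\longmapsto\P^{\nu^\star}_{t,x}$ (\Cref{AssumptionC}\ref{AssumptionC:iib}), uniform integrability of the family $\Mc^x_{t+\ell}$, and a passage through the weakly compact set of relaxed controls combined with the non-dominated monotone convergence theorem to obtain a bound uniform over the non-dominated family $\Ac(t,x)$. Second, your identification $v(t,x)=\sup_{\P}\E^\P[Y_t]$ is asserted as a "standard comparison/representation argument", but it actually rests on the nontrivial BSVIE structure of the diagonal process: one must prove that $\E^{\P^{\nu^\star}_{t,x}}[\int_t^T\partial Y^r_r\,\d r]=\E^{\P^{\nu^\star}_{t,x}}[\int_t^T\partial\Yc^r_r\,\d r]$ where $(\partial\Yc,\partial\Zc)$ solves the associated family of BSDEs, and that the diagonal $\Yc^t_t$ satisfies a Type--I BSVIE (\Cref{Lemma:Derivative} and \Cref{Lemma:Dynamicsdiagonal}); only then does $\E^{\P^{\nu^\star}_{t,x}}[Y^{t,x}_t]=J(t,t,x,\nu^\star)$ follow, and with it the supremum identity via the minimality condition and the shifted-$2$BSDE consistency of \Cref{Lemma:shifted2bsde}.
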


We stress that together, \Cref{Theorem:Necessity} and \Cref{Verification} imply that System \eqref{HJB} is fundamental for the study of time-consistent stochastic control problems for sophisticated agents.

\subsection{Well-posedness}\label{Section:wellposedness}

Our analysis would not be complete without a well-posedness result. The result we present is limited to the drift control case, see  \Cref{Appendix:well-posedness}. In this framework, there is a unique weak solution to \eqref{Eq:driftlessSDE} which we will denote by $\P$. In the context of PDEs, under a different and stronger notion of equilibrium, a well-posedness result for the corresponding system of PDEs was given in \cite{wei2017time}. Nonetheless, as we present a probabilistic argument as opposed to an analytic one, our proof makes substantial improvements in both weakening the assumptions as well as the presentation and readability of the arguments. We state next our definition of a solution to \eqref{HJB0}.\medskip

\begin{definition}\label{Def:solution:SystemH0}
The pairs $(Y,Z),$ and $(\partial Y,\partial Z)$ are a \emph{solution} to the system \eqref{HJB0} if for some $p>1$
\begin{enumerate}[label=$(\roman*)$, ref=.$(\roman*)$,wide, labelindent=0pt]
\item $(Y,Z)\in \S^{p}_\xb(\F^{X,\P}_+) \times \H^{p}_\xb(\F^{X,\P}_+)$ satisfy the first equation in \eqref{HJB0} $\P$--$\as;$
\item $(\partial Y, \partial Z)\in \S^{p,2}_\xb(\F^{X,\P}_+) \times \H^{p,2}_\xb(\F^{X,\P}_+)$ and $(\partial Y^s, \partial Z^s)$ satisfies the second equation in \eqref{HJB0} $\P$--$\as, $ for any $s\in [0,T];$

\item there exists $\nu^{\star}\in \Ac(\xb)$ such that 
\begin{align*} H_t^o(X,Z_t,\partial Y_t^t)=h_t^o(t,X,Z_t,\partial Y_t^t,\nu^{\star}_t) \; \d t \otimes \d\P \text{--} \ae \text{ \rm on } [0,T]\times \Xc.
\end{align*}
\end{enumerate}

\end{definition}

Our well-posedness result in the uncontrolled volatility case is subject to the following assumption.

\begin{assumption}\label{AssumptionD}
\begin{enumerate}[label=$(\roman*)$, ref=.$(\roman*)$,wide, labelindent=0pt]
\item $\sigma_t(x):=\sigma_t(x,  a)$ for any $a\in A$, \emph{i.e.} the volatility is not controlled$;$
\item $ s \longmapsto \xi(s,x)$ \emph{(}resp. $s\longmapsto f_t(s,x,a))$ is continuously differentiable uniformly in $x$ $($resp. in $(t,x,a));$
\item  $z\longmapsto H_t^o(x,z,u)$ is Lipschitz uniformly in $(t,x,u) $, \emph{i.e.}
\begin{align*}
\exists C>0,\; \forall (z,z')\in\R^d\times\R^d,\; |H_t^o(x,z,u)-H_t^o(x,z',u)|\leq C|\sigma_t(x)^{1/2}(z-z^\prime)|,\; \forall (t,x,u)\in [0,T]\times \Xc \times \R;
\end{align*}

\item \label{AssumptionD:ii} $ \R^d\ni z \longmapsto \Vc^\star (t,x,z)\in A$ is Lipschitz continuous, uniformly in $(t,x)\in [0,T]\times \Xc$, \emph{i.e.} there exists $C>0$ s.t.
\begin{align*}
\forall (z,z^\prime)\in  \R^d\times \R^d,\; \big|\Vc^\star (t,x,z)- \Vc^\star (t,x,z^\prime)\big|\leq C |\sigma_t(x)^{1/2}(z-z^\prime)|, \forall (t,x)\in [0,T]\times \Xc.
\end{align*}

\item  $(z,a) \longmapsto \nabla h_t^o(s,x,a)$ is Lipschitz continuous, uniformly in $(s,t,x)$, \emph{i.e.} there exists $C>0$ such that for all $(s,t,x)\in [0,T]^2\times \Xc$
\begin{align*}
\forall (z,z',a,a')\in\R^d\times\R^d\times A^2,\; |\nabla h_t^o(s,x,z,a)-\nabla h_t^o(s,x,z',a')|\leq C(|\sigma_t(x)^{1/2}(z-z')|+|a-a'|);
\end{align*}
\item  there exists $p>1$ such that for any $(s,t,x)\in [0,T]^2\times\Xc$
\begin{align*}
\E^{\P}\bigg[\big|\xi(T,X)\big|^p+ \int_t^T \big| H_t^o(x,0,0)\big|^p \d r\bigg] +  \E^{\P}\bigg[ \big| \partial_s \xi (s,X) \big|^p+  \int_t^T \big|\partial_s f_t(s,x,0,\Vc^\star(t,x,0))\big|^p \d r\bigg] < \infty
\end{align*}
\end{enumerate}
\end{assumption}

\begin{remark}
We would like to comment on the previous set of assumptions. We will follow a classic fix point argument to get the well-posedness of {\rm System} \eqref{HJB0}, thus {\rm \Cref{AssumptionD}} consists of a tailor-made version of the classic requirements to get a contraction in a Lipschitz context. Conditions $(iii)$, $(iv)$ and $(v)$ will guarantee the Lipschitz property of the drivers. Condition $(ii)$ will be exploited to control the coupling between the two {\rm BSDEs}.
\end{remark}

The technical but simple results regarding well-posedness are deferred to \Cref{Appendix:well-posedness}. In fact, we are able to establish a well-posedness result for a more general class of systems, see System \eqref{HJB:Abstract}, for which we allow for orthogonal martingales. Moreover, we stress that coupled systems as the ones considered in this work, where the coupling is via an uncountable family of BSDEs, have not been considered before in the literature.

\begin{theorem}[Wellposedness drift control]\label{Thm:wellposedness:driftcontrol}
Let {\rm \Cref{AssumptionD}} hold with $p=2$. There exists a unique solution, in the sense of {\rm\Cref{Def:solution:SystemH0}}, to \eqref{HJB0} with $p=2$. 
\end{theorem}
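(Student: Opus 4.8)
The plan is to establish existence and uniqueness by a fixed-point argument on the coupled pair, exploiting the ``triangular'' structure of System \eqref{HJB0}: the second equation, an uncountable family $(\partial Y^s,\partial Z^s)_{s\in[0,T]}$ of BSDEs, is driven only through $\Vc^\star(r,X,Z_r)$ by the $Z$-component of the first BSDE, while the first equation is coupled back to the family only through the diagonal process $\partial Y^t_t$. First I would fix an auxiliary process $Z\in\H^2_\xb(\F^{X,\P}_+,X)$ and, under \Cref{AssumptionD} $(i)$--$(v)$, invoke the standard theory of Lipschitz BSDEs (e.g.\ as recalled in \cite{zhang2017backward}) to solve, for each $s\in[0,T]$, the BSDE
\begin{align*}
\partial Y^s_t=\partial_s\xi(s,X_{\cdot\wedge T})+\int_t^T\partial h^o_r(s,X,\partial Z^s_r,\Vc^\star(r,X,Z_r))\d r-\int_t^T\partial Z^s_r\cdot\d X_r,
\end{align*}
obtaining a unique $(\partial Y^s,\partial Z^s)\in\S^2_\xb\times\H^2_\xb$; the moment assumption $(vi)$ and the Lipschitz property $(v)$ of $\partial h^o$ give the a priori estimates, and one checks measurability in $s$ (so that $(\partial Y,\partial Z)\in\S^{2,2}_\xb\times\H^{2,2}_\xb$ and in particular the diagonal $t\mapsto\partial Y^t_t$ is a well-defined process) using continuity of $s\mapsto\partial_s\xi$, $s\mapsto\partial_s f$ from \Cref{AssumptionD}$(ii)$ together with stability estimates for BSDEs. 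This produces a map $\Phi_1:Z\longmapsto(\partial Y^t_t)_{t\in[0,T]}$.

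Next I would feed the diagonal process $u_t:=\partial Y^t_t$ into the first equation of \eqref{HJB0}, the BSDE
\begin{align*}
Y_t=\xi(T,X_{\cdot\wedge T})+\int_t^T H^o_r(X,Z_r,u_r)\d r-\int_t^T Z_r\cdot\d X_r,
\end{align*}
whose generator is Lipschitz in $z$ by \Cref{AssumptionD}$(iii)$ and satisfies the integrability requirement by $(vi)$; classical Lipschitz BSDE theory then yields a unique $(Y,Z)\in\S^2_\xb\times\H^2_\xb$, defining a map $\Phi_2:(u_t)_t\longmapsto Z$. The composition $\Phi:=\Phi_2\circ\Phi_1$ maps $\H^2_\xb(\F^{X,\P}_+,X)$ into itself, and a solution to \eqref{HJB0} is precisely a fixed point of $\Phi$ (the maximiser clause \Cref{Def:solution:SystemH0}$(iii)$ is then automatic by the definition of $H^o$ as a supremum and of $\Vc^\star$ as the corresponding argmax, with $\nu^\star_t:=\Vc^\star(t,X_t,Z_t)\in\Ac(\xb)$ by weak uniqueness and \Cref{AssumptionD}$(iv)$). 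To obtain the contraction I would compare two inputs $Z,Z'$, write the BSDE stability estimate for the family to bound $\|\partial Y^t_t-\partial Y'^t_t\|$ in terms of $\|\Vc^\star(\cdot,X,Z)-\Vc^\star(\cdot,X,Z')\|\le C\|\sigma^{1/2}(Z-Z')\|$ using \Cref{AssumptionD}$(iv)$, then a second stability estimate for the $Y$-BSDE to bound $\|\sigma^{1/2}(\Phi(Z)-\Phi(Z'))\|$ in terms of $\|u-u'\|$ via the Lipschitz constant in $z$ and $u$ of $H^o$. On a short enough time interval $[T-\delta,T]$ the composite Lipschitz constant is $<1$ in the $\H^2$-norm (or, more robustly, in an equivalent norm $\E^\P[\int_0^T e^{\beta r}|\sigma_r^{1/2}(\cdot)|^2\d r]$ with $\beta$ large), so Banach's fixed-point theorem applies; one then patches finitely many such intervals backward in time, using at each step the terminal data produced by the previous step, to cover $[0,T]$.

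The main obstacle I anticipate is controlling the coupling uniformly over the uncountable family $(\partial Y^s)_{s\in[0,T]}$ and extracting good estimates for the diagonal process $t\mapsto\partial Y^t_t$: one must show that the map $(s,t,\omega)\mapsto\partial Y^s_t(\omega)$ admits a jointly measurable version regular enough that the diagonal lies in the right space $\S^{2,2}_\xb$, and that the stability estimates are uniform in $s$ so that taking the diagonal does not destroy them. This is where \Cref{AssumptionD}$(ii)$ (continuity of $s\mapsto\partial_s\xi$, $s\mapsto\partial_s f$) is essential — it gives continuity of $s\mapsto(\partial Y^s,\partial Z^s)$ in $\S^2\times\H^2$ by BSDE stability, hence measurability and the required control of the diagonal. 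A secondary technical point is that the first equation lives under the right-continuous augmented filtration $\F^{X,\P}_+$, so one must make sure the fixed-point is carried out in the correct spaces $\S^p_\xb(\F^{X,\P}_+)$, $\H^p_\xb(\F^{X,\P}_+,X)$; since $\P$ is fixed and unique here this is routine but should be stated carefully. Finally, uniqueness of the full solution follows because any solution gives a fixed point of $\Phi$ on each patching interval, and the contraction is a bijection onto its range there.
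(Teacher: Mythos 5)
Your argument is correct, and it reaches the same conclusion by a Banach fixed--point argument, but the map you contract is genuinely different from the one in the paper. The paper does not work with \eqref{HJB0} directly: it proves well--posedness of a more general abstract system (System \eqref{HJB:Abstract}, \Cref{Existent:Abstract}, which also allows orthogonal martingale components) by a \emph{frozen--coefficient} Picard map $\Tf$ on the full product space $\H^2\times\H^2\times\M^2\times\H^{2,1}\times\H^{2,2}\times\M^{2,2}$: given an input tuple $(y,z,n,u,v,m)$, both generators are evaluated at the input and the output is obtained by conditional expectation plus martingale representation, so each iteration is linear; the contraction is then obtained on all of $[0,T]$ at once via It\^o's formula applied to $\e^{ct}|\delta\Yc_t|^2+\e^{ct}|\delta\Uc^s_t|^2$ with $c$ large, and a priori estimates (\Cref{Apriorinormestimates}, \Cref{Apriorinormestimatesdif}) give uniqueness. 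You instead exploit the triangular structure specific to \eqref{HJB0} — the family $(\partial Y^s)_s$ sees the first equation only through $\Vc^\star(\cdot,X,Z)$, and the first equation sees the family only through the diagonal — to contract the composition $\Phi_2\circ\Phi_1$ of two \emph{nonlinear} solution operators on the single space $\H^2_\xb(\F^{X,\P}_+,X)$, with the contraction coming from stability estimates and either time--patching or an exponential weight. What the paper's route buys is generality (it covers couplings of the generators in $(Y,Z,\Uc^s,\Vc^s)$ beyond the triangular one, and filtrations without the representation property relative to $X$, hence the $\Nc,\Mc^s$ terms — which vanish here precisely because weak uniqueness of \eqref{Eq:driftlessSDE} gives predictable representation w.r.t. $X$, a point worth stating explicitly in your version); what your route buys is a smaller fixed--point space and a cleaner reading of where the coupling actually lives. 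The delicate points you flag — continuity of $s\mapsto(\partial Y^s,\partial Z^s)$ so that the diagonal is a well--defined element of the right space, and uniformity in $s$ of the stability estimates — are exactly the ones the paper handles in Step 1 of the proof of \Cref{Existent:Abstract} using \Cref{AssumptionE} (the counterpart of \Cref{AssumptionD}$(ii)$), so your treatment of them is adequate.
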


{\color{black} Lastly, an immediate consequence of the previous results, \Cref{Theorem:Necessity} and \Cref{Verification}, we obtain the existence and uniqueness of equilibrium actions for \eqref{P} in the drift control case.

\begin{theorem}[Uniqueness of equilibria]\label{Thm:uniqueness of equiilibria}
Let {\rm \Cref{AssumptionD}} hold with $p=2$. There exists a unique equilibria for \eqref{P} given by $\nu^\star_\cdot=\Vc^\star(\cdot ,X, Z_\cdot)$, where $\Vc^\star$ denotes the unique Borel-measurable map that maximises $h^o$.
\end{theorem}}

We would like to mention here that the assumption $p=2$ is by no means crucial in our analysis and our results hold in the general case $p>1$, a fact that should be clear to our readers familiar with the theory of BSDEs. Nevertheless, hoping to keep our arguments simple and\ to not drown them in additional unnecessary technicalities, we have opted to present the case $p=2$ only. In this case, it is easier to distinguish between the essential ideas behind our assumptions, and how they work into the probabilistic framework we propose for the study of \eqref{P} and \eqref{HJB0}.

\section{Example: optimal investment with non-exponential discounting}\label{Section:Examples}

We consider the following non-exponential discounting framework 
\begin{align*}
\xi(s,x)=\varphi(T-s) \tilde \xi(x), \; f_t(s,x,a)=\varphi(t-s)\tilde f_t(x,a),\; (t,s,x,a)\in[0,T]^2\times\Xc\times A,
\end{align*}
where $\tilde \xi$ is a Borel-measurable map, and
\begin{align*}
&\tilde f:[0,T]\times\Xc \times A \longrightarrow \R, \text{ Borel-measurable, with } \tilde f_\cdot(\cdot,a)\; \F^X\text{-optional } \text{for any } a \in  A,\\
&\varphi:[0,T]\longrightarrow \R, \text{ non-negative, differentiable, with}\; \varphi(0)=1.
\end{align*}
Let us assume in addition $d=m=1$, $A= \R_+^\star\times \R_+^\star$ and a Markovian framework. We define an action process $\nu$ as a $2$-dimensional $\F^X$-adapted process $(\alpha_t(X_t),c_t(X_t))_{t\in[0,T]}$ with exponential moments of all orders bounded by some arbitrary large constant $M$. We let
$\sigma_t(X_t,\nu_t):=\alpha_t(X_t)$, $ b_t(X_t,\nu_t):=\beta +\alpha_t^{-1}(X_t)(rX_t-c_t(X_t))$. Consequently, for $\M=(\P,\nu)\in \Mf(s,x)$, with $\Mf(s,x)=\{ (\P,\nu) \in \Prob(\Omega)\times \Ac(s,x,\P)\}$
\begin{align*}
\d X_t^{s,x,\nu} =\alpha_t\big(\beta +\alpha^{-1}_t(rX_t-c_t)\d t+\d W_t^\M\big),\; \overline \P^\nu\text{\rm--}\as
\end{align*}

Here we will study the case where the utility function is given, for all $x\in\R$ by
\begin{align*}
U(x):=\frac{x^{1-\eta}-1}{1-\eta}\mathbf{1}_{\{ \eta\in (0,1)\}}+\log (x)\mathbf{1}_{\{ \eta=1\}},
\end{align*}
so that
\begin{align*}
J(s,t,x,\nu)=\E^{\overline \P^\nu}\bigg[ \varphi(T-s)  U (X_T)+\int_t^T\varphi(r-s)U(c_r) \d r\bigg].
\end{align*}

This model, studied for specific choices of $U$ and $\varphi$ in \cite{ekeland2008investment} and \cite{bjork2016time2}, represents an agent who is seeking to find investment and consumption plans, in cash value, $\alpha$ and $c$ respectively, which determine the wealth process $X$. She derives utility only from consumption. At time $s$, the present utility from consumption at time $r$ is discounted according to $\varphi(r-s)$. We present a solution, via verification, based on \Cref{Verification} . The system $\eqref{HJB}$ is given by
\begin{align*}
Y_t=&\ U(X_T)+\int_t^T F_r(X_r,Z_r,\widehat \sigma_r^2,\partial Y_r^r)    \d r -\int_t^T Z_r\cdot \d X_r+K^\P_T-K^\P_t ,\; 0\leq t \leq T,\; \Pc(\xb)\text{\rm--}\qs,\\
{\partial Y_t^s}(\omega ):=&\ \E^{\overline \P^{\nu^\star}_{t,x}}\bigg[\partial_s \varphi(T-s)U(X_T)+\int_t^T\partial_s \varphi(r-s) U(c^\star(r,X_r,Z_r))\d r\bigg],\; (s,t)\in[0,T]^2,\; \omega\in\Omega. 
\end{align*}
where for $(t,x,z,\gamma,\Sigma)\in [0,T]\times \Xc \times \R \times \R \times \R_+^\star$
\begin{align*}
F_t(x,z,\Sigma,u) =\sup_{(c,\alpha)\in \R_+^\star\times \{\alpha^2=\Sigma\}} \big\{ (rx +\beta \alpha - c)z +U(c) \big\}-u;\;
H_t(x,z,\gamma)=\sup_{\Sigma \in \R_+^\star}\bigg\{ F_t(x,z,\Sigma,u)+\frac{1}{2}\Sigma \gamma \bigg\}.
\end{align*}

\begin{proposition}
Let $(Y,Z,K)$ and $\nu^\star$ be given by
\[Y_t:=a(t)U(X_t)+b(t),\; Z_t:=a(t)X_t^{-\eta},\; K_t:=\int_0^t k_r\d r,\;  \nu^\star_t:=(\beta \eta^{-1} X_t, a(t)^{-\frac{1}{\eta}}X_t),\]
where 
\[
k_t:= H(t,X_t, Z_t,\Gamma_t , \partial Y_t^t) - F_t(X_t,Z_t, \sigmah_t^2, \partial Y_t^t )  -\frac{1}{2} \Tr[ \sigmah_t^2 \Gamma_t], \; \Gamma_t:=-\eta a(t)X_t^{-(1+\eta)},\; t\in[0,T],
\] $\partial Y^s$ is defined as above, and $a(t)$ and $b(t)$ as in {\rm \Cref{Eq:examplea}}, which we assume has a unique and continuous solution. Then, there exists $\P^\star\in \Pc(\xb,\nu^\star)$, $(Y,Z,K,\partial Y)$ define a solution to \eqref{HJB} and $(\P^\star,\nu^\star)$ is an equilibrium model. Moreover
\begin{align*}
\d X_t= X_t\Big(\big( r+\beta^2\eta^{-1} + a(r)^{-\frac1{\eta}}\big)\d t+ \beta \eta^{-1}  \d W_t\Big) ,\; 0\leq t \leq T,\; \overline \P^{\nu^\star}\text{\rm--}\as
\end{align*}
\end{proposition}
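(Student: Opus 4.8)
The strategy is a direct application of the verification theorem, \Cref{Verification}: I will exhibit the candidate $(Y,Z,K^\P,\partial Y)$ and show it satisfies \Cref{Def:sol:systemH}, check that \Cref{AssumptionC} holds for the CRRA/non-exponential-discounting data, and then read off that $\nu^\star_t=(\beta\eta^{-1}X_t, a(t)^{-1/\eta}X_t)$ is an equilibrium and $v(t,x)=\sup_{\P}\E^\P[Y_t]$. Throughout I would work in the Markovian one-dimensional setting and exploit the explicit homogeneity of $U$.

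First I would carry out the pointwise Hamiltonian maximisation. With $Y_t=a(t)U(X_t)+b(t)$, $Z_t=a(t)X_t^{-\eta}$, $\Gamma_t=-\eta a(t)X_t^{-(1+\eta)}$, I compute $F_t(X_t,Z_t,\Sigma,\partial Y_t^t)$ by maximising $(rx+\beta\alpha-c)z+U(c)$ over $(c,\alpha)$ with $\alpha^2=\Sigma$: the first-order condition in $c$ gives $U'(c)=z$, i.e. $c^\star=z^{-1/\eta}=a(t)^{-1/\eta}X_t$ (using $U'(x)=x^{-\eta}$), and $\alpha$ enters linearly so the maximisation over $\Sigma\in\R_+^\star$ in $H$ pushes $\alpha$ to be determined by the coupling with $\frac12\Sigma\gamma$; since $\gamma=\Gamma_t<0$, the sup over $\Sigma$ of $\beta\alpha z+\frac12\Sigma\gamma$ is attained at a finite $\Sigma^\star$, yielding $\alpha^\star=\beta\eta^{-1}X_t$ after substituting the explicit forms. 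This identifies $\Vc^\star$ and $\overline\Vc^\star$ and shows $\nu^\star$ is the maximiser; it also produces the ODE that $a$ and $b$ must satisfy so that the drift terms in the BSDE cancel, which is the system referenced as \Cref{Eq:examplea}. Applying Itô's formula to $a(t)U(X_t)+b(t)$ along the dynamics \eqref{Eq:driftlessSDE} and matching with the first equation in \eqref{HJB} pins down $k_t$ as stated (the nonnegative difference $H-F-\frac12\Tr[\sigmah^2\Gamma]$, which is $\geq 0$ by \eqref{Hamiltonia:General}, so $K$ is nondecreasing) and shows $K^\P_t=\int_0^t k_r\,\d r$ works for every $\P\in\Pc(\xb)$.

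Next I would verify the structural requirements: that $(Y,Z,K)$ lies in the right spaces and the minimality condition \eqref{Min:Condition:NoA} holds. The exponential-moment bound $M$ on the action processes, together with the explicit linear SDE for $X$ under $\overline\P^{\nu^\star}$, gives all the integrability needed for some $p>1$; the minimality condition and \Cref{Def:sol:systemH}\ref{Def:sol:systemH:iii} follow because $k$ is the same for all $\P$, so for the maximiser $\nu^\star$ one has the uncontrolled-volatility situation where $K^{\P^{\nu^\star}}$ can be taken to be the one realising the essential infimum (here one invokes that $a$ solves its ODE so that along the optimiser the extra term vanishes — this is exactly where the assumed uniqueness and continuity of the solution to \Cref{Eq:examplea} is used). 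Then \Cref{AssumptionC} is checked directly: continuous differentiability of $s\mapsto\varphi(T-s)\tilde\xi$ and $s\mapsto\varphi(t-s)\tilde f$ is immediate from differentiability of $\varphi$, and continuity of $x\mapsto\Phi(r,x)$ follows from the explicit CRRA forms. Existence of $\P^\star\in\Pc(\xb,\nu^\star)$ follows from \cite[Theorem 6.1.6]{stroock2007multidimensional} since $x\mapsto\sigma\sigma^\t_t(x,\nu^\star(t,x))=(\beta\eta^{-1}x)^2$ is continuous; invoking \Cref{Verification} then yields $\nu^\star\in\Ec(\xb)$ and the stated value-function representation, and substituting $\nu^\star$ into \eqref{Eq:dynamics} gives the closed-form dynamics of $X$.

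The main obstacle I anticipate is not the verification bookkeeping but the derivation and control of the ODE system for $(a,b)$ — i.e. making \Cref{Eq:examplea} explicit and checking that the candidate genuinely solves \eqref{HJB} rather than merely the decoupled 2BSDE. The coupling runs through $\partial Y^t_t$, which depends on $\Vc^\star(r,X,Z_r)=c^\star(r,X_r,Z_r)=a(r)^{-1/\eta}X_r$ and hence on $a$; so the ODE for $a$ is of the form $a'(t)+ (\text{linear in }a)+ \kappa\, a(t)^{1-1/\eta}\cdot(\text{term from }\partial Y)=0$ with the $\partial Y$ contribution itself an integral functional of $a$, making it a genuinely non-local/implicit equation. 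This is precisely why the proposition assumes $a$ (and $b$) have a unique continuous solution rather than asserting it; I would state the ODE, note the assumed well-posedness, and confine myself to checking that under that hypothesis all drift terms cancel and all minimality/integrability conditions of \Cref{Def:sol:systemH} are met.
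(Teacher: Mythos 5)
Your proposal is correct and follows essentially the same route as the paper: explicit Hamiltonian maximisation yielding $c^\star=z^{-1/\eta}$ and $\alpha^\star=|\beta z\gamma^{-1}|=\beta\eta^{-1}X_t$, It\=o's formula on $a(t)U(X_t)+b(t)$ to match the 2BSDE and derive \Cref{Eq:examplea}, minimality and integrability via the argument of \Cref{Thm:PDEsystem:markovian}, and existence of $\P^\star$ from continuity of $x\longmapsto(\beta\eta^{-1}x)^2$. One small correction: the vanishing of $k$ along $\nu^\star$ (hence $K^{\P^{\nu^\star}}=0$) comes from the identity $H=\sup_{\Sigma}\{F+\frac12\Tr[\Sigma\gamma]\}$ being attained at $\sigmah_t^2=(\sigma\sigma^\t)_t(X,\nu^\star_t)$, not from the ODE for $a$ --- the ODE is what cancels the remaining drift so that $Y$ actually solves the BSDE --- and the full minimality condition \eqref{Min:Condition:NoA} under an arbitrary $\P\in\Pc(\xb)$ still requires the approximate-maximiser construction of measures in $\Pc_\xb(t,\P,\F_+^X)$ as in the proof of \Cref{Thm:PDEsystem:markovian}, rather than the single measure $\P^{\nu^\star}$.
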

\begin{proof}
By computing the Hamiltonian, we obtain $(c^\star(t,x,z),\alpha^\star(t,x,z,\gamma)):=(z^{-\frac{1}\eta}, | \beta z \gamma^{-1}|)$ define the maximisers in $F$ and $H$ respectively. Therefore, in this setting, the 2BDE in \eqref{HJB} can be rewritten as for any $\P\in \Pc(\xb)$
\begin{align*}
Y_t=U(X_T)+\int_t^T \Big(Z_r(rX_r+ \beta \sigmah_r -Z_r^{-\frac{1}\eta})+ U(Z_r^{-\frac{1}\eta})-\partial Y_r^r\Big)    \d r -\int_t^T Z_r\cdot \d X_r+K_T^\P-K_t^\P, \; 0\leq t \leq T,\; \P\text{\rm--}\as
\end{align*}

Moreover, taking $Y, Z$ and $K$ as in the statement we obtain $\nu^\star_t:=(\alpha_t^\star,c_t^\star)=(\beta \eta^{-1} X_t, a(t)^{-\frac{1}{\eta}}X_t)$, $t\in[0,T]$. Note that the map $(t,x,z,u)\longmapsto F_t(x,z,\Sigma,u)$ is clearly continuous for fixed $\Sigma$ and that the processes $X, Y,Z$ and $\Gamma$ are continuous in time for fixed $\omega$. This yields, as in the proof of \Cref{Thm:PDEsystem:markovian}, that the process $K$ satisfies the minimality condition under every $\P\in \Pc(\xb)$. Moreover, note that $x\longmapsto \alpha^\star_t(x)$ is continuous for all $t\in[0,T]$. Therefore, there exists $\P^{\nu^\star}\in \Ac(\xb,\nu^\star)$ such that
\begin{align*}
\d X_t= X_t\Big(\big( r+\beta^2\eta^{-1} + a(r)^{-\frac1{\eta}}\big)\d t+ \beta \eta^{-1}  \d W_t\Big) ,\; 0\leq t \leq T,\; \overline \P^{\nu^\star}\text{\rm--}\as
\end{align*}
Moreover, we may find $a(t)$ and $b(t)$ given by \Cref{Eq:examplea} so that for any $\P\in \Pc(\xb)$
\begin{align*}
Y_t&=U(X_T)+\int_t^T h_r(r,X_r,a(r)X_r^{-\eta},a(t)^{-\frac{1}{\eta}}X_t)-\partial Y_r^r \d r -\int_t^T a(r)X_r^{-\eta} \cdot \d X_r+K_T-K_t,\\
&=U(X_T)+\int_t^T F_r(X_r,X_r,a(r)X_r^{-\eta},\sigmah^2_r,\partial Y_r^r) \d r -\int_t^T a(r)X_r^{-\eta}\cdot \d X_r+K_T-K_t, \; 0\leq t \leq T,\; \P\text{\rm--}\as,
\end{align*}
where we exploited the fact $\partial Y$ satisfies \eqref{Eq:BSVEverification}. Also note that given our choice of $\nu^\star$, $K^{\P^{\nu^\star}}=0$. We are left to argue the integrability of $Y,Z,K$. This follows as in the proof of \Cref{Thm:PDEsystem:markovian} as the uniform exponential growth assumption is satisfied, since $a(t)$ and $b(t)$ are by assumption continuous on $[0,T]$. The integrability follows as the action processes are assumed to have exponential moments of all orders bounded by $M$. With this we obtained that $(Y,Z,K)$ is a solution to the 2BSDE in \eqref{HJB}. The integrability of $\partial Y$ is argued as in the proof of \eqref{Theorem:Necessity}.
\end{proof}

\section{Dynamic programming principle}\label{Section:DPP}

This section is devoted to the proof of Theorem \ref{Theorem:DPP:Limit}, namely we wish to obtain the corresponding \emph{extended} version of the \emph{dynamic programming principle}. We begin with a sequence of lemmata that will allow us to study the local behaviour of the value of the game. These results are true in great generality and require mere extra regularity of the running and terminal rewards in the type variable.\medskip

Throughout this section we assume there exists $ \nu^\star\in \Ec(\xb)$. We stress that no assumption about uniqueness of the equilibrium will be imposed. Therefore, in the spirit of keeping track of the notation, for $(t,x)\in [0,T]\times\Xc $ we recall
\[v(t,x)=J(t,t,x,\nu^{\star})=\E^{\overline \P^{\nu^\star}_{t,x}}\bigg[\int_t^T f_r(t,X,\nu_r^\star)\d r+ \xi(t,X_{\cdot \wedge T}) \bigg],\]
and  $(\P^{\nu^\star}_{t,x})_{(t,x)\in [0,T]\times \Xc}$ denotes the unique solution to the martingale problem for \eqref{Eq:driftlessSDE}, with initial condition $(t,x)$ and fixed action process $\nu^\star$. Similarly, for $\omega=(x,\text{w},q) \in \Omega$, $\{\sigma, \tau\} \subset \Tc_{0,T}$, with $\sigma\leq \tau$, and $\nu\in \Ac(\sigma,x)$ we also set
\[
v(\sigma,X)(\omega):=v(\sigma(\omega),x_{\cdot \wedge \sigma(\omega)} ),\;\text{and } J(\sigma, \tau, X,\nu)(\omega):=J(\sigma(\omega),\tau(\omega),x_{\cdot \wedge \sigma(\omega)} ,\nu ).
\]

Our first result consists of a one step iteration of our equilibrium definition.

\begin{lemma}\label{Lemma:DPP}
Let $\nu^\star \in \Ec(\xb)$ and $v$ the value associated to $\nu^\star$ as in \eqref{P}. Then, for any $(\eps,\ell,t,\sigma,\tau)\in \R_+^\star \times (0,\ell_\eps)\times [0,T]\times \Tc_{t,t+ \ell}\times \Tc_{t,t+ \ell}$ with $\sigma
\leq \tau,$ and  $\Pc(\xb)\text{--}\qe\; x \in \Xc$
\begin{align}\label{DPP:Inequalities}
\begin{split}
v(\sigma,x)&\leq  \sup_{\nu \in \Ac(\sigma,x)} \ J(\sigma,\sigma, x, \nu \otimes_{\tau}\nu^\star ), \\
v(\sigma,x)&\geq   \sup_{\nu \in \Ac(\sigma,x)}  \E^{\overline \P^\nu}\bigg[v(\tau,X)+ \int_{\sigma}^{\tau} f_r(\sigma,X,\nu_r)\d r+ J(\sigma,\tau,X,\nu^\star ) - J(\tau,\tau, X,\nu^\star)\bigg]-\eps \ell.
\end{split}
\end{align}
\end{lemma}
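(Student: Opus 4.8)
The plan is to derive both inequalities from the equilibrium property \eqref{Equilibrium:Ell:noifn}, which guarantees that for $(\eps,\ell)\in\R_+^\star\times(0,\ell_\eps)$, for $\Pc(\xb)\text{--}\qe\ x\in\Xc$ and all $(t,\nu)\in[0,T]\times\Ac(t,x)$, one has $J(t,t,x,\nu^\star)-J(t,t,x,\nu\otimes_{t+\ell}\nu^\star)\geq-\eps\ell$. Both bounds in \eqref{DPP:Inequalities} are really statements at the (stopping) time $\sigma$, so the first step is to replace the deterministic time $t$ in \eqref{Equilibrium:Ell:noifn} by the stopping times $\sigma,\tau\in\Tc_{t,t+\ell}$. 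Since $\sigma,\tau$ take values in $[t,t+\ell]$, the perturbation $\nu\otimes_\tau\nu^\star$ with $\nu\in\Ac(\sigma,x)$ is a legitimate competitor in the equilibrium comparison after conditioning on $\Fc_\sigma$; here I would invoke the r.c.p.d.\ machinery of \Cref{Section:Rcpd} and the concatenation \Cref{Thm:Concatenated:M}, together with the fact that under the weak uniqueness assumption admissibility is stable under such concatenations (\Cref{Lemma:forwardcondition}), so that $\nu\otimes_\tau\nu^\star\in\Ac(\sigma,x)$ for $\Pc(\xb)$--q.e.\ $x$.

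For the \textbf{first inequality}: since $\nu^\star\in\Ec(\xb)$, the equilibrium is in particular compared against \emph{itself}, or more directly, for the specific competitor we simply note that $v(\sigma,x)=J(\sigma,\sigma,x,\nu^\star)=J(\sigma,\sigma,x,\nu^\star\otimes_\tau\nu^\star)$, and $\nu^\star\in\Ac(\sigma,x)$, so trivially $v(\sigma,x)\le\sup_{\nu\in\Ac(\sigma,x)}J(\sigma,\sigma,x,\nu\otimes_\tau\nu^\star)$. This bound does not even use the equilibrium property beyond $\nu^\star$ being admissible; the content is really in the second inequality.

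For the \textbf{second inequality}: fix $\nu\in\Ac(\sigma,x)$ and apply \eqref{Equilibrium:Ell:noifn} at the stopping time $\sigma$ with the perturbation $\nu\otimes_\tau\nu^\star$, giving $v(\sigma,x)\ge J(\sigma,\sigma,x,\nu\otimes_\tau\nu^\star)-\eps\ell$ for $\Pc(\xb)$--q.e.\ $x$. Now I would decompose the right-hand side using the flow/tower property of the pay--off under $\overline\P^\nu$: on $[\sigma,\tau)$ the action is $\nu$ and on $[\tau,T]$ it is $\nu^\star$, so conditioning at $\tau$ via the r.c.p.d.\ and the definition \eqref{Pay-off} of $J$,
\begin{align*}
J(\sigma,\sigma,x,\nu\otimes_\tau\nu^\star)=\E^{\overline\P^\nu}\bigg[\int_\sigma^\tau f_r(\sigma,X,\nu_r)\d r+J(\sigma,\tau,X,\nu^\star)\bigg].
\end{align*}
Then I would add and subtract $J(\tau,\tau,X,\nu^\star)=v(\tau,X)$ inside the expectation to obtain
\begin{align*}
J(\sigma,\sigma,x,\nu\otimes_\tau\nu^\star)=\E^{\overline\P^\nu}\bigg[v(\tau,X)+\int_\sigma^\tau f_r(\sigma,X,\nu_r)\d r+J(\sigma,\tau,X,\nu^\star)-J(\tau,\tau,X,\nu^\star)\bigg].
\end{align*}
Combining with $v(\sigma,x)\ge J(\sigma,\sigma,x,\nu\otimes_\tau\nu^\star)-\eps\ell$ and taking the supremum over $\nu\in\Ac(\sigma,x)$ yields the claim, provided the supremum can be pulled out of the q.e.\ statement — this requires a measurable-selection / countable-dense-family argument so that a single $\Pc(\xb)$--polar set works simultaneously for all competitors, which is where I expect the only real technical friction.

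The \textbf{main obstacle} is precisely this last point: making the equilibrium inequality \eqref{Equilibrium:Ell:noifn}, which a priori holds for q.e.\ $x$ with the null set possibly depending on $\nu$, hold uniformly so that it survives taking the supremum over the (uncountable) family $\Ac(\sigma,x)$, and simultaneously verifying that all the concatenated controls remain admissible and that the r.c.p.d.\ conditioning at the stopping times $\sigma\le\tau$ is valid $\Pc(\xb)$--q.e.; the rest is bookkeeping with the tower property and the definition of $J$.
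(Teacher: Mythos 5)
Your proposal is correct and follows essentially the same route as the paper: the first inequality is immediate from $\nu^\star=\nu^\star\otimes_\tau\nu^\star\in\Ac(\sigma,x)$, and the second is obtained by applying the equilibrium inequality pathwise at $\sigma$ with competitor $\nu\otimes_\tau\nu^\star$ (the local feature, i.e.\ validity for every $\ell'<\ell_\eps$, is what lets $\tau-\sigma$ be random), conditioning at $\tau$ via the r.c.p.d.\ and \Cref{Lemma:forwardcondition}, and adding and subtracting $J(\tau,\tau,X,\nu^\star)=v(\tau,X)$ before taking the supremum. The only point you flag as "real friction" — a single polar set working for all $\nu$ — is already built into \Cref{Def:equilibrium}, which quantifies over $(t,x,\nu)$ inside one $\P$--null event, so no separate selection argument is needed.
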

\begin{proof}
The first inequality is clear. Indeed for $\Pc(\xb)\text{--}\qe\; x\in \Xc$, $\overline \P^{\nu^\star}_{\sigma(\omega),x}\in \Pc(\sigma(\omega),x)$ and therefore $\nu^{\star} \in \Ac(\sigma, x)$.

\medskip
To get the second inequality note that for $(\eps,\ell,t,\sigma,\tau)\in \R_+^\star \times (0,\ell_\eps)\times [0,T]\times \Tc_{t,t+ \ell}\times \Tc_{t,t+ \ell}$ with $\sigma \leq \tau,$ $\Pc(\xb)\text{--}\qe\; x\in \Xc$ and $\nu \in \Ac(\sigma,x)$
\begin{align*}
v(\sigma,x)&=J(\sigma,\sigma,x, \nu^\star)\geq J(\sigma,\sigma,x, \nu\otimes_{\tau}\nu^\star )-\eps \ell\\
&=\ \E^{\overline \P^{\nu\otimes_{\tau}\nu^\star}} \bigg[ \int_{\sigma}^{\tau} f_r(\sigma,X,( \nu\otimes_\tau \nu^\star)_r)\d r +\int_{\tau}^T  f_r(\sigma,X, ( \nu\otimes_\tau \nu^\star)_r)\d r+ \xi(\sigma,X_{\cdot \wedge T})\bigg]-\eps \ell\\
&=\ \E^{\overline \P^\nu}\bigg[ v(\tau,X)+ \int_{\sigma}^{\tau} f_r(\sigma,X,\nu_r)\d r+ J(\sigma,\tau,X,\nu^{\star} ) - J(\tau,\tau, X,\nu^{\star})\bigg]-\eps \ell,
\end{align*} 
where the last equality follows by conditioning and the $\Fc_\tau$ measurability of all the terms. Indeed, in light of \Cref{Lemma:forwardcondition}, an r.c.p.d. of $\overline \P^{\nu\otimes_\tau \nu^\star}$ with respect to $\Fc_\tau$, evaluated at $x$, agrees with $\overline \P^{\nu^\star}_{\tau(x),x}$, the weak solution to \eqref{Eq:driftlessSDE} with initial condition $(\tau,x)$ and action $\nu^\star$, for $\overline \P^{\nu\otimes_\tau\nu^\star}\text{--}\ae\; x\in \Xc$. As all the terms inside the expectation are $\Fc_\tau$-measurable, the previous holds for $\overline \P^{\nu}\text{--}\ae\; x\in \Xc$.
\end{proof}

From the previous result, we know that equilibrium models satisfy a form of $\eps$-optimality in a sufficiently small window of time. We now seek to gain more insight from iterating the previous result. This will allow us to move forward the time window into consideration.\medskip

In the following, given $(\sigma, \tau) \in \Tc_{t,T}\times \Tc_{t,t+\ell}$, with $\sigma \leq \tau$, we denote by $\Pi^\ell:=(\tau_i^\ell)_{i\in\{1,\dots,n_\ell\}}\subseteq \Tc_{t,T}$ a generic partition of $[ \sigma, \tau]$ with mesh smaller than $\ell$, \emph{i.e.} for $n_{\ell} :=\ceil[\big]{(\tau-\sigma)/\ell}$, $\sigma=:\tau^\ell_0\leq \cdots\leq  \tau^\ell_{n^\ell}:=\tau,$ $\forall \ell$, and $\sup_{ i\in\{1,\dots,  n_\ell\}} |\tau^\ell_i-\tau^\ell_{i-1}|\leq \ell$. We also let $\Delta \tau_i^\ell:= \tau_{i}^\ell-\tau_{i-1}^\ell$. The previous definitions hold $\omega$-by-$\omega$. \medskip

\begin{proposition}\label{DPP:Iterated}
Let $\nu^\star\in \Ec(\xb)$ and $\{\sigma, \tau\}\subset \Tc_{t,T}$, with $\sigma\leq \tau$. Fix $\eps>0$ and some partition $\Pi^{\ell}$ with $\ell<\ell_\eps$. Then for $\Pc(\xb)\text{--}\qe\; x \in \Xc$
\begin{align*}
v(\sigma,x)\geq \sup_{\nu \in \Ac(\sigma,x)}   \E^{\overline \P^\nu}\bigg[ v(\tau,X)+\sum_{i=0}^{n_{\ell}-1} \int_{\tau_i^\ell}^{\tau_{i+1}^\ell} f_r(\tau_i^\ell,X,\nu_r)\d r&+ J(\tau_i^\ell,\tau_{i+1}^\ell, X,\nu^\star) - J(\tau_{i+1}^\ell,\tau_{i+1}^\ell, X,\nu^\star) -n_\ell \eps \ell\bigg].
\end{align*}
\end{proposition}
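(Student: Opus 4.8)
The plan is to iterate the second inequality of \Cref{Lemma:DPP} along the partition $\Pi^\ell$, peeling off one sub--interval from the front at each step; essentially all the technical work is then inherited from \Cref{Lemma:DPP} and \Cref{Lemma:forwardcondition}. Fix $\eps>0$, a partition $\Pi^\ell=(\tau_i^\ell)_{0\leq i\leq n_\ell}$ with $\ell<\ell_\eps$, and an arbitrary $\nu\in\Ac(\sigma,x)$; since the right--hand side of the claimed inequality is a supremum over such $\nu$, it suffices to prove the bound with $\sup_\nu$ replaced by this fixed $\nu$ and then take the supremum. For $0\leq i\leq n_\ell-1$ set
\begin{align*}
\Psi_i:=\int_{\tau_i^\ell}^{\tau_{i+1}^\ell}f_r(\tau_i^\ell,X,\nu_r)\d r+J(\tau_i^\ell,\tau_{i+1}^\ell,X,\nu^\star)-J(\tau_{i+1}^\ell,\tau_{i+1}^\ell,X,\nu^\star),
\end{align*}
which is $\Fc_{\tau_{i+1}^\ell}$--measurable. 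I will prove by induction on $j\in\{0,\dots,n_\ell\}$ that for $\Pc(\xb)\text{--}\qe\; x\in\Xc$
\begin{align*}
v(\sigma,x)\geq \E^{\overline\P^\nu}\bigg[v(\tau_j^\ell,X)+\sum_{i=0}^{j-1}\Psi_i\bigg]-j\eps\ell.
\end{align*}
The case $j=0$ is immediate since $X_{\cdot\wedge\sigma}=x_{\cdot\wedge\sigma}$ under $\overline\P^\nu$, and $j=n_\ell$ is the assertion after taking $\sup_{\nu\in\Ac(\sigma,x)}$.

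The induction step reduces to the sub--estimate $\E^{\overline\P^\nu}[v(\tau_j^\ell,X)]\geq \E^{\overline\P^\nu}[v(\tau_{j+1}^\ell,X)+\Psi_j]-\eps\ell$, which, once available, combines with the level--$j$ bound and the identity $\sum_{i=0}^{j-1}\Psi_i+\Psi_j=\sum_{i=0}^{j}\Psi_i$ to give the level--$(j+1)$ bound. To obtain the sub--estimate I apply \Cref{Lemma:DPP} on the sub--interval $[\tau_j^\ell,\tau_{j+1}^\ell]$, whose length is $\leq\ell<\ell_\eps$, and then argue by conditioning on $\Fc_{\tau_j^\ell}$ exactly as in the proof of \Cref{Lemma:DPP}: by \Cref{Lemma:forwardcondition} the r.c.p.d.\ of $\overline\P^\nu$ given $\Fc_{\tau_j^\ell}$ is, for $\overline\P^\nu\text{--}\ae\ \bar x$, the weak solution $\overline\P^\nu_{\tau_j^\ell(\bar x),\bar x}$ associated with the (admissible) continuation $\nu|_{[\tau_j^\ell,T]}$; lower bounding the supremum in \Cref{Lemma:DPP} by this continuation yields, for $\overline\P^\nu\text{--}\ae\ \bar x$,
\begin{align*}
v(\tau_j^\ell,\bar x)\geq \E^{\overline\P^\nu_{\tau_j^\ell(\bar x),\bar x}}\big[v(\tau_{j+1}^\ell,X)+\Psi_j\big]-\eps\ell,
\end{align*}
and integrating against $\overline\P^\nu$ and using the tower property recorded right after \Cref{Thm:Concatenated:M} gives the sub--estimate. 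Taking the supremum over $\nu$ at $j=n_\ell$ completes the proof.

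Two points require the usual care, and both are already absorbed in \Cref{Lemma:DPP} and \Cref{Lemma:forwardcondition}: the measurability and $\overline\P^\nu$--integrability of $v(\tau_i^\ell,X)$ and of the $\Psi_i$'s, which legitimise the conditioning manipulations; and the handling of the a priori path--dependent number of steps $n_\ell=\ceil{(\tau-\sigma)/\ell}$, dealt with by padding the partition with $\tau_i^\ell=\tau$ once the endpoint is reached. The step I expect to be the genuine obstacle is the conditioning one: transferring the ``$\Pc(\xb)\text{--}\qe$'' conclusion of \Cref{Lemma:DPP} applied at $\tau_j^\ell$ into an ``$\overline\P^\nu\text{--}\ae$'' statement along $X_{\cdot\wedge\tau_j^\ell}$, and identifying the regular conditional probability distribution of $\overline\P^\nu$ with a weak solution from the conditioned initial datum. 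Both rest on the weak--uniqueness assumption and on the stability under conditioning of the martingale problem of \Cref{Section:ControlledDynamics}, i.e.\ precisely on \Cref{Lemma:forwardcondition} and \Cref{Thm:Concatenated:M}.
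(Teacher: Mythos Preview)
Your proof is correct and follows essentially the same skeleton as the paper's, but with one genuine methodological difference worth pointing out. The paper iterates at the level of the \emph{supremum}: it keeps $\sup_{\nu\in\Ac(\sigma,x)}$ on the outside at each step, applies \Cref{Lemma:DPP} at $(\tau_j^\ell,X)$ to introduce an inner $\sup_{\tilde\nu\in\Ac(\tau_j^\ell,\tilde x)}$, and then invokes a measurable--selection/concatenation result (\cite[Theorem 4.6]{karoui2013capacities}, \cite[Theorem 2.3]{nutz2013constructing}, via \cite{karoui2015capacities2}) to collapse the nested suprema back into a single $\sup_{\nu\in\Ac(\sigma,x)}$. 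You instead fix $\nu\in\Ac(\sigma,x)$ from the outset, lower--bound the supremum in \Cref{Lemma:DPP} at each $\tau_j^\ell$ by the continuation of that same $\nu$, use the tower property via the r.c.p.d.\ identification of \Cref{Lemma:forwardcondition}, and only take the supremum at the end. This bypasses the measurable--selection machinery entirely and is in that sense more elementary; the price is that you need the r.c.p.d.\ of $\overline\P^\nu$ at $\tau_j^\ell$ to be the weak solution with the continued action, which you correctly source from \Cref{Lemma:forwardcondition}. The remaining delicate point---that ``$\Pc(\xb)\text{--}\qe$'' at the level of \Cref{Lemma:DPP} transfers to ``$\overline\P^\nu\text{--}\ae$'' along $X_{\cdot\wedge\tau_j^\ell}$---is also implicitly required by the paper's argument and is handled the same way in both.
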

\begin{proof}
A straightforward iteration of \Cref{Lemma:DPP} yields that for $\Pc(\xb)\text{--}\qe\; x \in \Xc$
\begin{align*}
v(\sigma,x) \geq  \sup_{\nu \in \Ac(\sigma,x)}  &   \E^{\overline \P^\nu}\bigg[ \int_{\sigma}^{\tau_1^\ell} f_r(\sigma,X,\nu_r)\d r+ J(\sigma,\tau_1^\ell, X, \nu^\star) - J(\tau_1^\ell,\tau_1^\ell, X,\nu^\star)+v(\tau_1^\ell,X) - \eps \ell \bigg]\\
\geq \sup_{\nu \in \Ac(\sigma,x)}  & \int_{\Omega}  \bigg( \int_{\sigma}^{\tau_1^\ell} f_r(\sigma,X,\nu_r)\d r+ J(\tau_0^\ell,\tau_0^\ell,X, \nu^\star)- J(\tau_1^\ell,\tau_1^\ell, X,\nu^\star)\\
& +\sup_{\tilde \nu \in \Ac(\tau_1^\ell,\tilde x)}   \E^{{\widetilde \P}^{\tilde \nu}}\bigg[v(\tau_2^\ell, X )+ \int_{\tau_1^\ell}^{\tau_2^\ell} f_r(\tau_1^\ell,X,\tilde \nu_r)\d r+ J(\tau_1^\ell,\tau_2^\ell, X,\nu^\star) - J(\tau_2^\ell,\tau_2^\ell,X,\nu^\star) -2 \eps \ell \bigg]\bigg)\overline \P^\nu(\d \tilde \omega) \\
=\sup_{\nu \in \Ac(\sigma,x)}  &  \E^{\overline \P^\nu}\bigg[ \int_{\sigma}^{\tau_1^\ell} f_r(\sigma,X,\nu_r)\d r+ J(\tau_0^\ell,\tau_1^\ell,X, \nu^\star)- J(\tau_1^\ell,\tau_1^\ell,X,\nu^\star) \\
&+v(\tau_2^\ell,X)+\int_{\tau_1^\ell}^{\tau_2^\ell} f_r(\tau_1^\ell,X,\nu_r)\d r+ J(\tau_1^\ell,\tau_2^\ell, X,\nu^\star) - J(\tau_2^\ell,\tau_2^\ell, X,\nu^\star)-2 \eps \ell \bigg],
\end{align*}
where the second inequality follows by applying the definition of an equilibrium at $(\tau_1^\ell,X)$. Now, the last step follows from \cite[Theorem 4.6.]{karoui2013capacities}, see also \cite[Theorem 2.3.]{nutz2013constructing}, which holds thanks to \cite{karoui2015capacities2}. Indeed, as $\F$ is countably generated and $\Pc(t,x)\neq \varnothing$, for all $(t,x)\in [0,T]\times \Xc$, \cite[Lemmata 3.2 and 3.3]{karoui2015capacities2} hold. The general result follows directly by iterating and the fact the iteration is finite.
\end{proof}

In the same spirit as in the classic theory of stochastic control, a natural question at this point is whether there is, if any, an infinitesimal limit of the previous iteration and what kind of insights on the value function we can draw from it. The next theorem shows than under a mild extra regularity assumption on the running cost, namely \Cref{AssumptionA}\ref{AssumptionA:DPP1}, we can indeed pass to the limit.\medskip

To ease the readability of our main theorem, for $x\in \Xc$, $\{\sigma, \zeta, \tau\} \subset \Tc_{0,T}$ with $\sigma\leq \zeta \leq \tau$, $\nu^\star \in \Ac(\xb)$, $\nu\in \Ac(\sigma,x)$, and any $\Fc_T^X$-measurable random variable $\xi$, we introduce the notation 
\[
\E_\zeta^{\overline \P^\nu, \overline \P^{\nu^\star}}[\xi]:=\E^{\overline \P^{\nu}\otimes_\zeta {\overline \P}^{\nu^\star}_{\zeta,X}} [\xi],
\]
where $\P^{\nu^\star}_{\zeta,X}$ is given by $\omega \longmapsto  \P^{\nu^\star}_{ (\zeta(\omega),x_{\cdot \wedge \zeta(\omega)})}$ and denotes the $\Fc_\zeta$-kernel prescribed by the family of solutions to the martingale problem associated with $\nu^\star$, see \cite[Theorem 6.2.2]{stroock2007multidimensional}. Note in particular $\E_\sigma^{ \overline \P^\nu, \overline \P^{\nu^\star} }[\xi]=\E^{ \overline \P^{\nu^\star}_{\sigma, X} }[\xi]$.

\begin{proof}[Proof of {\rm\Cref{Theorem:DPP:Limit}}]
Let $\eps>0$, $0<\ell<\ell_\eps$ and $\Pi^{\ell}$ be as in the statement of \Cref{DPP:Iterated}. From Proposition \ref{DPP:Iterated} we know that for $\Pc(\xb)\text{--}\qe\; x \in \Xc$
\begin{align}\label{DPP:Limit:Aux1}
v(\sigma,x) \geq &\sup_{\nu \in \Ac(\sigma,x)}  \bigg\{ \E^{\overline \P^\nu}\bigg[v(\tau,X)+\sum_{i=0}^{n_{\ell}-1} \int_{\tau_i^\ell}^{\tau_{i+1}^\ell} f_r(\tau_i^\ell,X,\nu_r)\d r +J(\tau_i^\ell,\tau_{i+1}^\ell,X,\nu^{\star}) - J(\tau_{i+1}^\ell,\tau_{i+1}^\ell, X,\nu^{\star}) -n_\ell \eps \ell  \bigg]\bigg\}\nonumber \\
=&\sup_{\nu \in \Ac(\sigma,x)}  \bigg\{ \E^{\overline \P^\nu}\bigg[ v(\tau,X)+\sum_{i=0}^{n_{\ell}-1} \int_{\tau_i^\ell}^{\tau_{i+1}^\ell} f_r(\tau_i^\ell,X,\nu_r)\d r -n_\ell \eps {\ell}  \bigg]\nonumber \\
&\hspace*{1.5em} +\sum_{i=0}^{n_{\ell}-1} \int_\Omega  \E^{\overline \P^{\nu^\star}_{\tau^\ell_{i+1}(\tilde \omega),X(\tilde \omega)}}\bigg[ \int_{\tau_{i+1}^\ell}^T \big(f_r(\tau_i^\ell,X,\nu_r^\star)-f_r(\tau_{i+1}^\ell,X,\nu_r^\star)\big)\d r +\xi(\tau_{i}^\ell,X)-\xi(\tau_{i+1}^\ell,X)\bigg] \overline \P^\nu(\d \tilde \omega)\bigg\}\nonumber \\
\begin{split}=&\sup_{\nu \in \Ac(\sigma,x)}   \bigg\{ \E^{\overline \P^\nu}\bigg[v(\tau , X)+\sum_{i=0}^{n_{\ell}-1} \int_{\tau_i^\ell}^{\tau_{i+1}^\ell} f_r(\tau_i^\ell,X,\nu_r)\d r -n_\ell \eps \ell \bigg] \\
&\hspace*{1.5em} +\sum_{i=0}^{n_{\ell}-1} \E^{\overline \P^\nu,\overline \P^{\nu^\star}}_{\tau^\ell_{i+1}} \bigg[ \int_{\tau_{i+1}^\ell}^T\big( f_r(\tau_i^\ell,X,\nu_r^\star)-f_r(\tau_{i+1}^\ell,X,\nu_r^\star)\big)\d r +\xi(\tau_{i}^\ell,X_{\cdot\wedge T})-\xi(\tau_{i+1}^\ell,X_{\cdot\wedge T})\bigg]\bigg\},
\end{split}
\end{align}
where we used the definition of $J$ and conditioned. For $(t,x,\nu)\in [0,T]\times \Xc\times \Ac(t,x)$, let $G^t(s):=\int_t^s f_r(s,x,\nu_r)\d r$, $s\in[t,T]$. We set $G(s):=G^\sigma(s)$, so that
\begin{align}\label{DPP:Limit:Aux0}
 \sum_{i=0}^{n_{\ell}-1} \E^{\overline \P^\nu} \bigg[ \int_{\tau_i^\ell}^{\tau_{i+1}^\ell}   f_r(\tau_i^\ell,X,\nu_r)\d r\bigg]&= \sum_{i=0}^{n_{\ell}-1}   \E^{\overline \P^\nu}\bigg[G(\tau_{i+1}^\ell)-G(\tau_{i}^\ell)+\int_{\sigma}^{\tau_{i+1}^\ell}   \big( f_r(\tau_i^\ell,X,\nu_r)-f_r(\tau_{i+1}^\ell,X,\nu_r)\big)\d r\bigg]\nonumber \\
 &=  \sum_{i=0}^{n_{\ell}-1}  \E^{\overline \P^\nu} \Big[G(\tau_{i+1}^\ell)-G(\tau_{i}^\ell)\Big] + \E_{\tau_{i+1}^\ell}^{\overline \P^\nu,\overline \P^{\nu^\star}}  \bigg[ \int_{\sigma}^{\tau_{i+1}^\ell}   \big(  f_r(\tau_i^\ell,X,\nu_r) - f_r(\tau_{i+1}^\ell,X,\nu_r)\big)\d r\bigg].
\end{align}
where the last equality follows from the $\Fc_{\tau_{i+1}^\ell}$-measurability of the integral and \Cref{Thm:Concatenated:M}. Now we observe that we can add the integral terms in \eqref{DPP:Limit:Aux1} and \eqref{DPP:Limit:Aux0}, \emph{i.e.}
\begin{align*}
&\sum_{i=0}^{n_{\ell}-1}\E_{\tau_{i+1}^\ell}^{\overline \P^\nu,\overline \P^{\nu^\star}} \bigg[ \int_{\tau_{i+1}^\ell}^T \big(f_r(\tau_i^\ell,X,\nu_r^\star)-f_r(\tau_{i+1}^\ell,X,\nu_r^\star)\big)\d r \bigg]+\E_{\tau_{i+1}^\ell}^{\overline \P^\nu,\overline \P^{\nu^\star}}\bigg[ \int_{\sigma}^{\tau_{i+1}^\ell}\big(  f_r(\tau_i^\ell,X,\nu_r)-f_r(\tau_{i+1}^\ell,X,\nu_r)\big)\d r\bigg]\\
&=\sum_{i=0}^{n_{\ell}-1}\E_{\tau_{i+1}^\ell}^{\overline \P^\nu,\overline \P^{\nu^\star}}\bigg[ \int_{\sigma}^{T} \big( f_r(\tau_i^\ell,X,(\nu\otimes_{\tau_{i+1}^\ell}\nu^\star)_r)-f_r(\tau_{i+1}^\ell,X,(\nu\otimes_{\tau_{i+1}^\ell}\nu^\star)_r)\big)\d r\bigg].
\end{align*}
Consequently, for $\Pc(\xb)\text{--}\qe\; x \in \Xc$
\begin{align}\label{DPP:Limit:Aux2}
\begin{split}
v(\sigma,x)\geq \sup_{\nu \in \Ac(\sigma,x)}\bigg\{ &\E^{\overline \P^\nu}\bigg[v(\tau, X)+ \sum_{i=0}^{n_{\ell}-1} G(\tau_{i+1}^\ell)-G(\tau_{i}^\ell)-n_\ell \eps \ell \bigg]\\
&+ \sum_{i=0}^{n_{\ell}-1}\E_{\tau_{i+1}^\ell}^{\overline \P^\nu,\overline \P^{\nu^\star}}\bigg[ \int_{\sigma}^T  f_r(\tau_i^\ell,X,(\nu\otimes_{\tau_{i+1}^\ell}\nu^\star)_r)-f_r(\tau_{i+1}^\ell,X,(\nu\otimes_{\tau_{i+1}^\ell}\nu^\star)_r)\d r\bigg]\\
&+ \sum_{i=0}^{n_{\ell}-1}\E_{\tau_{i+1}^\ell}^{\overline \P^\nu,\overline \P^{\nu^\star}}\bigg[\xi(\tau_{i}^\ell,X_{\cdot\wedge T})-\xi(\tau_{i+1}^\ell,X_{\cdot\wedge T})\bigg]\bigg\}.
\end{split}
\end{align}

The idea in the rest of the proof is to take the limit $\ell\longrightarrow 0 $ on both sides of \eqref{DPP:Limit:Aux2}. As $v$ is finite we can exchange the limit with the $\sup$ and study the limit inside. The analysis of all the above terms, except the error term $\ceil[\big]{\tau-\sigma/\ell}\eps {\ell}$, is carried out below. Regarding the error term, we would like to make the following remarks as it is clear that simply letting $\eps$ go to zero will not suffice for our purpose. As $\ell_\eps$ is bounded and monotone in $\eps$, see Remark \ref{Remark:Def:Eq:Ext}, we consider $\ell_0$ given by $\ell_\eps \longrightarrow \ell_0$ as $\eps \longrightarrow 0$. We must consider two cases for $\ell_0$: when $\ell_0=0$ the analysis in the next paragraph suffices to obtain the result; in the case $\ell_0>0$, we can then take at the beginning of this proof $\ell < \ell_0\leq \ell_\eps$, in which case all the sums in \eqref{DPP:Limit:Aux2} are independent of $\eps$, we then first let $\eps$ go to zero so that $\ceil[\big]{\tau-\sigma/\ell}\eps {\ell}\longrightarrow0$ as $\eps \longrightarrow 0$, and then study the limit $\ell\longrightarrow 0$ as in the following. In both scenarii \eqref{Eq:DPP:limit} holds.\medskip

We now carry out the analysis of the remaining terms. To this end, and in order to prevent enforcing unnecessary time regularity on the action process, we will restrict our class of actions to piece-wise constant actions, \emph{i.e.} $\nu_t:=\sum_{ k \in\N^\star} \nu_k \mathbf{1}_{(\varrho_{k-1},\varrho_k]}(t)$ for a sequence of non-decreasing $\F$--stopping times $(\varrho_k)_{k\in\N}$, and random variables $(\nu_k)_{k\in\N^\star}$, such that for any $k\geq1$, $\nu_k$ is $\Fc^X_{\varrho_{k-1}}$-measurable. We will denote by $\Ac^{\text{\rm pw}}(t,x)$ the corresponding subclass of actions. By \cite{karoui2015capacities2} the supremum over $\Ac(t,x)$ and $\Ac^{\text{pw}}(t,x)$ coincide. Indeed, under \Cref{AssumptionA}\ref{AssumptionA:DPP2} and \ref{AssumptionA}\ref{AssumptionA:DPP}, we can apply \cite[Theorem 4.5]{karoui2015capacities2}. Assumption \ref{AssumptionA}\ref{AssumptionA:DPP2}, \emph{i.e.} the Lipschitz-continuity of $a\longmapsto f_t(t,x,a)$, ensures the continuity of the drift coefficient when the space is extended to include the running reward, see \cite[Remark 3.8]{karoui2015capacities2}. Without loss of generality we assume $(\varrho_k)_{k\in\N}\subseteq \Pi^\ell$, as we can always refine $\Pi^\ell$ so that $\nu_{r}=\nu_i$ for $\tau_i^\ell\leq r \leq \tau_{i+1}^\ell$.
\medskip

In the following, we fix $\omega\in \Omega$. A first-order Taylor expansion of the first summation term in \eqref{DPP:Limit:Aux2} guarantees the existence of $\gamma_i^\ell\in (\tau_i^\ell,\tau_{i+1}^\ell),$ $i\in\{0,\dots,n_\ell\}$ such that
\begin{align}\label{DPP:Limit:Aux3:1}
&\left| \sum_{i=0}^{ n_\ell-1} G(\tau_{i+1}^\ell)-G(\tau_{i}^\ell) -  \Delta \tau_{i+1}^\ell \bigg( f_{\tau_i^\ell}(\tau_{i}^\ell,X,\nu_i)+\int_{\sigma}^{\tau_{i+1}^\ell} \partial_s f_r(\tau_{i+1}^\ell,X,\nu_r)\d r\bigg)\right|\\
&=\left| \sum_{i=0}^{ n_\ell -1} \Delta \tau_{i+1}^\ell \bigg( f_{\gamma_i^\ell}(\gamma_i^\ell,X,\nu_i)-  f_{\tau_i^\ell}(\tau_{i}^\ell,X,\nu_i)+\sum_{k=0}^{i} \int_{\tau_k^\ell}^{\tau_{k+1}^\ell\wedge \gamma_i^\ell} \partial_s f_r(\gamma_i^\ell,X,\nu_k)\d r-\int_{\tau_k^\ell}^{\tau_{k+1}^\ell} \partial_s f_r(\tau_{i+1}^\ell,X,\nu_k)\d r\bigg) \right|\nonumber \\
&\leq \sum_{i=0}^{ n_\ell -1}|\Delta \tau_{i+1}^\ell | \bigg( \rho_f(|\Delta \tau_{i+1}^\ell |)+\sum_{k=0}^{i} \int_{\tau_k^\ell}^{\tau_{k+1}^\ell} \rho_{\partial_s f} (|\Delta \tau_{i+1}^\ell |)  \d r  \bigg)\leq 2 T\big( \rho_f(\ell)+\rho_{\partial_s f}(\ell)\big)\xrightarrow{\ell \to 0}0. \nonumber
\end{align}
The equality follows by replacing the expansion of the terms $G(\tau_{i+1}^\ell)$ and the fact $\nu$ is constant between any two terms of the partition. The first inequality follows from \Cref{AssumptionA}\ref{AssumptionA:DPP1}, where $\rho$ and $\rho_{\partial_s f}$ are the modulus of continuity of the maps $t\longmapsto f_t(t,x,a)$ and $s\longmapsto\partial_s f_r(s,x,a)$, for $a$ constant. The limits follows by bounded convergence as the last term is independent of $\omega$. Thus, both expressions on the first line have the same limit for every $\omega \in \Omega$. We claim that for a well chosen sequence of partitions of the interval $[\sigma ,\tau]$
\begin{align}\label{DPP:Limit:Aux3}
\E^{\overline \P^\nu}\bigg[\sum_{i=0}^{ n_\ell-1} G(\tau_{i+1}^\ell)-G(\tau_{i}^\ell)\bigg]
 \xrightarrow{\ell \to 0} \E^{ \overline \P^\nu}\bigg[\int_{\sigma}^\tau f_r(r,X,\nu_r)\d r+\int_{\sigma}^{\tau} \E^{ { \overline \P}^{\nu^\star}_{r,X} }\bigg[\int_{\sigma}^{r} \partial_s f_u(r,X,\nu_u)\d u \bigg]\d r \bigg],
\end{align}
where the integrals on the right-hand side are w.r.t the Lebesgue measure on $[0,T]$, and we recall the term inside $ \E^{{\overline \P}^{\nu^\star}_{r,X }}$ is $\Fc^X_r$-measurable. Indeed, following \citeauthor*{mcshane1986unified} \cite{mcshane1986unified}, for $\ell>0$ fixed there exists, $\omega$-by-$\omega$,  $\widehat \Pi^\ell:=(\hat \tau_i^\ell)_{i\in \{ 1, \dots, n_\ell\}}$ a partition of $[\sigma,\tau]$ such that the Riemann sum in \eqref{DPP:Limit:Aux3:1} evaluated at $\widehat \Pi^\ell$ converges to the Lebesgue integral $\omega$-by-$\omega$. With this, we are left to argue \eqref{DPP:Limit:Aux3}. Recall that so far, our analysis was for $\omega\in \Omega$ fixed, therefore one has to be careful about, for instance, the measurability of the partition $\widehat \Pi^\ell$. An application of Galmarino's test, see \citeauthor*{dellacherie1978probabilities} \cite[Ch. IV. 99--101]{dellacherie1978probabilities}, guarantees that $\hat \tau_i^\ell \in \Tc_{0,T}$ for all $i\in \{ 1, \dots, n_\ell\}$, \emph{i.e.} the random times $\hat \tau_i^\ell$ are in fact stopping times. See \Cref{Lemma:galmarinos} for details. Finally, \eqref{DPP:Limit:Aux3} follows by the bounded convergence theorem. \medskip

Similarly, a first-order expansion of the second term in \eqref{DPP:Limit:Aux2} yields $( \gamma_i^\ell)_{ i \in\{0,\dots, n_\ell\}}$ such that 
\begin{align*}
 &\Bigg|\sum_{i=0}^{ n_\ell-1} \E_{\tau_{i+1}^\ell}^{\overline \P^\nu,\overline\P^{\nu^\star}} \bigg[ \int_{\sigma}^{T}  \Big( f_r\big(\tau_i^\ell,X,(\nu\otimes_{\tau_{i+1}^\ell}  \nu^\star)_r\big)-f_r\big(\tau_{i+1}^\ell,X,(\nu\otimes_{\tau_{i+1}^\ell} \nu^\star)_r\big)+\Delta \tau_{i+1}^\ell \partial_s f_r\big(\tau_{i+1}^\ell,X,(\nu\otimes_{\tau_{i+1}^\ell}  \nu^\star)_r\big) \Big) \d r\bigg] \Bigg| \\ 
&=\Bigg|\sum_{i=0}^{ n_\ell -1} \E_{\tau_{i+1}^\ell}^{\overline \P^\nu,\overline \P^{\nu^\star}}\bigg[ \Delta \tau_{i+1}^\ell \int_{\sigma}^{T} \Big( \partial_s f_r\big(\gamma_{i}^\ell,X,(\nu\otimes_{\tau_{i+1}^\ell} \nu^\star)_r\big)-\partial_s f_r\big(\tau_{i+1}^\ell,X,(\nu\otimes_{\tau_{i+1}^\ell} \nu^\star)_r\big)\Big)\d r\bigg]\Bigg|\leq   T  \rho_{\partial_s f}(\ell)\xrightarrow{\ell \to 0}0. \nonumber
\end{align*}
Since the limits agree, we obtain that for an appropriate choice of $\Pi^\ell$ this term converges to
\[
\E^{\overline \P^\nu}  \bigg[ \sum_{i=0}^{ n_\ell -1}  \Delta \tau_{i+1}^\ell   \E^{{\overline \P }^{\nu^\star}_{\tau_{i+1}^\ell, X}} \bigg[  \int_{\sigma}^T     \partial_s f_u (\tau_{i+1}^\ell,X,(\nu\otimes_{\tau_{i+1}^\ell} \nu^\star)_u)\d u \bigg] \bigg] \xrightarrow{\ell \rightarrow 0}  \E^{\overline \P^\nu}  \bigg[ \int_{\sigma}^\tau \!  \E^{\overline \P^{\nu^\star}_{r,X} } \bigg[  \int_{\sigma}^T  \partial_s f_u(r,X,(\nu\otimes_{r} \nu^\star)_u) \d u\bigg]  \d r \bigg].
\]
Combining the double integrals in \eqref{DPP:Limit:Aux3} and the previous expression we obtain back in \eqref{DPP:Limit:Aux2} that for $\Pc(\xb)\text{--}\qe\; x \in \Xc$
\begin{align*}
v(\sigma,x) \geq \sup_{\nu \in \Ac(\sigma,x)} \E^{\overline \P^\nu}\bigg[ v(\tau, X)+\int_{\sigma}^\tau \bigg[ f_r(r,X,\nu_r)-\E^{{\overline \P}^{\nu^\star}_{r,X} } \bigg[\partial_s \xi(r,X_{\cdot\wedge T}) +\int_r^T  \partial_s f_u(r,X,\nu^\star_u)\d u\bigg]\bigg]\d r\bigg].
\end{align*}
Now for the reverse inequality, note that for $\Pc(\xb)-\qe x\in \Xc$, $\overline \P^{\nu^\star}_{\sigma(\omega),x} \in \Pc(\sigma(\omega),x)$, \emph{i.e.} $\nu^\star \in \Ac(\sigma,x)$. Second, by definition 
\begin{align*}
v(\sigma,x)=\E^{\overline \P^{\nu^\star}_{\sigma,x}} \bigg[ v(\tau,X)+\int_\sigma^T f_r(\sigma, X,\nu^\star)\d r - \int_\tau^T f_r(\tau, X,\nu^\star)\d r +\xi(\sigma,X_{\cdot \wedge T}) - \xi(\tau, X_{\cdot \wedge T})\bigg].
\end{align*}
In light of the regularity of $s\longmapsto f_t(s,x,a)$ and the measurability of $\nu^\star$, Fubini's theorem yield
\begin{align*}
\E^{\overline \P^{\nu^\star}_{\sigma,x}} \bigg[ \int_\sigma^T f_r(\sigma, X,\nu^\star_r)\d r - \int_\tau^T f_r(\tau, X,\nu^\star_r)\d r \bigg] =\E^{\overline \P^{\nu^\star}_{\sigma,x}} \bigg[  \int_\sigma^\tau  \bigg( f_r(r,X,\nu^\star_r) - \E^{\overline \P^{\nu^\star}_{r,X}} \bigg[ \int_u^T \partial_s f_u (r,X,\nu^\star_u)\d u \bigg] \bigg)\d r\bigg],
\end{align*}
where we also use the tower property. Proceeding similarly for $s\longmapsto \xi(s,x)$, we conclude that for $\Pc(\xb)-\qe x\in \Xc$
\begin{align*}
v(\sigma, x)=\E^{\overline \P^{\nu^\star}_{\sigma,x}}\bigg[ v(\tau,X)+\int_{\sigma}^\tau \bigg( f_r(r,X,\nu_r^\star)-\E^{{\overline \P}^{\nu^\star}_{r,X}} \bigg[\partial_s \xi(r,X_{\cdot\wedge T}) +\int_r^T  \partial_s f_u(r,X,\nu^\star_u)\d u\bigg] \bigg)\d r\bigg],
\end{align*}
which gives us the desired equality and the fact that $\nu^\star$ does attain the supremum.
\end{proof}

\begin{remark}\label{Remark:assumpDPP}
Let us comment on the necessity of {\rm\Cref{AssumptionA}}\ref{AssumptionA:DPP} for our result to hold. As commented in the proof, a crucial step in our approach is that the $\sup$ in \eqref{DPP:Limit:Aux2} attains the same value over $\Ac(t,x)$ and $\Ac^{\rm{pw}}(t,x)$. For this we used {\rm \cite[Theorem 4.5]{karoui2015capacities2}} which holds in light of {\rm\Cref{AssumptionA}}\ref{AssumptionA:DPP}. Indeed, after inspecting the proof of {\rm \cite[Theorem 4.5]{karoui2015capacities2}}, one sees that {\rm \cite[Assumption 1.1]{karoui2015capacities2}} guarantees pathwise uniqueness of the solution to an auxiliary {\rm SDE}. 
However, as pointed out also in {\rm\citeauthor*{claisse2016pseudo} {\cite[Section 2.1]{claisse2016pseudo}}}, the previous condition can be relaxed to weaker conditions which imply weak uniqueness but are beyond the scope of the current paper. 
\end{remark}

\begin{remark}\label{Remark:DPP:Limit}
A close look at our arguments in the above proof, right after {\rm\Cref{DPP:Limit:Aux2}}, brings to light how to obtain {\rm\Cref{Theorem:DPP:Limit}} in the case one introduces $\ell_{\eps,t,\nu }$ in the definition of equilibria. Indeed, we need to control $\ell_{0,t,\nu }$, the limit $\eps\longrightarrow 0$ of $\ell_{\eps,t,\nu }$. In the case of equilibria, no extra condition was necessary as $\ell_\eps$ is uniform in $(t,\nu )$. However, when this is not the case one could add, for instance, the condition that for $\Pc(\xb)$--$\qe x \in \Xc$  
\[
\inf_{(t,\nu)\in [0,T]\times \Ac(t,x)} \ell_{0,t,\nu}>0.
\]
\end{remark}

\begin{remark}[Reduction in the exponential case]\label{Remark:DPP:sanotycheck}
As a sanity check at this point, we can see what {\rm\Cref{Theorem:DPP:Limit}} yields in the case of exponential discounting. Let, for any $(t,s,x,a)\in[0,T]^2\times \Xc\times A$
\begin{align*}
&f(s,t,x,a)=\mathrm{e}^{-\theta (t-s)}\tilde f(t,x,a),\; \xi(s,x)=\mathrm{e}^{-\theta (T-s)}\tilde \xi(x),\\
&J(t,x,\nu)=\E^{\overline \P^\nu} \bigg[\int_t^T e^{-\theta (r-t)}\tilde f(r,X,\nu_r)\d r+e^{-\theta (T-t)}\tilde \xi(X_{\cdot \wedge T})\bigg].
\end{align*}
Notice that
\begin{align*}
\int_t^\tau \bigg(\partial_s \xi (r,X_{\cdot \wedge T}) +\int_r^T  \partial_s f_u(r,X, \nu_u^\star) \d u \bigg) \d r=&\ \big(\mathrm{e}^{-\theta(T-\tau)}-  \mathrm{e}^{\theta(T-t)} \big) \tilde \xi (X_{\cdot \wedge T})+\int_t^\tau  \big(1-\mathrm{e}^{-\theta (r-t)}\big) \tilde f(r,X,\nu_r^\star ) \d r\\
&+\int_\tau^T \big (\mathrm{e}^{-\theta(r-\tau)}-\mathrm{e}^{-\theta(r-t)}\big)\tilde f(r,X,\nu_r^\star)\d r.
\end{align*}
Now, replacing on the right side of \eqref{Eq:DPP:limit} and cancelling terms we obtain that for $\Pc(\xb)$--$\qe x \in \Xc$
\begin{align*}
v(\sigma,x)=\sup_{\nu \in \Ac(\sigma,x)} \E^{\overline \P^\nu}\bigg[ \int_\sigma^\tau e^{-\theta(r-\sigma)} \tilde f(r,X,\nu_r)\d r+ v(\tau,X_{\cdot \wedge T}) \bigg],
\end{align*}
which is the classic dynamic programming principle, see \emph{\cite[Theorem 3.5]{karoui2015capacities2}}.
\end{remark}

Finally, we mention that representations in the spirit of \eqref{Eq:DPP:limit} have been obtained, see \cite[Proposition 3.2]{ekeland2008investment}. Nevertheless, this is an \emph{a posteriori} result, which follows from a direct application of Feynman--Kac's formula.

\section{Analysis of the BSDE system}\label{Section:Analysis}

We begin this section introducing the spaces necessary to carry out our analysis of \eqref{P}.
\subsection{Functional spaces and norms}
Let $(\Pc(t,x))_{(t,x)\in [0,T]\times \Xc}$ be given family of sets of probability measures on $(\Omega,\Fc)$ solutions to the corresponding martingale problems with initial condition $(t,x)\in [0,T]\times \Omega$.  Fix $(t,x)\in [0,T]\times \Xc$ and let $\Gc$ be an arbitrary $\sigma$-algebra on $\Omega$, $\G:=(\Gc_r)_{s\leq r\leq T}$ be an arbitrary filtration on $\Omega$, $\text{X}$ be an arbitrary $\G$-adapted process, $\P$ an arbitrary element in $\Pc(t,x)$. For any $p,q \geq 1$ we introduce the space

\begin{enumerate}[label=$\bullet$, ref=.$(\roman*)$,wide, labelindent=0pt]
\item $\Lc^p_{t,x}(\Gc)$ $($resp. $\Lc^p_{t,x}(\Gc,\P))$ of $\Gc$-measurable $\R$-valued random variables $\xi$ with 
\[ \| \xi\|_{\Lc^p_{t,x}}^p:=\sup_{\P\in\Pc({t,x})}\E^\P[ |\xi|^p]<\infty,\; \bigg(\text{resp. }  \| \xi\|_{\Lc^p_{t,x}(\P)}^p:= \E^\P[ |\xi|^p]<\infty \bigg). \]
\item  $\S^p_{t,x}(\G)$ $($resp. $\S^p_{t,x}(\G,\P))$ of $Y\in \Pc_{\text{prog}}(\R,\G)$, with $\Pc({t,x})\text{\rm--}\qs$ $($resp. $\P\text{\rm--}\as)$ c\`adl\`ag paths on $[t,T]$, with 
\[ \|Y\|_{\S^p_{t,x}}^p:=\sup_{\P\in\Pc({t,x})} \E^\P\bigg[ \sup_{r\in[t,T]} |Y_r|^p\bigg]<\infty, \; \bigg(\text{resp. } \|Y\|_{\S^p_{t,x}(\P)}^p:=\E^\P\bigg[ \sup_{r\in[t,T]} |Y_r|^p\bigg]<\infty \bigg) .\]

\item  $\L^{q,p}_{t,x}(\G)$ $($resp. $\L^{q,p}_{t,x}(\G,\P))$ of $Y\in \Pc_{\text{prog}}(\R,\G)$, with 
\[ \|Y\|_{\L^{q,p}_{t,x}}^p:=\sup_{\P\in\Pc({t,x})} \E^\P\bigg[\bigg( \int_0^T |Y_r|^q\d r\bigg)^{\frac{p}q} \bigg]<\infty, \; \bigg(\text{resp. } \|Y\|_{\L^{q,p}_{t,x}(\P)}^p:=\E^\P\bigg[ \bigg( \int_0^T |Y_r|^q\d r \bigg)^{\frac{p}q}\bigg]<\infty \bigg) .\]

\item $\H^{p}_{t,x}(\G)$ $($resp. $\H^p_{t,x}(\G,\P))$ of $Z \in  \Pc_{\text{\rm pred}}(\R^d,\G)$, which are defined $\sigmah_t^2 \d t\text{\rm--}\ae$, with 
\[  \|Z\|_{\H^{p}_{t,x}}^p :=\sup_{\P\in\Pc({t,x})} \E^\P\bigg[\bigg( \int_0^T |\sigmah_r Z_r|^2\d r\bigg)^{\frac{p}2} \bigg] <\infty,\; \bigg(\text{resp. }  \|Z\|_{\H^{p}_{t,x}(\P)}^p := \E^\P\bigg[\bigg( \int_0^T |\sigmah_r Z_r|^2\d r\bigg)^{\frac{p}2} \bigg] <\infty\bigg). \]

\item $\I^p_{t,x}(\G,\P)$ of $K \in \Pc_{\text{\rm pred}}(\R,\G)$, with $\P-\as$  c\`adl\`ag, non-decreasing paths with $K_t=0$, $\P$--$\as$, and such that
\[\|K\|_{\I^p_{t,x}(\P)}^p:=\E^\P\big[  |K_T|^p\big]<\infty.\]

We will say a family $(K^\P)_{\P\in \Pc({t,x})}$ belongs to $\I^p_{t,x}((\G_\P)_{\P\in\Pc({t,x})})$, if for any $\P\in \Pc({t,x})$, $K^\P\in \I^p_{t,x}(\G_\P,\P)$, and
 \[
 \|K\|_{\I^{p}_{t,x}}:= \sup_{\P\in\Pc({t,x})}  \|K\|_{\I^p_{t,x}(\P)}^p<\infty.
 \]
 
\item $\M^p_{t,x}(\G,\P)$ of martingales $M\in \Pc_{\text{\rm opt}}(\R,\G)$ which are $\P$-orthogonal to $X$ (that is the product $XM$ is a $(\G,\P)$-martingale), with $\P$--$\as$ c\`adl\`ag paths, $M_0=0$ and
\[\|M\|^p_{\M^p_{t,x}(\P)}:=\E^\P\Big[ [ M]^{\frac{p}{2}}_T\Big]<\infty.\]

%
Due to the time-inconsistent nature of the problem, for a metric space $E$ we let $\Pc^2_{\text{meas}}(E,\Gc)$ be the space of two parameter processes $(U_\uptau)_{\uptau \in [0,T]^2 }$ $:([0,T]^2\times \Omega, \Bc([0,T]^2)\otimes \Gc)  \longrightarrow (\Bc(E),E)$ measurable. 
\medskip

\item $\Lc^{p,2}_{t,x}(\Gc)$ $($resp. $\Lc^{p,2}_{t,x}(\Gc,\P))$ denotes the space of collections $(\xi(s))_{s\in [0,T]}$ of $\Gc$-measurable $\R$-valued random variables such that the mapping $([0,T]\times \Omega, \Bc([0,T])\otimes \Fc^X_T)\longrightarrow (\Lc^p_{t,x}(\Gc),\| \cdot \|_{\Lc^p_{t,x}})$ $($resp. $(\Lc^{p,2}_{t,x}(\Gc,\P)),\| \cdot \|_{\Lc^p_{t,x}(\P)})):s\longmapsto \xi(s)$ is continuous and
\[   \| \xi\|_{\Lc^{p,2}_{t,x}}^p:= \sup_{s\in[0,T]} \|\xi(s)\|^p_{\Lc^{p}_{t,x}}<\infty     ,\, \bigg(\text{resp. } \| \xi\|_{\Lc^{p,2}_{t,x}(\P)}^p:= \sup_{s\in[0,T]} \|\xi(s)\|^p_{\Lc^{p}_{t,x}(\P)}<\infty\bigg). \]

Finally, given a generic integrability space $(\I^p,\|\cdot\|_{\I})$ we introduce the space

\item $\I^{p,2}$ of $(U_\uptau)_{\uptau \in [0,T]^2 }\in \Pc^2_{\text{meas}}(\R,\Gc_T)$ such that the mapping $([0,T],\Bc([0,T])) \longrightarrow (\I^{p},\|\cdot \|_{ \I^{p}}): s \longmapsto U^s $ and 
\begin{align*}
\| U\|_{\I^{p,2}}^p:= \sup_{s\in[0,T]} \| U^s\|_{\I^{p}}^p <\infty.
\end{align*}

\end{enumerate}

\begin{remark}
To ease the notation, when $p=q$ we will write $\L^{p}_{t,x}(\G)$ $\big($resp. $\L^{p,2}_{t,x}(\G)\big)$ for $\L^{q,p}_{t,x}(\G)$ $\big($resp. $\L^{q,p,2}_{t,x}(\G)\big)$. With this convention, $\L^{2}_{t,x}(\G)$ $\big($resp. $\L^{2,2}_{t,x}(\G)\big)$ will always mean $\L^{2,2}_{t,x}(\G)$ $\big($resp. $\L^{2,2,2}_{t,x}(\G)\big)$. The spaces are $\L^{q,p,2}_{t,x}(\G)$ and $\H^{p,2}_{t,x}(\G,X)$ are Hilbert spaces. For $U\in \S^{p,2}_{t,x}(\G)$ we highlight the diagonal process $(U_t^t)_{t\in [0,T]}$ is well defined. Indeed, the path continuity of $U^s$ for all $s\in [0,T]$ together with the $($uniform$)$ continuity of $s\longmapsto \|U^s\|_{\S^2}$ allows us to define a $\Bc[0,T]\otimes \Fc$-measurable version. Finally we will suppress the dependence on $(0,\xb)$ and write $\S_{\xb}(\G)$ for $\S_{0,\xb}(\G)$ and similarly for the other spaces.
\end{remark}

\subsection{The BSDE system}
We now begin our study of the system
\begin{align}\tag{H}
\begin{split}
Y_t=&\ \xi(T,X_{\cdot\wedge T})+\int_t^T   F_r(X,Z_r,\widehat \sigma_r^2,\partial {Y_r^r})\d r- \int_t^T  Z_r \cdot  \d X_r+ K_T^\P-K_t^\P,\;  0\leq t \leq T,\; \Pc(\xb)\text{--}\qs,\\
{\partial Y_t^s}(\omega ):=&\ \E^{\overline \P^{\nu^\star}_{t,x}}\bigg[\partial_s \xi(s,X_{\cdot\wedge T})+\int_t^T\partial_s f_r(s,X, \Vc^\star(r,X,Z_r) )  \d r\bigg],\; (s,t)\in[0,T]^2,\; \omega\in\Omega. 
\end{split}
\end{align}

As a motivation of the notion of solution to \eqref{HJB}, let us note that the first equation is a 2BSDE under the set $\Pc(\xb)$, \emph{i.e.} the dynamics holds $\Pc(\xb)$--$\qs$ A closer examination of \Cref{Def:equilibrium} reveals that, unlike in the classical stochastic control scenario, one needs to be able to make sense of a solution under any $\Pc(s,x)$ for $s\in [0,T]$ and $x$ outside a $\Pc(\xb)$-polar set. Fortunately, the results in \citeauthor*{possamai2015stochastic} \cite{possamai2015stochastic} allow us to verify that constructing the initial solution suffices, see \Cref{Lemma:shifted2bsde}.

\begin{definition}\label{Def:2bsde}
Let $(s,x)\in [0,T]\times \Xc$, $\partial Y_r^r$ be a given process and consider the equation
\begin{align}\label{Eq:dynamic2bsde}
Y_t^{s,x}=\xi(T,X_{\cdot\wedge T})+\int_t^T   F_r(X,Z_r^{s,x},\widehat \sigma_r^2,\partial {Y_r^r})\d r- \int_t^T  Z_r^{s,x} \cdot  \d X_r\ + K_T^{s,x,\P}-K_t^{s,x,\P},\;  s\leq t \leq T.
\end{align}
We say $(Y^{s,x},Z^{s,x},(K^{s,x,\P})_{\P\in \Pc(s,x)})$ is a solution to {\rm 2BSDE} \eqref{Eq:dynamic2bsde} under $\Pc(s,x)$ if for some $p>1$,
\begin{enumerate}[label=$(\roman*)$, ref=.$(\roman*)$,wide, labelindent=0pt]
\item {\rm\Cref{Eq:dynamic2bsde}} holds $\Pc(s,x)$--$\qs$
\item $(Y^{s,x},Z^{s,x},(K^{s,x,\P})_{\P\in \Pc(s,x)})\in \S_{s,x}^p(\F^{X,\Pc(s,x)}_+)\times \H_{s,x}^p(\F^{X,\Pc(s,x)}_+)\times (\I_{s,x}^p(\F^{X,\P}_+,\P))_{\P\in \Pc(s,x)}$.
\item The family $(K^{s,x,\P})_{\P\in \Pc(s,x)}$ satisfies the minimality condition
\[0=\operatorname*{ess\, inf^\P}_{\P'\in \Pc_{s,x}(t,\P,\F_+^X)} \E^{\P'}\big[K_T^{s,x,\P'}-K^{s,x,\P'}_t \big|\Fc_{t+}^{X,\P'}\big],\; s\leq t\leq T,\; \P\text{\rm--}\as,\; \forall \P\in \Pc(s,x).\]
\end{enumerate}
\end{definition}

Consistent with \Cref{Def:solution2bsde0}, we set $(Y,Z,(K^\P)_{\P\in\Pc(\xb)})=(Y^{0,\xb},Z^{0,\xb},(K^{0,\xb,\P})_{\P\in\Pc(\xb)})$. We use the rest of this section to prove \Cref{Thm:PDEsystem:markovian}, justifying that in the setting of this paper our approach encompasses that of \cite{bjork2014theory}.

\begin{proof}[Proof of \emph{Theorem \ref{Thm:PDEsystem:markovian}}]
Let $\P\in \Pc(\xb)$ and consider $(\Omega,\F^{X,\Pc(\xb)}_+,\P)$. We first verify that $( Y, Z, K)$ satisfies first equation in System \eqref{HJB}. A direct application of It\^o's formula to $ Y_t=v(t,X_t)$ with $X$ given by the SDE \eqref{Eq:driftlessSDE} yields that $\P$--$\as$
\begin{align*}
 Y_t&= Y_T -\int_t^T \Big(\partial_t v(r,X_r) +\frac{1}{2} \Tr[\d \langle X\rangle_r \partial_{xx}v(r,X_r)] \Big) \d r - \int_t^T  \partial_x v(r,X_r) \cdot  \d X_r\\
&=  Y_T+ \int_t^T  \sup_{\Sigma \in \Sigma_r(X_r)}\Big\{ F_r(X_r,  Z_r,\Sigma,\partial  \Yc_r^r)+\frac{1}{2} \Tr[ \Sigma \,  \Gamma_r] \Big\}-  \frac{1}{2} \Tr\big[ \d \langle X\rangle_r  \Gamma_r  \big] \d r -\int_t^T  Z_r  \cdot \d X_r,
\end{align*}
where we used \eqref{Eq:HJB-BKM} and the definition of $H$ in terms of $F$ as in \eqref{Hamiltonia:General}. Next, by definition of $\sigmah^2_t$ and with $K_t$ as in the statement we obtain
\begin{align*}
& Y_t=  Y_T+ \int_t^T F_r(X_r,  Z_r,\widehat \sigma_r^2 ,\partial \Yc_r^r)-\int_t^T Z_r  \cdot \d X_r+ K_T- K_t, \; \P\text{\rm--}\as
\end{align*}

Next, we verify the integrability conditions in Definitions \ref{Def:sol:systemH}\ref{Def:sol:systemH:i} and \ref{Def:sol:systemH}\ref{Def:sol:systemH:ii}. As $\sigma$ is bounded, it follows that for any $\P\in \Pc(\xb)$, $X_t$ has exponential moments of any order which are bounded on $[0,T]$, \emph{i.e.} $\exists C$, $\sup_{t\in[0,T]} \E^\P[\exp(c |X_t|_1)]\leq C<\infty, \forall \P\in \Pc(\xb), \forall c>0$ ,where $C$ depends on $T$ and the bound on $\sigma$. 

\medskip
The exponential grown assumption on $v$ and de la Vallée-Poussin's theorem yield $ Y\in\S^p_\xb(\F^{X,\Pc(\xb)}_+,\Pc(\xb))$ for $p>1$. Similarly, we obtain $ Z\in \H^p_\xb(\F^{X,\Pc(\xb)}_+,\Pc(\xb),X)$ and $\partial  Y \in \S^{p,2}_\xb(\F^{X,\Pc(\xb)}_+,\Pc(\xb))$. To derive the integrability of $ K$, let $\P \in \Pc(\xb)$ and note that
\begin{align*}
\E^\P[K_T^p]\leq & C_p\bigg( \| Y \|^p_{\S^p_\xb}+\sup_{\P\in \Pc(\xb)}\E^\P\bigg[\bigg(\int_0^{T} |F_r(X_r, Z_r,\widehat \sigma_r^2 ,0)| \d r\bigg)^p \bigg]+\| \partial  Y \|^p_{\S^{p,2}_\xb}+ \| Z\|_{\H_\xb^p}^p \bigg)<\infty,
\end{align*}
where the inequality follows from the fact $(t,x,z,a)\longmapsto F(t,x,z,a,0)$ is Lipschitz in $z$ which follows from the exponential growth assumption on $f^0$ and the boundedness of the coefficients $b$ and $\sigma$. The constant $C_p$ depends on the Lipschitz constant and the value of $p$ as in \citeauthor*{bouchard2018unified} \cite[Lemma 2.1]{bouchard2018unified}. As the term on the right does not depend on $\P$, we conclude $ K \in  \I^{p}_\xb((\F^{X,\P}_+)_{\P\in \Pc(\xb)})$. The previous estimate shows as a by-product that the 2BSDE in \eqref{HJB} is well-posed, see \cite[Theorem 4.1]{possamai2015stochastic}. Therefore, provided $K$ satisfies the minimality condition by \cite[Theorem 4.2]{possamai2015stochastic}, we conclude \Cref{Def:sol:systemH}\ref{Def:sol:systemH:i}, \emph{i.e.} $(Y,Z,K)$ is the solution to the 2BSDE in \eqref{HJB}.  \medskip

We now argue that $K$ satisfies \eqref{Min:Condition:NoA}. Following \cite[Theorem 5.2]{soner2012wellposedness}, we can exploit the fact the $\sigma$ is bounded and the continuity in time of $X$, $Z$ and $\Gamma$ for fixed $x\in \Xc$, to show that for $\eps>0$, $(t,\nu)\in [0,T]\times \Ac(\xb,\P)$ and $\tau^{\eps,t} :=T\wedge \inf \{r>t : K_r \geq K_t +\eps\},$ there exists $\P^{\nu^\eps}\in \Pc_{\xb}(t,\P,\F_+^X)$ such that $k_t \leq \eps, \; \d t\otimes \d \P^{\nu^\eps} \text{ on } [\tau^{\eps,t},T]\times \Omega $. From this the minimality condition follows. Moreover, by assumption, we know there exists $\P^{\nu^\star}\in \Pc^0(0,\xb,\nu^\star)$ where $\nu^\star$ maximises the Hamiltonian, \emph{i.e.} $\sigmah^2_r=(\sigma\sigma^\t)_t(X_t,\nu^\star_t), \d t\otimes \d \P^\star\text{\rm--}\ae$ on $[0,T]\times \Omega$, and $k_t=0$, $\P^{\nu^\star}\text{\rm--}\as$ for all $t\in[0,T]$. Thus the minimality condition is attained under $\P^{\nu^\star}$, \emph{i.e.} \ref{Def:sol:systemH}\ref{Def:sol:systemH:iii} holds. Moreover, note that this implies
\begin{align}\label{Eq:equalitymaximisers}
\overline \Vc^\star(t,X_t,Z_t,\Gamma_t)= \Vc^\star(t,X_t,Z_t),\; \P^{\nu^\star}\text{\rm--}\as
\end{align}

We are left to argue $\partial  Y$ satisfies the second equation in \eqref{HJB}. Given the regularity of $s\longmapsto J(s,t,x)$, we can differentiate the second equation in \eqref{Eq:HJB-BKM}. Using this, for $s\in [0,T]$ fixed and $\omega=(x,\textbf{w},q) \in \Omega$ we can apply It\^o's formula to $\partial Y^s_t=\partial_s J(s,t,X_t)$ under $\overline \P^{\nu^\star}_{t,x}$. This yields, 
\[ \partial Y^s_t(\omega)=\E^{\overline \P^{\nu^\star}_{t,x}}\bigg[ \partial_s \xi(s,X_T)+\int_t^T \partial_s f_r(s,X_r, \Vc^\star (r,X,Z_r))\d r\bigg], \]
where the stochastic integral term vanished in light of the growth assumption on $\partial_x J(s,t,x)$ and we used \eqref{Eq:equalitymaximisers}.
\end{proof}

\subsection{Necessity of (\textcolor{red}{H}) for equilibria}\label{Section:necessity}

We recall that throughout this section, we let the Assumptions \ref{AssumptionA} and \ref{AssumptionB} hold. To begin with, from the definition of the set $\Ac(t,x)$, we can restate the result of our dynamic programming principle \Cref{Theorem:DPP:Limit}, by decomposing a control $\nu\in \Ac(t,x)$ into a pair $(\P,\nu)\in\Pc(t,x)\times \Ac(t,x,\P)$, where $\P$ is the unique weak solution to \eqref{Eq:driftlessSDE} and $\nu$. We remark that for any given $\P\in \Pc(t,x)$, there could be in general several admissible controls $\nu$. With this we state \Cref{Theorem:DPP:Limit} as, for $\nu^\star \in \Ec(\xb)$, $\sigma, \tau\in \Tc_{t,T}$, $\sigma\leq \tau$ and $\Pc(\xb)$--$\qe x\in \Xc$
\begin{align}\label{Eq:V:sup:sup}
v(\sigma,x)=\sup_{\P \in \Pc(\sigma,x)}\sup_{\nu \in \Ac(\sigma,x,\P)} \E^{\overline \P^\nu} \bigg[ v(\tau, X)+ \int_{\sigma}^\tau  f_r(r,X,\nu_r)- \E^{\overline \P^{\star,\nu^\star}_{r,\cdot} }\bigg[\partial_s \xi(r,X_{\cdot\wedge T}) + \int_r^T  \partial_s f_u(r,X,\nu^\star_u)\d u\bigg]\d r\bigg].
\end{align}

The goal of this section is to show that given $\nu^\star$, $\P^\star$, unique solution to the martingale problem asociated with $\nu^\star$, and $v(t,x)$, one can construct a solution to \eqref{HJB}. To do so, we recall that given a family of BSDEs $(\Yc^\P)_{\P\in\Pc(t,\omega)}$ indexed by $\Pc(t,\omega)\subseteq \Prob(\Omega)$ with $(t,\omega)\in [0,T]\times \Omega$, a 2BSDE is the supremum over $\Pc(t,\omega)$ of the $\P$-expectation of the afore mentioned family, see \cite{soner2012wellposedness}, \cite{possamai2015stochastic}. This together with equation \eqref{Eq:V:sup:sup} reveals the road map we should take.\medskip

Let us begin by fixing an equilibrium $\nu^\star\in \Ec(\xb)$. For $(s,t,\omega,\P)\in [0,T]\times [0,T]\times \Omega\times \Pc(t,x)$ we consider the $\F^X$-adapted processes
\begin{align}\label{BSDE:supactions}
\begin{split}
\widetilde \Yc_t^\P(\omega)&:=\sup_{\nu \in \Ac(t,x,\P)} \E^{\overline \P^\nu}\bigg[ \xi(T,X)+\int_{t}^T \big( f_r(r,X,\nu_r)- \partial Y_r^r\big)\d r\bigg],\\
 \partial Y_t^s( \omega )&:=\E^{{\overline \P}^{\nu^\star}_{t,x} }\bigg[\partial_s \xi(s,X_{\cdot\wedge T}) +\int_t^T  \partial_s f_r(s,X,\nu^\star_r)\d r\bigg].
\end{split}
\end{align}
and on $(\Omega, \F^{X,\P}_+, \P)$ the BSDE
\begin{align}\label{BSDE:Nusup}
\Yc_t^\P=\xi(T,X_{\cdot \wedge T})+\int_t^T  F_r(X,\Zc_r^\P,\widehat \sigma _r^2, {\partial Y_r^r})\d r-\bigg(\int_t^T  \Zc_r^\P \cdot \d X_r\bigg)^\P,\; 0\leq s\leq t\leq T.
\end{align}
Note we specify the stochastic integral w.r.t $X$ is under the probability $\P$. Our first step is to relate $\widetilde \Yc^\P$ with the solution to the BSDE \eqref{BSDE:Nusup}. Namely, \Cref{Lemma:BSDE:Nusup} says that $\widetilde \Yc^\P$ corresponds to the first component of the solution to \eqref{BSDE:Nusup}.

\begin{lemma}\label{Lemma:BSDE:Nusup}
Let {\rm\Cref{AssumptionB}} hold, $(t,\omega , \P)\in [0,T]\times \Omega\times \Pc(t,x)$ and $(\Yc^{\P},\Zc^\P)$ be the solution to the {\rm BSDE} \eqref{BSDE:Nusup}, as in {\rm\citeauthor*{papapantoleon2018existence} \cite[Definition 3.2]{papapantoleon2018existence}}, and $\tilde \nu^{\star}_t:=\Vc^\star(t,X,\Zc_t^{\P})$. Then
\begin{align}\label{Eq:Lemma:BSDE:Nusup}
\widetilde \Yc_t^\P(\omega )=\E^{\P^{\tilde \nu^\star}}[\Yc^\P_t].
\end{align}
\end{lemma}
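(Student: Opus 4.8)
\medskip

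The plan is to recognise \eqref{Eq:Lemma:BSDE:Nusup} as an instance of the standard duality between a BSDE with a driver obtained by partial maximisation and a control problem over the actions generating a fixed volatility. First I would fix $(t,\omega,\P)\in[0,T]\times\Omega\times\Pc(t,x)$ and work on $(\Omega,\F^{X,\P}_+,\P)$. The right-hand side of \eqref{BSDE:Nusup} has driver $r\longmapsto F_r(X,\Zc_r^\P,\widehat\sigma^2_r,\partial Y_r^r)$, and by definition $F_r(X,z,\widehat\sigma_r^2,\partial Y_r^r)=\sup_{a\in A_r(X,\widehat\sigma_r^2)}\{h_r(r,X,z,a)\}-\partial Y_r^r$, with $h_r(r,X,z,a)=f_r(r,X,a)+b_r(X,a)\cdot\sigma_r(X,a)^\t z$. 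Under \Cref{AssumptionB}\ref{AssumptionB:i} and \ref{AssumptionB}\ref{AssumptionB:ii}, the map $z\longmapsto F_r(X,z,\widehat\sigma_r^2,\partial Y_r^r)$ is Lipschitz (in the $\widehat\sigma$-weighted norm) and the maximiser $\Vc^\star$ is Lipschitz in $z$, so by \cite[Definition 3.2]{papapantoleon2018existence} and the accompanying well-posedness there is a unique solution $(\Yc^\P,\Zc^\P)$ with the integrability dictated by \Cref{AssumptionB}\ref{AssumptionB:iii}, and the measurable selection $\widetilde\nu^\star_r:=\Vc^\star(r,X,\Zc^\P_r)$ attains the supremum in $F$ pointwise, i.e. $F_r(X,\Zc^\P_r,\widehat\sigma_r^2,\partial Y_r^r)=h_r(r,X,\Zc^\P_r,\widetilde\nu^\star_r)-\partial Y_r^r$, $\d r\otimes\d\P$--a.e.

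\medskip

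Next I would establish the two inequalities in \eqref{Eq:Lemma:BSDE:Nusup}. For "$\geq$": for any $\nu\in\Ac(t,x,\P)$, pass to $\overline\P^\nu$ via the Girsanov change of measure of \Cref{Remark:com}, under which $W^{\overline\P^\nu}_\cdot=W_\cdot-\int_0^\cdot b_r(X,\nu_r)\d r$ is a Brownian motion and $X_\cdot=x_0+\int_0^\cdot \sigma_r(X,\nu_r)(b_r(X,\nu_r)\d r+\d W^{\overline\P^\nu}_r)$. Rewriting the stochastic integral $(\int_t^T\Zc^\P_r\cdot\d X_r)^\P$ under $\overline\P^\nu$ produces a finite-variation correction $\int_t^T b_r(X,\nu_r)\cdot\sigma_r(X,\nu_r)^\t\Zc^\P_r\,\d r$ plus a $\overline\P^\nu$-martingale (here one uses that $\nu\in\Ac(t,x,\P)$ forces $\widehat\sigma^2_r=(\sigma\sigma^\t)_r(X,\nu_r)$, so $\Zc^\P\in\H^p$ is integrated against the same quadratic variation); taking $\overline\P^\nu$-expectations and bounding $h_r(r,X,\Zc^\P_r,\nu_r)\leq \sup_{a\in A_r(X,\widehat\sigma^2_r)}h_r(r,X,\Zc^\P_r,a)$ gives $\Yc^\P_t\geq\E^{\overline\P^\nu}[\xi(T,X)+\int_t^T(f_r(r,X,\nu_r)-\partial Y_r^r)\d r]$; taking the supremum over $\nu$ yields $\Yc^\P_t\geq\widetilde\Yc^\P_t$. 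For "$\leq$": I would argue that $\widetilde\nu^\star\in\Ac(t,x,\P)$ — this requires the weak uniqueness of \eqref{Eq:driftlessSDE} together with the fact that $\widetilde\nu^\star$ generates precisely the volatility $\widehat\sigma$ (since $\widetilde\nu^\star_r\in A_r(X,\widehat\sigma^2_r)$, using $2H=(-2F)^*$ and the construction of $\Vc^\star$ on $A_t(x,\widehat\sigma^2_t(x))$) — and then repeat the computation with $\nu=\widetilde\nu^\star$, where the driver inequality is now an equality, giving $\Yc^\P_t=\E^{\overline\P^{\widetilde\nu^\star}}[\xi(T,X)+\int_t^T(f_r(r,X,\widetilde\nu^\star_r)-\partial Y_r^r)\d r]\leq\widetilde\Yc^\P_t$. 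Combining, $\Yc^\P_t=\widetilde\Yc^\P_t(\omega)=\E^{\overline\P^{\widetilde\nu^\star}}[\Yc^\P_t]$, which is \eqref{Eq:Lemma:BSDE:Nusup} once one observes $\overline\P^{\widetilde\nu^\star}=\P^{\widetilde\nu^\star}$ in the notation of the statement.

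\medskip

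The main obstacle I expect is the admissibility claim $\widetilde\nu^\star\in\Ac(t,x,\P)$, i.e. verifying that the measurable selection $\Vc^\star(\cdot,X,\Zc^\P_\cdot)$ indeed yields a control for which \eqref{Eq:driftlessSDE} has $\P$ as its unique weak solution in the class $\Ac^0(t,x,\P)$. The subtlety is that $\Zc^\P$ depends on $\P$ and on the a priori given process $\partial Y^r_r$, so one must check that feeding this $\Zc^\P$ into $\Vc^\star$ produces an $\F^X$-predictable, $A$-valued process generating exactly $\widehat\sigma^2$ under $\P$ (Lipschitz continuity of $\Vc^\star$ from \Cref{AssumptionB}\ref{AssumptionB:ii} is what makes the composition well-behaved and measurable), and then invoke the weak-uniqueness standing assumption on \eqref{Eq:driftlessSDE} to conclude $\Pc^0(t,x,\widetilde\nu^\star)=\{\P\}$. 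A secondary technical point is the careful bookkeeping of the stochastic integral $\big(\int\Zc^\P_r\cdot\d X_r\big)^\P$ under the non-equivalent-looking (but in fact equivalent, since $b$ is bounded) change from $\P$ to $\overline\P^\nu$, and justifying that the local-martingale part is a true martingale under the integrability in \Cref{AssumptionB}\ref{AssumptionB:iii} — both are routine given the boundedness of $b,\sigma$ but need to be stated.
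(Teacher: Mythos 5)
Your argument is correct and reaches the same conclusion by essentially the same verification/duality mechanism, but the technical implementation differs from the paper's in one respect worth noting. The paper does not perform the Girsanov computation directly on the BSDE \eqref{BSDE:Nusup}; instead it introduces, for each $\nu\in\Ac(t,x,\P)$, an auxiliary BSDE \eqref{Necessity:Aux0} with driver $h_r(r,X,\cdot,\nu_r)-\partial Y^r_r$, identifies $\E^{\overline\P^\nu}[\Yc^{\P,\nu}_t]$ with the reward appearing in the definition of $\widetilde\Yc^\P_t$, and then invokes the comparison theorem of \cite[Theorem 3.25]{papapantoleon2018existence} (after checking the martingale representation property holds under $\P$ because all $\nu\in\Ac(t,x,\P)$ generate the same quadratic variation) to compare the whole family against $\Yc^\P$, with equality at $\tilde\nu^\star$. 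Your route replaces the comparison theorem by the pointwise driver inequality $h_r(r,X,z,\nu_r)\le F_r(X,z,\widehat\sigma^2_r,\partial Y^r_r)+\partial Y^r_r$ for $\nu_r\in A_r(X,\widehat\sigma^2_r)$, followed by a change of measure and taking expectations; this is more elementary and avoids citing a comparison result, at the cost of having to justify by hand that the stochastic integral is a true $\overline\P^\nu$--martingale and that $\E^{\overline\P^\nu}[\Yc^\P_t]$ can be identified with $\Yc^\P_t$ itself (which holds since $\Fc^{X,\P}_{t+}$ is $\P$--trivial for the problem started at $(t,x)$, by Blumenthal's law, and $\overline\P^\nu\sim\P$ because $b$ is bounded) --- a point you should state explicitly rather than pass from $\E^{\overline\P^\nu}[\Yc^\P_t]\ge\cdots$ to $\Yc^\P_t\ge\cdots$ silently. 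You also correctly isolate the admissibility of $\tilde\nu^\star$ as the key point and resolve it the same way the paper does: $\Vc^\star$ is by construction valued in $A_t(x,\widehat\sigma^2_t(x))$, so $(\sigma\sigma^\top)_t(X,\tilde\nu^\star_t)=\widehat\sigma^2_t$, and Lipschitz continuity of $\Vc^\star$ in $z$ gives predictability of the composition with $\Zc^\P$.
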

\begin{proof}
Let us consider on $\big(\Omega, \F^{X,\P}_+, \P\big)$, for $t\leq u\leq T$ and $\nu \in \Ac(t,x,\P)$ the BSDE
\begin{align}\label{Necessity:Aux0}
\Yc_u^{\P,\nu}= \xi(T,X_{\cdot \wedge T})+\int_u^T \big( h_r(r,X,\Zc_r^{\P,\nu},\nu_r)-\partial Y_r^r \big)\d r-\bigg(\int_u^T \Zc_r^{\P,\nu}\cdot \d X_r\bigg)^\P,\; \P\text{\rm--}\as
\end{align}
Under Assumptions \ref{AssumptionB}\ref{AssumptionB:i} and \ref{AssumptionB}\ref{AssumptionB:iii}, we know that $z\longmapsto h_t(t,x,z,a)$ is Lipschitz-continuous, uniformly in $(t,x,a)$, that there exists $p>1$ such that $([0,T]\times \Omega,\F^X)\ni (t,\omega)\longmapsto h_t(t,x,0,0)\in\H^{p,2}_{s,\omega}(\R^m,\F^{X,\P}_+,\P)$ is well defined, and that $\partial Y\in\H^p_{s,x}(\R,\F_+^{X,\P},\P)$. Moreover, as for any $\nu$, and $\tilde\nu$ in $\Ac(s,x,\P),$ $(\sigma\sigma^\t)_t(X,\nu_t)=\sigmah^2_t=(\sigma\sigma^\t)_t(X,\tilde \nu_t), \d t\otimes \d \P$--a.e., $\P$ is the unique solution to an uncontrolled martingale problem where $(\sigma \sigma^\t)_t(X):=(\sigma\sigma^\t)_t(X,\nu_t)$. Consequently, the martingale representation property holds for any local martingale in $(\Omega, \F^\P, \P)$ relative to $X$, see \citeauthor*{jacod2003limit} \cite[Theorem 4.29]{jacod2003limit}. Therefore, conditions (H1)--(H6) in \cite[Theorem 3.23]{papapantoleon2018existence} hold and the above BSDE is well defined. Its solution consists of a tuple $(\Yc^{\P,\nu},\Zc^{\P,\nu})\in \D^p_{s,x}(\F^{X,\P}_+,\P) \times \H^p_{s,x}(\R^m,\F^{X,\P}_+,\P,X)$ and for every $\nu \in \Ac(t,x,\P)$ and $\overline \P^\nu$ is as in \Cref{Remark:com} we have
\begin{align*}
\E^{\overline \P^\nu}[\Yc_t^{\P,\nu}]=\E^{\overline \P^\nu}\bigg[ \xi(T,X)+\int_{t}^T \big( f_r(r,X,\nu_r)- \partial Y_r^r\big)\d r\bigg],\ t\in[0,T].
\end{align*}

In addition, Assumption \ref{AssumptionB}\ref{AssumptionB:ii} guarantees the solution $(\Yc^\P,\Zc^\P)\in \D^p_{s,\omega}(\F^{X,\P}_+,\P) \times \H^p_{s,\omega}(\R^m,\F^{X,\P}_+,\P)$ to BSDE \eqref{BSDE:Nusup} is well defined. Furthermore, conditions (Comp1)--(Comp3) in \cite[Theorem 3.25]{papapantoleon2018existence} are fulfilled, ensuring a comparison theorem holds. Indeed, as $X$ is continuous (Comp1) is immediate, while (Comp2) and (Comp3) correspond in our setting to \ref{AssumptionB}\ref{AssumptionB:i} and \ref{AssumptionB}\ref{AssumptionB:iii}, respectively. By definition, $\tilde \nu^\star_t$ satisfies $(\sigma \sigma^\t)_t(X, \tilde \nu^\star_t)=\sigmah_t^2, \d t\otimes \d \P$--a.e. on $[t,T]\times \Xc$, and so, $\tilde \nu^\star \in \Ac(t,x,\P)$. By comparison, we obtain
\begin{align*}
\widetilde \Yc^\P_t(x)= \sup_{\nu\in \Ac(t,x\P)}\E^{\overline \P^\nu}[ \Yc^{\P,\nu}_t ] \leq \E^{\overline \P^{\tilde \nu^\star}} [\Yc^{\P}_t ].
\end{align*}\end{proof}

\begin{remark}
\begin{enumerate}[label=$(\roman*)$, ref=.$(\roman*)$,wide, labelindent=0pt]
\item In the literature on {\rm BSDEs} one might find the additional term $ \int_t^T \d \Nc_r^\P$ in \eqref{BSDE:Nusup}, where $\Nc^\P$ is a $\P$-martingale $\P$-orthogonal to $X$. Yet, as noticed in the proof once $(\sigma\sigma^\t)_t(X)$ is fixed, uniqueness of the associated martingale problem guarantees the representation property relative to $X$.

\item We remark that an alternative constructive approach to relate $\widetilde \Yc$ to a {\rm BSDE} is to consider for any $\nu \in \Ac(s,x,\P)$ and $t\geq s$ the process
\begin{align*}
N_t^{\P,\nu}:=J(t,t, X,\nu^{\star}(\P)) +\int_0^t \big( f_r(r,X,\nu_r)-\partial Y_r^r \big)\d r.
\end{align*}
However, as the careful reader might have noticed, this requires that for a given $\P\in \Pc(s,x)$ we introduce $\nu^{\star}(\P)$ the action process attaining the {\rm sup} in \eqref{BSDE:supactions}. However, the existence of an action with such property is not necessarily guaranteed for all $\P\in \Pc(s,x)$, at least without further assumptions, which we do not want to impose here.
\end{enumerate}
\end{remark}

\begin{remark}
At this point we are halfway from our goal in this section. For $(t,\omega)\in [0,T]\times \Omega$, the previous lemma defines a family $(\Yc^{\P})_{\P\in \Pc(t,\omega)}$ of $\F^{X,\P}_+$-adapted processes. Recalling our discussion at the beginning of this section, all we are left to do is to take $\sup$ over $(\Pc(t,x)
)_{(t,x)\in [0,T]\times \Xc}$, \emph{i.e.} putting together \eqref{Eq:Lemma:BSDE:Nusup} and \eqref{Eq:V:sup:sup}, we now know
\begin{align}\label{Eq:v:Psup}
v(t,x)=\sup_{\P\in \Pc(t,x)} \E^{\overline \P^{\tilde \nu^\star}}\big[ \Yc^\P_t\big].
\end{align}
\end{remark}

In light of the previous remark and the characterisation in \cite{possamai2015stochastic}, we consider the following 2BSDE
\begin{align}\label{2BSDE:Necessity}
Y_t=\xi(T,X_{\cdot \wedge T})+\int_t^T  F_r(X,Z_r,\widehat \sigma_r^2, \partial Y_r^r)\d r-\int_t^T  Z_r \cdot \d X_r+K_T^\P-K_t^\P,\; 0\leq t \leq T ,\ \Pc(\xb)\text{\rm--}\qs
\end{align}

With this we are ready to prove the necessity of System \eqref{HJB}.

\begin{proof}[Proof of {\rm \Cref{Theorem:Necessity}}]
We begin by verifying the integrability of $\partial Y$, defined as in \eqref{BSDE:supactions}. From \Cref{AssumptionB}\ref{AssumptionB:iii} we have that for any $s\in [0,T]$
\[
\|\partial Y^s\|_{\S_{\xb}^p} \leq  \sup_{\P\in \Pc(\xb)} \E^\P\bigg[ \big | \partial_s \xi(s,X_{\cdot \wedge T}) \big |^p+ \int_0^T \big |  \partial_s f_r (s,X,\nu^\star_r)\big|^p  \d r\bigg] < \infty.
\]
Therefore, as \Cref{AssumptionA}\ref{AssumptionA:DPP1} guarantees the continuity of the map $s\longmapsto \|\partial Y^s\|_{\S_{0,\xb}^p(\F^X,\Pc(\xb))}$ the result follows.  \medskip

Let us construct such a solution from $\nu^\star \in \Ec(\xb)$. Under \Cref{AssumptionB}\ref{AssumptionB:i}, it follows from \eqref{Eq:v:Psup} and \cite[Lemma 3.2]{possamai2015stochastic} that $v$ is l\`adl\`ag outside a $\Pc(\xb)$-polar set. Therefore the process $v^+$ given by
\[
v^+_t(x) :=\lim_{r\in \Q\cap (t,T], r\downarrow t} v(t,x),
\]
is well defined in the $\Pc(\xb)$--$\qs$ sense. Clearly $v^+$ is c\`adl\`ag, $\F^{X,\Pc(\xb)}_+$-adapted, and in light of \cite[Lemmata 2.2 and 3.6]{possamai2015stochastic}, which hold under \Cref{AssumptionB}, we have that for any $\P \in \Pc(\xb)$, there exist $(\Zc^\P,\Kc^\P)\in \H^p_{0,\xb}(\F^{X,\P}_+,\P)\times  \I^p_{0,\xb}(\F^{X,\P}_+,\P)$ such that 
\begin{align*}
v^+_t=\xi(T,X_{\cdot \wedge T})+\int_t^T  F_r(X,\Zc_r^\P,\widehat \sigma _r^2, {\partial Y_r^r})\d r-\bigg(\int_t^T  \Zc_r^\P \cdot \d X_r\bigg)^\P+\Kc^\P_T-\Kc^\P_t,\; 0\leq t\leq T,\; \P\text{\rm--}\as
\end{align*}
Moreover, the process $ Z_t:=(\sigmah^{2}_t)^{\oplus} \frac{\d [ v^+,X]_t }{\d t},$ where $(\sigmah^{2}_t)^{\oplus}$ denotes the Moore--Penrose pseudo-inverse of $\sigmah^{2}_t$, aggregates the family $(Z^\P)_{\P\in \Pc(\xb)}$. The proof that $(K^\P)_{\P\in \Pc(\xb)}$ satisfies the minimality condition \eqref{Min:Condition:NoA} is argued as in \cite[Section 4.4]{possamai2015stochastic}. The integrability follows from \Cref{AssumptionB}\ref{AssumptionB:iii}.\medskip

Arguing as in \cite[Lemma 3.5]{possamai2015stochastic}, we may obtain that for any $\Pc(\xb)$--$\qe x\in \Xc$ and $\P\in \Pc(t,x)$
\[ v_t^+=\es_{\tilde \P \in \Pc(t,\P,\F_+^X)} \Yc_t^{\tilde \P}
\]
with $\Yc^{\tilde \P}$ as in \eqref{BSDE:Nusup}. Consequently
\[
v(t,x)=\sup_{\P\in \Pc(t,x)} \E^{\P}[v_t^+].
\]
Moreover, as for any $t\in [0,T]$ and $\Pc(\xb)$--$\qe x\in \Xc$, $\overline \P^{\nu^\star}_{t,x}$ attains equality in \eqref{Eq:v:Psup}, see \Cref{Theorem:DPP:Limit}, we deduce, in light of \Cref{Remark:com}, $\P^{\nu^\star}$ attains the minimality condition. This is, under $\P^{\nu^\star}$, the process $K^{\P^{\nu^\star}}$ equals $0$. With this, we obtain $(v^+,Z,(K)^\P_{\P\in \Pc(\xb)})$ and $\partial Y$ are a solution to \eqref{HJB}. Moreover, \Cref{Lemma:BSDE:Nusup} implies 
\[
\nu^\star_t\in \argmax_{a\in A_t(x,\hat\sigma_t^2(x))} h_t(t,X,Z_t,a),\; \d t \otimes \d \P^{\nu^\star}\text{\rm--}{\rm a.e.},\; \text{on}\; [0,T]\times \Xc.\]
 \end{proof}

\subsection{Verification}

This section is devoted to prove the verification \Cref{Verification}. To do so we will need to obtain a rigorous statement of how the processes defined by \eqref{HJB} relate. This is carried in a series of lemmata available in \Cref{Appendix:verification}.

\begin{proof}[Proof of {\rm \Cref{Verification}}]
We will first show that with $\nu^\star$ as in the statement of the theorem $Y_t(x)=J(t,t,x,\nu^{\star})$ for all $t\in [0,T]$ and $\Pc(\xb)$--$\qe x \in \Xc$. To do so, let $(s,x)\in [0,T]\times \Xc$ and note that \Cref{AssumptionB} guarantees that the corresponding 2BSDE under $\Pc(s,x)$ is well-posed. Indeed, it follows from \citeauthor*{soner2011martingale} \cite[Lemma 6.2]{soner2011martingale} that for any $p>p'>\kappa>1$,
\begin{align*}
\sup_{\P \in \Pc(s,x)} \E^{\P}\bigg[\es_{s\leq t \leq T}\bigg( \es_{\P'\in \Pc_{s,x}(t,\P,\F^+)}  \E^{\P'}\bigg[|\xi(T,X_{ \cdot \wedge T })|^\kappa +	\int_s^T|F_r(X,0,\sigmah^2_r,\partial Y_r^r)|^\kappa \d r\bigg| \Fc_t^+\bigg]\bigg)^{\frac{p'}{\kappa}}\bigg] <\infty.
\end{align*}
The well-posedness follows by \cite[Theorem 4.1]{possamai2015stochastic}. Now, in light of \cref{Lemma:shifted2bsde}, for any $s \in [0,T]$
\begin{align}\label{Eq:consistencyshiftedbsde}
\begin{split}
Y_t&=Y_t^{s,x},\; s\leq  t\leq T, \; \Pc(s,x)\text{\rm--}\qs, \text{ for } \Pc(\xb)\text{\rm--}\qe\; x\in \Xc,\\
Z_t&=Z_t^{s,x}, \; \sigmah_t^2 \d t\otimes \d \Pc(s,x)\text{\rm--}\qe \text{ on } [s,T]\times \Xc, \text{ for } \Pc(\xb)\text{\rm--}\qe\; x\in \Xc,\\
K_t^{\P }&=K_t^{s,x,\P_{s,x} },\; s\leq  t\leq T,\;  \P_{s,x}\text{\rm--}\as,\; \text{for}\; \P\text{\rm--}\ae\; x\in \Xc,\; \forall \P\in \Pc(\xb).
\end{split}
\end{align}

We first claim that given a solution to \eqref{HJB} for any $(s,x)\in (0,T]\times \Xc$, $\P^{\nu^\star}_{s,x}$ attains the minimality condition for the 2BSDE under $\Pc(s,x)$, see \Cref{Def:2bsde}. Indeed, by \Cref{Def:sol:systemH}\ref{Def:sol:systemH:iii}
\[\E^{\P^{\nu^\star}_{0,\xb}} \Big[K_T^{\P^{\nu^\star}_{0,\xb}}-K_t^{\P^{\nu^\star}_{0,\xb}} \Big]=0,\;  0\leq t\leq T.\]
As $K^{\P^{\nu^\star}_{0,\xb }}$ is an increasing process, this implies $K^{\P^{\nu^\star}_{0,\xb}}=0$ and therefore $\P^{\nu^\star}_{0,\xb }$ attains the minimality condition for the 2BSDE in \eqref{HJB} under $\Pc(\xb)$. This implies, together with \eqref{Eq:consistencyshiftedbsde}, that for $\Pc(\xb)\text{--}\qe x\in \Xc$ and $s\in [0,T]$
\begin{align*}
\E^{\P^{\nu^\star}_{s,x}}\Big[ \E^{\P^{\nu^\star}_{s,x}}\Big[ K_T^{s,x,\P^{\nu^\star}_{s,x}}- K_s^{s,x,\P^{\nu^\star}_{s,x}} \Big|\Fc_{s+}^{X} \Big]\Big]=\E^{\P^{\nu^\star}_{s,x}}\Big[ K_T^{s,x,\P^{\nu^\star}_{s,x}}- K_s^{s,x,\P^{\nu^\star}_{s,x}} \Big]=0,\; \P^{\nu^\star}_{s,x}\text{--a.s.},
\end{align*}
which proves the claim. Consequently, for $\Pc(\xb)$--$\qe x\in \Xc$ and $s\in [0,T]$
\begin{align*}
Y_t=Y_t^{s,x}=\xi(T,X_{\cdot \wedge
 T})+\int_t^T F_r(X,Z_r^{s,x},\widehat{\sigma}^2_r, \partial Y_r^r) \d r -\int_t^T Z_r^{s,x} \cdot \d X_r,\; s\leq t\leq T,\; \P^{\nu^\star}_{s,x}\text{\rm--}\as
\end{align*}
We note the equation on the right side prescribes a BSDE under $ \P^{\nu^\star}_{s,x}\in \Pc(s,x)$. Moreover, given that for $\Pc(\xb)$--$\qe x\in \Xc$ and $s\in [0,T]$, $\Vc^\star_t(X,Z^{s,x}_t)=\Vc^\star_t(X,Z_t), \d t \otimes \d \P^{\nu^\star}_{s,x}\text{\rm--}\ae$ on $[s,T]\times \Xc$, we obtain
\begin{align*}
Y_t^{s,x}=\xi(T,X_{\cdot \wedge T})+\int_t^T \big(h_r(r,X,Z_r,\nu^{\star}_r)- \partial Y_r^r\big) \d r -\int_t^T Z_r^{s,x} \cdot \d X_r,\; s\leq t\leq T, \;  \P^{\nu^\star}_{s,x}\text{\rm--}\as
\end{align*}
In particular, at $t=s$ we have that for $\Pc(\xb)$--$\qe x\in \Xc$ and $t\in [0,T]$
\begin{align}\label{Eq:veri1}
\E^{\P^{\nu^\star}_{t,x}}\big[Y_t^{t,x}\big]=\E^{\P^{\nu^\star}_{t,x}}\bigg[ \xi(T,X_{\cdot \wedge  T})+\int_t^T\big( h_r(r,X, Z_r,\nu^\star_r)-\partial Y_r^r\big)\d r \bigg].
\end{align}

Now, in light of \Cref{AssumptionC}\ref{AssumptionC:ii}, there exists $(\partial \Yc,\partial \Zc)\in \S^{2,2}_{t,x}(\F^{X,\P^{\nu^\star}_{t,x}}_+)\times \H^{2,2}_{t,x}(\F^{X,\P^{\nu^\star}_{t,x}}_+,X)$ such that for any $s\in [t,T]$
\begin{align*}
\partial \Yc_r^{s}&=\partial_s \xi(s, X_{\cdot \wedge T})+\int_r^T \nabla h_u(s,X,\partial \Zc_u^{s}, \nu_u^{\star}) \d u- \int_r^T\partial \Zc_u^{s} \cdot \d X_u ,\; s\leq r \leq T, \; \P^{\nu^\star}_{t,x}\text{\rm--}\as
\end{align*}
In addition, \Cref{Lemma:Derivative} yields
\begin{align*}
\E^{\P^{\nu^\star}_{t,x}}\bigg[ \int_t^T \partial Y_r^r\d r\bigg]=\E^{\P^{\nu^\star}_{t,x}}\bigg[ \int_t^T \E^{\P^{\nu^\star}_{t,\cdot}}\bigg[ \partial_s \xi(r, X_{\cdot \wedge T})+\int_r^T \partial_s f_u (r,X,\nu^\star_u)\d u\bigg] \d r\bigg]=\E^{\P^{\nu^\star}_{t,x}}\bigg[ \int_t^T \partial \Yc_r^{r}\d r\bigg].
\end{align*}
Therefore, from \Cref{Lemma:Dynamicsdiagonal} and \eqref{Eq:veri1} we have that for $\Pc(\xb)$--$\qe x\in X$ and $t\in [0,T]$
\begin{align}\label{Eq:veri2}
\E^{\P^{\nu^\star}_{t,x}}\big[ Y_t^{t,x}\big]=\E^{\P^{\nu^\star}_{t,x}}\bigg[ \xi(T,X_{\cdot \wedge  T})+\int_t^T\big( h_r(r,X, Z_r^{t,x},\nu^\star_r)-\partial \Yc_r^{r}\big) \d r\bigg]=J(t,t,x,\nu^{\star}).
\end{align}
Finally, arguing as in \cite[Proposition 4.6]{cvitanic2015dynamic}, \eqref{Eq:veri2} yields that for $\Pc(\xb)-\qe x\in \Xc$ and $t\in [0,T]$
\begin{align*}
v(t,x)=\sup_{\P\in \Pc(t,x)} \E^\P\big[Y_t^{t,x}\big].
\end{align*}

It remains to show $\nu^\star \in \Ec(\xb)$. Let $(\eps, \ell, t,x,\nu)\in \R_+^\star \times (0, \ell_\eps)\times  [0,T]\times \Omega\times  \Ac(t,x)$, $\ell_\eps$ to be chosen, and $\nu \otimes_{t+\ell} \nu^\star$. Recall we established that for $\Pc(\xb)$--$\qe x\in \Xc$, $t\in [0,T]$
\begin{align*}
J(t,t,x,\nu^{\star})=\xi(T,X)+\int_t^T F_r(X,Z_r^{t,x},\sigmah_r^2,\partial Y_r^r)\d r-\int_t^T Z_r^{t,x} \cdot \d X_r +K_T^{t,x,\P}-K_t^{t,x,\P}, \; \Pc(t,x)\text{\rm--}\qs
\end{align*}
By computing the expectation of the stochastic integral under $\overline \P^{\nu\otimes_{t+\ell}\nu^\star}$ we obtain
\begin{align*}
&J(t,t,x,\nu^{\star})-J(t,t,x,\nu \otimes_{t+\ell} \nu^\star)\\
\geq&\ \E^{\overline \P^{\nu\otimes_{t+\ell}\nu^\star}}\bigg[\xi(T,X)+\int_t^{T} \big(h_r(r,X,Z_r^{t,x},\nu^\star_r)-b_r(X,(\nu\otimes_{t+\ell}\nu^\star)_r) \cdot\sigmah_r^\t Z_r^{t,x}\big)\d r\nonumber\\ 
& - \xi (t,X) - \int_t^{T}\big(f_r(t,X,(\nu\otimes_{t+\ell}\nu^\star)_r)+\partial Y_r^r \big)\d r \bigg]\nonumber \\
 =&\ \E^{\overline \P^{\nu\otimes_{t+\ell}\nu^\star}}  \bigg[\xi(T,X)-\xi(t,X)+ \int_{t}^T h_r(r,X,Z_r^{t,x},\nu^\star_r) - h_r(t,X,Z_r^{t,x},\nu^\star_r)-\partial Y_r^r\d r \bigg] \nonumber \\
&+ \int_t^{t+\ell}  h_r(r,X,Z_r^{t,x},\nu^\star_r )-h_r(t,X,Z_r^{t,x},\nu_r)+  h_r(t,X,Z_r^{t,x},\nu^\star_r )-h_r(r,X,Z_r^{t,x},\nu_r)\d r\bigg],\nonumber
\end{align*}
where the inequality follows from dropping the $K$ term. Now, since for $\Pc(\xb)$--$\qe x\in \Xc$, $\Vc^\star_t(X,Z^{s,x}_t)=\Vc^\star_t(X,Z_t), \d t \otimes \d \P\text{\rm--}\ae$ on $[s,T]\times \Xc$ for all $\P \in \Pc(s,x)$, we have the previous expression is greater or equal than the sum of
\begin{align*}
I_1&:=\E^{\overline \P^{\nu\otimes_{t+\ell}\nu^\star}}  \bigg[ \xi(T,X)-\xi(t+\ell,X) +\int_{t+\ell}^T\big( h_r(r,X,Z_r^{t,x},\nu^\star_r) - h_r(t + \ell,X,Z_r^{t,x},\nu^\star_r)-\partial Y_r^r\big)\d r\bigg], \\
I_2&:= \E^{\overline \P^{\nu\otimes_{t+\ell}\nu^\star}}  \bigg[\xi(t+ \ell,X)-\xi(t,X) - \int_{t}^T f_r(t,X,\nu^\star_r)\d r+ \int_{t+\ell}^T f_r(t+\ell ,X,\nu^\star_r)\d r+\int_t^{t+\ell}\big(f_r(r,X,\nu^\star_r)- \partial Y_r^r\big) \d r \bigg],\\
I_3&:= \E^{\overline \P^{\nu\otimes_{t+\ell}\nu^\star}}  \bigg[\int_t^{t+\ell} \big( f_r(r,X,\nu_r )-f_r(t,X,\nu_r) + f_r(t,X,\nu_r^\star )-f_r(r,X,\nu_r^\star)\big)\d r\bigg].
\end{align*}

We now study each remaining terms separately. First, regarding $I_1$, by conditioning we can see this term equals 0. Indeed, this follows analogously to \eqref{Eq:veri1}, by using the fact that $\big(\delta_{\omega}\otimes_{t+\ell}\overline \P^{\nu^\star,t+\ell,x}\big )_{\omega \in \Omega}$ is an r.c.p.d. of $\overline \P^{\nu\otimes_{t+\ell}\nu^\star} |\Fc_{t+\ell}^X$, see \Cref{Lemma:forwardcondition}, together with \Cref{Lemma:Dynamicsdiagonal}.\medskip

We can next use Fubini's theorem and $\Phi$ as in \Cref{AssumptionC} to express the term $I_2$ as
\begin{align*}
I_2&= \E^{\overline \P^{\nu\otimes_{t+\ell}\nu^\star}}  \bigg[ \int_t^{t+\ell} \partial_s \xi(r,X) \d r +\int_{t}^T \int_r^T \partial_s f_u (r,X,\nu^\star_u)\d u \d r- \int_{t+\ell}^T \int_{r}^T \partial_s f_u (r,X,\nu^\star_u)\d u \d r -\int_t^{t+\ell} \partial Y_r^r \d r  \bigg]\\
&=  \E^{\overline \P^{\nu}}  \bigg[   \int_t^{t+\ell} \E^{\overline \P^{\nu^\star}_{t+\ell,\cdot}}  \big[\Phi(r,X)]-  \E^{\overline \P^{\nu^\star}_{r,\cdot}}  \big[\Phi(r,X)  \big]\d r\bigg],
\end{align*}
where the second equality follows by conditioning, see \Cref{Lemma:forwardcondition}. Now, arguing as in the proof of \cite[Corollary 6.3.3]{stroock2007multidimensional}, under the weak uniqueness assumption for fixed actions ,$\P^{\nu^\star}_{t_n,x_n}\longrightarrow \P^{\nu^\star}_{t,x}$ weakly, whenever $(t_n,x_n)\longrightarrow (t,x)$.\medskip

By \Cref{AssumptionC}\ref{AssumptionC:iib}, for every $(r,t,x)\in [0,T]^2\times \Xc$, $ \E^{\overline \P^{\nu^\star}_{t+\ell,x}}  \big[\Phi(r,X)]\longrightarrow \E^{\overline \P^{\nu^\star}_{t,x}}  \big[\Phi(r,X)]$, $\ell \longrightarrow 0$. Moreover, as $t\longmapsto \Phi(t,x)$ is clearly continuous and $(r,t)\in [0,T]^2$, the previous convergence holds uniformly in $(r,t)$. Together with bounded convergence we obtain that for $\Pc(\xb)$--$\qe x\in \Xc$ and $\nu\in \Ac(t,x)$, $\E^{\overline \P^{\nu\otimes_{t+\ell}\nu^\star}}  \big[\Phi(r,X)\big] \longrightarrow  \E^{\overline \P^{\nu\otimes_{t}\nu^\star}}  \big[\Phi(r,X)\big]$, $\ell \longrightarrow 0$, uniformly in $(t,r)$.\medskip

We now argue that the above convergence holds uniformly in $\nu$. Indeed, \Cref{AssumptionB}\ref{AssumptionB:iii} guarantees that for $\Pc(\xb)$--$\qe x\in \Xc$ the family $ \Mc_{t+\ell}^x(\tilde x):=\E^{\overline \P^{\nu^\star}_{t+\ell,\tilde x}}[\Phi(r,X)]-\E^{\overline \P^{\nu^\star}_{t,x}}[\Phi(r,X)]$, is $\Pc(t,x)$--uniformly integrable. Thus, provided $\Pc(t,x)$ is weakly compact, an application of the non-dominated monotone convergence theorem, see \cite[Theorem 31]{denis2011function}, yields the result. In order to bypass the compactness assumption on $\Pc(t,x)$, we consider the compact set $\Ac^{\text{rel}}(t,x)$, see \cite[Theorem 4.1]{karoui2015capacities2}, of solutions to the martingale problem for which relaxed action processes are allowed, \emph{i.e.} ignoring condition $(iii)$ in the definition of $\Pc(t,x)$. By \cite[Theorem 4.5]{karoui2015capacities2}, the supremum over the two families coincide. With this we can find $\ell_\eps$ such that for $\ell<\ell_\eps$
\begin{align*}
\int_t^{t+\ell} \sup_{\nu \in \Ac(t,x)} \Big| \E^{\overline \P^{\nu\otimes_{t+\ell}\nu^\star}}  \big[\Phi(r,X)\big] -  \E^{\overline \P^{\nu\otimes_{t}\nu^\star}}  \big[\Phi(r,X)  \big] \Big| \d r \leq  \eps \ell.
\end{align*}
Finally, to control $I_3$, we see that \Cref{AssumptionC}\ref{AssumptionC:ii} guarantees there is $\ell_\eps$ such that for all $(r,x,a)\in[0,T]\times \Xc \times A,\ |f_r(s,x,a)-f_r(t,x,a)|<\eps/2$ whenever $|s-t|<\ell_\eps$, so that
\begin{align*}
\E^{\overline \P^{\nu \otimes_{t+\ell}\nu^\star}} \bigg[ \int_t^{t+\ell} |f_r(r,X,\nu_r)dr-f_r(t,X,\nu_r)|+|f_r(t,X,\nu_r^\star)-f_r(r,X,\nu_r^\star) |dr\bigg] \leq \eps \ell.
\end{align*}
Combining the previous arguments, we obtain that for $0<\ell<\ell_\eps$, $\Pc(\xb)$--$\qe x \in \Xc$ and $(t,\nu)\in [0,T]\times \Ac(t,x)$ 
\[
J(t,t,x,\nu^\star)-J(t,t,x,\nu \otimes_{t+\ell} \nu^\star)\geq-\eps \ell.
\]
\end{proof}

\subsection{Well-posedness: the uncontrolled volatility case}

We start this section studying how System \eqref{HJB} reduces when no control on the volatility is allowed. Intuitively speaking the first equation should reduce to a standard BSDE and under our assumption of weak uniqueness for \eqref{Eq:driftlessSDE} we end up with only one probability measure which allows a probabilistic representation of the second element in the system. We first study the reduction in the next proposition.

\begin{proposition}\label{HJB:to:HJB0}
Suppose $\sigma_t(x,a)=\sigma_t(x,\tilde a)=:\sigma_t (x)$ for all $a\in A$, \emph{i.e.} the volatility is not controlled, then {\rm System} \eqref{HJB} reduces to
\begin{align}\tag{H\textsubscript{o}}
\begin{split}
Y_t&=\xi(T,X_{\cdot\wedge T})+\int_t^T H_r^o(X,Z_r,\partial Y_r^r)\d r-\int_t^T  Z_r \cdot \d X_r,\; 0\leq t \leq T,\; \P\text{--}\as, \\
\partial Y_t^s&=\partial_s \xi(s,X_{\cdot\wedge T})+\int_t^T \partial h_r^o(s,X,\partial Z_r^s, \Vc^\star(r,X, Z_r)) \d r-\int_t^T\partial Z_r^s \cdot \d X_r,\; 0\leq  t \leq T,\; \P\text{--}\as,\; 0\leq s \leq T.
\end{split}
\end{align}
\begin{proof}
As the volatility is not controlled there is a unique solution to the martingale problem \eqref{Eq:MartingaleProblem}, \emph{i.e.} $\Pc(\xb)=\{\P\}$. In addition, since $(\sigma \sigma^\t)_t(x,a)=(\sigma \sigma^\t)_t(x)$ for all $t \in [0,T]$, then
\begin{align*}
\Sigma_t(x)=\{(\sigma \sigma^\t)_t(x)\} \in \S_n^+(\R),\; A_t(x, \sigma_t(x))=A.
\end{align*}

Let $(Y,Z,K)$ be a solution to the 2BSDE in \eqref{HJB}. As $\Pc(\xb)=\{\P\}$, the minimality condition implies that the process $K^\P$ vanishes in the dynamics, thus $(Y,Z)$ is a solution to the first BSDE in \eqref{HJB0}. Now as the family $\partial Y$ is defined $\P$--$\as$, $Y$ is well-defined in the $\P$--$\as$ sense too. Finally, $\Pc(\xb)=\{\P\}$ guarantees that the predictable martingale representation holds for $(\F^{X,\P}_+,\P)$-martingales. With this, it follows that for $s\in [0,T]$, $\partial Y^s$ in \eqref{HJB} admits the representation in \eqref{HJB0}, which holds up to a $\P$-null set.
\end{proof}
\end{proposition}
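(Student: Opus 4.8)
The plan is to exploit the single structural simplification that forbidding volatility control brings, namely that the family $\Pc(\xb)$ collapses to a singleton, and to read off both equations of \eqref{HJB0} from the corresponding objects in \eqref{HJB}. First I would record the two elementary consequences of $\sigma_t(x,a)=\sigma_t(x)$ for all $a\in A$: on the one hand $\widehat\sigma_t^2=(\sigma\sigma^\t)_t(X,\nu_t)=(\sigma\sigma^\t)_t(X)$ for every admissible $\nu$ by \eqref{Eq:SupportSigma}, so the martingale problem \eqref{Eq:MartingaleProblem} carries a fixed (path--dependent) quadratic variation and, under the standing weak--uniqueness assumption, admits a unique solution $\P$, i.e. $\Pc(\xb)=\{\P\}$; on the other hand, for the only attainable squared diffusion $\Sigma=(\sigma\sigma^\t)_t(x)$ one has $A_t(x,\Sigma)=A$, hence $F_t(x,z,\widehat\sigma_t^2,u)=\sup_{a\in A}\{f_t(t,x,a)+b_t(x,a)\cdot\sigma_t(x)^\t z\}-u=H^o_t(x,z,u)$, and similarly $\partial h^o$ is exactly what $F$'s "$\partial_s$--analogue" becomes once $\sigma$ is uncontrolled.

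For the first line of \eqref{HJB0}: since $\Pc(\xb)=\{\P\}$, the set $\Pc_\xb(t,\P,\F^X_+)$ appearing in the minimality condition \eqref{Min:Condition:NoA} reduces to $\{\P\}$, so that condition reads $0=\E^\P[K^\P_T-K^\P_t\,|\,\Fc^{X,\P}_{t+}]$ for all $t$; as $K^\P$ is non--decreasing with $K^\P_0=0$ this forces $K^\P_T=K^\P_t$ $\P$--a.s. for every $t$, i.e. $K^\P\equiv 0$. Substituting $F=H^o$ and $\widehat\sigma^2=(\sigma\sigma^\t)(X)$ into \eqref{Eq:dynamic2bsde0} then leaves exactly the first BSDE of \eqref{HJB0}, with $(Y,Z)$ now an ordinary (no $K$) solution under $\P$.

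For the second line: here the point is that, $\Pc(\xb)$ being a singleton, the augmented filtration $\F^{X,\P}_+$ enjoys the predictable martingale representation property relative to $X$ (uniqueness of the martingale problem implies representation, as recalled via \cite[Theorem 4.29]{jacod2003limit}). Fix $s\in[0,T]$; using the r.c.p.d. characterisation of $\overline\P^{\nu^\star}_{t,x}$ (cf. \Cref{Thm:Concatenated:M} and \Cref{Lemma:forwardcondition}), the process $M_t:=\partial Y^s_t+\int_0^t\partial_s f_r(s,X,\nu^\star_r)\,\d r$ is an $(\F^{X,\P}_+,\overline\P^{\nu^\star})$--martingale with $M_T=\partial_s\xi(s,X_{\cdot\wedge T})+\int_0^T\partial_s f_r(s,X,\nu^\star_r)\,\d r$. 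Since $\overline\P^{\nu^\star}\sim\P$, representation transfers, and rewriting the representation against $X$ (via the Moore--Penrose pseudo--inverse of $\sigma_t(X)$, so as to absorb the Girsanov drift $\sigma_t(X)b_t(X,\nu^\star_t)$) produces a predictable $\partial Z^s$ with $M_t=M_0+\int_0^t\partial Z^s_r\cdot\d X_r-\int_0^t\partial Z^s_r\cdot\sigma_r(X)b_r(X,\nu^\star_r)\,\d r$. Reading this backward from $t=T$ and recalling $\partial h^o_r(s,X,\partial Z^s_r,\nu^\star_r)=\partial_s f_r(s,X,\nu^\star_r)+b_r(X,\nu^\star_r)\cdot\sigma_r(X)^\t\partial Z^s_r$ with $\nu^\star_r=\Vc^\star(r,X,Z_r)$ yields precisely the second equation of \eqref{HJB0}, holding $\P$--a.s. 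The integrability assertions placing $(\partial Y^s,\partial Z^s)$ in the announced spaces are a consequence of the a priori estimates under \Cref{AssumptionB}--type hypotheses and are not the crux.

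The main obstacle is the second equation: the object $\partial Y^s$ is originally given only pathwise, as a conditional expectation under the family $(\overline\P^{\nu^\star}_{t,x})$, so turning it into a genuine BSDE requires (i) the predictable martingale representation property, which is exactly what the collapse $\Pc(\xb)=\{\P\}$ buys, and (ii) careful bookkeeping of the Girsanov drift when passing from a representation against the driving noise to one against $X$ with a possibly degenerate, non--square $\sigma$. Everything concerning the first equation is essentially immediate once $\Pc(\xb)=\{\P\}$ and $A_t(x,\Sigma)=A$ are in hand.
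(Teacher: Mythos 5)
Your proposal is correct and follows essentially the same route as the paper's proof: collapse $\Pc(\xb)$ to the singleton $\{\P\}$, observe $A_t(x,\widehat\sigma_t^2)=A$ so that $F$ becomes $H^o$, kill $K^\P$ through the minimality condition, and invoke the predictable martingale representation property (available precisely because the martingale problem has a unique solution) to convert the conditional-expectation definition of $\partial Y^s$ into the second BSDE. Your explicit bookkeeping of the Girsanov drift when passing from a representation against $\sigma\,\d W^{\M}$ to one against $\d X$ is a correct elaboration of a step the paper leaves implicit.
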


\begin{remark}[HJB system exponential case]
As a sanity check at this point we can check what the above system leads to in the case of exponential discounting in a non-Markovian framework. Defining $f(s,t,x,a)$ and $F(s,x)$ as in {\rm\Cref{Remark:DPP:sanotycheck}}, note that
\[
J(t,x,\nu)=\E^{\P^\nu}\bigg[\int_t^T \mathrm{e}^{-\theta (r-t)}\tilde f(r,X,\nu_r)\d r+\mathrm{e}^{-\theta (T-t)}\tilde F(X_{\cdot \wedge T})\bigg]=Y_t^t.
\]
Notice that
\begin{align*}
&\partial Y_t^s=\theta \mathrm{e}^{-\theta (T-s)} +\int_t^T\big( \theta \mathrm{e}^{-\theta (r-s)}\tilde f(r,X,\nu_r^{\star})+b(r,X,\nu_r^{\star})\cdot \sigma(r,X)^\t Z_r^s \big) \d r-\int_t^T\partial Z_r^s \cdot \d X_r,
\end{align*}
and as it does turn out that $Z_r^r=Z_r$, see {\rm\Cref{Lemma:Derivative}} and {\rm\Cref{Verification}}, we get
\begin{align*}
\partial Y_t^t&= \theta \mathrm{e}^{-\theta (T-t)} +\int_t^T \big( \theta \mathrm{e}^{-\theta (r-t)}\tilde f(r,X,\nu^{\star}_r)+b(r,X,\nu_r^{\star})\cdot \sigma(r,X)^\t Z_r^t \big)\d r-\int_t^T\partial Z_r^t \cdot \d X_r\\
&=\theta \E^{\P^{\nu^{\star}}}\bigg[\int_t^T \mathrm{e}^{-\theta (r-t)}\tilde f(r,X,\nu_r^{\star})\d r+e^{-\theta (T-t)}\tilde F(X_{\cdot \wedge T})\Big| \Fc_{t+}^{X,\P} \bigg]=\theta Y_t^t.
\end{align*}
Thus
\begin{align*}
& Y_t =\xi(T)+\int_t^T H_r^o(X_r,Z_r^{\nu^{\star}},\theta Y_r)\d r - \int_t^T Z_r^{\nu^{\star}}\cdot \mathrm{d}X_r,\; \nu^{\star}_t(x, z,u):=\argmax_{a\in A}  \big \{ h_t^o(t,x,z,a)\big \}.
\end{align*}
In the classic Brownian filtration set up, \emph{e.g.} assuming $\sigma$ is non-degenerate and $n=d$, the above {\rm BSDE} corresponds to the well-known solution to an optimal stochastic control problem with exponential discounting, see \emph{\cite{zhang2017backward}}.
\end{remark}

The general treatment of systems as \eqref{HJB0} is carried out in the appendix.

\begin{proof}[Proof of {\rm \Cref{Thm:wellposedness:driftcontrol}}]
The result is immediate from \Cref{Existent:Abstract}, see the appendix.
\end{proof}

\begin{remark}\label{Remark:wp}
The general well-posedness result, \emph{i.e.} in which both the drift and the volatility are controlled remains open. In fact, as this requires to be able to guarantee the existence of a probability measure $\P^\star$ under which the minimality condition \eqref{Min:Condition:NoA} is attained, we believe a feasible direction to attain this result is to go one level beyond the weak formulation, and work in the relaxed framework, see for example {\rm\cite{karoui2015capacities2}}.

\end{remark}

\section{Extensions of our results}\label{Section:Extensions}

We now present an extension of our results to more general classes of pay-off functionals, as in \cite[Section 7.4]{bjork2016time2}. The dynamics of the controlled process $X$ remains as in \Cref{Section:ControlledDynamics}. {\color{black} We only present the corresponding results, the proofs are analogous to those presented in this document and are available in \citeauthor*{hernandez2021me} \cite{hernandez2021me}}. We will consider
\begin{align*}
 &\f:[0,T]\times  \Xc\longrightarrow \R, \text{ Borel-measurable, with } \f_\cdot(\cdot)\; \F^X\text{-optional};\;  g:\Xc\longrightarrow \R, \text{ Borel-measurable};\\
& \xi:[0,T]\times \Xc \longrightarrow \R, \text{ Borel-measurable} ;\;
 G:[0,T] \times  \R\longrightarrow \R,  \text{ Borel-measurable},\\
&f:[0,T]^2\times \Xc \times \R \times  A \longrightarrow \R, \text{ Borel-measurable, with } f_\cdot(s,{\rm n},\cdot,a)\; \F^X\text{-optional, }
\end{align*}
$ \text{for any } (s,{\rm n},a) \in [0,T] \times \R \times  A$,
and define for $(s,t,x,\nu)\in [0,T]^2\times \Xc\times \Ac(t,x)$
\[ J(s,t,x,\nu)=\E^{\overline \P^{\nu}_{t,x}}\bigg[\int_t^T f_r\Big(s,X,\E^{\overline \P^\nu_{s,x}}[\f_r(X)],\nu_r\Big)\d r+ \xi(s,X_{\cdot \wedge T}) \bigg]+ G\Big(s,\E^{\overline\P^{\nu}_{s,x}}\big[g(X_{\cdot\wedge T})]\Big) .
\]
As a motivation for the consideration for this kind of pay-off functionals, notice that the presence of the term $G(s,\E^{\P^\nu_{t,x}}[g(x)])$ allows, for example, to include classic mean--variance models into the analysis. In order to present the corresponding DPP in this framework we need to adapt the notation and assumptions that led to it.\medskip

Recall the convention $\partial^2_{{\rm nn}} f_t(s,x,{\rm n},a):=\frac{\partial^2}{\partial {\rm n}^2} f_t(s,x,{\rm n},a)$ denotes the respective derivatives. Let $\nu^\star\in \Ec(\xb)$ and define
\begin{align*}
M_t^\star(x):=\E^{\overline \P^{\nu^\star}_{t,x}}\big[\g(X_{\cdot\wedge T})\big],\; N_t^{s,\star}(x):= \E^{\overline \P^{\nu^\star}_{s,x}}[\f_t(X_{\cdot\wedge t})],\; (s,t,x)\in  [0,T]^2\times \Xc.
\end{align*}

We emphasise that $N$ defines an infinite family of processes, when considered as functions of $s$, one for every $t\in [0,T]$. We also recall that under the weak uniqueness assumption, both processes are well--defined. Moreover, we know that the application $(t,x)\longmapsto \P_{t,x}^{\nu^\star}$ is measurable and continuous for the weak topology, see \cite[Corollary 6.3.3]{stroock2007multidimensional} and the preceding comments.\medskip

To be able to extend our results, we work under the following set of assumptions.

\begin{assumption}\label{AssumptionDppExt}
{\rm\Cref{AssumptionA}\ref{AssumptionA:DPP2} and \ref{AssumptionA}\ref{AssumptionA:DPP}} together with
\begin{enumerate}[label=$(\roman*)$, ref=.$(\roman*)$,wide, labelindent=0pt]

\item $ s \longmapsto \xi(s,x)$ is continuously differentiable uniformly in $x$. $(s,{\sf m})\longmapsto G(s,{\sf m})$ belongs to $\Cc_{1,2}([0,T]\times \R,\R)$ with spatial derivatives Lipschitz-continuous uniformly in $s$. $(s,{\rm n})\longmapsto f_t(s,x,{\rm n},a)$ belongs to $ \Cc_{1,2}([0,T]\times \R,\R)$ with spatial derivatives Lipschitz-continuous uniformly in $(s,t,x,a)$.  \label{AssumptionDppExt1}

\item \label{AssumptionDppExt2} $f_t(s,x,{\rm n},a)= \tilde f_t(s,x,a)+\hat f_t(s,{\rm n})$ for an $\F^X$-optional $($resp. deterministic$)$ mapping $\tilde f$ $($resp. $\hat f)$. The mappings $x \longmapsto \f_t(x)$, $t\longmapsto \f_t(x)$, and $x\longmapsto g(x)$ are continuous uniformly in the other variables. 

\item \label{AssumptionDppExt2.5} $\exists C>0$, $\rho:(0,\infty)\longrightarrow [0,\infty)$, $\rho(|\ell|) \longrightarrow 0$, $\ell\longrightarrow 0$, such that for $\Pc(\xb)\qe \; x\in \Xc,\nu\in \Ac(t, x)$, $t\leq t^\prime \leq r \leq T$, 
\begin{align*}
\E^{\overline \P^{\nu}}\bigg[  \Big| \E^{\overline \P^{\nu}_{t ,\cdot }} [N_r^{t^\prime,\star}]- N_r^{t,\star}\Big|^2 +  \Big| \E^{\overline \P^{\nu}_{t ,\cdot }} [M_{t^\prime}^{\star}]- M_t^{\star}\Big|^2\bigg]  \leq C|t^\prime-t|\rho(|t^\prime-t|).
\end{align*}
\end{enumerate}
\end{assumption}

\begin{remark}\label{remark:ext1}
We would like to comment on the previous set of assumptions. The extensions of our previous set of assumptions correspond to $(i)$, $(iv)$ and $(v)$. In addition, the reader might have noticed the assumptions imposed on $f$, $\f$ and $g$ in {\rm \Cref{AssumptionDppExt}\ref{AssumptionDppExt2}} and {\rm \Cref{AssumptionDppExt}\ref{AssumptionDppExt2.5}}. The condition on $f$ basically disentangles the randomness coming from $X_{\cdot\wedge r}$ and $\E^{\overline \P^\nu_{s,x}}[f_r(X_{\cdot\wedge r})]$. This helps us bypass some measurability issues arising from the interaction between these two variables. Given the non-linear dependence of the reward, when passing to the limit in the proof of {\rm \Cref{Theorem:DPP:Ext}} below, one should expect that first order, \emph{i.e.} linear, approximations, would not suffice to rigorously obtain the limit. Not surprisingly, it is necessary to have access to the quadratic variations of the previously introduced processes. This is usually carried out by having a pathwise construction of the stochastic integral. 
For this, a viable way is to follow the approach in {\rm \cite{karandikar1995pathwise}}. It is thus necessary to guarantee that $M^\star$ and $N^{\star,\cdot}_r$ are left limits of c\`adl\`ag processes. In light of the continuity of the map $(t,x)\longmapsto \P_{t,x}^{\nu^\star}$, {\rm \Cref{AssumptionDppExt}\ref{AssumptionDppExt2}} ensures that these processes are continuous. Hence, there exists a process $[ M^\star ]$ $($resp. $[ N^{\star,\cdot}_r ]$ for any $r\in[0,T])$ which coincides with the quadratic variation of $M^\star$ $($resp. $N^{\star,\cdot }_r$ for any $r\in[0,T])$ under $ \P^{\nu^\star}$.

\medskip
Moreover, ${\rm f}$ and $g$ can be understood as \emph{changes of variables} from the canonical process $X$. As such, it is expected to require some control on the quadratic difference under the laws induced by an arbitrary action $\nu$ and the equilibrium $\nu^\star$. This is precisely the goal of {\rm \Cref{AssumptionDppExt}\ref{AssumptionDppExt2.5}}. We highlight that both processes appearing in {\rm \Cref{AssumptionDppExt}\ref{AssumptionDppExt2.5}} are $\Fc_t$-measurable and differ only, from $\nu$ to $\nu^\star$, on the action performed over the interval $[t,t^\prime]$. In fact, when $\nu=\nu^\star$ this condition holds trivially as both expression are equal to zero. In fact, when $\nu=\nu^\star$ this condition holds trivially as both expression are equal to zero. It is also possible to verify this condition in the case of uncontrolled volatility if ${\rm f}$ and $g$ are regular in the sense of {\rm \citeauthor*{cont2010change} \cite{cont2010change}} so that the functional It{\^ o} formula holds, see below.

\end{remark}

\begin{lemma}\label{lemma:assumpextendeddpp}
{\rm \Cref{AssumptionDppExt}\ref{AssumptionDppExt2.5}} holds if either
\begin{enumerate}[label=$(\roman*)$, ref=.$(\roman*)$,wide, labelindent=0pt]
\item the volatility is uncontrolled, $A$ is bounded, $\f$ has bounded horizontal, first and second order vertical derivatives,  $\Dc_t f$, $\nabla_xf$ and $\nabla^2_{x} $, respectively. Moreover, the process $\Af(t,X_{\cdot\wedge t},\nu_t)$ is square integrable for any $\nu\in \Ac$, where $\Af(t,x,a):= \Dc_t f_t(x)+\sigma_t(x)b_u(x,a) \nabla_x f_u(x)+ \frac{1}{2}(\sigma\sigma^\t)_u(x)\nabla^2_{x} f_u(x);$ 

\item the problem is in strong formulation with state dependent coefficients, unique strong solution. This is, there is a probability space $(\Omega, \Fc,\F,\P)$ and a $\P$--Brownian motion $W$ such that for any $\nu \in \Ac(\xb)$ there is a unique process $X^{x,\nu}$ that satisfies
\[ X_t^{x,\nu}=x_0+\int_0^t b_r(X_r,\nu_r)\d r +\int_0^t \sigma_r(X_r,\nu_r)\d W_r, \; t\in [0,T], \; \P\text{\rm--}\as\]

Moreover, $(\f,\g)\in \Cc_{1,2}([0,T]\times \R,\R)\times \Cc_2(\R,\R)$ and $ \Ac(t,X_t,\nu_t)$ is square integrable for any $\nu\in \Ac$, where $\Ac(t,x,a):= \partial_t \f_t(x)+\sigma_t(x,a)b_u(x,a) \partial_x \f_u(x)+ \frac{1}{2}(\sigma\sigma^\t)_u(a,x)\partial^2_{xx} \f_u(x)$.
\end{enumerate}
\end{lemma}

In light of \Cref{remark:ext1} we define $\mh$ and $\nh$ square roots of the processes
\begin{align*}
\mh^{2}_t :=\limsup_{\eps \searrow 0} \frac{[ M^\star ]_t-[ M^\star ]_{t-\eps}}{\eps},\;  \nh_r^{t\, 2} :=\limsup_{\eps \searrow 0} \frac{[ N^{\cdot,\star}_r ]_t-[ N^{\cdot,\star}_r ]_{t-\eps}}{\eps}, \; (r,t)\in [0,T]^2.
\end{align*}

Building upon our previous analysis, we can profit from the recent results available in \citeauthor*{djete2019mckean} \cite{djete2019mckean} to obtain the next DPP. We also remark that time-inconsistent McKean--Vlasov problems have been recently studied by \citeauthor*{mei2020closed} \cite{mei2020closed}.\medskip

\begin{theorem}\label{Theorem:DPP:Ext}
Let {\rm\Cref{AssumptionDppExt}} hold, and $\nu^\star \in \Ec(\xb)$. For $\{\sigma, \tau\}\subseteq \Tc_{t,T}$, $\sigma\leq  \tau$ and $\Pc(\xb)\qe\; x \in \Xc$, we have
\begin{align*}
\begin{split}
v(\sigma,x) = \! \! \sup_{\nu \in \Ac(\sigma,x)} & \E^{\overline \P^\nu}\bigg[ v(\tau,X)+ \!  \int_{\sigma}^\tau\!    \bigg(  f_r(r,X,\f_r(X),\nu_r)- \E^{{\overline \P}^{\nu^\star}_{r,\cdot}} \bigg[ \partial_s G(r,M_r^\star) - \frac{1}2  \partial_{{\sf mm}}^2  G(r,M_r^\star) \mh^2_r\bigg]  \bigg)\mathrm{d}r \\
&\!\!\!\!\!\! - \int_{\sigma}^\tau\E^{{\overline \P}^{\nu^\star}_{r,\cdot}} \bigg[\partial_s\xi(r,X)+ \int_r^T   \Big( \partial_s f_u(r,X,N^{r,\star}_u ,\nu^\star_u)+   \frac{1}2 \partial_{{\rm nn} }^2 f_u(r,X,N^{r,\star}_u,\nu^\star_u) \nh_u^{r\, 2}  \Big) \d u\bigg] \d r\bigg]
\end{split}
\end{align*}
\end{theorem}

Analogously, we can associate a system of BSDEs to the problem. Define for $(s,t,x, z,\gamma,\Sigma,u,v,$ ${\rm n},{\rm z},{\sf m},{\sf z}, a)\in [0,T)^2 \times \Xc \times \R^d\times \S_d(\R)\times \S_d(\R) \times \R \times \R^d\times \R \times \R^d\times  \R \times \R^d\times A$ 
\begin{align*}
h_t(s,x,z,a)&:= f_t(s,x,\f_t(x),a)+ b_t(x,a)\cdot\sigma_t(x,a)^\t z;\\
F_t(x,z,\Sigma,u,{\sf m},{\sf z})& :=\sup_{a \in A_t(x,\Sigma)} \big\{ h_t(t,x,z,a)\big \}-u-\frac{1}{2}\, {\sf z}^\t \Sigma \, {\sf z} \,  \partial_{{\sf mm}}^2 G(t,{\sf m}),
\end{align*}
and $\Vc^\star(t,x,z)$ denotes the unique (for simplicity) $A$-valued Borel-measurable map satisfying
\begin{align*}
[0,T]\times\Xc\times\R^d\ni (t,x,z)\longmapsto \Vc^\star (t,x,z)\in \argmax _{a\in A} h_t(t,x,z,a).
\end{align*}

In light of \Cref{Theorem:DPP:Ext}, we consider for $(s,t)\in [0,T]^2$ and $\omega\in \Omega$ the system 
\begin{align}\label{HJB2}\tag{H\textsubscript{e}}
\begin{split}
& Y_t=\xi(T,X)+G(T,g(X))+\int_t^T F_r(X,Z_r,\sigmah_r^2, \partial Y_r^r,M^\star_r,\mh^2_t)\d r-\int_t^T  Z_r \cdot \d X_r+K^\P_T-K^\P_t,\\
& \partial Y_t^s(\omega)=\E^{{\overline \P}^{\nu^\star}_{t,x}}\bigg[ \partial_s \xi(s,X)+ \int_t^T \Big( \partial_s f_r^\star(s,X, N^{\star,s}_r, Z_r)+ \frac{1}2 \partial_{{\rm nn}}^2 f_r^\star(s,X, N^{s,\star}_r,Z_r) \nh^{\star\, 2}_r\Big) \d r\bigg],\\
&M^{\star}_t(\omega)=\E^{{\overline \P}^{\nu^\star}_{t,\omega}}\big[ g(X) \big],\; N^{\star,s}_t(\omega)=\E^{{\overline\P}^{\nu^\star}_{s,\omega}}\big[\f_t(X) \big],\;   \mh^2_t:=\frac{ \d [ M^\star]_t}{\d t},\;  \nh_r^{t\, 2}:=\frac{ \d [ N^{\star,\cdot}_r ]_t}{\d t}  ,
\end{split}
\end{align}
where  $\partial_s f_t^\star(s,x, n, z):=\partial_s f_t^\star(s,x, n, \Vc^\star(t,x,z))$ and $\partial_{\rm n n}^2 f_t^\star(s,x, n, z):=\partial_{\rm n n}^2  f_t^\star(s,x, n, \Vc^\star(t,x,z))$.\medskip

In the case of drift control only, we define
\begin{align*}
h_t^o(s,x,z,a)&:= f_t(s,x,n,a)+ b_t(x,a)\cdot  \sigma_t(x)^\t z,\\
 \partial h_t^o(s,x,v,{\rm n},{\rm z},a)&:= \partial_s f_t	(s,x,{\rm n},a)+  b_t(x,a)\cdot \sigma_t(x)^\t  v +\frac{1}{2}{\rm z}^\t\sigma_t(x)^\t\sigma_t(x) \, {\rm z}\, \partial_{{\rm nn}}^2 f_t(s,x,{\rm n},a),\\
H_t^o(x,z,u,{\sf m},{\sf z})& :=\sup_{a \in A} \big\{ h_t^o(t,x,z,a)\big \}-u-\partial_s G(t,{\sf m})-\frac{1}{2}\ {\sf z}^\t \sigma_t(x)^\t \sigma_t(x) \, {\sf z} \  \partial_{{\sf mm}}^2 G(t,{\sf m}) ,
\end{align*}
and \eqref{HJB2} reduces to the infinite family of BSDEs which for any $s\in [0,T]$ (recall the notations in \Cref{Section:BSDEassociated}) satisfies
\begin{align}\label{HJB3}\tag{$\text{H}_\e^\o$}
\begin{split}
Y_t&=\xi(T,X)+G(T,g(X))+\int_t^T H_r^o(X, Z_r, \partial Y_r^r,M^\star_r, Z^\star_r)\d r-\int_t^T  Z_r \cdot \d X_r, \, t\in [0,T],\,\P\text{\rm --a.s.},\\
\partial Y_t^s&=\partial_s \xi(s,X)+\int_t^T \partial h_r^o\big(s,X,\partial Z_r^s, N^{s,\star}_r, Z^{s,\star}_r, \Vc^\star(r,X,  Z_r)\big) \d r-\int_t^T\partial Z_r^s \cdot \d X_r,  \, t\in [0,T],\,\P\text{\rm --a.s.},\\
M^{\star}_t&=g(X) +\int_t^T b_r\big(X,\Vc^\star(r,X,  Z_r)\big)\cdot \sigma_r^\t(X) Z^{\star}_r\d r  -\int_t^T  Z^{\star}_r \cdot \d X_r, \, t\in [0,T],\,\P\text{\rm --a.s.},\\
N^{s,\star}_t&=\f_t(X) +\int_s^T b_r\big(X,\Vc^\star(r,X, Z_r)\big)\cdot \sigma_r^\t(X)Z^{r,\star}_t \d r  -\int_s^T  Z^{r,\star}_t \cdot \d X_r,\, t\in [0,T],\,\P\text{\rm --a.s.}
\end{split}
\end{align}

In the same way, a necessity theorem holds. It does require us to introduce the following set of assumptions.

\begin{assumption}\label{AssumptionNecExt} {\rm\Cref{AssumptionB}\ref{AssumptionB:i} and \ref{AssumptionB}\ref{AssumptionB:ii}} together with
\begin{enumerate}[label=$(\roman*)$, ref=.$(\roman*)$,wide, labelindent=0pt]
\item  \label{AssumptionNecExt:iii} there exists $p>1$ such that for every $(s,t,x)\in [0,T]^2\times \Xc$
\begin{align*}
\sup_{\P\in \Pc(t,x)} \E^\P\bigg [ & |\xi(T,X)|^p+ |\partial_s \xi(s,X)|^p+|G(T,g(X))|^p+|g(X)|^p+|f_s(X)|^p\\
&\; +\int_t^T |F_r(X,0,\widehat \sigma^2_r ,0,0,0)|^p +|\partial h_r^o(s,X,0,N_r^{s,\star}, \nh^{s,\star}_r,\nu^\star_r)|^p \d r\bigg] < \infty.
\end{align*}
\end{enumerate}
\end{assumption}

\begin{theorem}[Necessity]\label{Theorem:NecessityExt}
Let {\rm\Cref{AssumptionDppExt}} and {\rm\Cref{AssumptionNecExt}} hold. Given $\nu^\star \in \Ec(\xb)$, one can construct $(Y,Z,(K^\P)_{\P\in \Pc(\xb)}, \partial Y)$ solution to \eqref{HJB}, such that for any $t\in [0,T]$ and $\Pc(\xb)\qe\; x\in \Xc$
 \[
v(t,x)=\sup_{\P\in\Pc(t,x)} \E^{ \P} \big[Y_t \big].
\] Moreover, $\nu^\star$ satisfies {\rm\Cref{Def:sol:systemH}\ref{Def:sol:systemH:iii}}, \emph{i.e.} $\nu^\star$ is a maximiser of the Hamiltonian.
\end{theorem}

\begin{remark}
We would like to comment that the well-posedness of \eqref{HJB3}, \emph{i.e.} the extended system when only drift control is allowed, remains a much harder task. In particular, it is known that the presence of a non-linear functionals of conditional expectations opens the door to scenarii with multiplicity of equilibria with different game values, see {\rm \cite{landriault2018equilibrium}} for an example in a mean--variance investment problem. Consequently and in line with current results available for systems of {\rm BSDEs} with quadratic growth, see {\rm \citeauthor*{frei2011financial} \cite{frei2011financial}}, {\rm \citeauthor*{harter2019stability} \cite{harter2019stability}}, and {\rm\citeauthor*{xing2016class} \cite{xing2016class}}, we expect to be able to obtain existence of a solution, but not necessarily uniquness.
\end{remark}

{\small
\bibliography{bibliography}

\begin{thebibliography}{78}
\providecommand{\natexlab}[1]{#1}
\providecommand{\url}[1]{\texttt{#1}}
\expandafter\ifx\csname urlstyle\endcsname\relax
  \providecommand{\doi}[1]{doi: #1}\else
  \providecommand{\doi}{doi: \begingroup \urlstyle{rm}\Url}\fi

\bibitem[Barro(1999)]{barro1999ramsey}
R.J. Barro.
\newblock Ramsey meets {L}aibson in the neoclassical growth model.
\newblock \emph{The Quarterly Journal of Economics}, 114\penalty0 (4):\penalty0
  1125--1152, 1999.

\bibitem[Basak and Chabakauri(2010)]{basak2010dynamic}
S.~Basak and G.~Chabakauri.
\newblock Dynamic mean--variance asset allocation.
\newblock \emph{Review of Financial Studies}, 23\penalty0 (8):\penalty0
  2970--3016, 2010.

\bibitem[Bayraktar et~al.(2019)Bayraktar, Zhang, and
  Zhou]{bayraktar2019notions}
E.~Bayraktar, J.~Zhang, and Z.~Zhou.
\newblock On the notions of equilibria for time-inconsistent stopping problems
  in continuous time.
\newblock \emph{ArXiv preprint arXiv:1909.01112}, 2019.

\bibitem[Bertsekas and Shreve(1978)]{bertsekas1978stochastic}
D.P. Bertsekas and S.E. Shreve.
\newblock \emph{Stochastic optimal control: the discrete time case}, volume 139
  of \emph{Mathematics in science and engineering}.
\newblock Academic Press New York, 1978.

\bibitem[Bj{\"o}rk and Murgoci(2010)]{bjork2010general}
T.~Bj{\"o}rk and A.~Murgoci.
\newblock A general theory of {M}arkovian time inconsistent stochastic control
  problems.
\newblock \emph{{SSRN} preprint 1694759}, 2010.

\bibitem[Bj{\"o}rk and Murgoci(2014)]{bjork2014theory}
T.~Bj{\"o}rk and A.~Murgoci.
\newblock A theory of {M}arkovian time-inconsistent stochastic control in
  discrete time.
\newblock \emph{Finance and Stochastics}, 18\penalty0 (3):\penalty0 545--592,
  2014.

\bibitem[Bj{\"{o}}rk et~al.(2014)Bj{\"{o}}rk, Murgoci, and Zhou]{bjork2014mean}
T.~Bj{\"{o}}rk, A.~Murgoci, and X.Y. Zhou.
\newblock Mean--variance portfolio optimization with state--dependent risk
  aversion.
\newblock \emph{Mathematical Finance}, 24\penalty0 (1):\penalty0 1--24, 2014.

\bibitem[Bj{\"o}rk et~al.(2016)Bj{\"o}rk, Khapko, and Murgoci]{bjork2016time2}
T.~Bj{\"o}rk, M.~Khapko, and A.~Murgoci.
\newblock Time inconsistent stochastic control in continuous time: theory and
  examples.
\newblock \emph{ArXiv preprint arXiv:1612.03650}, 2016.

\bibitem[Bj{\"o}rk et~al.(2017)Bj{\"o}rk, Khapko, and Murgoci]{bjork2017time}
T.~Bj{\"o}rk, M.~Khapko, and A.~Murgoci.
\newblock On time-inconsistent stochastic control in continuous time.
\newblock \emph{Finance and Stochastics}, 21\penalty0 (2):\penalty0 331--360,
  2017.

\bibitem[Bouchard et~al.(2018)Bouchard, Possama{\"\i}, Tan, and
  Zhou]{bouchard2018unified}
B.~Bouchard, D.~Possama{\"\i}, X.~Tan, and C.~Zhou.
\newblock A unified approach to {\it a priori} estimates for supersolutions of
  {BSDE}s in general filtrations.
\newblock \emph{Annales de l'institut Henri Poincar{\'e}, Probabilit{\'e}s et
  Statistiques $(${\rm B}$)$}, 54\penalty0 (1):\penalty0 154--172, 2018.

\bibitem[Brutscher(2011)]{brutscher2011payment}
P.-B. Brutscher.
\newblock Payment matters? {A}n exploratory study into the pre-payment
  electricity metering.
\newblock Technical report, University of Cambridge, 2011.

\bibitem[Christensen and
  Lindensj{\"o}(2019{\natexlab{a}})]{christensen2019moment}
S.~Christensen and K.~Lindensj{\"o}.
\newblock Moment constrained optimal dividends: precommitment $\&$ consistent
  planning.
\newblock \emph{ArXiv preprint arXiv:1909.10749}, 2019{\natexlab{a}}.

\bibitem[Christensen and
  Lindensj{\"o}(2019{\natexlab{b}})]{christensen2019timeb}
S.~Christensen and K.~Lindensj{\"o}.
\newblock Time-inconsistent stopping, myopic adjustment $\&$ equilibrium
  stability: with a mean--variance application.
\newblock \emph{ArXiv preprint arXiv:1909.11921}, 2019{\natexlab{b}}.

\bibitem[Christensen and Lindensj{\"{o}}(2020)]{christensen2019time}
S.~Christensen and K.~Lindensj{\"{o}}.
\newblock On time-inconsistent stopping problems and mixed strategy stopping
  times.
\newblock \emph{Stochastic Processes and their Applications}, 130\penalty0
  (5):\penalty0 2886--2917, 2020.

\bibitem[Claisse et~al.(2016)Claisse, Talay, and Tan]{claisse2016pseudo}
J.~Claisse, D.~Talay, and X.~Tan.
\newblock A pseudo-{M}arkov property for controlled diffusion processes.
\newblock \emph{SIAM Journal on Control and Optimization}, 54\penalty0
  (2):\penalty0 1017--1029, 2016.

\bibitem[Cont and Fourni{\'{e}}(2010)]{cont2010change}
R.~Cont and D.-A. Fourni{\'{e}}.
\newblock {Change of variable formulas for non-anticipative functionals on path
  space}.
\newblock \emph{Journal of Functional Analysis}, 259\penalty0 (4):\penalty0
  1043--1072, 2010.

\bibitem[Cvitani{\'c} et~al.(2018)Cvitani{\'c}, Possama{\"\i}, and
  Touzi]{cvitanic2015dynamic}
J.~Cvitani{\'c}, D.~Possama{\"\i}, and N.~Touzi.
\newblock Dynamic programming approach to principal--agent problems.
\newblock \emph{Finance and Stochastics}, 22\penalty0 (1):\penalty0 1--37,
  2018.

\bibitem[Czichowsky(2013)]{czichowsky2013time}
C.~Czichowsky.
\newblock Time-consistent mean--variance portfolio selection in discrete and
  continuous time.
\newblock \emph{Finance and Stochastics}, 17\penalty0 (2):\penalty0 227--271,
  2013.

\bibitem[Da~Prato and Zabczyk(2014)]{da2014stochastic}
G.~Da~Prato and J.~Zabczyk.
\newblock \emph{Stochastic equations in infinite dimensions}, volume~45 of
  \emph{Encyclopedia of mathematics and its applications}.
\newblock Cambridge University Press, 2014.

\bibitem[Dellacherie and Meyer(1978)]{dellacherie1978probabilities}
C.~Dellacherie and P.-A. Meyer.
\newblock \emph{Probabilities and potential}, volume~29 of \emph{Mathematics
  studies}.
\newblock North--Holland, 1978.

\bibitem[Denis et~al.(2011)Denis, Hu, and Peng]{denis2011function}
L.~Denis, M.~Hu, and S.~Peng.
\newblock Function spaces and capacity related to a sublinear expectation:
  application to ${G}$--{B}rownian motion paths.
\newblock \emph{Potential Analysis}, 34\penalty0 (2):\penalty0 139--161, 2011.

\bibitem[Djete et~al.(2019)Djete, Possama{\"\i}, and Tan]{djete2019mckean}
M.F. Djete, D.~Possama{\"\i}, and X.~Tan.
\newblock Mc{K}ean--{V}lasov optimal control: the dynamic programming
  principle.
\newblock \emph{ArXiv preprint arXiv:1907.08860}, 2019.

\bibitem[Ekeland and Lazrak(2006)]{ekeland2006being}
I.~Ekeland and A.~Lazrak.
\newblock Being serious about non-commitment: subgame perfect equilibrium in
  continuous time.
\newblock \emph{ArXiv preprint arXiv:0604264}, 2006.

\bibitem[Ekeland and Lazrak(2010)]{ekeland2010golden}
I.~Ekeland and A.~Lazrak.
\newblock The golden rule when preferences are time inconsistent.
\newblock \emph{Mathematics and Financial Economics}, 4\penalty0 (1):\penalty0
  29--55, 2010.

\bibitem[Ekeland and Pirvu(2008)]{ekeland2008investment}
I.~Ekeland and T.A. Pirvu.
\newblock Investment and consumption without commitment.
\newblock \emph{Mathematics and Financial Economics}, 2\penalty0 (1):\penalty0
  57--86, 2008.

\bibitem[El~Karoui and Tan(2013{\natexlab{a}})]{karoui2013capacities}
N.~El~Karoui and X.~Tan.
\newblock Capacities, measurable selection and dynamic programming part {I}:
  abstract framework.
\newblock \emph{ArXiv preprint arXiv:1310.3363}, 2013{\natexlab{a}}.

\bibitem[El~Karoui and Tan(2013{\natexlab{b}})]{karoui2015capacities2}
N.~El~Karoui and X.~Tan.
\newblock Capacities, measurable selection and dynamic programming part {II}:
  application in stochastic control problems.
\newblock \emph{ArXiv preprint arXiv:1310.3364}, 2013{\natexlab{b}}.

\bibitem[Fleming and Soner(2006)]{fleming2006controlled}
W.H. Fleming and H.M. Soner.
\newblock \emph{Controlled {M}arkov processes and viscosity solutions},
  volume~25 of \emph{Stochastic modelling and applied probability}.
\newblock Springer--Verlag New York, 2nd edition, 2006.

\bibitem[Frei and Dos~Reis(2011)]{frei2011financial}
C.~Frei and G.~Dos~Reis.
\newblock A financial market with interacting investors: does an equilibrium
  exist?
\newblock \emph{Mathematics and Financial Economics}, 4\penalty0 (3):\penalty0
  161--182, 2011.

\bibitem[Fudenberg and Levine(2006)]{fudenberg2006dual}
D.~Fudenberg and D.K. Levine.
\newblock A dual-self model of impulse control.
\newblock \emph{The American Economic Review}, 96\penalty0 (5):\penalty0
  1449--1476, 2006.

\bibitem[Goldman(1980)]{goldman1980consistent}
Steven~M Goldman.
\newblock Consistent plans.
\newblock \emph{The Review of Economic Studies}, 47\penalty0 (3):\penalty0
  533--537, 1980.

\bibitem[Gul and Pesendorfer(2001)]{gul2001temptation}
F.~Gul and W.~Pesendorfer.
\newblock Temptation and self-control.
\newblock \emph{Econometrica}, 69\penalty0 (6):\penalty0 1403--1435, 2001.

\bibitem[Han and Wong(2019)]{han2019time}
B.~Han and H.Y. Wong.
\newblock Time-consistent feedback strategies with {V}olterra processes.
\newblock \emph{ArXiv preprint arXiv:1907.11378}, 2019.

\bibitem[Harter and Richou(2019)]{harter2019stability}
J.~Harter and A.~Richou.
\newblock A stability approach for solving multidimensional quadratic {BSDE}s.
\newblock \emph{Electronic Journal of Probability}, 24\penalty0 (4):\penalty0
  1--51, 2019.

\bibitem[He and Jiang(2019)]{he2019equilibrium}
X.D. He and Z.~Jiang.
\newblock On the equilibrium strategies for time-inconsistent problems in
  continuous time.
\newblock \emph{SSRN preprint 3308274}, 2019.

\bibitem[He and Zhou(2021)]{he2021who}
X.D. He and X.Y. Zhou.
\newblock Who are {I}: time inconsistency and intrapersonal conflict and
  reconciliation.
\newblock \emph{ArXiv preprint arXiv:2105.01829}, 2021.

\bibitem[He et~al.(2019{\natexlab{a}})He, Hu, Ob{\l}{\'o}j, and
  Zhou]{he2019optimal}
X.D. He, S.~Hu, J.~Ob{\l}{\'o}j, and X.Y. Zhou.
\newblock Optimal exit time from casino gambling: strategies of precommitted
  and naive gamblers.
\newblock \emph{SIAM Journal on Control and Optimization}, 57\penalty0
  (3):\penalty0 1845--1868, 2019{\natexlab{a}}.

\bibitem[He et~al.(2019{\natexlab{b}})He, Strub, and
  Zariphopoulou]{he2019forward}
X.D. He, M.S. Strub, and T.~Zariphopoulou.
\newblock Forward rank-dependent performance criteria: time-consistent
  investment under probability distortion.
\newblock \emph{ArXiv preprint arXiv:1904.01745}, 2019{\natexlab{b}}.

\bibitem[Hern{\'{a}}ndez()]{hernandez2021me}
Camilo Hern{\'{a}}ndez.
\newblock \emph{{Me, Myself and I: time-inconsistent stochastic control,
  contract theory and backward stochastic Volterra integral equations}}.
\newblock PhD thesis, Columbia university.

\bibitem[Hu et~al.(2012)Hu, Jin, and Zhou]{hu2012time}
Y.~Hu, H.~Jin, and X.Y. Zhou.
\newblock Time-inconsistent stochastic linear--quadratic control.
\newblock \emph{SIAM Journal on Control and Optimization}, 50\penalty0
  (3):\penalty0 1548--1572, 2012.

\bibitem[Hu et~al.(2017)Hu, Jin, and Zhou]{hu2017time}
Y.~Hu, H.~Jin, and X.Y. Zhou.
\newblock Time-inconsistent stochastic linear--quadratic control:
  characterization and uniqueness of equilibrium.
\newblock \emph{SIAM Journal on Control and Optimization}, 55\penalty0
  (2):\penalty0 1261--1279, 2017.

\bibitem[Huang and Zhou(2018)]{huang2021strong}
Y.-J. Huang and Z.~Zhou.
\newblock Strong and weak equilibria for time-inconsistent stochastic control
  in continuous time.
\newblock \emph{Mathematics of Operations Research}, 46\penalty0 (2):\penalty0
  405--833, 2018.

\bibitem[Huang and Zhou(2020)]{huang2020optimal}
Y.-J. Huang and Z.~Zhou.
\newblock Optimal equilibria for time-inconsistent stopping problems in
  continuous time.
\newblock \emph{Mathematical Finance}, 30\penalty0 (3):\penalty0 1103--1134,
  2020.

\bibitem[Huang et~al.(2020)Huang, Nguyen-Huu, and Zhou]{huang2020general}
Y.-J. Huang, A.~Nguyen-Huu, and X.Y. Zhou.
\newblock General stopping behaviors of na{\"{i}}ve and noncommitted
  sophisticated agents, with application to probability distortion.
\newblock \emph{Mathematical Finance}, 30\penalty0 (1):\penalty0 310--340,
  2020.

\bibitem[Jacod and Shiryaev(2003)]{jacod2003limit}
J.~Jacod and A.N. Shiryaev.
\newblock \emph{Limit theorems for stochastic processes}, volume 288 of
  \emph{Grundlehren der mathematischen Wissenschaften}.
\newblock Springer--Verlag Berlin Heidelberg, 2003.

\bibitem[Karandikar(1995)]{karandikar1995pathwise}
R.L. Karandikar.
\newblock On pathwise stochastic integration.
\newblock \emph{Stochastic Processes and their Applications}, 57\penalty0
  (1):\penalty0 11--18, 1995.

\bibitem[Karatzas and Shreve(1998)]{karatzas1991brownian}
I.~Karatzas and S.E. Shreve.
\newblock \emph{Brownian motion and stochastic calculus}, volume 113 of
  \emph{Graduate texts in mathematics}.
\newblock Springer--Verlag New York, 2nd edition, 1998.

\bibitem[Karnam et~al.(2017)Karnam, Ma, and Zhang]{karnam2016dynamic}
C.~Karnam, J.~Ma, and J.~Zhang.
\newblock Dynamic approaches for some time inconsistent problems.
\newblock \emph{The Annals of Applied Probability}, 27\penalty0 (6):\penalty0
  3435--3477, 2017.

\bibitem[Laibson(1997)]{laibson1997golden}
D.~Laibson.
\newblock Golden eggs and hyperbolic discounting.
\newblock \emph{The Quarterly Journal of Economics}, 112\penalty0 (2):\penalty0
  443--477, 1997.

\bibitem[Landriault et~al.(2018)Landriault, Li, Li, and
  Young]{landriault2018equilibrium}
D.~Landriault, B.~Li, D.~Li, and V.R. Young.
\newblock Equilibrium strategies for the mean--variance investment problem over
  a random horizon.
\newblock \emph{SIAM Journal on Financial Mathematics}, 9\penalty0
  (3):\penalty0 1046--1073, 2018.

\bibitem[Lindensj{\"{o}}(2019)]{lindensjo2019regular}
K.~Lindensj{\"{o}}.
\newblock {A regular equilibrium solves the extended HJB system}.
\newblock \emph{Operations Research Letters}, 47\penalty0 (5):\penalty0
  427--432, 2019.

\bibitem[Ma et~al.(2018)Ma, Wong, and Zhang]{ma2018time}
J.~Ma, T.-K.L. Wong, and J.~Zhang.
\newblock Time-consistent conditional expectation under probability distortion.
\newblock \emph{ArXiv preprint arXiv:1809.08262}, 2018.

\bibitem[McShane(1983)]{mcshane1986unified}
E.J. McShane.
\newblock \emph{{Unified Integration}}.
\newblock Pure and applied mathematics. Academic Press Orlando, 1983.

\bibitem[Mei and Zhu(2020)]{mei2020closed}
H.~Mei and C.~Zhu.
\newblock Closed-loop equilibrium for time-inconsistent {M}c{K}ean--{V}lasov
  controlled problem.
\newblock \emph{ArXiv preprint arXiv:2002.06952}, 2020.

\bibitem[Nutz and van Handel(2013)]{nutz2013constructing}
M.~Nutz and R.~van Handel.
\newblock Constructing sublinear expectations on path space.
\newblock \emph{Stochastic Processes and their Applications}, 123\penalty0
  (8):\penalty0 3100--3121, 2013.

\bibitem[Nutz and Zhang(2020)]{nutz2020conditional}
M.~Nutz and Y.~Zhang.
\newblock Conditional optimal stopping: a time-inconsistent optimization.
\newblock \emph{The Annals of Applied Probability}, 30\penalty0 (4):\penalty0
  1669--1692, 2020.

\bibitem[O'Donoghue and Rabin(1999{\natexlab{a}})]{odonoghue1999doing}
T.~O'Donoghue and M.~Rabin.
\newblock Doing it now or later.
\newblock \emph{The American Economic Review}, 89\penalty0 (1):\penalty0
  103--124, 1999{\natexlab{a}}.

\bibitem[O'Donoghue and Rabin(1999{\natexlab{b}})]{odonoghue1999incentives}
T.~O'Donoghue and M.~Rabin.
\newblock Incentives for procrastinators.
\newblock \emph{The Quarterly Journal of Economics}, 114\penalty0 (3):\penalty0
  769--816, 1999{\natexlab{b}}.

\bibitem[Osborne and Rubinstein(1994)]{osborne1994course}
M.J. Osborne and A.~Rubinstein.
\newblock \emph{A course in game theory}.
\newblock MIT press, 1994.

\bibitem[Papapantoleon et~al.(2018)Papapantoleon, Possama{\"\i}, and
  Saplaouras]{papapantoleon2018existence}
A.~Papapantoleon, D.~Possama{\"\i}, and A.~Saplaouras.
\newblock Existence and uniqueness for {BSDE}s with jumps: the whole nine
  yards.
\newblock \emph{Electronic Journal of Probability}, 23\penalty0 (121):\penalty0
  1--68, 2018.

\bibitem[Phelps and Pollak(1968)]{phelps1968second}
E.S. Phelps and R.A. Pollak.
\newblock On second-best national saving and game-equilibrium growth.
\newblock \emph{The Review of Economic Studies}, 35\penalty0 (2):\penalty0
  185--199, 1968.

\bibitem[Pollak(1968)]{pollak1968consistent}
R.A. Pollak.
\newblock Consistent planning.
\newblock \emph{The Review of Economic Studies}, 35\penalty0 (2):\penalty0
  201--208, 1968.

\bibitem[Possama{\"\i} et~al.(2018)Possama{\"\i}, Tan, and
  Zhou]{possamai2015stochastic}
D.~Possama{\"\i}, X.~Tan, and C.~Zhou.
\newblock Stochastic control for a class of nonlinear kernels and applications.
\newblock \emph{The Annals of Probability}, 46\penalty0 (1):\penalty0 551--603,
  2018.

\bibitem[Protter(2005)]{protter2005stochastic}
P.E. Protter.
\newblock \emph{Stochastic integration and differential equations}, volume~21
  of \emph{Stochastic modelling and applied probability}.
\newblock Springer--Verlag Berlin Heidelberg, 2nd edition, 2005.

\bibitem[Samuelson(1937)]{samuelson1937note}
P.A. Samuelson.
\newblock A note on measurement of utility.
\newblock \emph{The Review of Economic Studies}, 4\penalty0 (2):\penalty0
  155--161, 1937.

\bibitem[Sch{\"{a}}l(1974)]{schal1974selection}
M.~Sch{\"{a}}l.
\newblock {A selection theorem for optimization problems}.
\newblock \emph{Archiv der Mathematik}, 25\penalty0 (1):\penalty0 219--224,
  1974.

\bibitem[Soner et~al.(2011)Soner, Touzi, and Zhang]{soner2011martingale}
H.M. Soner, N.~Touzi, and J.~Zhang.
\newblock Martingale representation theorem for the ${G}$-expectation.
\newblock \emph{Stochastic Processes and their Applications}, 121\penalty0
  (2):\penalty0 265--287, 2011.

\bibitem[Soner et~al.(2012)Soner, Touzi, and Zhang]{soner2012wellposedness}
H.M. Soner, N.~Touzi, and J.~Zhang.
\newblock Wellposedness of second order backward {SDE}s.
\newblock \emph{Probability Theory and Related Fields}, 153:\penalty0 149--190,
  2012.

\bibitem[Stroock and Varadhan(1997)]{stroock2007multidimensional}
D.W. Stroock and S.R.S. Varadhan.
\newblock \emph{Multidimensional diffusion processes}, volume 233 of
  \emph{Grundlehren der mathematischen Wissenschaften}.
\newblock Springer--Verlag Berlin Heidelberg, 1997.

\bibitem[Strotz(1955)]{strotz1955myopia}
R.H. Strotz.
\newblock Myopia and inconsistency in dynamic utility maximization.
\newblock \emph{The Review of Economic Studies}, 23\penalty0 (3):\penalty0
  165--180, 1955.

\bibitem[Tan et~al.(2018)Tan, Wei, and Zhou]{tan2018failure}
K.S. Tan, W.~Wei, and X.Y. Zhou.
\newblock Failure of smooth pasting principle and nonexistence of equilibrium
  stopping rules under time-inconsistency.
\newblock \emph{ArXiv preprint arXiv: 1807.01785}, 2018.

\bibitem[van Neerven(2002)]{neerven2002approximating}
J.~van Neerven.
\newblock Approximating {B}ochner integrals by {R}iemann sums.
\newblock \emph{Indagationes Mathematicae}, 13\penalty0 (2):\penalty0 197--208,
  2002.

\bibitem[Vieille and Weibull(2009)]{vieille2009multiple}
N.~Vieille and J.W. Weibull.
\newblock Multiple solutions under quasi-exponential discounting.
\newblock \emph{Economic Theory}, 39\penalty0 (3):\penalty0 513--526, 2009.

\bibitem[Wang and Yong(2021)]{wang2019time}
H.~Wang and J.~Yong.
\newblock Time-inconsistent stochastic optimal control problems and backward
  stochastic {V}olterra integral equations.
\newblock \emph{ESAIM: Control, Optimisation and Calculus of Variations},
  27\penalty0 (22):\penalty0 1--40, 2021.

\bibitem[Wei et~al.(2017)Wei, Yong, and Yu]{wei2017time}
Q.~Wei, J.~Yong, and Z.~Yu.
\newblock Time-inconsistent recursive stochastic optimal control problems.
\newblock \emph{SIAM Journal on Control and Optimization}, 55\penalty0
  (6):\penalty0 4156--4201, 2017.

\bibitem[Xing and {\v{Z}}itkovi{\'c}(2018)]{xing2016class}
H.~Xing and G.~{\v{Z}}itkovi{\'c}.
\newblock A class of globally solvable {M}arkovian quadratic {BSDE} systems and
  applications.
\newblock \emph{The Annals of Probability}, 46\penalty0 (1):\penalty0 491--550,
  2018.

\bibitem[Yong(2012)]{yong2012time}
J.~Yong.
\newblock {Time-inconsistent optimal control problems and the equilibrium HJB
  equation}.
\newblock \emph{Mathematical Control and Related Fields}, 2\penalty0
  (3):\penalty0 271--329, 2012.

\bibitem[Zhang(2017)]{zhang2017backward}
J.~Zhang.
\newblock \emph{Backward stochastic differential equations---from linear to
  fully nonlinear theory}, volume~86 of \emph{Probability theory and stochastic
  modelling}.
\newblock Springer--Verlag New York, 2017.

\end{thebibliography}
}

\begin{appendix}

\section{Appendix}


\subsection{Optimal investment and consumption for log utility}\label{Appendix:examples}

We provide the necessary results for \Cref{Section:Examples}. We start with expressions to determine the functions $a(\cdot)$ and $b(\cdot)$
\[
p:=\frac{1}{\eta}, \; q:=1-\frac{1}{\eta}, \; \alpha_1(t):= r+\frac{1}{2} \beta^2 p + a(t)^{-p},\; t\in[0,T], \; \alpha_2(t):=(1-\eta)(\alpha_1(t)+\frac{\beta^2}{2\eta^2}(1+\eta)),\; t\in[0,T].
\]
Under the optimal policy $(c^\star,\gamma^\star)$ we have that $\P^{\nu^\star}$--$\as$
\begin{align*}
\d X_t= X_t(r+\beta^2\eta^{-1} + a(r)^{-\frac1{\eta}})&\d t+ \beta \eta^{-1} X_t \d W_t,\; \d {X_t}^{1-\eta} =   (1-\eta)X_t^{1-\eta}\big[\alpha_1(t) \d t +p \beta \d W_t\big], \\
\d U(c^\star(t,X_t))&=a(t)^{q} X_t^{1-\eta} \big(\big( \alpha_1(t)-p a'(t) a(t)^{-1}\big)  \d t+p \beta  \d W_t\big), 
\end{align*}
which we can use to obtain that for $\P\in \Pc(t,x)$
\begin{align*}
\E^\P[U(c^\star(r,X_r))]&=U(c^\star(t,x))+ x^{1-\eta} \int_t^r \exp\bigg(\int_t^u \alpha_2(v) \d v\bigg)a(u)^{q} (\alpha_1(u)-p a'(u) a(u)^{-1} )\d u, \\
\E^\P [U(X_T)]&=U(x)\exp\bigg(\int_t^T \alpha_2(u)\d u\bigg).
\end{align*}
By direct computation in \eqref{Eq:BSVEverification} one finds that in general $a$ must satisfy
\begin{align}\label{Eq:examplea}\begin{split}
a'(t)&+(1-\eta)a(t)\alpha_1(t)+a(t)^q \varphi(T-t)+\varphi'(T-t)\exp\bigg(\int_t^T\alpha_2(u)\d u\bigg)\\
&+\int_t^T \varphi'(r-t)\int_t^r\exp\bigg(\int_t^r \alpha_2(v)\d v\bigg)a(u)^q\big(\alpha_1(t)-p a'(u)a(u)^{-1}\big)\d u \d r=0,\; t\in[0,T),\; a(T)=1,
\end{split}
\end{align}
where we recall $\alpha_1$ and $\alpha_2$ are actually functions of $a$. The boundary condition follows as $Y_T(x)=U(x)$ for all $x\in \Xc$. The previous equation is, of course, an implicit formula that reflects the non-linearities inherent to the general case. A general expression for $b$ can be written down too. We have refrained from doing so here. Nevertheless, in the particular case $\eta=1$, which corresponds to the $\log$ utility scenario, the expressions involved simplify considerably, which reflects the fact that all non-linearities vanish. Indeed, in this case $p=1$, $q=0$, $\alpha_1=r+\beta/2-a^{-1}$, $\alpha_2=0$ and one obtains
\begin{align*}
b'(t)+ a(t)\alpha_1(t)-\log a(t) - \int_t^T  \varphi'(s-t)\big(A(s)-A(t)- \log [a(s)]\big)\d s&+\varphi '(T-t)(A(T)-A(t))\\
&+\log(x)[a'(t)+ \varphi(T-t)+ \varphi'(T-t)]=0.
\end{align*}
where $A(t)$ denotes the antiderivative of $\alpha_1$. To find $a$ we set  
\[a'(t)+ \varphi(T-t)+ \varphi'(T-t)=0,\; t\in[0,T),\; a(T)=1\]
This determines both $\alpha$ and $A$. $b$ is then given by setting the first line in the above expression equal to zero together with the boundary condition $b(T)=0$.

\subsection{Verification theorem}\label{Appendix:verification}
Throughout this section we assume \Cref{AssumptionC}. We present next a series of lemmata which shed light on the properties satisfied by the 2BSDE in \eqref{HJB}.

\begin{lemmaA}\label{Lemma:shifted2bsde}
For $(\xb,x,s)\in \Xc\times \Xc\times (0,T]$ consider the {\rm 2BSDEs}
\begin{align}\label{Eq:lemma:shifted2bsde}\begin{split}
Y_t&=\xi(T,X_{\cdot \wedge
 T})+\int_t^T  F_r(X,Z_r,\widehat{\sigma}^2_r, \partial Y_r^r) \d r -\int_t^T Z_r \cdot \d X_r +K_T^{\P}-K_t^{\P},\; 0\leq t\leq T,\; \Pc(\xb)\text{\rm--}\qs\\
Y_t^{s,x}&=\xi(T,X_{\cdot \wedge
 T})+\int_t^T  F_r(X,Z_r^{s,x},\widehat{\sigma}^2_r, \partial Y_r^r) \d r -\int_t^T Z_r^{s,x} \cdot \d X_r+K_T^{s,x,\Q}-K_t^{s,x,\Q},\; s\leq t\leq T,\; \Pc(s,x)\text{\rm--}\qs
 \end{split}
\end{align}
Suppose both {\rm2BSDEs} are well-posed. Then, for any $s\in (0,T]$, \begin{align*}
Y_t&=Y_t^{s,x},\; s\leq  t\leq T,\;  \Pc(s,x)\text{\rm--}\qs,\; \text{\rm for } \Pc(\xb)\text{\rm--}\qe\; x\in \Xc,\\
Z_t&=Z_t^{s,x}, \; \sigmah_t^2 \d t\otimes \d \Pc(s,x)\text{\rm--}\qe \text{\rm on } [s,T]\times \Xc, \text{ \rm for } \Pc(\xb)\text{\rm--}\qe\; x\in \Xc,\\
K_t^{\P }&=K_t^{s,x,\P_{s,x} },\; s\leq  t\leq T,\;  \P_{s,x}\text{\rm--}\as,\; \text{for } \P\text{\rm--}\ae\; x\in \Xc, \forall \P\in \Pc(\xb).
\end{align*}
\end{lemmaA}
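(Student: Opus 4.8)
\textbf{Proof proposal for Lemma \ref{Lemma:shifted2bsde}.}

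The plan is to exploit the fact that, under our weak-formulation setup, the regular conditional probability distribution of any $\P \in \Pc(\xb)$ with respect to $\Fc_s$ evaluated at $x$ lies in $\Pc(s,x)$ for $\Pc(\xb)$--$\qe\, x \in \Xc$, and conversely that $\Pc(s,x)$ can be realized as the family of such r.c.p.d.'s. This is the shifting/pasting machinery developed for 2BSDEs in \cite{possamai2015stochastic}, building on \cite{soner2012wellposedness} and on the stability of the martingale problem under conditioning (see \Cref{Thm:Concatenated:M}, \cite[Theorem 6.1.2]{stroock2007multidimensional}). Concretely, first I would fix $s \in (0,T]$ and $\P \in \Pc(\xb)$, and let $(\P_{s,x})_{x}$ denote a version of the r.c.p.d. of $\P$ given $\Fc_s$. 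By \Cref{Remark:martingaleproblem} the martingale problem \eqref{Eq:MartingaleProblem} is preserved under conditioning (its dynamics depend only on the increments of $\Delta_\cdot[\sigma\sigma^\t]$), so $\P_{s,x} \in \Pc(s,x)$ for $\P$--$\ae\, x \in \Xc$; taking a union over a countable dense family of $\P \in \Pc(\xb)$ and using the $\Pc(\xb)$--polar convention, this holds for $\Pc(\xb)$--$\qe\, x$.

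The second step is to verify that the restriction of the candidate solution $(Y,Z,(K^\P)_{\P\in\Pc(\xb)})$ of the first 2BSDE to the time interval $[s,T]$, after conditioning, solves the second 2BSDE under $\Pc(s,x)$ in the sense of \Cref{Def:2bsde}. The dynamics \eqref{Eq:dynamic2bsde} hold $\Pc(\xb)$--$\qs$ hence, after conditioning, $\Pc(s,x)$--$\qs$ for $\Pc(\xb)$--$\qe\, x$ — here one uses that $F_r$, $\xi$, $\partial Y_r^r$ are $\F^X$--adapted and that the stochastic integral $\int Z_r \cdot \d X_r$ behaves well under r.c.p.d.'s (the argument is the standard one from \cite[Section 4]{possamai2015stochastic}, invoking \Cref{Thm:Concatenated:M} and the reverse implication noted after it). The integrability in the spaces $\S^p_{s,x}$, $\H^p_{s,x}$, $\I^p_{s,x}$ follows from the integrability of the unconditioned objects together with the tower property / Jensen under conditioning. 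Finally, the minimality condition \eqref{Min:Condition:NoA}, which is phrased via essential infima over $\Pc_\xb(t,\P,\F^X_+)$, localizes correctly: conditioning at time $s$ turns it into exactly the minimality condition for the shifted family over $\Pc_{s,x}(t,\P_{s,x},\F^X_+)$, again by the arguments in \cite[Lemma 3.5, Section 4.4]{possamai2015stochastic}. Thus the conditioned restriction is \emph{a} solution of the second 2BSDE under $\Pc(s,x)$.

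The third and concluding step is uniqueness: since we have assumed both 2BSDEs are well-posed, \cite[Theorem 4.1 or 4.2]{possamai2015stochastic} gives uniqueness of the solution to the second 2BSDE under $\Pc(s,x)$, and comparing the two solutions yields $Y_t = Y_t^{s,x}$ for $s \le t \le T$, $\Pc(s,x)$--$\qs$, for $\Pc(\xb)$--$\qe\, x$. The identification of $Z$ follows from uniqueness of the martingale (Doob–Meyer / quadratic variation) decomposition — more precisely, $Z$ and $Z^{s,x}$ are both characterized as the predictable density $(\sigmah^2_t)^\oplus \,\d[Y,X]_t/\d t$ (cf. the aggregation argument in the proof of \Cref{Theorem:Necessity}), which is a pathwise/conditioning-stable object, hence they agree $\sigmah^2_t\d t \otimes \d\Pc(s,x)$--$\qe$ on $[s,T]\times\Xc$. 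The $K$ identity then follows by subtracting the two dynamics. The main obstacle I anticipate is the careful bookkeeping of the ``for $\Pc(\xb)$--$\qe\, x$'' and ``$\P$--$\ae\, x$, $\forall \P$'' quantifiers when passing between polar sets under the whole family $\Pc(\xb)$ and under individual measures — i.e. checking that the exceptional sets can be chosen uniformly (or at least consistently) so that the three identities hold simultaneously; this is exactly the kind of measurability/aggregation subtlety handled by \cite[Lemmata 3.2, 3.3, 3.5]{possamai2015stochastic}, and I would lean on those results rather than reprove them.
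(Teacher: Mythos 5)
Your proposal is correct in outline, but it takes a genuinely different route from the paper. You condition the global solution at time $s$, verify that the conditioned restriction solves the shifted 2BSDE (dynamics, integrability, and the minimality condition over $\Pc(s,x)$), and then invoke uniqueness of the shifted equation. The paper instead identifies \emph{both} solutions with a single pathwise-defined third object: the right limit $\widehat \Yc^+$ of the value process $\widehat\Yc_t(x)=\sup_{\P\in\Pc(t,x)}\E^\P[\Yc_t^\P]$, whose decomposition (from \cite[Lemmata 3.2 and 3.6]{possamai2015stochastic}) and whose aggregated control $\widehat Z_t=(\sigmah^2_t)^\oplus\,\d[\widehat\Yc^+,X]_t/\d t$ (via \cite{karandikar1995pathwise}) are constructed $\omega$--by--$\omega$ and therefore restrict consistently to $[s,T]\times\Omega^\omega_s$; well--posedness of each 2BSDE then forces each solution to coincide with this canonical object. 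What the paper's route buys is precisely that it never has to verify directly that the conditioned restriction of $(Y,Z,(K^\P)_\P)$ satisfies the essential-infimum minimality condition relative to the \emph{uncountable} family $\Pc(s,x)$ with a single exceptional set of $x$'s valid for all $\Q\in\Pc(s,x)$ simultaneously --- which is exactly the quantifier/aggregation obstacle you flag at the end of your proposal, and which in your approach carries the real weight of the argument (it is not merely bookkeeping; it is where one must genuinely invoke the dynamic-programming results of \cite[Section 4.4]{possamai2015stochastic} plus the pasting \Cref{Lemma:forwardcondition}). Your route is the more standard flow-property argument and avoids re-deriving the value-process representation, so it is a legitimate alternative provided that localization step is carried out in full. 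One smaller point: for the $K$ identity, ``subtracting the two dynamics'' requires knowing that the stochastic integral $\int Z_r\cdot\d X_r$ computed under $\P$ and under the conditioned measure $\P_{s,x}$ agree $\P_{s,x}$--a.s.; the paper makes this explicit by appealing to \cite[Lemma 4.1]{claisse2016pseudo} together with weak uniqueness of \eqref{Eq:driftlessSDE}, and you should cite something to that effect rather than leave it implicit.
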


\begin{proof}
Following \cite{possamai2015stochastic}, we consider for $(t,x)\in [0,T]\times \Xc$
\begin{align*}
\widehat \Yc_t(x):= \sup_{\P\in \Pc(t,x)} \E^\P\big[\Yc_t^\P\big],
\end{align*}
where for an arbitrary $\P\in \Pc(s,x)$, $\Yc^\P$ corresponds to the first coordinate of the solution to the BSDE
\begin{align*}
\Yc_t^\P&=\xi(T,X_{\cdot \wedge T}) +\int_t^T F_r(X,\Zc_r^\P,\sigmah_r^2,\partial Y_r^r)\d r-\int_t^T \Zc_r^\P\cdot \d X_r,\; s\leq t \leq T,\; \P\text{\rm--}\as
\end{align*}
It then follows by \cite[Lemmata 3.2. and 3.6.]{possamai2015stochastic} that $\widehat \Yc^+$, the right limit of $\widehat \Yc$, is $\F^{X,\Pc(\xb)}_+$-measurable, $\Pc(\xb)$--$\qs$ c\`adl\`ag, and for every $\P\in \Pc(\xb)$, there is $(\widehat \Zc^\P,\widehat \Kc^\P) \in \H^p_{\xb}(\F^{X,\P}_+,\P)\times \I^p_{\xb}(\F^{X,\P}_+,\P)$ such that for every $\P\in \Pc(\xb)$
\begin{align*}
\widehat \Yc_t^+&=\xi(T,X_{\cdot \wedge T}) +\int_t^T F_r(X,\widehat \Zc_r^\P,\sigmah_r^2,\partial Y_r^r)\d r-\int_t^T \widehat  \Zc_r^\P\cdot \d X_r+\int_t^T \d\widehat  \Kc^{\P}_r,\;  0\leq t \leq T,\; \P\text{\rm--}\as
\end{align*}
By \cite{karandikar1995pathwise}, there exists a universal process $[ \widehat \Yc^+,X]$ which coincides with the quadratic co-variation of $\widehat \Yc^+$ and $X$ under each probability measure $\P\in \Pc(\xb)$. Thus, one can define a universal $\F^{X,\Pc(\xb)}_+$-predictable process $Z$ by
\begin{align}\label{Eq:defzlemmaaux}
 \widehat Z_t:=(\sigmah^{2}_t)^\oplus \frac{\d [ \widehat \Yc^+,X]_t }{\d t},
\end{align}
and obtain,
\begin{align*}
\widehat \Yc_t^+&=\xi(T,X_{\cdot \wedge T}) +\int_t^T F_r(X,\widehat Z_r,\sigmah_r^2,\partial Y_r^r)\d r-\int_t^T\widehat Z_r \cdot \d X_r +\widehat \Kc^{\P}_T-\widehat \Kc^{\P}_t, \; 0\leq t \leq T,\; \Pc(\xb)\text{--}\qs
\end{align*}

By well-posedness, we have that 
\begin{align}\label{Eq:equalityatP0}\begin{split}
\widehat \Yc_t^+&=Y_t,\; 0\leq t\leq T,\; \Pc(\xb)\text{\rm--}\qs,\;  \widehat Z_t=Z_t,\; \sigmah^2 \d t\otimes \d \Pc(\xb)\text{\rm--}\qe\; \text{on } [0,T]\times \Xc,\\
 \widehat \Kc^{\P}_t&=K^{\P}_t, 0\leq t \leq T,\; \P\text{\rm--}\as,\; \forall \P\in  \Pc(\xb),\end{split}
\end{align}
where the later denotes the solution to the first 2BSDE in \eqref{Eq:lemma:shifted2bsde}. Thus, as $\Yc^+$ is computed $\omega-$by$-\omega$, we can repeat the previous argument on the time interval $[s,T]$ and $\Omega^\omega_s=\{\tilde \omega\in \Omega: \tilde x_r=x_r, 0\leq r\leq s\}$, \emph{i.e.} fixing an initial trajectory. Reasoning as before, we then find that on $\Omega^\omega_s$, $\widehat \Yc^{+}$ is $\F^{\Pc(s,x)}_+$-measurable and $\Pc(s,x)$--$\qs$ c\`adl\`ag. By well-posedness of the second 2BSDE in \eqref{Eq:lemma:shifted2bsde}, this yields the analogous version of \eqref{Eq:equalityatP0} between $(\widehat \Yc^+,\widehat Z, (\widehat \Kc^\P)_{\P\in \Pc(s,x)})$ and $(Y^{s,x},Z^{s,x},(K^{s,x})_{\P\in \Pc(s,x)})$. It is then clear that
\begin{align*}
Y_t=Y^{s,x}_t, s\leq t\leq T, \Pc(s,x)\text{--}\qs, \text{ for } \Pc(\xb)\text{--}\qe\; x\in \Xc,\; s\in (0,T].
\end{align*}
The corresponding result for $Z$ follows from \eqref{Eq:defzlemmaaux}. The relation for the family $(K^\P)_{\P\in \Pc(\xb)}$ holds $\P$-by-$\P$ for every $\P\in \Pc(\xb)$ in light of the weak uniqueness assumption for the drift-less dynamics \eqref{Eq:driftlessSDE} and \cite[Lemma 4.1]{claisse2016pseudo}, which guarantees that for any $\P\in \Pc(\xb)$ and $\P\text{--}\ae x\in \Xc$
\begin{align*}
\bigg(\int_t^T Z_r\cdot X_r\bigg)^\P=\bigg(\int_t^T Z_r\cdot X_r\bigg)^{\P_{s,x}}, \;  0\leq s\leq t,\; \P_{s,x}\text{--}\as
\end{align*}
\end{proof}

\begin{lemmaA}\label{Lemma:Derivative}
Let $(\P^\nu,\nu)\in \Mf(\xb)$. For $(s,x)\in [0,T]\times \Xc$ consider the system, assumed to hold $\P^{\nu}_{s,x}\text{--}\as$
\begin{align}\label{Eq:lemma:derivative}\tag{D}\begin{split}
\partial \Yc_t^{r}&=\partial_s \xi(r,X_{\cdot \wedge T})+\int_t^T \partial h_u(r,X,\partial \Zc_u^r, \nu_u) \d u-\int_t^T\partial \Zc_u^r \cdot \d X_u ,\;  s\leq t \leq T,\\
\Yc_t^{r}&=\xi(r,X_{\cdot \wedge T})+\int_t^T h_u(r,X,\Zc_u^{r},\nu_u)\d u-\int_t^T\Zc_u^{r} \cdot \d X_u , \;  s \leq t \leq T.
\end{split}
\end{align}
Let $( \partial \Yc, \partial \Zc) \in \S^{p,2}_{s,x}(\F^{X,\P^{\nu}_{ s,x}}_+,\P^{\nu}_{s,x})\times \H^{p,2}_{ s,x}(\F^{X,\P^{\nu}_{ s,x}}_+,\P^{\nu}_{  s,x},X)$ be the solution to the first {\rm BSDE} in \eqref{Eq:lemma:derivative}. Then, the mapping $[0,T]\ni s  \longmapsto (\partial \Yc^s,\partial \Zc^s)\in \S^{p}_{s,x}(\F^{X,\P^{\nu}_{ s,x}}_+,\P^{\nu}_{s,x})\times \H^{p}_{ s,x}(\F^{X,\P^{\nu}_{ s,x}}_+,\P^{\nu}_{  s,x},X)$ is Lebesgue-integrable with antiderivative $(\Yc,\Zc) $, that is to say
\begin{align*}
\bigg(\int_s^T \partial \Yc^r \d r, \int_s^T \partial \Zc^r \d r \bigg) = (\Yc^T-\Yc^s,\Zc^T-\Zc^s ),\;  \P^{\nu}_{ s,x}\text{\rm--}\as
\end{align*}
Furthermore, letting 
\begin{align*}
 \partial Y_t^r(\omega)&:=\E^{{\overline \P}^{\nu}_{t,x} } \bigg[\partial_s \xi(r,X_{\cdot\wedge T}) +\int_t^T  \partial_s f_u(r,X,\nu_u)\d u\bigg],\;  (s,t)\in [0,T]^2,\; \omega \in \Omega,
\end{align*}
it holds that $\partial Y_s^s=\E^{\overline \P_{s,x}^\nu}\big[\partial \Yc_s^s\big]$ and
 $J(s,s,x,\nu)=\E^{\overline \P_{s,x}^\nu}\big[\Yc_s^s\big]$. If in addition $\partial Y\in \S^{p,2}_{\xb}(\F^{X,\P^\nu_{s,x}}_+)$, for any $t\in [s,T]$
\begin{align*}
\E^{\overline \P^{\nu}_{s,x}}\bigg[ \int_t^T \partial Y_r^r \d r\bigg]=\E^{\overline \P^{\nu}_{s,x}}\bigg[ \int_t^T \partial \Yc_r^r \d r\bigg].
\end{align*}
\end{lemmaA}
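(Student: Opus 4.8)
\textbf{Proof plan for Lemma \ref{Lemma:Derivative}.}

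The plan is to establish the three assertions in order, using standard BSDE stability/differentiability estimates together with Fubini's theorem. \emph{Step 1: Lebesgue-integrability of $s\longmapsto(\partial\Yc^s,\partial\Zc^s)$ with antiderivative $(\Yc,\Zc)$.} I would first argue that the map $r\longmapsto(\partial\Yc^r,\partial\Zc^r)$ is continuous from $[0,T]$ into $\S^p_{s,x}\times\H^p_{s,x}$: this follows from the a priori estimates for BSDEs with Lipschitz generator (under \Cref{AssumptionC}\ref{AssumptionC:ii}, which gives continuity of $r\longmapsto\partial h_u(r,X,\cdot,\nu_u)$ and $r\longmapsto\partial_s\xi(r,\cdot)$, and \Cref{AssumptionC}\ref{AssumptionC:iii} for the integrability of the data), applied to the difference of the equations at parameters $r$ and $r'$. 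Continuity gives Lebesgue-measurability and integrability of the norm. Then define $\widetilde\Yc_t:=\Yc_t^s+\int_s^T\partial\Yc_t^r\,\d r$ and $\widetilde\Zc_t:=\Zc_t^s+\int_s^T\partial\Zc_t^r\,\d r$; the goal is to identify $(\widetilde\Yc,\widetilde\Zc)$ with $(\Yc^T,\Zc^T)$. Integrating the second BSDE in \eqref{Eq:lemma:derivative} at parameter $r$ over $r\in[s,T]$ and interchanging the $\d r$-integral with the Lebesgue $\d u$-integral and the stochastic integral $\int\cdot\,\d X_u$ (the latter interchange justified by a stochastic Fubini theorem, see e.g.\ the $\H^p$-bounds just obtained), and using $\int_s^T h_u(r,X,\Zc_u^r,\nu_u)\,\d r$ versus the fundamental theorem of calculus $h_u(T,\cdot)-h_u(s,\cdot)=\int_s^T\partial h_u(r,\cdot)\,\d r$ together with $\xi(T,\cdot)-\xi(s,\cdot)=\int_s^T\partial_s\xi(r,\cdot)\,\d r$, one checks that $(\widetilde\Yc,\widetilde\Zc)$ solves exactly the BSDE defining $(\Yc^T,\Zc^T)$. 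Uniqueness of BSDE solutions then yields $\widetilde\Yc=\Yc^T$, $\widetilde\Zc=\Zc^T$, $\P^\nu_{s,x}$--a.s., which is the claimed identity.

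\emph{Step 2: the representations $\partial Y_s^s=\E^{\overline\P^\nu_{s,x}}[\partial\Yc_s^s]$ and $J(s,s,x,\nu)=\E^{\overline\P^\nu_{s,x}}[\Yc_s^s]$.} Here I would pass from the drift-less dynamics to the dynamics with drift via the Girsanov change of measure of \Cref{Remark:com}: under $\overline\P^\nu$ the process $W^{\M}_t=W_t-\int_0^t b_r(X,\nu_r)\,\d r$ is Brownian, and the stochastic integral $\int\partial\Zc_u^s\cdot\d X_u=\int\partial\Zc_u^s\cdot\sigma_u(X,\nu_u)\,\d W_u$ acquires a drift $\int\partial\Zc_u^s\cdot\sigma_u(X,\nu_u)b_u(X,\nu_u)\,\d u$ which exactly cancels the $b\cdot\sigma^\t\partial\Zc$ part of $\partial h_u$, leaving $\int\partial_s f_u(s,X,\nu_u)\,\d u$. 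Taking $\overline\P^\nu_{s,x}$--expectation of the BSDE at $t=s$ kills the (true) martingale term (the $\H^p$-integrability with $p>1$ guarantees this), and yields $\E^{\overline\P^\nu_{s,x}}[\partial\Yc_s^s]=\E^{\overline\P^\nu_{s,x}}[\partial_s\xi(s,X_{\cdot\wedge T})+\int_s^T\partial_s f_u(s,X,\nu_u)\,\d u]=\partial Y_s^s$, the last equality being the definition of $\partial Y$. The same computation applied to the second BSDE in \eqref{Eq:lemma:derivative} at $t=s$ gives $\E^{\overline\P^\nu_{s,x}}[\Yc_s^s]=\E^{\overline\P^\nu_{s,x}}[\xi(s,X_{\cdot\wedge T})+\int_s^T f_u(s,X,\nu_u)\,\d u]=J(s,s,x,\nu)$.

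\emph{Step 3: the integrated identity $\E^{\overline\P^\nu_{s,x}}[\int_t^T\partial Y_r^r\,\d r]=\E^{\overline\P^\nu_{s,x}}[\int_t^T\partial\Yc_r^r\,\d r]$ for $t\in[s,T]$.} Fix $r\in[s,T]$. Applying Step 2's argument with initial time $r$ instead of $s$ (legitimate because, by the flow property and the r.c.p.d.\ description in \Cref{Thm:Concatenated:M} / \Cref{Lemma:forwardcondition}, an r.c.p.d.\ of $\overline\P^\nu_{s,x}$ at time $r$ agrees with $\overline\P^\nu_{r,X}$), one gets $\partial Y_r^r(\omega)=\E^{\overline\P^\nu_{r,x}}[\partial\Yc_r^r]$ for $\overline\P^\nu_{s,x}$--a.e.\ $x$; equivalently $\partial Y_r^r=\E^{\overline\P^\nu_{s,x}}[\partial\Yc_r^r\mid\Fc_{r+}^{X,\overline\P^\nu_{s,x}}]$ after conditioning. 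Then by the tower property $\E^{\overline\P^\nu_{s,x}}[\partial Y_r^r]=\E^{\overline\P^\nu_{s,x}}[\partial\Yc_r^r]$ for each $r$, and integrating in $r$ over $[t,T]$ — with Fubini justified by the integrability $\partial Y\in\S^{p,2}_{\xb}$ assumed in the hypothesis and the $\S^{p,2}$-bound on $\partial\Yc$ from Step 1 — yields the claim. The main obstacle I anticipate is Step 1, specifically the careful justification of the three interchanges (Lebesgue $\d r$ with Lebesgue $\d u$, with the stochastic integral, and with $\E$) via a stochastic Fubini theorem in the $\H^p$-setting and the identification of the resulting object as the unique solution of the parameter-$T$ BSDE; the measure-change bookkeeping in Steps 2--3 is routine once the drift-cancellation is observed, and the only subtlety there is ensuring the stochastic integrals are genuine martingales under $\overline\P^\nu$, which follows from the stated integrability and boundedness of $b,\sigma$.
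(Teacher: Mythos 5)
Your proposal is correct, and Steps 2--3 (the Girsanov drift--cancellation giving the representations of $\partial Y^s_s$ and $J(s,s,x,\nu)$, and the tower--property/Fubini argument for the integrated identity) coincide with what the paper does. Where you genuinely diverge is in the proof of the antiderivative identity. You integrate the parameter--$r$ BSDE directly over $r\in[s,T]$, interchange the $\d r$--integral with the Lebesgue $\d u$--integral and with the stochastic integral via a stochastic Fubini theorem, observe that the data integrate to $\xi(T,\cdot)-\xi(s,\cdot)$ and $f_u(T,\cdot)-f_u(s,\cdot)$ by the fundamental theorem of calculus, and then identify $\big(\Yc^s+\int_s^T\partial\Yc^r\d r,\,\Zc^s+\int_s^T\partial\Zc^r\d r\big)$ with $(\Yc^T,\Zc^T)$ by uniqueness of the Lipschitz BSDE. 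The paper instead avoids the stochastic Fubini at this stage: it forms Riemann sums $I^\ell(\partial\Yc):=\sum_i\Delta s_i^\ell\,\partial\Yc^{s_i^\ell}$, notes that $I^\ell(\partial\Yc)-(\Yc^T-\Yc^s)$ solves a BSDE whose data are the Riemann--sum--minus--increment discrepancies of $\partial_s\xi$ and $\partial_s f$, and invokes the standard a priori stability estimate plus uniform continuity in $s$ to send these to zero in $\S^p\times\H^p$; the limit of the Riemann sums is then the Bochner integral by definition. Your route is arguably more direct and makes the algebraic cancellation transparent, at the cost of having to justify the stochastic Fubini interchange (jointly measurable version of $(r,u,\omega)\mapsto\partial\Zc^r_u$ plus $\int_0^T\|\partial\Zc^r\|_{\H^p}\,\d r<\infty$, which does follow from continuity of $r\mapsto\partial\Zc^r$ on the compact $[0,T]$); this is the same tool the paper itself uses in the subsequent \Cref{Lemma:Dynamicsdiagonal}, so nothing outside the paper's toolbox is needed. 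The paper's discretisation buys a proof that only uses the deterministic stability estimate, at the cost of the bookkeeping with sample points and moduli of continuity.
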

\begin{proof}
We first prove the second part of the statement. Note that
\begin{align*}
\Yc_t^s&=  \xi(s,X_{\cdot\wedge T})+ \int_t^T \big(f_r(s,X,\nu_r)+ b_r(X,\nu_r)\cdot \sigmah_r^\t Z_r^s\big)\d r-\int_t^T Z_r^s\cdot  \d X_r,\\
&=\xi(s,X_{\cdot\wedge T}) + \int_t^T f_r(s,X,\nu_r)\d r-\int_t^T  Z_r^s\cdot  \d X_r , \; \overline \P^{\nu}_{s,x}\text{\rm--}\as
\end{align*}
This implies
\[
\Yc_t^r(x)=\E^{\overline \P^{\nu}_{ s,x}}\bigg[\xi(r,X_{\cdot\wedge T}) + \int_t^T f_u(r,X,\nu_u)\d u \Big| \Fc_{t+}^{X,\P^{\nu}_{ s,x}} \bigg].
\]
Therefore, by taking expectation
\[
\E^{\overline \P_{s,x}^\nu}\big[\Yc_s^s(\omega)\big]=\E^{\overline \P^{\nu}_{s,x}}\bigg[\xi(s,X_{\cdot\wedge T}) + \int_s^T f_r(s,X,\nu_r)\d r \bigg]=J(s,s,x,\nu).
\]
The equality $\partial Y_s^s=\partial \Yc_s^s,$ is argued identically. Now, to obtain the last equality we use the fact that $\partial \Yc \in \S^{p,2}_{s,x}(\F^{X,\P^{\nu}_{ s,x}}_+,\P^{\nu}_{ s,x})$. Indeed, the continuity of the mapping $s\longmapsto (\partial \Yc^s,\partial \Zc^s)$ guarantees the integral is well-defined. The equality follows from the tower property.\medskip

We now argue the first part of the statement. Again, we know the mapping $[0,T]\ni s\longmapsto (\partial \Yc^s,\partial \Zc^s)$ is continuous, in particular integrable. A formal integration with respect to $s$ to the first equation in \eqref{Eq:lemma:derivative} leads to
\begin{align*}
\int_s^T\partial \Yc_t^s \d s &= \int_s^T \partial_s \xi(s,X_{\cdot\wedge T})\d s +\int_t^T   \int_s^T \frac{\partial f_r }{\partial s}	(s,X,\nu_r)\d s +  b_r(X,\nu_r) \cdot  \sigmah_r^\t \int_s^T\partial \Zc_r^s \d s \d r-\int_t^T  \int_s^T \partial \Zc_r^s\d s\cdot \d X_r.
\end{align*}
Therefore, a natural candidate for solution to the second BSDE in \eqref{Eq:lemma:derivative} is $( \Yc^s,  \Zc^s, \Nc^s)$, solution of the BSDE
\begin{align*}
\Yc_t^s=\xi(s,X_{\cdot\wedge T})+\int_t^T\big( f_r(s,X,\nu_r)+ b_r(X,\nu_r)\cdot \sigmah_r^\t \Zc_r^s\big) \d r-\int_t^T \Zc_r^s\cdot   \d X_r.
\end{align*}

Let $(\Pi^\ell)_{\ell}$ be a properly chosen sequence of partitions of $[s,T]$, as in \cite[Theorem 1]{neerven2002approximating}, $\Pi^\ell=(s_i)_{i\in\{1,\dots,n_\ell\}}$ with $\| \Pi^\ell\|\leq \ell$. Recall $\Delta s_i^\ell=s_i^\ell-s_{i-1}^\ell$. For a generic family process $X\in \H^{p,2}_{s,x}(\G)$, and mappings $s\longmapsto \partial_s \xi(s,x)$, $s\longmapsto \partial_s f(s,x,a)$ for $(s,x,a)\in [0,T]\times \Xc\times A$ we define 
\begin{align*}
I^\ell( X):=\sum_{i=0}^{n_\ell} \Delta s_i^\ell  X^{s_i^\ell}, \; \delta X:=X^T-X^s,\; I^{\ell}(\partial_s \xi(\cdot,x)):=\sum_{i=0}^n \Delta s_i^\ell \partial_s \xi(s_i^\ell,x),\; I^\ell(\partial_s f)_t(\cdot,x,a):=\sum_{i=0}^{n_\ell} \Delta s_i^\ell \partial_s f_t (s_i^\ell,x,a),
\end{align*}
and notice that for any $t\in[0,T]$
\begin{align*}
I^\ell(\partial Y)_t-(\delta Y)_t & =  I^\ell(\partial_s \xi(\cdot,X_{\cdot \wedge T}))-(\xi(T,X_{\cdot \wedge T})-\xi(s,X_{\cdot \wedge T})) -\int_t^T\big(I^\ell(\partial Z)_r-(\delta Z)_r\big)\cdot \d X_r \\
&\int_t^T \big[I^\ell(\partial_s f)_r(\cdot,X,\nu_r))-(f_r(T,X,\nu_r)-f_r(s,X,\nu_r)]+\sigmah_r b_r(X,\nu)[I^\ell(\partial Z)_r-(\delta Z)_r]\d r.
\end{align*}
Thanks to the integrability of $(\partial Y, \partial Z)$ and $(Y, Z)$, it follows that $I^\ell(\partial Y)-(\delta Y)\in \H^{p,2}_{s,x}$ and similarly for $\partial Z$ and $Z$. Therefore, \cite[Theorem 2.2]{bouchard2018unified} yields
\begin{align*}
\|I^\ell(\partial Y)-(\delta Y) \|_{\H^{p,2}_{s,x}}^p + \|I^\ell(\partial Z)-(\delta Z) \|_{\H^{p,2}_{s,x}}^p \leq&\  \E^{\P^\nu_{s,x}} \bigg[ \Big|I^\ell\big(\partial_s \xi(\cdot ,X_{\cdot\wedge T})\big)-\big(\xi(T,X_{\cdot\wedge T})-\xi(s,X_{\cdot\wedge T})\big)\Big|^p\\
&+\int_t^T \Big|I^\ell \big( \partial_s f)_r (\cdot ,X,\nu_r)\big)-\big(f_r(T,X,\nu_r)-f_r(s,X,\nu_r)\big)\Big|^p\d r \bigg].
\end{align*}
The uniform continuity of $s\longmapsto \partial_s \xi(s,x)$ and $s\longmapsto\partial_s f(s,x,a)$, see Assumption \ref{AssumptionC}, justifies, via bounded convergence, the convergence in $\S^{p}_{s,x}(\F^{X,\P^{\nu}_{ s,x}}_+,\P^{\nu}_{ s,x})$ (resp $\H^{p}_{s,x}$) of $I^\ell(\partial Y^s)$ to $Y^T-Y^s$ (resp $I^\ell(\partial Z^s)$ to $Z^T- Z^s$) as $\ell \longrightarrow 0$.
\end{proof}

\begin{lemmaA}\label{Lemma:Dynamicsdiagonal}
Let $(\P^\nu,\nu)\in \Mf(\xb)$, $( s, x)\in [0,T]\times \Xc$ and $(\partial \Yc,\partial \Zc)$ and $(\Yc,\Zc)$ as in \eqref{Eq:lemma:derivative}. Then
\begin{align}\label{Eq:BSVEverification}
\Yc_{t}^{t} =\Yc_{T}^{T}+ \int_t^T  h_r(r,X, \Zc_r^r, \nu_r)-\partial \Yc_t^t\d r -\int_t^T  \Zc_r^r\cdot \d X_r, \; \tilde s \leq t \leq T,\; \P_{s,x}\text{\rm--}\as &
\end{align}
\end{lemmaA}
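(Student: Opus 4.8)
The plan is to obtain the diagonal relation \eqref{Eq:BSVEverification} by combining the two families of BSDEs in \eqref{Eq:lemma:derivative} with the integration-along-$s$ identity from \Cref{Lemma:Derivative}. First I would recall that, by \Cref{Lemma:Derivative}, the map $s\longmapsto(\partial\Yc^s,\partial\Zc^s)$ is Lebesgue-integrable with antiderivative $(\Yc,\Zc)$, that is $\Yc^T_t-\Yc^s_t=\int_s^T\partial\Yc^r_t\,\d r$ and $\Zc^T_t-\Zc^s_t=\int_s^T\partial\Zc^r_t\,\d r$, $\P^\nu_{s,x}$--a.s. Writing the second BSDE of \eqref{Eq:lemma:derivative} for the fixed parameter $r=t$, namely
\[
\Yc^t_u=\xi(t,X_{\cdot\wedge T})+\int_u^T h_v(t,X,\Zc^t_v,\nu_v)\,\d v-\int_u^T\Zc^t_v\cdot\d X_v,\quad t\le u\le T,
\]
and evaluating at $u=t$, one gets an expression for the diagonal process $\Yc^t_t$. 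The goal is to differentiate/telescope this in $t$ and identify the driver as $h_r(r,X,\Zc^r_r,\nu_r)-\partial\Yc^r_r$.

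The key steps, in order, would be: (i) fix a partition $\Pi^\ell=(t_i^\ell)$ of $[t,T]$ with mesh $\le\ell$ chosen as in \citeauthor*{neerven2002approximating}\cite{neerven2002approximating}, and write the telescoping sum $\Yc^{t_n}_{t_n}-\Yc^{t_0}_{t_0}=\sum_i\big(\Yc^{t_{i+1}}_{t_{i+1}}-\Yc^{t_i}_{t_i}\big)$; (ii) split each increment as $\big(\Yc^{t_{i+1}}_{t_{i+1}}-\Yc^{t_i}_{t_{i+1}}\big)+\big(\Yc^{t_i}_{t_{i+1}}-\Yc^{t_i}_{t_i}\big)$, where the first difference is an increment in the \emph{upper} index, controlled by the antiderivative property (it equals $\int_{t_i}^{t_{i+1}}\partial\Yc^r_{t_{i+1}}\,\d r$ up to the parametrisation), and the second difference is an increment in the \emph{time} index, controlled by the dynamics of $\Yc^{t_i}$, giving $\int_{t_i}^{t_{i+1}}h_v(t_i,X,\Zc^{t_i}_v,\nu_v)\,\d v-\int_{t_i}^{t_{i+1}}\Zc^{t_i}_v\cdot\d X_v$; (iii) pass to the limit $\ell\to0$: the first contribution converges to $-\int_t^T\partial\Yc^r_r\,\d r$ (using continuity of $s\mapsto\partial\Yc^s$ in $\S^p$), the drift part to $\int_t^T h_r(r,X,\Zc^r_r,\nu_r)\,\d r$ (using continuity of $s\mapsto\Zc^s$ in $\H^p$ together with the joint regularity of $h$), and the stochastic-integral part to $\int_t^T\Zc^r_r\cdot\d X_r$, where the diagonal integrand $\Zc^r_r$ is well-defined exactly because of \Cref{Def:sol:systemH}-type arguments on the existence of a $\Bc[0,T]\otimes\Fc$--measurable diagonal version (cf. the remark following the definition of $\H^{p,1}$); (iv) invoke \cite[Theorem 2.2]{bouchard2018unified} to get the $\H^{p,2}$ convergence estimates making each Riemann-type sum converge, exactly as in the last display of the proof of \Cref{Lemma:Derivative}. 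Putting these together yields $\Yc^t_t=\Yc^T_T+\int_t^T\big(h_r(r,X,\Zc^r_r,\nu_r)-\partial\Yc^r_r\big)\d r-\int_t^T\Zc^r_r\cdot\d X_r$, which is \eqref{Eq:BSVEverification}.

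The main obstacle I anticipate is controlling the \emph{diagonal} stochastic integral $\int_t^T\Zc^r_r\cdot\d X_r$: one must justify that $r\mapsto\Zc^r_r$ is a bona fide predictable process (so the integral is defined) and that the partition sums $\sum_i\int_{t_i}^{t_{i+1}}\Zc^{t_i}_v\cdot\d X_v$ converge to it. This requires uniform (in $v$) control of $\|\Zc^{t_i}_\cdot-\Zc^r_\cdot\|$ on the small interval $[t_i,t_{i+1}]$, i.e. joint continuity of $(s,v)\mapsto\Zc^s_v$ in an appropriate sense, which is where the continuity hypotheses packaged into the spaces $\H^{p,2}_{s,x}$ and \Cref{AssumptionC} do the work; the BMO/Burkholder–Davis–Gundy estimates behind \cite[Theorem 2.2]{bouchard2018unified} should then close the gap. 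A secondary technical point is that all identities hold only $\P^\nu_{s,x}$--a.s. and the null sets must be chosen uniformly along a countable dense set of partitions before taking limits, which is handled by the standard separability argument already used repeatedly in the paper.
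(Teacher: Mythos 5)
Your route is genuinely different from the paper's. The paper never introduces a partition: it writes $\int_t^T\partial\Yc_r^r\,\d r$ explicitly from the BSDE for $\partial\Yc^r$ evaluated on the diagonal, then interchanges the order of integration with a deterministic Fubini for the drift and the stochastic Fubini theorem of da Prato and Zabczyk for the martingale part, and closes using precisely the antiderivative identities from \Cref{Lemma:Derivative}, namely $\int_t^u\partial\Zc_u^r\,\d r=\Zc_u^u-\Zc_u^t$ and $\int_t^u\partial_s f_u(r,X,\nu_u)\,\d r=f_u(u,X,\nu_u)-f_u(t,X,\nu_u)$. Both interchanges are justified by the $\H^{p,2}$ bound $\int_0^T\E[(\int_0^T|\sigmah_u\partial\Zc_u^r|^2\d u)^{p/2}]\d r<+\infty$ alone; no diagonal limit ever has to be taken. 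Combining the resulting identity with the BSDE for $\Yc^t$ at time $t$ gives \eqref{Eq:BSVEverification} directly.

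Your telescoping decomposition is structurally sound (modulo a sign slip: $\sum_i(\Yc^{t_{i+1}}_{t_{i+1}}-\Yc^{t_i}_{t_{i+1}})$ converges to $+\int_t^T\partial\Yc^r_r\,\d r$, not its negative, when you compute $\Yc^T_T-\Yc^t_t$), but step (iii) has a genuine gap. To pass from $\sum_i\int_{t_i}^{t_{i+1}}\Zc^{t_i}_v\cdot\d X_v$ to $\int_t^T\Zc^v_v\cdot\d X_v$ you need, by the It\=o isometry, that $\sum_i\int_{t_i}^{t_{i+1}}\E\big[|\sigma_v^\t(\Zc^{t_i}_v-\Zc^v_v)|^2\big]\d v\to0$; the same issue arises for the drift term and for $\sum_i\int_{t_i}^{t_{i+1}}\partial\Yc^r_{t_{i+1}}\d r$ versus $\int_t^T\partial\Yc^r_r\,\d r$. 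The continuity packaged in $\H^{p,2}$ and $\S^{p,2}$ is continuity of $s\longmapsto\Zc^s$ in the \emph{time--integrated} norm $\|\cdot\|_{\H^p}$; it does not control the pointwise--in--$v$ second moments $\E[|\sigma_v^\t(\Zc^{s}_v-\Zc^{s'}_v)|^2]$ that you must sum over $\sim T/\ell$ disjoint small windows, each window seeing only an $O(\ell)$ sliver of a \emph{different} integral $\int_0^T\E[|\sigma_v^\t(\Zc^{t_i}_v-\Zc^{\cdot}_v)|^2]\d v$. No joint continuity of $(s,v)\longmapsto\Zc^s_v$ is available from the definitions or from \Cref{AssumptionC}, and \cite[Theorem 2.2]{bouchard2018unified} gives stability estimates in the same integrated norms, so it cannot close this. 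The errors in your three limits would in fact cancel if you expanded $\partial\Yc^r_{t_{i+1}}-\partial\Yc^r_r$ through its own dynamics and applied stochastic Fubini on each window, but at that point you have reproduced the paper's Fubini computation; as written, the limit passage does not go through.
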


\begin{proof}
By evaluating $\partial \Yc$ at $r=t$ in \eqref{Eq:lemma:derivative} we get
\begin{align*}
\partial \Yc_t^t=\partial_s \xi(t,X_{\cdot \wedge T})+\int_t^T \partial h_r(t,X,\partial \Zc_r^t, \nu_r) -\int_t^T  \partial \Zc_r^t\cdot \d X_r, \; \P^{ s}_x\text{\rm--}\as 
\end{align*}
We will show that for $ s\leq t \leq T$, $\P_{s,x}$--$\as$
\begin{align*}
\begin{split}
\xi(T,X_{\cdot \wedge T})-\xi(t,X_{\cdot \wedge T})&+\int_t^T  h_r(r,X,\Zc_r^r,\nu_r)\d r -\int_t^T \Zc_r^r \d X_r = \int_t^T h_r(t,X,\Zc_r^t,\nu_r)\d r - \int_t^T \Zc_r^t \cdot \d X_r+ \int_t^T \partial \Yc_r^r \d r.
\end{split}
\end{align*}
Indeed
\begin{align*}
\int_t^T \partial \Yc_r \d r= \int_t^T  \partial_s \xi(r,X_{\cdot \wedge T})\d r +\int_t^T\int_r^T \partial h_u(r,X,\partial \Zc_u^r, \nu_u) \d u\, \d r -\int_t^T\int_r^T\partial \Zc_u^r \cdot \d X_u \d r.
\end{align*}
Now, \Cref{AssumptionC}\ref{AssumptionC:ii} and $\partial \Zc \in\H^{p,2}_{ s, x}(\F^{X,\P^{\nu}_{ s,x}}_+,\P^{\nu}_{ s,x},X)$ yield
\begin{align*}
 \int_t^T\int_r^T \partial h_u(r,X,\partial Z_u^r, \nu_u) \d u\, \d r=&\int_t^T \int_t^u \big(\partial_s f_u(r,X,\nu_u) + \sigmah_u b_u(X,\nu_u)\cdot \partial \Zc_u^r\big)\d r\d u\\
=&\int_t^T \big(h_u(u,X,\Zc^u_u,\nu_u)-h_u(t,X,\Zc_u^t,\nu_u)\big)\d u.
\end{align*}
Moreover, $\|\partial \Zc\|_{\H^{p,2}_{\tilde s,x}(X)}<\infty$ guarantees $\int_0^T \E^{ \P_{s,x}^\nu}\Big[\int_0^T \big|\sigmah_t Z^r_t\big|^2\d t\Big]^{\frac{p}{2}}\d r<\infty$ so a stochastic Fubini's theorem, see \citeauthor*{da2014stochastic} \cite[Section I.4.5]{da2014stochastic}, justifies
\[
\int_t^T\int_r^T\partial Z_u^s \cdot \d X_u \d r=\int_t^T (Z_u^u-Z_u^t) \cdot \d X_u,\; \P_{s,x}^\nu\text{\rm--}\as
\]
\end{proof}

\subsection{Well-posedness}\label{Appendix:well-posedness}
In this section we work under the setting of \Cref{Section:ProblemFormulation} but with a slightly more general system. We consider mappings
\begin{align}\label{driversBSDE}
\begin{split}
\hat h: &[0,T]\times [0,T]\times \Xc\times \R \times \R^d \times \R \longrightarrow\R ,\;
 \ {g}:[0,T]^2 \times \Xc\times \R \times \R^d\times \R \times \R^d \longrightarrow \R,\\
 \xi:& [0,T]\times \Xc\longrightarrow \R,\; \eta:[0,T]^2\times \Xc\longrightarrow \R,
 \end{split}
\end{align}
which are all assumed to be jointly Borel-measurable. We also define for any $t\in [0,T],$ $x\in \Xc$, $(y,z,u,v)\in \R \times \R^d\times \R \times \R^d$, $h(t,x,y,z,u):= \hat h_t(t,x,y,z,u)$. We will work under the following assumptions.

\begin{assumption}\label{AssumptionE}

\begin{enumerate}[label=$(\roman*)$, ref=.$(\roman*)$,wide, labelindent=0pt]

\item $s\longmapsto \hat h_t(s,x,y,z,u)$ $($resp. $s \longmapsto  g_t(s,x,u,v,y,z))$ is continuous, uniformly in $(t,x,y,z,u)$ $($resp. uniformly in $(t,x,u,v,y,z));$

\item $(y,z,u)\longmapsto  h_t(x,y,z,u)$ is uniformly Lipschitz continuous, \emph{i.e.} $\exists L_h>0,$ such that for all $(t,x,y,y',z,z',u,u')$
\begin{align*}
 |h_t(x,y,z,u)-h_t(x,y',z',u')|\leq L_h\big(|y-y'|+ |\sigma_t(x)^\t(z-z')|+|u-u'|\big);
\end{align*}

\item $(u,v,y,z)\longmapsto g_t(s,x,u,v,y,z)$ is uniformly Lipschitz continuous, \emph{i.e.} $\exists L_{g } > 0,$ such that for all $(s,t,x,u,u',$ $v,v',y,y',z,z') $ 
\begin{align*}
 \ |g_t(s,x,u,v,y,z)-g_t(s,x,u',v',y,z)|\leq L_{g}\big(|u-u'|+|\sigma_t(x)^\t(v-v')|+|y-y'|+|\sigma_t(x)^\t(z-z')|\big);
\end{align*}

\item $\tilde h_\cdot :=h_\cdot(\cdot,0,0,0)\in \L^{1,2}_\xb(\F^{X,\P}_+,\P)$, and, $  \tilde g_\cdot(s):= g_\cdot(s,\cdot,0,0,0,0)\in\L^{1,2,2}_\xb(\F^{X,\P}_+,\P)$. \label{AssumptionE:gen0}
\end{enumerate}
\end{assumption}

For $(\xi,  \eta)\in \Lc^2(\Fc^X_T,\P)\times\Lc^{2,2}(\Fc_T^X,\P)$ consider the system
\begin{align*}\label{HJB:Abstract}\tag{\Sc}
\begin{split}
\Yc_t&=\xi(T,X_{\cdot\wedge T})+\int_t^T h_r(X,\Yc_r,\Zc_r, \Uc_r^r)\d r-\int_t^T  \Zc_r \cdot \d X_r-\int_t^T \d \Nc_r,\; t\in[0,T],\\
\Uc_t^s&=   \eta (s,X_{\cdot\wedge T})+\int_t^T  g_r(s,X,\Uc_r^s,\Vc_r^s, \Yc_r, \Zc_r) \d r-\int_t^T \Vc_r^s \cdot \d X_r-\int_t^T \d \Mc^s_r,\; (s,t)\in[0,T]^2,
\end{split}
\end{align*}
with $(\Yc,\Zc,\Nc,\Uc,\Vc,\Mc)\in \L^2_\xb(\F^{X,\P}_+,\P) \times \H^2_\xb(\F^{X,\P}_+,X,\P)\times  {\M^2_\xb}(\F^{X,\P}_+,\P)\times \L^{2,2}_\xb(\F^{X,\P}_+,\P) \times \H^{2,2}_\xb(\F^{X,\P}_+,X,\P) \times {\M^{2,2}_\xb}(\R^d,\P)$.

\subsubsection{\emph{A priori} estimates and regularity properties}

In order to alleviate notations, and as it is standard in the literature, we suppress the dependence on $\omega$, \emph{i.e.} on $X$ in the functions. In this section, we fix $\xb\in \Xc$ and an arbitrary probability measure $\P\in \Pc(\xb)$. To ease the notation we will write $\H^2$ for $\H^2_\xb(\F^{X,\P}_+,X,\P)$ and similarly for the other spaces involved.  Throughout this section, we define $(\Hs, \|\cdot\|_{\Hs})$, and $(\Hs^\star, \|\cdot\|_{\Hs^\star})$, where
\begin{align*}
& \mathscr{H}:=\L^2  \times \H^2 \times  {\M^2_\xb} \times \L^{2,2}  \times \H^{2,2}  \times \M^{2,2} ,\; \mathscr{H}^\star:=\S^2  \times \H^2 \times  {\M^2_\xb} \times \S^{2,2}  \times \H^{2,2}  \times \M^{2,2}, \\
& \|(\Yc,\Zc,\Nc,\Uc, \Vc,\Mc) \|^2_{\Hs}:= \|\Yc\|_{\L^2}^2 + \|\Zc\|_{\H^2}^2+\|\Nc\|_{\M^2}^2+ \|\Uc\|_{\L^{2.2}}^2 + \|\Vc\|_{\H^{2.2}}^2+\|\Mc \|_{\M^2}^2,\\
& |(\Yc,\Zc,\Nc,\Uc, \Vc,\Mc) \|^2_{{\Hs^\star}}:=\|\Yc\|_{\S^2}^2 + \|\Zc\|_{\H^2}^2+\|\Nc\|_{\M^2}^2+ \|\Uc\|_{\S^{2.2}}^2 + \|\Vc\|_{\H^{2.2}}^2+\|\Mc \|_{\M^2}^2.
\end{align*}
We  remove the dependence of the expectation operator on the underlying measure, and write $\E$ instead of $\E^\P$. To obtain estimates between the the difference of solutions, it is more convenient to work with norms defined by adding exponential weights. For instance, for any $c\in\R$, we define the norm $\|\cdot \|_{\H^{2,c}}$ by 
\[
\|\Uc\|_{\H^{2,c}}^2=\E \bigg[  \int_0^T  \e^{ct}|\sigma^\t_t\Uc_t^s|^2 \d t  \bigg].
\]
Such norms are equivalent for different values of $c$, since $[0,T]$ is compact. We also recall Young's inequality which for $\eps >0$ states $2ab\leq \eps a^2 +\eps^{-1} b^2$, and that for any finite collection of non-negative numbers $(a_i)_{i\in\{1,\dots, n\}}$ 
\begin{align}\label{Eq:ineqsquare}
\bigg(\sum_{i=1}^n a_i\bigg)^2\leq n\sum_{i=1}^n a_i^2.
\end{align}

\begin{lemmaA}\label{Apriorinormestimates}
Let $(\Yc,\Zc,\Nc,\Uc,\Vc,\Mc)\in \mathscr{H}$ be a solution to \eqref{HJB:Abstract}. Then $(\Yc,\Uc) \in \S^2 \times \S^{2,2}$. Furthermore there exists a constant $C>0$ depending only on the data of the problem such that
\begin{align*}
\|(\Yc,\Zc,\Nc,\Uc, \Vc,\Mc) \|^2_{{\Hs^\star}} \leq C \Big(  \|\xi \|^2_{\Lc^2}  + \|\eta \|^2_{\Lc^{2,2}} +\|\tilde h\|_{\L^{1,2}}^2 +\| \tilde g \|_{\L^{1,2,2}}^2\Big)  <\infty.
\end{align*}
\end{lemmaA}

\begin{proof}
We proceed in several steps. \medskip

\textbf{Step $1$:} We derive an auxiliary estimate. By applying Meyer--It\^o's formula to $\e^{\frac{c}2  t} |\Uc_t^s|$, see \citeauthor*{protter2005stochastic} \cite[Theorem 70]{protter2005stochastic}
\begin{align}\label{eq:eq1}
\begin{split}
&\e^{\frac{c}2  t}|\Uc_t^s|+ L_T^0 -\int_t^T \e^{\frac{c}2  r} \sgn( \Uc_r^s) \Vc_r^s \cdot \d X_r -\int_t^T \e^{\frac{c}2  r-} \sgn( \Uc_{r-}^s) \d \Mc_r^s \\
&\; =\e^{\frac{c}2  T}  |\eta (s)|  + \int_t^T  \e^{\frac{c}2  r} \bigg(  \sgn(  \Uc_r^s)  g_r(s,\Uc_r^s,\Vc_r^s,\Yc_r,\Zc_r)-\frac{c}2  |\Uc_r^s| \bigg) \d r ,\; t\in[0,T],
\end{split}
\end{align} 
where $L^0:=L^0(\Uc^s)$ denotes the non-decreasing and pathwise-continuous local time of the semi-martingale $\Uc^s$ at $0$, see \cite[Chapter IV, pp. 216]{protter2005stochastic}. We also notice that for any $s\in [0,T]$ the last two terms on the left-hand side are a martingale, recall that $\Vc^s\in \H^2$.\medskip

We now note that under the Lipschitz condition
\begin{align}\label{Eq:LipassUts}
|  g_r(s,\Uc_r^s, \sigma_r^\t \Vc^s_r,\Yc_r,\Zc_r))|\leq |  \tilde g(s)|+L_{  g} \big(|\Uc_r^s|+|\sigma_r^\t \Vc_r^s|+|\Yc_r|+|\sigma_r^\t \Zc_r|\big),
\end{align}

We now take conditional expectation with respect to $\Fc_t$ in \Cref{eq:eq1}. We may use \eqref{Eq:LipassUts} and the fact $\tilde L^0$ is non-decreasing to derive that for $c>2 L_{g}$
\begin{align}\label{Eq:ineqUst}
\e^{\frac{c}2 t}| \Uc_t^s| & \leq  \E_t \bigg[ \e^{\frac{c}2 T} |\eta (s)|+\int_t^T \e^{\frac{c}2 r} \Big(|\Uc_r^s| (L_{  g}-c/2)+ | \tilde g_r(s)| +L_{ g}\big ( |\sigma^\t_r   \Vc_r^s| + |\Yc_r|+ |\sigma^\t_r  \Zc_r|\big) \Big)  \d r \bigg]\notag \\
& \leq  \E_t \bigg[ \e^{\frac{c}2 T} |\eta (s)|+\int_t^T \e^{\frac{c}2 r} \Big(  | \tilde g_r(s)| +L_{ g}\big ( |\sigma^\t_r   \Vc_r^s| + |\Yc_r|+ |\sigma^\t_r  \Zc_r|\big) \Big)  \d r \bigg],\; t\in[0,T].
\end{align}

Squaring in \eqref{Eq:ineqUst}, we may use \eqref{Eq:ineqsquare} and Jensen's inequality to derive
\begin{align*}
\frac{\e^{ct}}{5} |\Uc_t^t|^2  \leq    &\  \E_t\bigg[ \e^{cT} |\eta (s)|^2+ \bigg(\int_t^T \e^{\frac{c}2 r} | \tilde g_r(t)|\d r\bigg)^2+ T L_{ g}^2 \int_t^T \e^{c r}   \big( |\Yc_r|^2 + |\sigma^\t_r \Vc_r^t|^2 \d r+ |\sigma^\t_r\Zc_r|^2\big) \d r \bigg],\; t\in[0,T].
\end{align*}

Integrating the previous expression and taking expectation, it follows from the tower property that for any $t\in[0,T]$
\begin{align*}
\frac{  1}5\E\bigg[\int_t^T \e^{cr}|\Uc_r^r|^2\d r\bigg]\leq &\ \E\bigg[ \int_t^T   \e^{cT} |\eta (r)|^2\d r\bigg]+\E \bigg[ \int_t^T  \bigg(  \int_r^T \e^{\frac{c}2 u}| \tilde g_u(r)|\d u\bigg)^2    \d r\bigg] \\
& + T L_{ g}^2 \E \bigg[ \int_t^T  \int_r^T \e^{c u}   \big( |\Yc_u|^2 + |\sigma^\t_u \Vc_u^r|^2 + |\sigma^\t_u\Zc_u|^2\big) \d u \d r\bigg]\\
\leq &\ T \sup_{r\in [0,T]} \bigg\{ \|  \e^{cT} |\eta(r)|^2]\|_{\Lc^2}+  \E \bigg[ \bigg( \int_t^T \e^{\frac{c}2 u} |  \tilde g_u(r)|\d u \bigg)^2 \bigg]+ T L_{ g}^2   \E\bigg[   \int_t^T \e^{c u} |\sigma^\t_u \Vc_u^r|^2 \d u\bigg]  \bigg\}   \\
& +    T^2 L_{g}^2 \E \bigg [ \int_t^T \e^{c u}   \big( |\Yc_u|^2 + |\sigma^\t_u\Zc_u|^2\big) \d u \bigg].
\end{align*}

Thus, we obtain for $\widetilde C=5T^2 L_{ g}^2 \e^{cT}$ and $c>2L_{ g}$, and any $t\in[0,T]$
\begin{align}\label{Eq:ineqUtt2}
 \frac{1}{\widetilde C} \E\bigg[ \int_t^T  \e^{cr}  |\Uc_r^r|^2  \d r\bigg] \leq    T^{-1} L_{  g}^{-2}\big(  \|\eta \|_{\Lc^{2,2}}^2+ \|  \tilde g\|^2_{\L^{1,2,2}}\big) +  \E\bigg[   \int_t^T \e^{cr} \big( |\Yc_r|^2 + |\sigma^\t_r\Zc_r|^2\big)\d r\bigg]+   \| \Vc\|^2_{\H^{2,2}}. 
\end{align}

\textbf{Step $2$:} Let $s\in [0,T]$, we show that $(\Yc, \Uc) \in \S^2 \times \S^{2,2}$. By \eqref{Eq:ineqsquare}, we obtain that there exists $C>0$, which may change value from line to line, such that
\[
|\Uc_t^s|^2 \leq  C \bigg(| \eta (s)|^2  + \bigg|\int_0^T|  \tilde g_r(s)|\d r \bigg|^{\!2}+\int_0^T  \big(  |\Uc^s_r|^2+| \sigma_r^\t  \Vc_r^s|^2 +|\Yc_r|^2+| \sigma_r^\t  \Zc_r|^2 \big) \d r+\bigg| \int_t^T   \Vc_r^{s} \cdot \d X_r\bigg|^{\!2}+ \bigg|\int_t^T   \d \Mc_r^s\bigg|^{\!2} \bigg).
\]
We note that by Doob's inequality
\begin{align}\label{ControlSingleSI}
\E\bigg[ \sup_{t\in [0,T]} \left( \int_0^t \Vc_r^s \cdot \d X_r\right)^2\bigg]\leq 4 \| \Vc^s\|^2_{\H^2_\xb},
\end{align}
so it is a uniformly integrable martingale. Taking supremum over $t$ and expectation we obtain
\begin{align}\label{AuxAS2}
\E\bigg[\sup_{t\in[0,T]} |\Uc_t^s|^2\bigg] \leq C\big( \|\eta(s)\|_{\Lc^2}^2+ \|  \tilde g(s)\|^2_{\L^{1,2}}+  \| \Uc^s\|^2_{\L^2}+ \|\Vc^s\|^2_{\H^2}+\| \Yc\|^2_{\L^2}+ \| \Zc\|^2_{\H^2}+ \| \Mc^s\|^2_{\M^2} \big)<\infty.
\end{align}

Given $(\eta,  \tilde g) \in \Lc^{2,2}\times \L^{1,2,2}$ and $(\Uc,\Vc,\Mc)\in \L^{2,2}\times \H^{2,2}\times \M^{2,2}$, the map $([0,T],\Bc([0,T])) \longrightarrow (\S^{2},\|\cdot \|_{ \S^{2}}): s \longmapsto \Uc^s $ in continuous. As $s\in [0,T]$, $\| \Uc\|_{\S^{2,2}}<\infty$ and consequently $\Uc\in \S^{2,2}$.\medskip

Arguing similarly in combination with \eqref{Eq:ineqUtt2}, we obtain there exists $C>0$ such that
\begin{align}\label{AuxAS3}\begin{split}
\| \Yc\|^2_{\S^2} \leq& C\big( \|\xi(T) \|_{\Lc^2}^2+ \|\eta \|_{\Lc^{2,2}}^2+ \|\tilde h\|^2_{\L^{1,2}}+ \| \tilde g\|^2_{\L^{1,2,2}} +\|\Yc\|_{\L^2}^2+ \|  \Zc\|^2_{\H^2} + \|\Vc\|^2_{\H^{2, 2}}+ \| \Nc\|^2_{\M^2}  \big)<\infty.
\end{split}
\end{align}

\textbf{Step $3$:} We obtain the estimate of the norm. By applying It\^o's formula to $\e^{ct}\big( |Y_t|^2+|U_t^s|^2\big)$ we obtain, $\P$--$\as$
\begin{align*}
&\e^{ct}|\Yc_t|^2+\e^{ct}|\Uc_t^s|^2+\int_t^T \e^{cr}| \sigma_r^\t \Zc_r|^2 \d r+\int_t^T \e^{cr}| \sigma_r^\t \Vc_r^s|^2 \d r+\int_t^T \e^{cr-} \d [ \Nc]_r+\int_t^T \e^{cr-} \d [ \Mc^s]_r\\
&= \e^{cT}(| \xi(T)|^2 +|\eta (s)|^2) +2 \int_t^T \e^{cr}( \Yc_r  h_r( \Yc_r, \Zc_r,\Uc_r^r)-c |\Yc_r|^2+\Uc_r^s g_r(s,\Uc_r^s, \Vc^s_r,\Yc_r,  \Zc_r)-c |\Uc_r^s|^2)\d r+M_T-M_t,
\end{align*}
where we use the orthogonality of $X$ and both $\Mc^s$ and $\Nc$, and introduced the notation
\begin{align*}
M_t:=2\int_0^t\e^{cr} \Yc_r\Zc_r\cdot \d X_r+2\int_0^t \e^{cr-} \Yc_{r-} \cdot \d \Nc_r +2\int_0^t \e^{cr}  \Uc^s_r\Vc_r\cdot \d X_r+2\int_0^t \e^{cr-} \Uc^s_{r-} \cdot \d \Mc_r^s.
\end{align*}
We insist       on the fact that the integrals with respect to both $\Nc$ and $\Mc^s$ account for possible jumps, see \cite[Lemma 4.24]{jacod2003limit}. Moreover, as $(\Yc, \Uc) \in \S^2 \times \S^{2,2}$ the Lipschitz assumption together with Young's inequality yield that for $\eps>0$ the left-hand side above is smaller than
\begin{align*}
\leq&\,  \e^{cT}\big(| \xi(T)|^2+|\eta (s)|^2\big) +  \int_0^T  \e^{cr}\bigg( |\Yc_r|^2(  C_1-c) +|\Uc^s_r|^2(  C_2 -c)+\frac{1}{8} | \sigma_r^\t \Zc_r|^2+\frac{1}{8} | \sigma_r^\t \Vc_r^s|^2+\frac{1}{8\widetilde C}|\Uc_r^r|^2  \bigg) \d r  \\
&  +\eps \sup_{r\in [0,T]} \e^{cr} |\Yc_r|^2+ \eps \sup_{r\in [0,T]}  \e^{cr}|\Uc_r^s|^2 +\eps^{-1} \bigg(\int_0^T \e^{\frac{c}2 r} |\tilde h_r |\d r\bigg)^2 + \eps^{-1}\bigg(\int_0^T \e^{\frac{c}2 r} |   \tilde g_r(s)|\d r \bigg)^2   + M_T-M_t,
\end{align*}

with $\widetilde C$ as in \eqref{Eq:ineqUtt2} and the constants $ C_1$ and $  C_2$ changed appropriately. As in \eqref{ControlSingleSI}, the Burkholder--Davis--Gundy inequality in combination with the fact that both $\Yc$ and $\Uc^s$ are in $\S^2$ shows that $M$ is a true martingale. Therefore, by \eqref{Eq:ineqUtt2}, we find by taking expectation that 
\begin{align*}
\begin{split}
&\E\bigg[\e^{ct}\big(|\Yc_t|^2+ |\Uc_t^s|^2\big)+ \int_0^T \e^{cr} \big( | \sigma_r^\t \Zc_r|^2 + | \sigma_r^\t \Vc_r^s|^2\big)  \d r+\int_t^T \e^{cr-} \big( \d [ \Nc]_r+ \d [ \Mc^s]_r \big) \bigg] \\
&\ \leq \e^{cT} \big( \| \xi(T)\|^2_{\Lc^2} + (1+C) \|\eta \|^2_{\Lc^{2,2}}\big) +\E \bigg[   \int_0^T  \e^{cr}\Big( |\Yc_r|^2(C_1-c) +|\Uc^s_r|^2( C_2 -c)+\frac{3}{8} | \sigma_r^\t \Zc_r|^2+\frac{3}{4} | \sigma_r^\t \Vc_r^s|^2  \Big) \d r\bigg] \\
&\   +\eps \e^{cT}  \big(\|\Yc\|_{\S^2}^2+\|\Uc\|^2_{\S^{2,2}}\big) +\eps^{-1}\e^{cT}\|\tilde h \|^2_{\L^{1,2}} +(\eps^{-1}+C)\e^{cT} \|  \tilde g  \|_{\L^{1,2,2}}^2 +\frac{1}{8} \| \Vc\|^2_{\H^{2,2}},
\end{split}
\end{align*}

with $C_1$ and $C_2$ appropriately updated. Therefore, letting $c\geq \max \{C_1,C_2\}$, the monotonicity of the integral yields
\begin{align*}
\begin{split}
&\E\bigg[\e^{ct}|\Yc_t|^2+  \e^{ct}|\Uc_t^s|^2+ \int_0^T\e^{cr} \Big( \frac{5}{8} | \sigma_r^\t \Zc_r|^2 +\frac{7}{8}| \sigma_r^\t \Vc_r^s|^2\Big)  \d r+\int_0^T \e^{cr-} \big( \d [ \Nc]_r+ \d [ \Mc^s]_r \big) \bigg] \\
&\leq \e^{cT} \Big( \| \xi(T)\|^2_{\Lc^2} + (1+C) \|\eta \|^2_{\Lc^{2,2}}+\eps^{-1}\|\tilde h \|^2_{\L^{1,2}} +(\eps^{-1}+C) \|   \tilde g \|_{\L^{1,2,2}}^2\Big)  + \eps \e^{cT} \big(\|\Yc\|_{\S^2}^2+\|\Uc\|^2_{\S^{2,2}}\big)+ \frac{1}{8} \| \Vc\|^2_{\H^{2,2}}.
\end{split}
\end{align*}

Therefore, taking $\sup$ over $s$ and $t$ to each term on the left-hand side separately  and adding we find
\begin{align}\label{AuxAS4}
\begin{split}
&\sup_{t\in [0,T]} \E\big[|\Yc_t|^2\big]+  \sup_{(s,t)\in [0,T]^2}\E\big[|\Uc_t^s|^2\big]+\frac{5}8 \|  \Zc\|^2_{\H^2} +\frac{1}{8}  \|   \Vc\|^2_{\H^{2,2}}+ \| \Nc\|_{\M^2}^2+ \| \Mc\|_{\M^{2,2}}^2  \\
&\leq 6\e^{cT} \Big( \| \xi(T)\|^2_{\Lc^2} +  (1+C)  \|\eta\|^2_{\Lc^{2,2}} +\eps^{-1}\|\tilde h \|^2_{\L^{1,2}} +(\eps^{-1}+C) \|\tilde g \|_{\L^{1,2,2}}^2\Big)+ 6\eps \e^{cT}  \big(\|\Yc\|_{\S^2}^2+\|\Uc\|^2_{\S^{2,2}}\big).
\end{split}
\end{align}

To conclude, we take $\sup$ over $s$ in \eqref{AuxAS2} and add it to \eqref{AuxAS3}. We may then use \eqref{AuxAS4} to control the right side and find $\eps>0$ small enough such that the result holds.
\end{proof}

\begin{lemmaA}\label{Apriorinormestimatesdif}
Let $(\xi^i ,\eta^i)\in  \Lc^2\times\Lc^{2,2}$ and $(h^i, g^i)$ for $i\in\{1,2\}$ satisfy {\rm\Cref{AssumptionE}} and suppose in addition that $(\Yc^i,\Zc^i,\Nc^i,\Uc^i,\Vc^i,\Mc^i)\in \mathscr{H}$ are solutions to the system \eqref{HJB:Abstract} with coefficients $(\xi^i,h^i,\eta^i, g^i)$, $i\in\{1,2\}$. Then
\begin{align*}
\|(\delta \Yc,\delta \Zc,\delta \Nc,\delta \Uc,\delta \Vc,\delta \Mc) \|^2\leq &\ C\bigg( \E\bigg[ \big|\delta \xi\big|^2 +\bigg( \int_0^T  \big|\delta h_t( \Yc_t^1, \Zc^1_t,\Uc^{1t}_t)\big| \d t\bigg)^2 \bigg]\\
&   + \sup_{s\in [0,T]}\E\bigg[  \big|\delta \eta (s)\big|^2 +\bigg(\int_0^T \big|\delta  g_t(s,\Uc^{1s}_t, \Vc^{1s}_t, \Yc^1_t, \Zc^1_t))\big| \d t\bigg)^2 \bigg]\bigg)
\end{align*}
where for $\varphi\in\{\Yc,\Zc,\Nc,\Uc,\Vc,\Mc,\xi\}$
\[
\delta \varphi^1:= \varphi^1-\varphi^2,\; \text{\rm and}\; \delta h_t( \Yc_t^1, \Zc^1_t,\Uc^{1t}_t):= h^1_t( \Yc_t^1, \Zc^1_t,\Uc^{1t}_t)- h^2_t( \Yc_t^1, \Zc^1_t,\Uc^{1t}_t),\; t\in[0,T].
\]
\end{lemmaA}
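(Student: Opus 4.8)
The plan is to mimic the proof of \Cref{Apriorinormestimates}, but applied to the differences of the two solutions. Write $\delta\varphi:=\varphi^1-\varphi^2$ for $\varphi\in\{\Yc,\Zc,\Nc,\Uc,\Vc,\Mc,\xi,\partial\xi\}$. Subtracting the two copies of \eqref{HJB:Abstract}, the triple $(\delta\Yc,\delta\Zc,\delta\Nc)$ solves a BSDE with terminal value $\delta\xi(T)$ and driver
\[
g^1_r(\Yc^1_r,\Zc^1_r,\Uc^{1r}_r)-g^2_r(\Yc^2_r,\Zc^2_r,\Uc^{2r}_r)=\delta g_r(\Yc^1_r,\Zc^1_r,\Uc^{1r}_r)+\big(g^2_r(\Yc^1_r,\Zc^1_r,\Uc^{1r}_r)-g^2_r(\Yc^2_r,\Zc^2_r,\Uc^{2r}_r)\big),
\]
and, for each $s\in[0,T]$, $(\delta\Uc^s,\delta\Vc^s,\delta\Mc^s)$ solves a BSDE with terminal value $\delta\partial\xi(s)$ and a driver that splits analogously into the source term $\delta\partial g_r(s,\Uc^{1s}_r,\Vc^{1s}_r,\Yc^1_r,\Zc^1_r)$ plus a remainder which, by the Lipschitz property in \Cref{AssumptionE}, is dominated by $L_{\partial g}\big(|\delta\Uc^s_r|+|\sigma_r^\t\delta\Vc^s_r|+|\delta\Yc_r|+|\sigma_r^\t\delta\Zc_r|\big)$; similarly the remainder in the first equation is dominated by $L_g\big(|\delta\Yc_r|+|\sigma_r^\t\delta\Zc_r|+|\delta\Uc^r_r|\big)$.

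Next I would apply It\=o's formula to $\e^{ct}\big(|\delta\Yc_t|^2+|\delta\Uc_t^s|^2\big)$ exactly as in the derivation leading to \eqref{AuxAS4}, using the $\P$--orthogonality of $X$ with $\delta\Nc$ and $\delta\Mc^s$ to cancel the cross terms. The Lipschitz bounds on $g^2$ and $\partial g^2$ together with Young's inequality $2ab\le\eps a^2+\eps^{-1}b^2$ convert the remainder contributions into multiples of $|\delta\Yc_r|^2$, $|\delta\Uc^s_r|^2$, $|\delta\Uc^r_r|^2$, $|\sigma_r^\t\delta\Zc_r|^2$ and $|\sigma_r^\t\delta\Vc^s_r|^2$. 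Choosing $c$ larger than a constant depending only on $L_g$ and $L_{\partial g}$ absorbs the $|\delta\Yc_r|^2$ and $|\delta\Uc^s_r|^2$ terms into the left--hand side, while the term $\tfrac12|\delta\Uc^r_r|^2$ is kept on the left, as in \eqref{AuxAS4}. This yields, for every $s\in[0,T]$, the analogue of \eqref{AuxAS4} with data $\big(\delta\xi,\ \delta g_\cdot(\Yc^1,\Zc^1,\Uc^{1\cdot}),\ \delta\partial\xi(s),\ \delta\partial g_\cdot(s,\Uc^{1s},\Vc^{1s},\Yc^1,\Zc^1)\big)$.

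To upgrade to the $\S^2$, $\S^{2,2}$ parts of the norm I would argue as in Step~$1$ of \Cref{Apriorinormestimates}: both solutions lie in $\S^2\times\S^{2,2}$ by that lemma, hence so do the differences, so the stochastic integrals against $X$, $\delta\Nc$ and $\delta\Mc^s$ are genuine uniformly integrable martingales by the Burkholder--Davis--Gundy inequality, just as in \eqref{ControlSingleSI}. Taking $\sup_{t\in[0,T]}$ and then expectations in the BSDEs for $\delta\Yc$ and $\delta\Uc^s$, and using \eqref{Eq:ineqsquare}, produces $\S^2$/$\S^{2,2}$ bounds of the form \eqref{AuxAS2}--\eqref{AuxAS3}. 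Combining these with the energy estimate above and taking $\sup_{s\in[0,T]}$ gives the claimed inequality, with $C$ depending only on $T$, $L_g$, $L_{\partial g}$ and the bound on $\sigma$.

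The only genuinely delicate point, exactly as in \Cref{Apriorinormestimates}, is the coupling through the diagonal process $\Uc^r_r$ in the first equation: applying Young's inequality to the term $\delta\Yc_r$ times the $g^2$--Lipschitz piece involving $\delta\Uc^r_r$ creates $|\delta\Uc^r_r|^2$, which is not a term in the exponentially weighted energy for a \emph{fixed} $s$. As there, this is handled by keeping $\tfrac12\int_0^T|\delta\Uc^r_r|^2\d r$ on the left--hand side and noting it is controlled by the $\H^{2,1}$--component of the norm being estimated, so no circularity appears. Everything else is the routine a priori BSDE machinery already deployed for \Cref{Apriorinormestimates}.
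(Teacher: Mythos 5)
Your proposal is correct and follows essentially the same route as the paper: the paper's own proof simply linearises the driver differences (writing the Lipschitz remainders as bounded processes multiplying $\delta\Yc$, $\sigma^\t\delta\Zc$, $\delta\Uc^t_t$, etc., with source terms $\delta g(\Yc^1,\Zc^1,\Uc^{1\cdot})$ and $\delta\partial g(s,\Uc^{1s},\Vc^{1s},\Yc^1,\Zc^1)$) and then invokes \Cref{Apriorinormestimates} on the resulting difference system, which is exactly the It\=o/Young/weighted-norm computation you carry out explicitly, including the correct treatment of the diagonal coupling through $\delta\Uc^r_r$. No gap.
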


\begin{proof}
Note that by the Lipschitz assumption on $h$ and $g$ there exist bounded processes $(\alpha^i, \beta^i,\gamma^i)$, $i\in\{1,2\}$ and $\eps^2$ such that 
\begin{align*}
\delta \Yc_t =&\ \delta \xi(T) + \int_t^T \big( \delta h_r(\Yc_r^1,\Zc_r^1,\Uc_r^{1r}) +\gamma_r^1  \delta \Yc_r + \alpha^1_r \cdot  \sigma_r^\t \delta \Zc_r  +  \beta^1_r \delta \Uc_r^r\big) \d r  - \int_t^T  \delta \Zc_r \cdot  \d X_r - \int_t^T \delta \d \Nc_r,\\
\delta \Uc_t^s=&\ \delta \eta (s)+ \int_t^T  \big(\delta  {g}_r(s,\Uc_r^{1s} ,\Vc_r^{1s}, \Yc_r^1,\Zc_r^1) + \beta_r^2  \delta \Uc_r^s  +\eps^2_r  \cdot \sigma_r^\t \delta \Vc_r^s   + \gamma_r^2  \delta \Yc_r + \alpha_r^2  \cdot  \sigma_r^\t \delta \Zc_r \big) \d r\\
&  - \int_t^T  \delta \partial \Zc_r^s  \cdot  \d X_r - \int_t^T  \delta \d \Mc^s_r.
\end{align*}
We can therefore apply Lemma \ref{Apriorinormestimates} and the result follows.
\end{proof}

\subsubsection{General well-posedness}
\begin{theoremA}\label{Existent:Abstract}
Let Assumption \eqref{AssumptionE} hold. Then \eqref{HJB:Abstract} admits a unique solution in $\mathscr{H}$.
\end{theoremA}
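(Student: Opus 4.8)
\textbf{Proof strategy for Theorem \ref{Existent:Abstract}.} The plan is to set up a Picard-type fixed-point iteration on the space $\mathscr{H}$, exploiting the \emph{a priori} estimates of \Cref{Apriorinormestimates} and \Cref{Apriorinormestimatesdif} together with the equivalence of the exponentially weighted norms $\|\cdot\|_{\H^2_c}$. The key structural observation is that \eqref{HJB:Abstract} couples an ordinary BSDE (the $\Yc$-equation, with orthogonal martingale part $\Nc$) with an uncountable family of BSDEs (the $\Uc^s$-equations), and the only coupling runs through the diagonal term $\Uc^r_r$ appearing in the generator of the $\Yc$-equation and through $(\Yc,\Zc)$ appearing in the generators of the $\Uc^s$-equations. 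Since $X$ is the canonical process of a weak solution to \eqref{Eq:driftlessSDE} with uncontrolled volatility, $\Pc(\xb)=\{\P\}$ is a singleton and the martingale representation property relative to $X$ holds under $\P$; hence for fixed inputs each scalar BSDE is well-posed by the standard Lipschitz theory (e.g.\ \cite{papapantoleon2018existence} or \cite{bouchard2018unified}), which is what makes the iteration well-defined.

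First I would define the map $\Phi$ on $\mathscr{H}$ as follows: given $(\Yc,\Zc,\Nc,\Uc,\Vc,\Mc)\in\mathscr{H}$, solve \emph{first} the family of BSDEs
\[
\widetilde\Uc^s_t=\partial\xi(s,X_{\cdot\wedge T})+\int_t^T \partial g_r\big(s,X,\widetilde\Uc^s_r,\widetilde\Vc^s_r,\Yc_r,\Zc_r\big)\d r-\int_t^T\widetilde\Vc^s_r\cdot\d X_r-\int_t^T\d\widetilde\Mc^s_r,\quad s\in[0,T],
\]
which is well-posed for each $s$ since $\partial g$ is uniformly Lipschitz in $(\Uc^s,\Vc^s)$ and the inputs $(\Yc,\Zc)$ enter as a fixed $\H^2$-process; the continuity in $s$ of $s\mapsto(\widetilde\Uc^s,\widetilde\Vc^s)$ and hence membership in $\H^{2,1}\times\H^{2,2}\times\M^{2,2}$, as well as the $\d t\otimes\d\P$-a.e.\ definition of the diagonal $\widetilde\Uc^t_t$, follow exactly as in Step 1 of \Cref{Apriorinormestimates} (using \Cref{AssumptionE}$(i)$ for the $s$-continuity and an estimate of $\|\widetilde\Uc^{s}-\widetilde\Uc^{s'}\|$ via \Cref{Apriorinormestimatesdif} applied with frozen inputs). \emph{Then} solve the single BSDE
\[
\widetilde\Yc_t=\xi(T,X_{\cdot\wedge T})+\int_t^T g_r\big(X,\widetilde\Yc_r,\widetilde\Zc_r,\widetilde\Uc^r_r\big)\d r-\int_t^T\widetilde\Zc_r\cdot\d X_r-\int_t^T\d\widetilde\Nc_r,
\]
which is well-posed because $g$ is uniformly Lipschitz in $(\Yc,\Zc,\Uc)$ and $\widetilde\Uc^r_r\in\H^2$ by the preceding step and \Cref{AssumptionE}$(iv)$. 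Set $\Phi(\Yc,\Zc,\Nc,\Uc,\Vc,\Mc):=(\widetilde\Yc,\widetilde\Zc,\widetilde\Nc,\widetilde\Uc,\widetilde\Vc,\widetilde\Mc)$; the \emph{a priori} bound of \Cref{Apriorinormestimates} shows $\Phi$ maps $\mathscr{H}$ into itself.

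Next I would prove $\Phi$ is a contraction for the weighted norm with $c$ large. Given two inputs with difference $(\delta\Yc,\ldots)$, the outputs $(\delta\widetilde\Uc^s,\delta\widetilde\Vc^s,\delta\widetilde\Mc^s)$ satisfy a linear BSDE whose source term is $\partial g_r(s,\cdot,\cdot,\Yc^1_r,\Zc^1_r)-\partial g_r(s,\cdot,\cdot,\Yc^2_r,\Zc^2_r)$, controlled by $L_{\partial g}(|\delta\Yc_r|+|\sigma_r^\t\delta\Zc_r|)$; and $(\delta\widetilde\Yc,\delta\widetilde\Zc,\delta\widetilde\Nc)$ satisfies a linear BSDE with source controlled by $L_g\,|\delta\widetilde\Uc^r_r|$ (plus $L_g(|\delta\widetilde\Yc_r|+|\sigma_r^\t\delta\widetilde\Zc_r|)$ absorbed into the weight). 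Running the It\^o/Gronwall computation of \Cref{Apriorinormestimatesdif} with the weight $\e^{ct}$ and choosing $c=c(L_g,L_{\partial g})$ sufficiently large yields $\|\Phi(\cdot)-\Phi(\cdot)\|^2\le \kappa\,\|(\delta\Yc,\delta\Zc,\delta\Nc,\delta\Uc,\delta\Vc,\delta\Mc)\|^2$ with $\kappa<1$, where the norm on both sides is the exponentially weighted one of \Cref{Apriorinormestimates}. The Banach fixed-point theorem on the closed subspace of $\mathscr{H}$ then gives a unique fixed point, which is precisely a solution of \eqref{HJB:Abstract}; uniqueness in $\mathscr{H}$ follows since any two solutions are fixed points of $\Phi$, or directly from \Cref{Apriorinormestimatesdif} with identical coefficients.

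\textbf{Main obstacle.} The delicate point is not the scalar BSDE theory but handling the \emph{infinite family} $(\Uc^s)_{s\in[0,T]}$ uniformly in $s$: one must show that the continuity-in-$s$ and the measurability of the two-parameter processes are preserved by $\Phi$ (so that the diagonal $\Uc^t_t$ is a genuine $\H^2$-process and the supremum norms $\sup_s\|\cdot\|$ are finite), and that the contraction constant $\kappa$ can be taken \emph{independent of $s$}. This is exactly why the norms on $\H^{2,1},\H^{2,2},\M^{2,2}$ were defined with a $\sup_{s}$ and why \Cref{Apriorinormestimates}--\Cref{Apriorinormestimatesdif} were stated with $s$-uniform constants; the proof consists of carefully threading those uniform estimates through the two-stage definition of $\Phi$ above. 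Everything else is the routine Lipschitz-BSDE contraction argument.
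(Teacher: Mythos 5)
Your proposal is correct, but it takes a slightly different route from the paper. The paper's map $\Tf$ freezes \emph{every} argument of the generators at the input $(y,z,n,u,v,m)$, so each iterate is produced by a bare martingale representation of a conditional expectation (plus Doob's inequality) rather than by solving a BSDE; all of the Lipschitz dependence — including the dependence of $g$ on $(\Yc,\Zc)$ and of $\partial g$ on $(\Uc^s,\Vc^s)$ — is then absorbed in one pass through the exponential-weight/Gronwall computation of Step 2. Your two-stage map $\Phi$ instead freezes only the cross-coupling terms ($(\Yc,\Zc)$ in the $\Uc^s$-equations, the diagonal $\Uc^r_r$ in the $\Yc$-equation) and solves genuine Lipschitz BSDEs at each stage, invoking the well-posedness theory of \cite{papapantoleon2018existence} under the representation property relative to $X$ with an orthogonal component. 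Both converge: the same $e^{ct}$ trick with $c$ large makes either map a strict contraction, and the uniform-in-$s$ continuity and measurability work for the two-parameter family (needed so that $\Uc^t_t$ is a legitimate $\H^2$-process and the $\sup_s$ norms close) is identical in both versions — you correctly identify this as the real content. What the paper's version buys is that each iteration step is completely elementary, with no appeal to scalar BSDE existence results; what your version buys is a cleaner isolation of the actual coupling, and as a by-product the observation that the composed map contracts on $(\Yc,\Zc)$ alone, since $\|\delta\widetilde\Uc\|\lesssim c^{-1}\|\delta(\Yc,\Zc)\|$ and then $\|\delta(\widetilde\Yc,\widetilde\Zc)\|\lesssim c^{-1}\|\delta\widetilde\Uc\|$. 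Either way, uniqueness follows from \Cref{Apriorinormestimatesdif} exactly as you and the paper both note.
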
 
In light of \Cref{Apriorinormestimates} and \Cref{Apriorinormestimatesdif} existence of a unique solution in $(\mathscr{H}, \|\cdot \|_{\mathscr{H}^\star})$ follows from \Cref{Existent:Abstract}.
\begin{proof}
We first note that uniqueness follows from \Cref{Apriorinormestimatesdif}. To show existence, let us define the map
\begin{align*}
\Tf: \mathscr{H} &\longrightarrow \mathscr{H} \\
(y,z,n,u,v,m)& \longmapsto (\Yc,\Zc,\Nc,\Uc,\Vc,\Mc),
\end{align*} 
with $(\Yc,\Zc,\Nc,\Uc,\Vc,\Mc)$ given by
\begin{align*}
\Yc_t&=\xi(T,X_{\cdot\wedge T})+\int_t^T h_r(X,y_r, z_r, \Uc_r^r)\d r-\int_t^T \Zc_r\cdot  \d X_r-\int_t^T \d  \Nc_r,\\
\Uc_t^s&= \eta (s,X_{\cdot\wedge,T})+\int_t^T  g_r(s,X,u_r^s,v_r^s, y_r, z_r) \d r-\int_t^T  \Vc_r^s \cdot \d X_r-\int_t^T \d  \Mc^s_r.
\end{align*}

\medskip
{\bf Step $1$:} We first show $\Tf$ is well defined. Let $(y,z,n,u,v,m)\in \mathscr{H}$.\medskip

\begin{enumerate}[label=$(\roman*)$, ref=.$(\roman*)$,wide, labelindent=0pt]

\item Let us first consider the pair $(\Uc,\Vc,\Mc)$. Recall that for all $s$
\begin{align*}
\Uc_t^s = \E\bigg[\eta(s)+\int_t^T   g_r(s,u_r^s, v_r^s,y_r, z_r)\d r\Big|\Fc_{t+}^{X,\P} \bigg].
\end{align*}

We first show $\|\Uc\|_{\L^{2,2}}<\infty$. To do so, note that for $s\in [0,T]$  
\begin{align*}
\widetilde \Uc_t^s:= \E \bigg[\eta(s)+\int_0^T    g_r(s,u_r^s,v_r^s,y_r,z_r) \d r\Big|\Fc_{t+}^{X,\P} \bigg], \text{ is a square-integrable $\F$-martingale}.
\end{align*}
Indeed, by \Cref{AssumptionE} $  g$ is uniformly Lipschitz in $(u,v,y,z)$. Thus \eqref{Eq:ineqsquare} and Jensen's inequality yield
\begin{align*}
\E\big[|\widetilde \Uc_t^s|^2\big]\leq 6 \Big(\|\eta (s)\|_{\Lc^2}^2 +\|   \tilde g\|_{\L^{1,2}}^2 + TL_{ g}^2 \big( \|u\|^2_{\L^{2,2}}+\|v\|^2_{\H^{2,2}}+\|y\|^2_{\L^{2}}+\|z\|^2_{\H^{2}}\big)\Big)<\infty,\; \forall t\in [0,T].
\end{align*}
Now as $\Uc_t^s=\widetilde \Uc_t^s -\E\Big[\int_0^t  g_r(s,u_r^s,v_r^s,y_r,z_r)\d r\Big]$, we get the estimate
\begin{align*}
\sup_{s\in[0,T]} \E \bigg[\int_0^T | \Uc_t^s|^2\d t \bigg]\leq 6T \Big( \|\eta\|_{\Lc^{2,2}}^2 +\|  \tilde g\|_{\L^{1,2}}^2 + TL_{ g}^2 \big( \|u\|^2_{\L^{2,2}}+\|v\|^2_{\H^{2,2}}+\|y\|^2_{\L^{2}}+\|z\|^2_{\H^{2}}\big)\Big) <\infty.
\end{align*}

Let us argue the continuity of $([0,T],\Bc([0,T])) \longrightarrow (\L^{2},\|\cdot \|_{ \L^{2}}): s \longmapsto U^s $. Let $(s_n)_n\subseteq [0,T], s_n\longrightarrow s_0\in[0,T]$, as, $ n\to\infty$, and define for $\varphi\in \{\Uc,\Vc,u,v,\eta\}$, $\Delta \varphi^n:=\varphi^{s_n}-\varphi^{s_0}$. From the previous observation we have that
\begin{align*}
| \Delta \Uc_t^n|^2 \leq 2 \E \bigg[ |\Delta {\eta}^n|^2 + T \int_0^T \big|   g_r(s_n,u_r^{s_n}, v_r^{s_n},y_r,z_r)-\partial g_r(s_0,u_r^{s_0},v_r^{s_0},y_r, z_r)\big|^2 \d r \Big|\Fc_{t+}^{X,\P} \bigg].
\end{align*}

Therefore, in light of the Lipschitz assumption we obtain there is $C>0$ such that
\begin{align*}
\E\bigg[\int_0^T | \Delta \Uc_t^n|^2\d r \bigg]\leq 2T\Big( \|\Delta {\eta}^n\|^2_{\Lc^2} +T L_{ g}^2 \big(\rho_{ g}^2(|s_n-s_0|)+\|\Delta u^{n}\|^2_{\H^{2}}+\|\Delta v^{n} \|^2_{\H^{2}}\big)\Big).
\end{align*}

We conclude $\|\Uc\|_{\L^{2,2}}<\infty$ and $\Uc\in {\L^{2,2}}$. Consequently, the predictable martingale	representation property for local martingales, \cite[Theorem 4.29]{jacod2003limit}, guarantees the existence for any $s\in[0,T]$ of a unique $\F$-predictable process $\Vc^s \in  \H^2$ and an orthogonal martingale $\Mc^s \in {\M^2}$ with the desired dynamics. Moreover, as in \eqref{AuxAS2} Doob's inequality yield $\Uc^s\in \S^2$ for all $s\in [0,T]$.  \medskip

In addition, taking $\widetilde C$ as in derivation of \eqref{Eq:ineqUtt2} in \Cref{Apriorinormestimates}, we may find that for $c> 2L_{ g}$ and $t\in [0,T]$
\begin{align}\label{Eq:ineqUtt2wp}
\frac{1}{\widetilde C} \E\bigg[\int_t^T \e^{cr} |\Uc_r^r|^2  \d r \bigg] \leq  \frac{1}{T  L_{ g}^2} \big( \|\eta \|_{\Lc^{2,2}}^2+   \| \tilde g\|_{\L^{1,2}}^2 \big) +\E\bigg[ \int_t^T \e^{cr}\big( |y_r|^2+ |z_r|^2\big) \d r \bigg] + \|v\|^2_{\H^{2,2}} .
\end{align}

\item For the tuple $(\Yc,\Zc,\Nc)$, notice that $\widetilde \Yc_t:= \E \big[\xi(T)+\int_0^T h_r(y_r, z_r, \Uc_r^r )\d r\big|\Fc_{t+}^{X,\P} \big],$  is a square integrable $\F$-martingale.

Indeed, under \Cref{AssumptionE}, $g$ is uniformly Lipschitz in $(y,z,u)$, so \eqref{Eq:ineqUtt2wp} yields
\begin{align*}
\E\big[|\widetilde \Yc_t|^2\big]&\leq 4 \bigg( \|\xi(T)\|_{\Lc^2}^2+\|\tilde h\|_{\L^{1,2}}^2+ T L_h^2 \bigg( \|y\|^2_{\H^2}+\|z\|^2_{\H^2}+\E\bigg[\int_0^T  |\Uc_r^r|^2  \d r \bigg]  \bigg) \bigg) <\infty, \;  \forall t\in [0,T].
\end{align*}

Integrating the above expression, Fubini's theorem implies that $\widetilde \Yc\in \H^2$, thus the the predictable martingale	representation property for local martingales guarantees the existence of a unique $(\Zc, \Nc) \in  \H^2\times \M^2$ such that $(\Yc,\Zc,\Nc)$ satisfies the correct dynamics, where $ \Yc
:=\widetilde \Yc -\E\big[\int_0^\cdot h_r(y_r,z_r,\Uc_r^r)\d r\big].$  Furthermore, Doob's inequality implies $\Yc\in \S^2$.\medskip

\item We now show that $(\Vc,\Mc) \in \H^{2,2}\times {\M^{2,2}}$. Applying It\^o's formula to $|\Uc_r^s|^2$ we obtain
\[
|\Uc_t^s|^2+\int_t^T | \sigma_r^\t \Vc_r^s|^2 \d r+\int_t^T \d [ \Mc^s]_r=\ |\eta (s)|^2 +2\int_t^T  \Uc_r^s  g_r(s,u_r^s, v^s_r,y_r,  z_r)\d r-2\int_t^T \Uc^s_r\Vc^s_r\cdot \d X_r
-2\int_t^T \Uc^s_{r-} \cdot \d \Mc_r^s.
\]
First note \ $\Uc^s \in \S^{2}$ ensures that the last two terms are true martingale for any $s\in[0,T]$. To show $([0,T],\Bc([0,T])) \longrightarrow (\H^{2},\|\cdot \|_{ \H^{2}})\, \big($resp. $ (\M^{2},\|\cdot \|_{ \M^{2}})\big) \, : s \longmapsto \Vc^s \, \big($resp. $ \Mc^s\big)$ is continuous, let $(s_n)_n\subseteq [0,T],$ $s_n\longrightarrow s_0\in[0,T]$, as, $n\to\infty$. We then deduce there is $C>0$ such that
\begin{align*}
\E\bigg[\int_0^T | \sigma_r^\t \Delta \Vc_r^n|^2\d r+ [ \Delta \Mc^s]_T  \bigg]\leq C\Big(\|\Delta  \eta\|_{\Lc^2}^2 +\rho_{ g}^2(|s_n-s_0|)+\|\Delta u^{n}\|^2_{\H^2}+\|\Delta v^{n}\|^2_{\H^2}\Big),
\end{align*}
and, likewise, we obtain
\begin{align*}
\sup_{s\in [0,T]} \E\bigg[\int_0^T | \sigma_r^\t \Vc_r^s|^2\d r +  [ \Mc^s]_T \bigg]\leq  C\Big(\|\eta \|_{\Lc^{2,2}}^2 +\| \tilde g\|^2_{\L^{1,2}} +\|v\|^2_{\H^{2,2}} +\|u\|^2_{\L^{2,2}} \Big)<\infty.
\end{align*}

Since the first term on the right-hand side is finite from \Cref{AssumptionE}, we obtain $\|\Vc\|_{\H^{2,2}}+\|\Mc\|_{\M^{2,2}}<\infty$. All together, we have shown that $\phi(y,z,n,u,v,m)\in \mathscr{H}$.
\end{enumerate}

\medskip
{\bf Step $2$:} We show $\Tf$ is a contraction under the equivalent norms $\|\cdot\|_{ \mathscr{H}^{c}_{\cdot}}$. Let $(y^i,z^i,n^i,u^i,v^i,m^i)_{i=1,2} \in  \mathscr{H}$ and
\[\delta h_r:= h_r(y_r^1,z_r^1,\Uc_r^{1r})-h_r(y_r^2,  z_r^2,\Uc_r^{2r}),\; \delta g_r:= g_r(s,{u_r^s}^1,  {v_r^s}^1,{y_r^s}^1,  z_r^1)- g_r(s,{u_r^s}^2, {v_r^s}^2,{y_r^s}^2,  z_r^2).\]

\begin{enumerate}[label=$(\roman*)$, ref=.$(\roman*)$,wide, labelindent=0pt]
\item With $\widetilde C$ as in derivation of \eqref{Eq:ineqUtt2} in \Cref{Apriorinormestimates}, we may find that for $c> 2L_{ g}$ and $t\in [0,T]$
\begin{align}\label{Eq:ineqUtt2wpcont}
\frac{1}{\widetilde C} \E\bigg[\int_t^T \e^{cr} |\delta \Uc_r^r|^2  \d r \bigg] \leq   \E\bigg[ \int_t^T \e^{cr}\big( |\delta y_r|^2+ |\delta z_r|^2\big) \d r \bigg] + \|\delta v\|^2_{\H^{2,2}} .
\end{align}

\item Applying It\^o's formula to $\e^{cr}\big(|\delta \Yc_r|^2+|\delta \Uc_r^s|^2\big)$ and noticing that $(\delta \Yc_T, \delta \Uc_T)=(0,0)$ we obtain
\begin{align*}
&\e^{ct}\big( |\delta \Yc_t|^2 +|\delta \Uc_r^s|^2\big) + \int_t^T \e^{cr}\big( | \sigma^\t_r \delta \Zc_r|^2+  |\sigma_r^\t \delta \Vc_r^s|^2 \big) \d r+ \int_t^T \e^{cr-}  \d \big( [ \delta\Nc]_r + [\delta \Mc^s]_r\big)+ \widetilde M_T^s-\widetilde M_t^s\\
& =\int_t^T \e^{cr}\Big(2 \delta \Yc_r \delta h_r + 2 \delta \Uc_r^s \delta  g_r  - c\big( |\delta \Yc_r|^2+ |\delta \Uc_r^s|^2\big) \Big)\d r,
\end{align*}

where
\[ \widetilde M_t^s= 2\int_0^t\e^{cr}\delta \Yc_r \delta \Zc_r\cdot \d X_r+ 2\int_0^t \e^{cr-}\delta \Yc_{r-}  \d \delta\Nc_r+ 2\int_0^t \e^{cr}\delta \Uc_r^s \delta \Vc_r^s\cdot \d X_r+ 2\int_0^t \e^{cr-}\delta \Yc_{r-}  \d \delta\Mc^s_r.\]

Again, the fact that $(\delta \Yc,\delta \Uc) \in \S^{2}\times \S^{2,2}$ guarantees, via the Burkholder--Davis--Gundy inequality, that $\widetilde M^s$ is a uniformly integrable martingale, and thus a true martingale for all $s$. Additionally, under \Cref{AssumptionE} $(g, \partial g)$ are uniformly Lipschitz in $(u,v,y,z)$ which implies 
\begin{align*}
&|h_r(y_r^1, z_r^1,\Uc_r^{1r})-h_r(y_r^2, z_r^2,\Uc_r^{2r})| \leq L_h (|\delta y_r|+| \sigma_t^\t \delta z_r| + |\delta \Uc_r^r|),\\
&|  g_r(s,{u_r^s}^1,  {v_r^s}^1,{y_r^s}^1,  z_r^1)- g_r(s,{u_r^s}^2,  {v_r^s}^2,{y_r^s}^2, z_r^2)| \leq L_{  g} \big(|\delta u_r^s|+| \sigma_r^\t \delta v^s_r|+|\delta y_r|+ |\sigma^\t_r \delta z_r|\big),
\end{align*}
yielding in turn, together with \eqref{Eq:ineqUtt2wpcont} and Young's inequality, there is $C>0$ such that for any $\eps>0$
\begin{align*}
&\ \E\bigg[\e^{ct}\big( |\delta \Yc_t|^2 +|\delta \Uc_r^s|^2\big) + \int_t^T \e^{cr}\big( | \sigma^\t_r \delta \Zc_r|^2+  |\sigma_r^\t \delta \Vc_r^s|^2 \big) \d r+ \int_t^T \e^{cr-}  \d \big( [ \delta\Nc]_r + [\delta \Mc^s]_r\big) \bigg]\\
&\  \leq \E\bigg[ \int_0^T \e^{cr}\Big (  \big (|\delta \Yc_r^s|^2+|\delta \Uc_r^s|^2\big) (2C\eps^{-1} - c) + \eps\big(|\delta u_r^s|^2+| \sigma_r^\t \delta v^s_r|^2+|\delta y_r|^2+ |\sigma^\t_r \delta z_r|^2\big)\Big) \d r+ \eps \|\delta v\|^2_{\H^{2,2}} 
\end{align*}
Choosing $\eps={2C}{c^{-1}}$ taking sup over s on the right we get
\begin{align*}
&\ \E\bigg[\e^{ct}\big( |\delta \Yc_t|^2 +|\delta \Uc_r^s|^2\big) + \int_t^T \e^{cr}\big( | \sigma^\t_r \delta \Zc_r|^2+  |\sigma_r^\t \delta \Vc_r^s|^2 \big) \d r+ \int_t^T \e^{cr-}  \d \big( [ \delta\Nc]_r + [\delta \Mc^s]_r\big) \bigg]\\
&\  \leq \frac{4C}{c} \Big(\|\delta u\|_{\L^{2,2}}+\|\delta v\|_{\H^{2,2}}+\|\delta y\|_{\L^2} +\|\delta z\|_{\H^2}\Big),
\end{align*}
yielding $\| (\delta \Yc, \delta \Zc, \delta \Nc, \delta \Uc, \delta \Vc, \delta \Mc)\|_{ \mathscr{H}^c}^2\leq \frac{4C}{c} \big(\|\delta u\|_{\L^{2,2}}+\|\delta v\|_{\H^{2,2}}+\|\delta y\|_{\L^2} +\|\delta z\|_{\H^2}\big). $ We conclude $\Tf$ has a fixed-point as it is a contraction for $c$ large enough.
\end{enumerate}

\end{proof}

\subsection{Auxiliary lemmata}\label{Appendix:Lemmata}

To being with we present a result that justifies our choice of the class $\Ac(t,x)$ in our definition of equilibrium.

\begin{lemmaA}\label{Lemma:forwardcondition}
Let $(t,\tau)\in [0,T]\times\Tc_{t,T}$, $(\nu,\tilde \nu)\in \Ac(\xb)\times \Ac(t,x)$. Then, $\P^{\nu\otimes_{\tau} \tilde \nu}=\P^\nu\otimes_{\tau} \P^{\tilde \nu}_{\tau,\cdot}$, $\nu\otimes_{\tau} \tilde \nu\in \Ac(t,x)$, $\Ac(t,x,\tau)=\Ac(t,x)$.
\end{lemmaA}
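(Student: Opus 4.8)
The plan is to realise the solution associated to the concatenated control as the concatenation, in the sense of Theorem~\ref{Thm:Concatenated:M}, of the solution for $\nu$ with the family of shifted solutions for $\tilde\nu$, and then to use weak uniqueness to pin down simultaneously the identity $\P^{\nu\otimes_\tau\tilde\nu}=\P^\nu\otimes_\tau\P^{\tilde\nu}_{\tau,\cdot}$ and the membership $\nu\otimes_\tau\tilde\nu\in\Ac(t,x)$; once these two are in hand, $\Ac(t,x,\tau)=\Ac(t,x)$ is immediate from \eqref{StoppedEq}. First I would check that the concatenated measure is well defined, i.e. that $\omega\longmapsto\P^{\tilde\nu}_{\tau(\omega),\omega}$ is an $\Fc_\tau$--measurable stochastic kernel with $\P^{\tilde\nu}_{\tau(\omega),\omega}[\Omega^\omega_\tau]=1$ for every $\omega$: the measurability comes from the weak uniqueness assumption, which makes $(t,x)\longmapsto\P^{\tilde\nu}_{t,x}$ Borel measurable (\cite[Exercise 6.7.4]{stroock2007multidimensional}), composed with the $\Fc_\tau$--measurable map $\omega\longmapsto(\tau(\omega),X_{\cdot\wedge\tau}(\omega))$, while the concentration on $\Omega^\omega_\tau$ is condition $(i)$ in the definition of $\Pc(\tau(\omega),\omega)$ together with the observation in Remark~\ref{Remark:martingaleproblem} that $\P^{\tilde\nu}_{\tau(\omega),\omega}$ depends on $\omega$ only through $X_{\cdot\wedge\tau}$. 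Hence $\Q:=\P^\nu\otimes_\tau\P^{\tilde\nu}_{\tau,\cdot}$ is a bona fide probability measure by Theorem~\ref{Thm:Concatenated:M}.

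Next I would verify that $\Q\in\Pc^0(t,x,\nu\otimes_\tau\tilde\nu)$. For $\varphi\in\Cc_{2,b}(\R^{d+n})$, let $M^\varphi$ be the process in \eqref{Eq:MartingaleProblem} built with the concatenated volatility, equal to $\sigma(\cdot,\nu)$ on $[t,\tau)$ and to $\sigma(\cdot,\tilde\nu)$ on $[\tau,T]$. Then $(M^\varphi_{r\wedge\tau})_r$ is exactly the martingale--problem quantity for the control $\nu$, hence an $(\F,\P^\nu)$--martingale (it is a bounded local martingale, thanks to the boundedness of $\varphi$ and $\sigma$), and for each $\omega$ the process $(M^\varphi_r-M^\varphi_{r\wedge\tau(\omega)})_{r}$ is the martingale--problem quantity for $\tilde\nu$ started at $\tau(\omega)$, hence an $(\F,\P^{\tilde\nu}_{\tau,\omega})$--martingale for the same reason; the ``in particular'' clause of Theorem~\ref{Thm:Concatenated:M} then gives that $M^\varphi$ is an $(\F,\Q)$--martingale, so $\Q$ satisfies property $(ii)$ of $\Pc(t,x)$. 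Property $(i)$ holds because $\Q=\P^\nu$ on $\Fc_\tau\supseteq\Fc_t$ transfers the Dirac initial condition; and since, $\Q$--a.s., $\Lambda$ disintegrates as $\delta_{\nu_r}(\d a)\d r$ on $[0,\tau)$ (from $\P^\nu$ and $\Fc_\tau$--measurability) and as $\delta_{\tilde\nu_r}(\d a)\d r$ on $[\tau,T]$ (from the kernel), it disintegrates as $\delta_{(\nu\otimes_\tau\tilde\nu)_r}(\d a)\d r\in\A_0$ throughout, which is property $(iii)$ together with $\nu\otimes_\tau\tilde\nu\in\Ac^0(t,x)$.

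It remains to show $\Pc^0(t,x,\nu\otimes_\tau\tilde\nu)$ is a singleton. Given any $\Q'$ in this set, the reverse direction of Theorem~\ref{Thm:Concatenated:M} together with \cite[Theorem 1.2.10]{stroock2007multidimensional} shows that, for $\Q'$--a.e.\ $\omega$, the r.c.p.d.\ $\Q'^\tau_\omega$ lies in $\Pc^0(\tau(\omega),\omega,\tilde\nu)$, so weak uniqueness for $\tilde\nu$ forces $\Q'^\tau_\omega=\P^{\tilde\nu}_{\tau(\omega),\omega}$; moreover $\Q'|_{\Fc_\tau}=\P^\nu|_{\Fc_\tau}$, for otherwise concatenating $\Q'|_{\Fc_\tau}$ with an r.c.p.d.\ of $\P^\nu$ would, by Theorem~\ref{Thm:Concatenated:M}, yield a second element of $\Pc^0(t,x,\nu)$, contradicting weak uniqueness for $\nu$. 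The uniqueness clause of Theorem~\ref{Thm:Concatenated:M} then gives $\Q'=\P^\nu|_{\Fc_\tau}\otimes_\tau\P^{\tilde\nu}_{\tau,\cdot}=\Q$. By \eqref{Def:admissible} this is precisely $\nu\otimes_\tau\tilde\nu\in\Ac(t,x)$ and $\P^{\nu\otimes_\tau\tilde\nu}=\P^\nu\otimes_\tau\P^{\tilde\nu}_{\tau,\cdot}$; and then $\Ac(t,x,\tau)\subseteq\Ac(t,x)$ holds by definition \eqref{StoppedEq} while the reverse inclusion is what we have just proved, applied with an arbitrary $\mu\in\Ac(t,x)$ in the role of the deviating control.

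The step I expect to be the main obstacle is the uniqueness argument: the weak uniqueness assumption is phrased for fixed initial data, and making it bite on the \emph{stopped} martingale problem (to get $\Q'|_{\Fc_\tau}=\P^\nu|_{\Fc_\tau}$) and on the $\omega$--\emph{shifted} martingale problems (to identify the r.c.p.d.) requires carefully tracking the $\Q'$--null sets produced by the r.c.p.d.\ and invoking the characterisation of \cite[Theorem 1.2.10]{stroock2007multidimensional}. By contrast, the existence and identification in the first two steps are a routine, if bookkeeping--heavy, application of Theorem~\ref{Thm:Concatenated:M}, the only mild nuisances being the $\F^X$--predictability of the glued control and the (automatic, given boundedness) passage from local to true martingales.
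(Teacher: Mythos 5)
Your proof follows the same route as the paper's: establish that $\omega\longmapsto\P^{\tilde\nu}_{\tau(\omega),\omega}$ is an $\Fc_\tau$--measurable kernel concentrated on $\Omega^\omega_\tau$ (via weak uniqueness and \cite[Exercise 6.7.4, Theorem 6.2.2]{stroock2007multidimensional}), build the concatenated measure with Theorem~\ref{Thm:Concatenated:M}, and use its martingale--gluing clause together with \cite[Theorem 1.2.10]{stroock2007multidimensional} to check that it solves the martingale problem for $\nu\otimes_\tau\tilde\nu$. The only difference is that you also spell out the weak--uniqueness argument showing $\Pc^0(t,x,\nu\otimes_\tau\tilde\nu)$ is a singleton (identifying the r.c.p.d.\ after $\tau$ and the restriction to $\Fc_\tau$ separately), a step the paper's proof leaves implicit; that addition is correct and completes the verification that $\nu\otimes_\tau\tilde\nu\in\Ac(t,x)$ rather than merely $\Ac^0(t,x)$.
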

\begin{proof}
The later two results follows from the first. Let $\tilde \nu\in \Ac(t,x)$, we claim that $\P^\nu\otimes_{\tau}\P^{\tilde \nu}_{\tau,X}$ is well defined and solves the martingale problem associated with $\nu\otimes_{\tau}\nu^\star$. Indeed, by \cite[Exercise 6.7.4, Theorem 6.2.2]{stroock2007multidimensional} we have that $\omega\longmapsto \P^{\nu^\star}_{\tau(\omega),X(\tau(\omega),\omega)}[A]$ is $\Fc_\tau$-measurable for any $A\in \Fc$, and $\P^{\nu^\star}_{\tau(\omega),X(\tau(\omega),\omega)}[\Omega_\tau^\omega]=1$ for all $\omega \in \Omega$. Therefore, \Cref{Thm:Concatenated:M} guarantees $\P^\nu\otimes_{\tau(\cdot)}\P^{\tilde \nu}_{\tau(\cdot),X(\cdot)}$ is well defined and  $\P^\nu\otimes_{\tau(\cdot)}\P^{\nu}_{\tau(\cdot),X(\cdot)}$ equals $\P^\nu$ on $\Fc_\tau$ and $(\delta_\omega\otimes_{\tau(\omega)} \P^{\nu^\star}_{\tau(\omega),x})_{\omega\in \Omega}$ is an r.c.p.d. of $\P^\nu\otimes_{\tau}\P^{\nu}_{\tau,X}$ given $\Fc_\tau$. In combination with \cite[Theorem 1.2.10]{stroock2007multidimensional} this yields $M^\varphi$ is a $(\F,\P^\nu\otimes_{\tau}\P^{\nu,\tau,X})$--local martingale on $[t,T]$ with control $\nu\otimes_{\tau}\nu^\star$.
\end{proof}

\begin{lemmaA}\label{Lemma:CompCond}
Let $(t,x,\tau)\in [0,T]\times \Xc\times \Tc_{t,T}$, $(\M,\widetilde\M) \in \Mf(t,x)\times \Mf(t,x)$ such that $\nu\otimes_\tau \tilde \nu \in \Ac(t,x,\P)$. Then,
$\overline \P^{\nu}\otimes_{\tau} {\overline \P}^{\tilde \nu}_{\tau,\cdot}= { \overline \P^{\nu\otimes_{\tau} \tilde \nu}} $.
\end{lemmaA}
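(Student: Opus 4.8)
\textbf{Proof proposal for Lemma~\ref{Lemma:CompCond}.} The plan is to reduce the identity $\overline\P^\nu\otimes_\tau \overline\P^{\tilde\nu}_{\tau,\cdot}=\overline\P^{\nu\otimes_\tau\tilde\nu}$ to the corresponding identity for the drift--less measures, namely $\P^{\nu\otimes_\tau\tilde\nu}=\P^\nu\otimes_\tau\P^{\tilde\nu}_{\tau,\cdot}$, which is exactly the content of \Cref{Lemma:forwardcondition}, together with the explicit Girsanov densities linking $\Mf$ to $\Mc$ recorded in \Cref{Remark:com}. First I would recall that, by definition of $\Mc(t,x)$, the measure $\overline\P^\nu$ is obtained from $\P^\nu$ via the density
\begin{align*}
\frac{\d\overline\P^\nu}{\d\P^\nu}=\mathcal{E}^\nu_T:=\exp\bigg(\int_t^T b_r(X,\nu_r)\cdot\d W_r-\int_t^T|b_r(X,\nu_r)|^2\d r\bigg),
\end{align*}
and similarly for $\overline\P^{\tilde\nu}_{\tau,\cdot}$ and $\overline\P^{\nu\otimes_\tau\tilde\nu}$ with the respective action processes; boundedness of $b$ guarantees these are genuine probability densities, so all the objects are well defined, and $\nu\otimes_\tau\tilde\nu\in\Ac(t,x,\P)$ ensures $\overline\P^{\nu\otimes_\tau\tilde\nu}$ makes sense.

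The key computation is then to check, for an arbitrary bounded $\Fc$--measurable $\xi$, that
\begin{align*}
\E^{\overline\P^\nu\otimes_\tau\overline\P^{\tilde\nu}_{\tau,\cdot}}[\xi]=\E^{\overline\P^{\nu\otimes_\tau\tilde\nu}}[\xi].
\end{align*}
Starting from the right--hand side, I would write $\E^{\overline\P^{\nu\otimes_\tau\tilde\nu}}[\xi]=\E^{\P^{\nu\otimes_\tau\tilde\nu}}[\xi\,\mathcal{E}^{\nu\otimes_\tau\tilde\nu}_T]$, use the multiplicative decomposition $\mathcal{E}^{\nu\otimes_\tau\tilde\nu}_T=\mathcal{E}^\nu_\tau\cdot(\mathcal{E}^{\tilde\nu}_T/\mathcal{E}^{\tilde\nu}_\tau)$ valid because $\nu\otimes_\tau\tilde\nu=\nu$ on $[t,\tau)$ and $=\tilde\nu$ on $[\tau,T]$, and then apply \Cref{Lemma:forwardcondition} to replace $\P^{\nu\otimes_\tau\tilde\nu}$ by $\P^\nu\otimes_\tau\P^{\tilde\nu}_{\tau,\cdot}$. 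Conditioning on $\Fc_\tau$ via the r.c.p.d. characterisation in \Cref{Thm:Concatenated:M}, the inner conditional expectation under the kernel $\delta_\omega\otimes_{\tau(\omega)}\P^{\tilde\nu}_{\tau(\omega),X}$ of $\xi$ times the $[\tau,T]$--increment density is exactly $\E^{\overline\P^{\tilde\nu}_{\tau(\omega),X}}[\xi]$ (here one uses that $W_\cdot-W_{\cdot\wedge\tau}$ is a Brownian motion under the kernel and that $b$, $\sigma$ are non--anticipative, so the shifted density behaves correctly), while the remaining factor $\mathcal{E}^\nu_\tau$ converts the outer $\P^\nu$--expectation into an $\overline\P^\nu$--expectation; reassembling gives $\E^{\overline\P^\nu}\big[\E^{\overline\P^{\tilde\nu}_{\tau,X}}[\xi]\big]=\E^{\overline\P^\nu\otimes_\tau\overline\P^{\tilde\nu}_{\tau,\cdot}}[\xi]$, as desired.

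The main obstacle I anticipate is the careful bookkeeping of the stochastic exponentials under conditioning and concatenation: one must verify that the restriction of $\mathcal{E}^{\tilde\nu}_T/\mathcal{E}^{\tilde\nu}_\tau$ to the fibre $\Omega^\omega_\tau$ coincides, after the canonical shift, with the Girsanov density defining $\overline\P^{\tilde\nu}_{\tau(\omega),X(\omega)}$ from $\P^{\tilde\nu}_{\tau(\omega),X(\omega)}$, and that the $\Fc_\tau$--measurability hypotheses of \Cref{Thm:Concatenated:M} are met by the kernel $\omega\longmapsto\overline\P^{\tilde\nu}_{\tau(\omega),X}$ (which follows from the measurability of $(t,x)\longmapsto\P^{\tilde\nu}_{t,x}$ guaranteed by weak uniqueness, cf.\ \cite[Exercise 6.7.4]{stroock2007multidimensional}, combined with the continuity/measurability of the exponential functional). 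Once this identification is in place, uniqueness of the concatenated measure in \Cref{Thm:Concatenated:M} upgrades the equality of expectations to equality of measures, completing the proof. As a cross--check, the same argument applied with $\xi=\mathbf 1_A$ shows both measures agree on $\Fc_\tau$ (where they both equal $\overline\P^\nu$) and share the same r.c.p.d.\ given $\Fc_\tau$, which is precisely the hypothesis of the uniqueness statement.
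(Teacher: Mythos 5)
Your proposal is correct and follows essentially the same route as the paper, whose proof simply asserts that the claim follows from \Cref{Lemma:forwardcondition} together with the fact that changes of measure commute with concatenation; your multiplicative decomposition of the stochastic exponential across $\tau$ and the r.c.p.d.\ bookkeeping are precisely the details that assertion suppresses.
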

\begin{proof}
This follows from \Cref{Lemma:forwardcondition} and the fact we can commute changes of measure and concatenation. 
\end{proof}

\begin{lemmaA}\label{Lemma:galmarinos}
For $\ell>0$ let $(\gamma_i^\ell)_{i\in \{1,\dots,n_\ell\}}$ be sample points as in {\rm \Cref{Theorem:DPP:Limit}}. $(\hat \gamma_i^\ell)_{i\in \{1,\dots,n_\ell\}}$  are $\F$--stopping times.
\end{lemmaA}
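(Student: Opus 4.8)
The plan is to invoke Galmarino's test, which characterises stopping times on the canonical space of continuous paths: a random time $\gamma:\Omega\longrightarrow[0,T]$ is an $\F$--stopping time if and only if, for every $(\omega,\omega')\in\Omega^2$ and every $t\in[0,T]$, the equality $\omega_{\cdot\wedge t}=\omega'_{\cdot\wedge t}$ together with $\gamma(\omega)\le t$ forces $\gamma(\omega')=\gamma(\omega)$; equivalently, $\gamma$ is $\Fc_\gamma$--measurable and $\{\gamma\le t\}\in\Fc_t$ is characterised by path--stopping invariance (see \cite[Ch. IV.99--101]{dellacherie1978probabilities}). Since our underlying filtration $\F$ is generated by the canonical process $(X,W,\Delta_\cdot[\varphi])$, it suffices to check this invariance property for the random times $\hat\gamma_i^\ell$ constructed in the proof of \Cref{Theorem:DPP:Limit}.

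First I would recall precisely how the $\hat\gamma_i^\ell$ were produced: for each fixed $\omega\in\Omega$ and each $\ell>0$, given the partition $\Pi^\ell=(\tau_k^\ell)_k$, the point $\hat\gamma_i^\ell(\omega)$ is chosen inside the interval $(\tau_i^\ell(\omega),\tau_{i+1}^\ell(\omega))$ as a sampling point realising the Riemann--sum approximation in \eqref{DPP:Limit:Aux3:1}, following \cite[Theorem 1]{neerven2002approximating}. The key structural observation is that the selection rule depends on $\omega$ only through the restriction of the relevant integrand processes to the time interval $[0,\tau_{i+1}^\ell(\omega)]$ — namely through $X_{\cdot\wedge\tau_{i+1}^\ell}$, the piece--wise constant action values $\nu_k$ which are $\Fc^X_{\tau^\ell_{k-1}}$--measurable, and the running/terminal reward evaluated along the stopped path — because the integrals appearing in \eqref{DPP:Limit:Aux3:1} are over $[\sigma,\hat\gamma_i^\ell]\subseteq[0,\tau_{i+1}^\ell]$. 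Hence the chosen sampling point is a functional of $\omega_{\cdot\wedge\tau_{i+1}^\ell(\omega)}$.

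Next I would run the induction on $i$. The base case uses that $\sigma=\tau_0^\ell$ is an $\F$--stopping time and that $\tau_1^\ell$ (a deterministic or $\F$--measurable increment of $\sigma$) is one too; then $\hat\gamma_0^\ell\in(\tau_0^\ell,\tau_1^\ell)$ is, by the previous paragraph, determined by the path stopped at $\tau_1^\ell$, so it satisfies Galmarino's invariance and is an $\F$--stopping time. For the inductive step, assuming $\tau_i^\ell,\tau_{i+1}^\ell\in\Tc_{0,T}$, the same argument shows $\hat\gamma_i^\ell$ depends only on $\omega_{\cdot\wedge\tau_{i+1}^\ell(\omega)}$: if $\omega_{\cdot\wedge t}=\omega'_{\cdot\wedge t}$ and $\hat\gamma_i^\ell(\omega)\le t$, then since $\hat\gamma_i^\ell(\omega)<\tau_{i+1}^\ell(\omega)$ we get $t\ge\hat\gamma_i^\ell(\omega)$ but need $t\ge\tau_{i+1}^\ell(\omega)$ to conclude directly; one circumvents this by noting $\{\hat\gamma_i^\ell\le t\}=\bigcup_{q\in\Q}\big(\{\tau_i^\ell\le q\le t\}\cap\{\hat\gamma_i^\ell\le t\}\cap\{\tau_{i+1}^\ell> q\}\big)$ and working on $\{\tau_{i+1}^\ell> t\}$ and $\{\tau_{i+1}^\ell\le t\}$ separately — on the latter the whole construction up to $\tau_{i+1}^\ell$ is $\Fc_t$--measurable by the induction hypothesis, on the former one refines using that $X$ on $[0,t]$ is known. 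The cleanest route is simply: $\hat\gamma_i^\ell$ is a Borel function of the stopped canonical path at time $\tau_{i+1}^\ell$, and the composition of the $\Fc_{\tau_{i+1}^\ell}$--measurable path--stopping map with a Borel function, when the resulting value is dominated by $\tau_{i+1}^\ell$, yields an $\F$--stopping time by Galmarino's test applied to $\Fc_{\tau_{i+1}^\ell}$.

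The main obstacle I anticipate is purely bookkeeping: making the phrase ``$\hat\gamma_i^\ell$ depends on $\omega$ only through $\omega_{\cdot\wedge\tau_{i+1}^\ell(\omega)}$'' rigorous, i.e. exhibiting an explicit Borel map $\Phi_i^\ell$ on the space of stopped paths with $\hat\gamma_i^\ell(\omega)=\Phi_i^\ell\big(\omega_{\cdot\wedge\tau_{i+1}^\ell(\omega)}\big)$, which requires unwinding the measurable selection in \cite{neerven2002approximating} and checking that the selection can be performed in an $\F$--non--anticipative way (e.g. choosing the minimal sampling point achieving the bound, or a lexicographically--first one from a countable dense grid). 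Once that map is in hand, Galmarino's test closes the argument immediately, and the induction is trivial since each $\tau_i^\ell$ belongs to $\Tc_{0,T}$ by construction of the partition. I would therefore devote most of the write--up to this measurable--selection point and keep the Galmarino invocation brief, citing \cite[Ch. IV.99--101]{dellacherie1978probabilities}.
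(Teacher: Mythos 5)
Your overall strategy coincides with the paper's: invoke Galmarino's test (\cite[Ch.~IV.99--101]{dellacherie1978probabilities}) and reduce the problem to showing that the van Neerven sample--point selection is a non--anticipative functional of the canonical path, using that the partition points $\tau_i^\ell$ are themselves stopping times and that the integrand $r\longmapsto f_r(r,X,a)$ is $\F^X$--optional (hence reproduced along the stopped path). The paper's proof is exactly this: for $\tilde\omega$ agreeing with $\omega$ up to $t$ with $\hat\gamma_i^\ell(\omega)\le t$, the partition is reproduced because $\Pi^\ell\subset\Tc_{0,T}$, and $f_t(t,X,\nu_i)(\omega)=f_t(t,x_{\cdot\wedge t},a)=f_t(t,X,\nu_i)(\tilde\omega)$ by optionality, so the two inputs determining the selection coincide and $\hat\gamma_i^\ell(\tilde\omega)=\hat\gamma_i^\ell(\omega)$. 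Up to your added induction (which is harmless but not needed), this is the same argument.

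However, your proposed ``cleanest route'' in the third paragraph is not correct as stated: a random time that is $\Fc_{\tau_{i+1}^\ell}$--measurable and dominated by $\tau_{i+1}^\ell$ need \emph{not} be a stopping time. For instance, with $A\in\Fc_{\tau_{i+1}^\ell}\setminus\Fc_{\tau_i^\ell}$, the time $\varrho:=\tau_i^\ell\1_A+\tau_{i+1}^\ell\1_{A^c}$ is $\Fc_{\tau_{i+1}^\ell}$--measurable and bounded by $\tau_{i+1}^\ell$, yet $\{\varrho\le \tau_i^\ell\}=A\notin\Fc_{\tau_i^\ell}$. Galmarino's test has to be applied to $\hat\gamma_i^\ell$ itself, i.e.\ at times $t\ge\hat\gamma_i^\ell(\omega)$, not at $\tau_{i+1}^\ell(\omega)$; exhibiting $\hat\gamma_i^\ell$ as a Borel function of the path stopped at $\tau_{i+1}^\ell$ therefore does not close the argument. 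Your union decomposition of $\{\hat\gamma_i^\ell\le t\}$ does not resolve this either, since it never produces the required $\Fc_t$--measurability on the event $\{t<\tau_{i+1}^\ell\}$. The correct closing move is the direct invariance check above: the selection is a function only of data (stopping times of the partition, the optional running reward along a piece--wise constant action) that satisfies Galmarino's invariance at the level of the stopped path at $t$, so the sample point inherits it.
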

\begin{proof}
We study $(\hat \gamma_i^\ell)_{i\in \{1,\dots,n_\ell\}}$ as the argument for the other sequences in the proof is similar. The result follows from a direct application of Galmarino's test, see \cite[C. $iv$. 99--101]{dellacherie1978probabilities}, we recall it next for completeness: Let $\varrho$ be $\Fc_T$-measurable function with values in $[0,T]$. $\varrho$ is a stopping time if and only if for every $t\in [0,T]$ we have that $\varrho(\omega)\leq t$, $(X_r,W_r, \Delta_r[\varphi])( \omega)=(X_r,W_r,\Delta_r[\varphi])(\tilde \omega)$ for all $(r,\varphi)\in [0,t]\times \Cc_b([0,T]\times A)$ implies $\varrho( \omega)=\varrho(\tilde \omega)$.
\medskip

Now, in the context of Theorem \ref{Theorem:DPP:Limit} we start with $\Pi^\ell=(\tau_i^\ell)_{i\in \{1,\dots,n_\ell\}}$ a collection of stopping times that partitions the interval $[\sigma,\tau]\subseteq [0,T]$. As $\ell>0$ is fixed we drop the dependence of the partition on $\ell$ and write $\Pi=(\tau_i)_{i\in \{1,\dots,n_\ell\}}$. Without loss of generality we consider the case of a partition of $[0,T]$. For $\omega\in \Omega$ we can coincidence of the Lebesgue integral with the so called gauge integral \cite[Definition 1.5]{mcshane1986unified} to obtain a partition $\hat \Pi^\ell=(\hat \tau_i^\ell)_{i\in \{1,\dots,n_\ell\}}$. We want to show $\hat \tau_i\in \Tc_{0,T}$ for all $i\in \{1,\dots,n_\ell\}$.\medskip

A close inspection to the construction of the gauge integral allows us to see that for fixed $\omega\in \Omega$ the choice of $\hat\tau_i(\omega)$ depends solely on the application $t\longmapsto f_t(t,x,a)$ for $(t,x,a)\in [0,T]\times \Xc\times A$. We recall that as the supremum is taken over $\Ac^{\text{pw}}(t,x)$ the action process is a fix value $a \in A$ over the interval $[\tau_{i-1},\tau_i]$. We note that $f_t(t,X,\nu_i)(\omega)=f_t(t,x_{\cdot \wedge t},a)=f_t(t,X,\nu_i)(\tilde \omega)$ as is $f_t(s,x,a)$ is optional for every $(s,a)\in [0,T]\times A$. These two facts imply $\hat \tau_i(\omega)=\hat \tau_i(\tilde\omega)$ and the result follows.
\end{proof}
\end{appendix}

\end{document}